\tikzstyle{dot}=[draw, fill =black, circle, inner sep=0pt, minimum size=2pt]
\numberwithin{equation}{section}
\theoremstyle{plain}
  \newtheorem{thm}{Theorem}[section]
  \newtheorem{cor}[thm]{Corollary}
  \newtheorem{lem}[thm]{Lemma}
  \newtheorem{prop}[thm]{Proposition}
\theoremstyle{definition}
  \newtheorem{Def}[thm]{Definition}
\theoremstyle{remark}
  \newtheorem{rem}[thm]{Remark}
  \newtheorem{eg}[thm]{Example}
  \newtheorem*{rem*}{Remark}
\numberwithin{equation}{section}
\newcommand{\bone}{\boldsymbol{1}}
\newcommand{\bbone}{\mathbbm{1}}
\newcommand{\bl}{\boldsymbol{\lambda}}
\newcommand{\bL}{\boldsymbol{\Lambda}}
\newcommand{\bbeta}{\boldsymbol{\beta}}
\newcommand{\balpha}{\boldsymbol{\alpha}}
\newcommand{\bvarnothing}{\boldsymbol{\varnothing}}
\newcommand{\bm}{\boldsymbol{\mu}}
\newcommand{\bnu}{\boldsymbol{\nu}}
\newcommand{\bk}{\boldsymbol{\kappa}}
\newcommand{\UU}{\mathbf{U}}
\newcommand{\uu}{\upsilon}
\newcommand{\fs}{\mathcal{s}}
\newcommand{\bfs}{\pmb{\fs}}
\newcommand{\br}{\mathbf{r}}
\newcommand{\bs}{\mathbf{s}}
\newcommand{\bt}{\mathbf{t}}
\newcommand{\bu}{\mathbf{u}}
\newcommand{\bv}{\mathbf{v}}
\newcommand{\by}{\mathbf{y}}
\newcommand{\bz}{\mathbf{z}}
\newcommand{\wt}{\mathsf{wt}}
\newcommand{\core}{\mathsf{core}}
\newcommand{\quot}{\mathsf{quot}}
\newcommand{\mv}{\mathbf{mv}}
\newcommand{\mvi}{\mathrm{mv}}
\renewcommand{\min}{\mathrm{min}}
\renewcommand{\max}{\mathrm{max}}
\newcommand{\ZZ}{\mathbb{Z}}
\newcommand{\cs}{\mathsf{s}}
\newcommand{\HH}{\mathcal{H}}
\newcommand{\BB}{\mathcal{B}}
\newcommand{\B}{\mathsf{B}}
\newcommand{\C}{\mathsf{C}}
\newcommand{\CC}{\mathbb{C}}
\newcommand{\rr}{\rho}
\newcommand{\FF}{\mathbb{F}}
\newcommand{\CF}{\mathcal{F}}
\newcommand{\Uv}{U_v(\widehat{\mathfrak{sl}}_e)}
\newcommand{\Sp}{S}
\newcommand{\Hub}{\mathsf{Hub}}
\newcommand{\hub}{\mathsf{hub}}
\newcommand{\PP}{\mathcal{P}}
\newcommand{\res}{\mathsf{res}}
\newcommand{\cont}{\mathsf{cont}}
\newcommand{\iInd}{i\text{-Ind}}
\newcommand{\iRes}{i\text{-Res}}
\newcommand{\len}{\mathcal{l}}
\newcommand{\Be}{\mathbf{e}}
\newcommand{\EW}{\widehat{\mathbf{W}}}
\newcommand{\AW}{\mathbf{W}}
\newcommand{\sym}[1]{\mathfrak{S}_{#1}}
\newcommand{\DDot}[1]{\mathrel{\overset{#1}{\bullet}}}
\newcommand{\CDot}[1]{\mathrel{\overset{#1}{\circ}}}
\newcommand{\AAF}{\mathcal{A_\mathsf{F}}}
\newcommand{\AAnobar}{\mathcal{A}_e^{\ell}}
\newcommand{\AAbar}{\overline{\mathcal{A}}_e^{\ell}}
\newcommand{\bij}{\iota}
\newcommand{\Sc}{\mathbf{Sc}}
\newcommand{\EP}{\pmb{\varnothing}}
\newcommand{\Ht}{\mathsf{ht}}
\newcommand{\T}{\mathsf{T}}
\newcommand{\AR}{\mathsf{AR}}
\newcommand{\AS}{\mathsf{A}}
\newcommand{\RS}{\mathsf{R}}
\newcommand{\cy}{\mathfrak{y}}
\def\Z{{\mathbb Z}}
\title[Moving vectors and core blocks]{Moving vectors and core blocks of Ariki-Koike algebras}
\author{Yanbo Li}
\address[Y. Li]{School of Mathematics and Statistics, Northeastern University at Qinhuangdao, Qinhuangdao, 066004, People's Republic of China.}
\email{liyanbo707@163.com}
\author{Xiangyu Qi}
\address[X. Qi]{School of Mathematics and Statistics, Beijing Institute of Technology, Beijing, 100081, People's Republic of China.}
\email{qixiangyumath@163.com}
\author{Kai Meng Tan}
\address[K. M. Tan]{Department of Mathematics, National University of Singapore, Block S17, 10 Lower Kent Ridge Road. Singapore 119760.}
\email{tankm@nus.edu.sg}
\date{January 2025}
\thanks{The third author thanks Joseph Chuang for his useful comments on the affine Weyl group, moving vectors and Rouquier blocks.}
\subjclass[2020]{20C08, 05E10}
\keywords{Moving vectors, core blocks, Scopes equivalence, Kostka number, graded decomposition numbers}
\begin{document}

\maketitle

\begin{abstract}
We classify the core blocks of Ariki-Koike algebras by their moving vectors.  Using this classification, we obtain a necessary and sufficient condition for Scopes equivalence between two core blocks, and express the number of simple modules lying in a core block as a classical Kostka number.  Under certain conditions on the multicharge and moving vector, we further relate the graded decomposition numbers of these blocks in characteristic zero to the graded decomposition numbers of the Iwahori-Hecke algebras of type $A$.
\end{abstract}

\section{Introduction}
The Ariki-Koike algebra, introduced in the early 1990s by Susumu Ariki and Kazuhiko Koike \cite{AK}, is a significant family of algebras that generalises the well-known Iwahori-Hecke algebras of type $A$.
It arises in the context of the study of symmetries in the representation theory of Hecke algebras and categorification of quantum groups, and
is intimately connected with the category $\mathcal{O}$ for affine Lie algebras, as well as with the representation theory of
$\mathfrak{sl}_2$ and other Lie theoretic structures.
Its representation theory has deep connections with various important mathematical objects and phenomena, such as Young tableaux, the theory of symmetric functions, Lusztig’s theory of cell modules and Kazhdan-Lusztig polynomials.
The more recent introduction of the Khovanov-Lauda-Rouquier (KLR) algebras has endowed the Ariki-Koike algebras with a $\ZZ$-grading, providing new insights into their structure. In particular, the $v$-decomposition numbers arising from the canonical basis of the Fock space representation of the quantum affine algebra $\Uv$ are proved to be the graded decomposition numbers of Ariki-Koike algebras in characteristic zero \cite{BK-GradedDecompNos}, generalising the theorem of Ariki for ungraded decomposition numbers \cite{Ariki}.

Core blocks of Ariki-Koike algebras are first defined by Fayers \cite{Fayers-Coreblock}.
These are blocks for which the multipartitions lying in them are all multicores, and are believed to be the simplest non-simple blocks.
Despite their perceived simplicity, these blocks can have arbitrarily large weight, and the research conducted so far has focused on blocks with very small weight.
In fact the authors are not aware of any work on these blocks with level $\ell \geq 3$ and weight $k \geq 4$.

An important equivalence between the blocks of Ariki-Koike algebras is the Scopes equivalence.
This is a special type of Morita equivalence in which the correspondence between distinguished modules such as the Specht modules or the simple modules can be easily described.
In particular,
Scopes equivalence preserves decomposition numbers.
However,
determining when two blocks are Scopes equivalent is generally a challenging problem, even for level 1, i.e.\ the Iwahori-Hecke algebras of type $A$.
Dell'Arciprete \cite{D} and the first and third authors \cite{LT} provided some different sufficient conditions for two blocks to be Scopes equivalent.
However, as noted in \cite[Remark 4.15]{LT}, neither condition is necessary for two core blocks to be Scopes equivalent.

The concept of moving vectors was first introduced by the first and second authors \cite{LQ-movingvector} to classify the blocks of Ariki-Koike algebras with finite representation type.
This landmark idea plays a far more significant role in controlling the representation theory of the blocks than their weights,
and it serves a function similar to the weights of blocks in Iwahori-Hecke algebras of type $A$ for the higher-level Ariki-Koike algebras.
For their purposes, moving vectors were only defined in \cite{LQ-movingvector} for the so-called FLOTW multicharges, where the components in each multicharge are increasing, with the last no more than $e$ larger than the first.
Here, we provide a slightly different perspective to this groundbreaking concept and generalise it to any arbitrary multicharge.

In this paper, we present a new understanding of the core blocks of Ariki-Koike algebras through their moving vectors.
Our first main result (Theorem \ref{T:moving-vector-of-core-block}) establishes that a block of an Ariki-Koike algebra is a core block if and only if its moving vector has a zero component.
Using this classification of core blocks, we quickly determine that the connected components of the weight graph defined by Lyle and Ruff in \cite{LyleRuff-Decompositionnumber} are the `same' as those of the undirected graph that arises naturally from its moving vector (Theorem \ref{T:weight-graph}), and provide a straightforward method (Corollary \ref{C:decomposable}) to determine if a multipartition is decomposable in the sense of Fayers \cite{Fayers-Coreblock}.

Our characterisation of core blocks by their moving vectors yields several important results, as the multicores lying in them have very special properties.
We exploit this to obtain our second main result (Theorem \ref{T:Scopes}): a simple necessary and sufficient condition for two core blocks to be Scopes equivalent.
In particular, this demonstrates that, for level 2, all core blocks lying in the same affine Weyl group orbit are all Scopes equivalent.
This perhaps explain why the core blocks of Iwahori-Hecke algebras of type $B$ (i.e.\ level 2) are more tractable than others.

In our investigation of Scopes equivalence between core blocks, we encounter blocks in every Scopes equivalence class that have extremely rich properties.
Using these blocks, we derive our third main result (Theorem \ref{T:number-simple}), which expresses the number of simple modules in a core block as a classical Kostka number.

All the above results are valid for arbitrary multicharge.
Our final main result (Theorem \ref{T:decomp}) however requires some condition on the multicharge and only applies to a subset of core blocks with such multicharge.
For these blocks, Theorem \ref{T:decomp} establishes a relationship between their $v$-decomposition numbers and those in level one.

We now indicate the organisation of this paper. After providing the necessary background in the next section, we look at the moving vectors in Section \ref{S:moving-vectors}.  Section \ref{S:Scopes} is dedicated to a closer examination of the core blocks, where we provide a necessary and sufficient condition for two core blocks to be Scopes equivalent. In Section \ref{S:simple-decomp}, we relate the number of simple modules lying in a core block to a classical Kostka number, as well as the $v$-decomposition numbers of core blocks satisfying certain conditions to those in level $1$.

\section{Preliminaries}

\subsection{Common notations and conventions} \label{SS:notations}

Throughout, $\mathbb{Z}^+$ denotes the set of positive integers.
For $k \in \ZZ$, we write
$$\ZZ_{< k } := \{ x \in \ZZ \mid x < k\},$$
and similarly for $\ZZ_{\leq k}$, $\ZZ_{> k}$  and $\ZZ_{\geq k}$.
For $a,b \in \ZZ$,
$$[a,\, b] := \{ x \in \ZZ \mid a \leq x \leq b \}.$$

We fix $e,\ell \in \mathbb{Z}^+$ with $e \geq 2$.
For $m \in \{e,\ell\}$, we write $a \equiv_m b$ for $a \equiv b \pmod m$.
We also identify $\ZZ/m\ZZ$ with $[0,\, m-1]$.

For $\bt = (t_1,\dotsc, t_m) \in \ZZ^m$, write
$$|\bt| := \sum_{i=1}^m t_i.$$
In addition
$$\bone := (1,\dotsc, 1) \in \ZZ^m.$$
Of course, $\bone$ depends on $m$, which will always be clear from the context in this paper.
Moreover, for $x \in \ZZ$ with $x = am + b$, where $a\in \ZZ$ and $b \in \ZZ/m\ZZ$, define
$$\bv_m(x) := (\overbrace{a,\dotsc, a}^{m-b \text{ times}}, \overbrace{a+1,\dotsc, a+1}^{b \text{ times}}) \in \ZZ^m,
$$
which satisfies $|\bv_m(x)| = x$.

Furthermore, $\bbone_{\mathtt{p}}$ denotes the indicator function on the statement \smash[t]{$\mathtt{p}$, i.e.\ $\bbone_{\mathtt{p}} = \begin{cases}
1, &\text{if $\mathtt{p}$ is true};\\
0, &\text{otherwise}.
\end{cases}
$}

\subsection{$\beta$-sets and abaci} \label{SS:beta sets}

A $\beta$-set $\B$ is a subset of $\Z$ such that both $\max(\B)$ and $\min(\Z \setminus \B)$ exist. Its charge $\fs(\B)$ is defined as
$$\fs(\B) := |\B \cap \Z_{\geq 0}| - |\Z_{<0} \setminus \B|.$$
We denote the set of all $\beta$-sets by $\BB$ and the set of all $\beta$-sets with charges $s$ by $\BB(s)$.  Clearly, $\BB= \bigcup_{s \in \Z} \BB(s)$ (disjoint union).

Let $\B \in \BB$. The $\infty$-abacus display of $\B$ is a horizontal line with positions labelled by $\Z$ in an ascending order going from left to right, by putting a bead at the position $a$ for each $a \in \B$.
To obtain the $e$-abacus display of $\B$, we cut its $\infty$-abacus display into sections $\{ ae, ae+1, \dotsc, ae + e-1 \}$ ($a \in \Z$), and put the section $\{ ae, ae+1, \dotsc, ae + e-1 \}$ directly on top of $\{ (a+1)e, (a+1)e+1, \dotsc, (a+1)e + e-1 \}$.
Thus the $e$-abacus have $e$ infinitely long vertical runners, which we label as $0, 1,\dotsc, e-1$ going from left to right, and countably infinitely many horizontal rows, labelled by $\Z$ in an ascending order going from top to bottom, and the position on row $a$ and runner $i$ equals $ae + i$.
The {\em $e$-quotient of $\B$}, denoted $\quot_e(\B) = (\B_1,\dotsc, \B_{e})$, can be obtained as follows: for each $i \in \ZZ/e\ZZ$, treat each runner $i$ as an $\infty$-abacus, with the position $ae+i$ relabelled as $a$, and obtain the subset $\B_{i+1}$ from the beads in runner $i$ of the $e$-abacus of $\B$.  Formally,
$$
\B_{i+1} = \{ a \in \Z \mid ae+i \in \B \} = \{ \tfrac{x-i}{e} \mid x \in \B,\ x \equiv_e i \}.
$$
Its {\em $e$-core}, denoted $\core_e(\B)$, can be obtained by repeatedly sliding its beads in its $e$-abacus up their respectively runners to fill up the vacant positions above them, and its {\em $e$-weight}, denoted $\wt_e(\B)$, is the total number of times the beads move one position up their runners.
%More formally, if $\B_i = \{b_{i1} > b_{i2} > \dotsb \}$ for all $i \in \Z/e\Z$, then
%\begin{align*}
%\core_e(\B) &= \bigcup_{i \in \Z/e\Z} \{ ae + i \mid a < \fs(\B_i) \}; \\
%\wt_e(\B) &= \sum_{i \in \Z/e\Z} \, \sum_{a \in \mathbb{Z}^+} (b_{ia} - \fs(\B_i)+a)
%\end{align*}
Note that
\begin{align*}
\sum_{i=1}^{e} \fs(\B_i) = \fs(\B) &= \fs(\core_e(\B)).
\end{align*}

Given $\beta$-set $\B$ and $k \in \Z$, define
$$
\B^{+k} :=  \{ x + k \mid x \in \B \}.
$$
Then $\B^{+k}$ is a $\beta$-set with $\fs(\B^{+k}) = \fs(\B) + k$.  Furthermore, if $\quot_e(\B) = (\B_1,\B_2,\dotsc, \B_{e})$, then $\quot_e(\B^{+1}) = ((\B_{e})^{+1}, \B_1,\dotsc, \B_{e-1})$.
Consequently,
\begin{equation} \label{E:quot}
\quot_e(\B^{+ke}) = ((\B_1)^{+k},(\B_2)^{+k},\dotsc, (\B_{e})^{+k}).
\end{equation}
In addition, we also have
\begin{align*}
\core_e(\B^{+k}) &= (\core_e(\B))^{+k};  \\
\wt_e(\B^{+k}) &= \wt_e(\B).
\end{align*}

\begin{lem} \label{L:hub}
Let $\B$ and $\C$ be $\beta$-sets.
Then
$$\fs(\B) - \fs(\C) = |\B \setminus \C| - |\C \setminus \B|.$$
In particular:
\begin{enumerate}
\item if $k := \fs(\B) - \fs(\C) > 0$, then there exist distinct $b_1,\dotsc, b_k \in \B \setminus \C$;
\item if $\B = \ZZ_{< m} \cup S$ for some $m \in \ZZ$ with $S \cap \ZZ_{< m} = \emptyset$, then $\fs(\B) = m + |S|$.
\end{enumerate}
\end{lem}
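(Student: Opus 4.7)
The plan is to reduce everything to the main identity by elementary counting, after first checking that $\B \setminus \C$ and $\C \setminus \B$ are both finite. Since $\B$ and $\C$ are $\beta$-sets, both $\max(\B)$ and $\max(\C)$ exist and both $\Z \setminus \B$ and $\Z \setminus \C$ have minima, so there exists $N \in \ZZ^+$ such that $\B \cap \ZZ_{\geq N} = \emptyset = \C \cap \ZZ_{\geq N}$ and $\ZZ_{<-N} \subseteq \B \cap \C$. Consequently $\B \triangle \C \subseteq [-N,\, N-1]$, so all cardinalities below are finite and subtraction is legitimate.

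To prove the identity, I would split the two terms in $\fs$ at $0$ and handle them separately. Decomposing $\B \cap \ZZ_{\geq 0} = (\B \cap \C \cap \ZZ_{\geq 0}) \sqcup ((\B \setminus \C) \cap \ZZ_{\geq 0})$ and likewise for $\C$, subtraction gives
\[
|\B \cap \ZZ_{\geq 0}| - |\C \cap \ZZ_{\geq 0}| = |(\B \setminus \C) \cap \ZZ_{\geq 0}| - |(\C \setminus \B) \cap \ZZ_{\geq 0}|.
\]
For the negative part, decomposing $\ZZ_{<0} \setminus \B = (\ZZ_{<0} \setminus (\B \cup \C)) \sqcup ((\C \setminus \B) \cap \ZZ_{<0})$ and similarly for $\ZZ_{<0} \setminus \C$, subtraction gives
\[
|\ZZ_{<0} \setminus \C| - |\ZZ_{<0} \setminus \B| = |(\B \setminus \C) \cap \ZZ_{<0}| - |(\C \setminus \B) \cap \ZZ_{<0}|.
\]
Adding these two equalities produces $\fs(\B) - \fs(\C) = |\B \setminus \C| - |\C \setminus \B|$.

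Part (1) is then immediate, since $|\B \setminus \C| = |\C \setminus \B| + k \geq k$ allows us to pick any $k$ distinct elements from $\B \setminus \C$. For part (2), I would apply the identity to $\B$ and the $\beta$-set $\C := \ZZ_{<m}$, for which a direct computation (splitting into the cases $m \geq 0$ and $m < 0$) yields $\fs(\C) = m$; since the hypotheses give $\B \setminus \C = S$ and $\C \setminus \B = \varnothing$, the identity delivers $\fs(\B) = m + |S|$.

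The proof is essentially bookkeeping, so there is no serious obstacle; the only point requiring a moment's care is the finiteness observation at the start, which is what makes the cancellations in the two decompositions valid.
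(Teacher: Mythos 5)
Your proof is correct. The paper disposes of the main identity by citing an earlier paper (namely \cite[Lemma 2.2]{LT}) and then proves (1) and (2) as quick corollaries, exactly as you do: (1) from $|\B \setminus \C| \geq \fs(\B) - \fs(\C)$ and (2) by substituting $\C = \ZZ_{<m}$. Your contribution is to actually prove the identity from scratch rather than delegate it, and your argument is the natural one: separate $\fs$ into its contribution from $\ZZ_{\geq 0}$ and $\ZZ_{<0}$, decompose each piece against $\B \cap \C$, and add. The opening finiteness observation ($\B \triangle \C$ lies in a bounded interval because both sets have a maximum and both complements have a minimum) is exactly the point that makes the cancellations legitimate, and you were right to flag it. There is nothing to fix; your proof is self-contained whereas the paper's is not, and this is the only material difference.
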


\begin{proof}
This is \cite[Lemma 2.2]{LT}.  Part (1) then follows since $|\B \setminus \C| \geq \fs(\B) - \fs(\C)$, and part (2) follows by putting $\C = \ZZ_{< m}$.
\end{proof}

For each $i \in \Z/e\Z$ and $\B \in \BB$, define $\hub_i(\B) = \hub_{i,e}(\B)$ by
\begin{align*}
\hub_i(\B) &:= |\{ x \in \B \mid x\equiv_e i,\, x-1 \notin \B \}| - |\{ x \in \B \mid x \equiv_e i-1,\, x+1 \notin \B \}|
\end{align*}
Furthermore, call the $e$-tuple $(\hub_0(\B), \dotsc, \hub_{e-1}(\B))$ the {\em $e$-hub of $\B$}, denoted $\Hub_e(\B)$.

We can deduce from Lemma \ref{L:hub} that if $\quot_e(\B) = (\B_1,\dotsc, \B_{e})$, then
\begin{equation}
\hub_j(\B) =
\fs(\B_{j+1}) - \fs(\B_{j}) - \delta_{j0}, \label{E:hub}
\end{equation}
for all $j \in \ZZ/e\ZZ$,
where
$\B_0$ is to be read as $\B_e$, and
$\delta_{i0}$ equals $1$ when $i=0$, and $0$ otherwise.
In particular,
\begin{equation}
\Hub_e(\B) = \Hub_e(\core_e(\B)). \label{E:hub-core}
\end{equation}

\begin{Def} \label{D:bfs}
Let $m \in \ZZ^+$.  Define a function $\bfs =\bfs_m : \BB^m \to \ZZ^m$ by
$$
\bfs(\B^{(1)}, \dotsc, \B^{(m)}) = (\fs(\B^{(1)}),\dotsc, \fs(\B^{(m)})).
$$
\end{Def}

\subsection{Partitions}
A partition is a weakly decreasing infinite sequence of nonnegative integers that is eventually zero.  Given partition $\lambda = (\lambda_1,\lambda_2,\dotsc)$, we write \begin{align*}
\len(\lambda) &:= \max \{ a \in \mathbb{Z}^+ : \lambda_a >0 \} ; \\
|\lambda| &:= \sum_{a=1}^{\len(\lambda)} \lambda_a.
\end{align*}
When $|\lambda| = n$, we say that $\lambda$ is a partition of $n$.
Its Young diagram $[\lambda]$ is defined as
$$ [\lambda] = \{ (i,j) \in \mathbb{Z} \mid 1 \leq i \leq \len(\lambda), 1 \leq j \leq \lambda_i \}.
$$
We write $\PP(n)$ for the set of partitions of $n$ and $\PP$ for the set of all partitions.  The unique partition of $0$, also known as the empty partition, shall be denoted as $\varnothing$.

Let $\lambda = (\lambda_1,\lambda_2,\dotsc)\in \PP$. For each $t \in \mathbb{Z}$,
$$
\beta_t(\lambda) := \{ \lambda_i+t-i \mid i \in \mathbb{Z}^+\}
$$
is a $\beta$-set, with $\fs(\beta_t(\lambda)) = t$.  We call $\beta_t(\lambda)$ the $\beta$-set of $\lambda$ with charge $t$.
The map $\beta : \PP \times \ZZ \to \BB$, defined by $\beta(\lambda,t) = \beta_t(\lambda)$, is a bijection, and for each $t \in \mathbb{Z}$, $\beta_t = \beta(-,t)$ gives a bijection between $\PP$ and $\BB(t)$.  In particular, when $\B$ is a $\beta$-set, we shall identify $\beta^{-1}(\B)$ with the unique partition $\lambda$ satisfying $\beta (\lambda, \fs(\B)) = \beta_{\fs(\B)}(\lambda) = \B$.

%Note that for any partition $\lambda$ and $k \in \mathbb{Z}$, we have $$\beta_{s+k}(\lambda) = (\beta_s(\lambda))^{+k}.$$
%Consequently, we have
%\begin{align}
%\wt_e(\beta_{s+k}(\lambda)) = \wt_e(\beta_s(\lambda))^{+k} &= \wt_e(\beta_s(\lambda)); \label{E:wt}\\
%\core_e(\beta_{s+k}(\lambda)) = \core_e((\beta_s(\lambda))^{+k})&= \core_e(\beta_s(\lambda))^{+k}. \label{E:core}
%\end{align}
%Thus we can define
The $e$-weight $\wt_e(\lambda)$ and the $e$-core $\core_e(\lambda)$ of  $\lambda$ can be unambiguously defined to be $\wt_e(\beta_t(\lambda))$ and $\beta^{-1}(\core_e(\beta_t(\lambda)))$ respectively for any $t \in \mathbb{Z}$.

We note that if $\B$ is a $\beta$-set with $\quot_e(\B) = (\B_{1},\dotsc, \B_{e})$, then $\core_e(\B)$ is the unique $\beta$-set whose $e$-quotient is $(\beta_{\fs(\B_1)}(\varnothing), \dotsc, \beta_{\fs(\B_{e})} (\varnothing))$ and $\wt_e(\B) = \sum_{i=1}^{e} |\beta^{-1}(\B_{i})|$.

\begin{comment}
\begin{rem}
One can also define the $e$-quotient for a partition $\lambda$ to be that induced from the $e$-quotient of $\beta_s(\lambda)$, i.e.\ if $\quot_e(\beta_s(\lambda)) = (B_0,\dotsc,B_{e-1})$ then $\quot_e(\lambda) = (\lambda^{(0)},\dotsc, \lambda^{(e-1)})$, where $\lambda^{(j)} = \beta^{-1}(B_j)$ ($j \in \{0,1,\dotsc, e-1\}$). However, this definition depends on the congruence class of $s$ modulo $e$ and is in general well-defined only up to cyclic permutation of the components in the $e$-quotient. As such, we choose not to do so here.
\end{rem}
\end{comment}

\subsection{Multipartitions}

An {\em $\ell$-partition of $n$} is an element $\bl = (\lambda^{(1)}, \dotsc, \lambda^{(\ell)}) \in \PP^{\ell}$ such that $\sum_{i=1}^{\ell} |\lambda^{(i)}| = n$.
We write $\PP^{\ell}(n)$ for the set of all $\ell$-partitions of $n$.
The unique $\ell$-partition of $0$ shall be denoted $\EP$.  Of course, $\EP$ depends on $\ell$, which will always be clear in the context in this paper.

Let $\bl \in \PP^{\ell}$.  The {\em Young diagram of $\bl$} is
$$[\bl] = \{ (a,b,i) \in (\Z^+)^3 \mid a \leq \len(\lambda^{(i)}),\ b \leq \lambda^{(j)}_a,\ i \leq \ell \}.$$
Elements of $[\bl]$ are called nodes.

We usually associate $\bl$ with an {\em $\ell$-charge} $\bt \in \Z^{\ell}$, and we write $(\bl; \bt)$ for such a pair.  The set of such pairs is then naturally identified with $\PP^{\ell} \times \Z^{\ell}$.

Let $\bt = (t_1,\dotsc, t_{\ell}) \in \Z^{\ell}$.
For $(a,b,i) \in \ZZ^3$ with $i \in [1,\,\ell]$, define its {\em $\bt$-content}, denoted $\cont_{\bt}(a,b,i)$, to be $b-a + t_i$.
Given $\bl \in \PP^{\ell}$, the rightmost nodes of $[\bl]$ are of the form $(a, \lambda^{(i)}_a, i)$ for $a \in \ZZ^+$ and $i \in [1,\,\ell]$.
Note that
$$\beta_{t_i}(\lambda^{(i)})= \{ \lambda^{(i)}_a - a + t_i \mid a \in \ZZ^+ \} = \{ \cont_{\bt}(a, \lambda^{(i)}_a, i) \mid a \in \ZZ^+ \},
$$
so that $\beta_{t_i}(\lambda^{(i)})$ is the set of $\bt$-contents of the rightmost nodes of $[\bl]$ coming from $\lambda^{(i)}$.

Given $(a,b,i) \in [\bl]$, its {\em $(e,\bt)$-residue}, denoted $\res_e^{\bt}(a,b,i)$,
is the residue class of its $\bt$-content modulo $e$; i.e.\ $\res_e^{\bt}(a,b,i) \in \ZZ/e\ZZ$ with $\res_e^{\bt}(a,b,i) \equiv_e \cont_{\bt}(a,b,i)$.
The {\em $(e,\bt)$-residues of $\bl$}, denoted $\res^{\bt}_e(\bl)$, is the multiset consisting precisely of $\res_e^{\bt}(a,b,i)$ for all $(a,b,i) \in [\bl]$.

If removing a node $\mathfrak{n}$ of $[\bl]$ from $[\bl]$ yields a Young diagram $[\bm]$ for some $\ell$-partition $\bm$, we call $\mathfrak{n}$ {\em a removable node of $\bl$} and {\em an addable node of $\bm$}, and write $\bl^{-\mathfrak{n}} := \bm$ and $\bm^{+\mathfrak{n}} := \bl$.
If $(a,b,i)$ is a removable node of $\bl$, then $b = \lambda^{(i)}_a > \lambda^{(i)}_{a+1}$, and so
$$
\cont_{\bt}(a, \lambda_a^{(i)}, i ) = \lambda_a^{(i)} - a + t_i
> \lambda_{(a+1)}^{(i)} - a + t_i
> \lambda_{(a+1)}^{(i)} - (a+1) + t_i = \cont_{\bt}(a+1,\lambda^{(i)}_{a+1}, i).
$$
Thus such a removable node corresponds bijectively to an $x \in \beta_{t_i}(\lambda^{(i)})$  with $x-1 \notin \beta_{t_i}(\lambda^{(i)})$ (namely, $x = \cont_{\bt}(a,b,i)$).
Note that if $\res_e^{\bt} (a,b,i) = j$, then $x = \cont_{\bt}(a,b,i) \equiv_e \res^{\bt}_e(a,b,i) = j$, so that $x$ lies in runner $j$ of the abacus display of $\beta_{t_i}(\lambda^{(i)})$.
In the same way, an addable node $(a',b',i')$ of $[\bl]$ corresponds bijectvely to an $x$ such that $x-1 \in \beta_{i'}(\lambda^{(i')})$ but $x \notin \beta_{i'}(\lambda^{(i')})$ (namely, $x = \cont_{\bt}(a',b',i')$).
In particular, this shows that each $\lambda^{(i)}$ can contribute at most one removable node and at most one addable node, and not both, with a given $\bt$-content.

We denote the set of addable (resp.\ removable) nodes of $[\bl]$ by $\AS(\bl)$ (resp. $\RS(\bl)$), and its subset consisting of nodes with $(e,\bt)$-residue $j$ ($j \in \ZZ/e\ZZ$) by $\AS^{e,\bt}_j(\bl)$ (resp.\ $\RS^{e,\bt}_j(\bl)$).
Furthermore, write $\AR(\bl) := \AS(\bl) \cup \RS(\bl)$ and
$\AR^{e,\bt}_j(\bl) := \AS^{e,\bt}_j(\bl) \cup \RS^{e,\bt}_j(\bl)$.

%The addable and removable nodes of $(\bl;\bt)$ with $e$-residue $i$ are totally ordered as follows:  $(a,b,j) < (c,d,k)$ if and only if $j < k$ or ($j = k$ and $a < c$).
%The $i$-signature of $(\bl;\bt)$ is the sequence of $+$ and $-$ signs obtained
%by examining these addable and removable nodes in ascending order and writing a $+$ (respectively $-$) for
%each addable (respectively removable) node. From this, the reduced $i$-signature is obtained by successively deleting all adjacent pairs $-+$.
%If there are any $ - $ signs in the reduced $i$-signature, the corresponding removable nodes with $e$-residue $i$ are called {\em normal} and the smallest among these is called {\em good}.

%The pair $(\bl;\bt)$ is said to be a {\em Kleshchev multipartition} if one obtains eventually $(\EP;\bt)$ by successively removing good nodes from $[\bl]$ and the Young diagrams of the intermediate multipartitions.

For $\bl = (\lambda^{(1)},\dotsc, \lambda^{(\ell)}) \in \PP^{\ell}$ and $\bt = (t_1,\dotsc, t_{\ell}) \in \Z^{\ell}$, we define
$$
\hub_j(\bl;\bt) := \sum_{i=1}^{\ell} \hub_j(\beta_{t_j}(\lambda^{(i)}))
$$
for each $j \in \Z/e\Z$.
This is the number of removable nodes of $\bl$ with $(e,\bt)$-residue $j$ minus the number of addable nodes of $\bl$ with $(e,\bt)$-residue $j$.
The {\em $e$-hub of $(\bl;\bt)$}, denoted $\Hub(\bl;\bt)$, is then
$$
\Hub(\bl;\bt) := (\hub_0(\bl;\bt), \dotsc, \hub_{e-1}(\bl;\bt)).
$$
%We note that

For convenience, for $\bl = (\lambda^{(1)},\dotsc, \lambda^{(\ell)}) \in \PP^{\ell}$ and $\bt = (t_1,\dotsc, t_{\ell}) \in \Z^{\ell}$, we shall use the shorthand
$$\beta_{\bt}(\bl) := (\beta_{t_1}(\lambda^{(1)}), \dotsc, \beta_{t_{\ell}}(\lambda^{(\ell)})) \in \BB^{\ell}.$$

%We further write $\beta_{\bs}(\bl)$ for the multi-$\beta$-set $(\beta_{s_1}(\lambda^{(1)}), \dotsc, \beta_{s_{\ell}}(\lambda^{(\ell)}))$.

\subsection{Uglov's map} \label{SS:Uglov}

For each $i \in \{1,\dotsc,\ell\}$, define $\uu_i\ (= \uu_{e,\ell, i}) : \mathbb{Z} \to \mathbb{Z}$ by
$$
\uu_i(x) = (x - \bar{x})\ell + (\ell-i)e + \bar{x}.
$$
Here $\bar{x}$ is defined by $x \equiv_e \bar{x} \in \{ 0,1,\dotsc, e-1\}$.   In other words, if $x = ae + b$, where $a,b \in \mathbb{Z}$ with $0 \leq b < e$, then $$\uu_i(x) = ae\ell + (\ell-i)e + b.$$

%\begin{comment}
We record some properties of these $\uu_i$'s which can be easily verified.

\begin{lem} \label{L:Uglov map}
\hfill
\begin{enumerate}
\item
$\uu_i(x) \equiv_e x$.
\item $\uu_i$ is injective.
%\item $\uu_i (\mathbb{Z}) = \{ z \in \mathbb{Z} \mid \lfloor \tfrac{z}{e} \rfloor \equiv_{\ell} \ell-i \},$.
\item
$
\mathbb{Z} = \bigcup_{i=1}^{\ell} \uu_i(\mathbb{Z})$ (disjoint union).
\item
$\uu_i(x) - e = \uu_{i+1}(x)$ for all $i \in [1,\, \ell-1]$, and $\uu_{\ell}(x) - e = \uu_1(x-e)$.
\end{enumerate}
\end{lem}

In \cite{Uglov}, Uglov defines a map $\UU\ (= \UU_{e,\ell}) : \BB^{\ell} \to \BB$ as follows:
$$
\UU(\B^{(1)},\dotsc,\B^{(\ell)}) := \bigcup_{j=1}^{\ell}
\uu_j(\B^{(j)}) \qquad (\B^{(1)},\dotsc, \B^{(\ell)} \in \BB).
$$
The best way to understand $\UU$ is via the abaci.
First we stack $\infty$-abacus displays of $\B^{(j)}$'s on top of each other, with the display of $\B^{(1)}$ at the bottom and that of $\B^{(\ell)}$ at the top.
Next, we cut up this stacked $\infty$-abaci into sections with positions $\{ ae, ae+1, \dotsc, ae + e-1 \}$ ($a \in \Z$), and putting the section with positions $\{ ae, ae+1, \dotsc, ae + e-1 \}$ on top of that with positions $\{ (a+1)e, (a+1)e+1, \dotsc, (a+1)e + e-1 \}$ (thus the section of the $\infty$-abacus of $\B^{(1)}$ with positions $\{ ae, ae+1, \dotsc, ae + e-1 \}$ is placed directly on top of the section of the $\infty$-abacus of $\B^{(\ell)}$ with positions $\{ (a+1)e, (a+1)e+1, \dotsc, (a+1)e + e-1 \}$), and relabel the rows of this $e$-abacus by $\Z$ in ascending order, with $0$ labelling the section of the $\infty$-abacus of $\B^{(\ell)}$ with positions $\{ 0, 1, \dotsc, e-1 \}$.
This yields the $e$-abacus display of $\UU(\B^{(1)},\dotsc,\B^{(\ell)})$.

Let $\bl = (\lambda^{(1)},\dotsc, \lambda^{(\ell)}) \in \PP^{\ell}$ and $\bt = (t_1,\dotsc, t_{\ell}) \in \ZZ^{\ell}$.
Recall that $|\bt| = \sum_{i=1}^{\ell} t_i$ and
$\beta_{\bt}(\bl) = (\beta_{t_1}(\lambda^{(1)}),\dotsc, \beta_{t_{\ell}}(\lambda^{(\ell)}))$.
It is not difficult to see from the description of $\UU$ that the addable nodes and removable nodes of $\bl \in \PP^{\ell}$ with nonzero $(e,\bt)$-residue correspond bijectively to the addable nodes and removable nodes of $\beta^{-1}(\UU(\beta_{\bt}(\bl)))$ with nonzero $(e,|\bt|)$-residue.
We record this formally below.

\begin{lem} \label{L:ARl1}
Let $\bt \in \ZZ^{\ell}$, and for all $\bnu \in \PP^{\ell}$, let $\Phi_{\bt}(\bnu) := \beta^{-1}(\UU(\beta_{\bt}(\bnu)))$.
For $\bl \in \PP^{\ell}$
%Suppose that there exists $m \in \ZZ$ such that for each $i \in [1,\,\ell]$, $\beta_{t_i}(\lambda^{(i)}) = \ZZ_{< m} \cup L_i$ for some subset $L_i \subseteq [m,m+e-1]$.
%Let $\mu = \beta^{-1}(\UU(\beta_{\bt}(\bl)))$.
and $j \in [1,\,e-1]$,
%and let
%\begin{align*}
%\AR_{\ne 0}(\bl) &= \{ \mathfrak{n} \in \AR(\bl) \mid \res^{\bt}_e(\mathfrak{n}) \not\equiv_e 0 \},  \\
%\AR_{\ne 0}(\mu) &= \{ \mathfrak{m} \in \AR(\mu) \mid \res^{|\bt|}_e(\mathfrak{m}) \not\equiv_e 0 \}.
%\end{align*}
there is a unique function $\phi: \AR^{e,\bt}_j(\bl) \to \AR^{e,|\bt|}_j(\Phi_{\bt}(\bl))$ satisfying
$$\uu_i(\cont_{\bt}(\mathfrak{n})) = \cont_{|\bt|}(\phi(\mathfrak{n}))$$
for all $\mathfrak{n} = (a,b,i) \in \AR^{e,\bt}_j(\bl)$.
Furthermore:
\begin{enumerate}
\item $\phi$ sends removable (resp.\ addable) nodes of $\bl$ to removable (resp.\ addable) nodes of $\Phi_{\bt}(\bl)$;
\item $\phi$ is bijective;
\item $\Phi_{\bt}(\bl^{+\mathfrak{n}}) = (\Phi_{\bt}(\bl))^{+\phi(\mathfrak{n})}$ if $\mathfrak{n}$ is addable and
$\Phi_{\bt}(\bl^{-\mathfrak{n}}) = (\Phi_{\bt}(\bl))^{-\phi(\mathfrak{n})}$ if $\mathfrak{n}$ is removable.
\end{enumerate}
\end{lem}

\begin{proof}
Let $\bt = (t_1,\dotsc, t_{\ell})$ and $\bl = (\lambda^{(1)},\dotsc, \lambda^{(\ell)}) \in \PP^{\ell}$.
For each $\mathfrak{n} = (a,b,i) \in \AR^{e,\bt}_j(\bl)$,
exactly one of $\cont_{\bt}(\mathfrak{n})$ and $\cont_{\bt}(\mathfrak{n})-1$ lies in $\beta_{t_{i}}(\lambda^{(i)})$,
and hence
exactly one of $\uu_{i}(\cont_{\bt}(\mathfrak{n}))$ and $\uu_{i}(\cont_{\bt}(\mathfrak{n})-1)$ lies in
$\UU(\beta_{\bt}(\bl)) = \beta_{|\bt|}(\Phi_{\bt}(\bl))$.
Now,
$$\uu_{i}(\cont_{\bt}(\mathfrak{n})) \equiv_e \cont_{\bt}(\mathfrak{n})
\equiv_e \res^{\bt}_e(\mathfrak{n}) \equiv_e j \ne 0
$$
by Lemma \ref{L:Uglov map}(1), so that
$\uu_{i}(\cont_{\bt}(\mathfrak{n})-1) = \uu_{i}(\cont_{\bt}(\mathfrak{n})) -1$.
Thus $\uu_i(\cont_{\bt}(\mathfrak{n})) = \cont_{|\bt|}(\mathfrak{m}_{\mathfrak{n}})$ for a unique $\mathfrak{m}_{\mathfrak{n}} \in \AR(\Phi_{\bt}(\bl))$, with
$$
\res^{|\bt|}_e(\mathfrak{m}_{\mathfrak{n}}) \equiv_e \cont_{|\bt|}(\mathfrak{m}_{\mathfrak{n}})
= \uu_i(\cont_{\bt}(\mathfrak{n})) %\equiv_e \res^{\bt}_e(\mathfrak{n})
\equiv_e j.
$$
Thus, we have a function $\phi: \AR^{e,\bt}_j(\bl) \to \AR^{e,|\bt|}_j(\Phi_{\bt}(\bl))$, defined by $\phi(\mathfrak{n}) = \mathfrak{m}_{\mathfrak{n}}$, satisfying
$$
\uu_i(\cont_{\bt}(\mathfrak{n})) = \cont_{|\bt|}(\mathfrak{m}_{\mathfrak{n}}) = \cont_{|\bt|}(\phi(\mathfrak{n})).
$$
Note that the uniqueness of $\mathfrak{m}_{\mathfrak{n}}$ ensures that $f$ is both well-defined and unique satisfying $\uu_i(\cont_{\bt}(\mathfrak{n})) = \cont_{|\bt|}(\phi(\mathfrak{n}))$.
Furthermore,
\begin{align*}
\cont_{\bt}(\mathfrak{n}) \in \beta_{t_i}(\lambda^{(i)})
\ \Leftrightarrow\ \cont_{|\bt|}(\phi(\mathfrak{n})) = \uu_i(\cont_{\bt}(\mathfrak{n})) \in \UU(\beta_{\bt}(\bl))
= \beta_{|\bt|}(\Phi_{\bt}(\bl))
\end{align*}
so that $\mathfrak{n}$ is removable if and only if $\phi(\mathfrak{n})$ is removable.

If $\mathfrak{n} = (a,b,i), \mathfrak{n'} = (a',b',i') \in \AR^{e,\bt}_{j}(\bl)$ are such that $\phi(\mathfrak{n}') = \phi(\mathfrak{n})$, then
$$
\uu_{i'}(\cont_{\bt}(\mathfrak{n}') ) = \cont_{|\bt|}(\phi(\mathfrak{n}'))
= \cont_{|\bt|}(\phi(\mathfrak{n}))
= \uu_{i}(\cont_{\bt}(\mathfrak{n}))
$$
so that $i' = i$ since the $\uu_a(\ZZ)$'s are pairwise disjoint,
and hence $\cont_{\bt}(\mathfrak{n}') = \cont_{\bt}(\mathfrak{n})$ since $\uu_{i}$ is injective (see Lemma \ref{L:Uglov map}(2,3)).
This further implies that $\mathfrak{n}' = \mathfrak{n}$ as
$\lambda^{(i)}$ has at most one removable node and at most one addable node, but not both, with a given content.
Thus $f$ is injective.

If $\mathfrak{m} \in \AR^{e,|\bt|}_j(\Phi_{\bt}(\bl))$, then exactly one of $\cont_{|\bt|}(\mathfrak{m})$ or $\cont_{|\bt|}(\mathfrak{m}) -1$ lies in
$$\beta_{|\bt|}(\Phi_{\bt}(\bl)) = \UU(\beta_{\bt}(\bl)) = \bigcup_{i=1}^{\ell} \uu_i(\beta_{t_i}(\lambda^{(i)})),$$
and hence in
$\uu_i(\beta_{t_i}(\lambda^{(i)}))$ for a unique $i$ (since the $\uu_a(\ZZ)$'s are pairwise disjoint).
Thus exactly one of $\uu_i^{-1}(\cont_{|\bt|}(\mathfrak{m}))$ or $\uu_i^{-1}(\cont_{|\bt|}(\mathfrak{m})-1)$ lies in $\beta_{t_i}(\lambda^{(i)})$.
Since
\begin{align*}
\uu_i^{-1}(\cont_{|\bt|}(\mathfrak{m})) &\equiv_e \cont_{|\bt|}(\mathfrak{m})
\equiv_e \res^{|\bt|}_e(\mathfrak{m}) = j \ne 0,
\end{align*}
we have $\uu_i^{-1}(\cont_{|\bt|}(\mathfrak{m})-1) = \uu_i^{-1}(\cont_{|\bt|}(\mathfrak{m})) -1$.
Thus $\uu_i^{-1}(\cont_{|\bt|}(\mathfrak{m})) = \cont_{\bt}(\mathfrak{n})$ for some $\mathfrak{n} = (a,b,i) \in \AR(\bl)$.
Now
\begin{align*}
\res^{\bt}_e(\mathfrak{n}) \equiv_e \cont_{\bt}(\mathfrak{n})
&= \uu_i^{-1}(\cont_{|\bt|}(\mathfrak{m})) %\\
%&\equiv_e \cont_{|\bt|}(\mathfrak{m}) + m
\equiv_e j.
\end{align*}
Thus $\mathfrak{n} \in \AR^{e,\bt}_j(\bl)$, and
$\cont_{|\bt|}(\mathfrak{m}) = \uu_i(\cont_{\bt}(\mathfrak{n}))$,
so that $\mathfrak{m} = \mathfrak{m}_{\mathfrak{n}} = \phi(\mathfrak{n})$.
Thus $f$ is surjective.

Let $\mathfrak{n} = (a,b,i) \in \AR^{e,\bt}_j(\bl)$.  If $\mathfrak{n}$ is addable, then
$$
\beta_{\bt}(\bl^{+\mathfrak{n}}) =
(\beta_{t_1}(\lambda^{(1)}), \dotsc, \beta_{t_{i-1}}(\lambda^{(i-1)}),
\beta_{t_i}(\lambda^{(i)}) \cup \{ \cont_{\bt}(\mathfrak{n}) \} \setminus \{ \cont_{\bt}(\mathfrak{n}) -1 \},
\beta_{t_{i+1}}(\lambda^{(i+1)}), \dotsc, \beta_{t_{\ell}}(\lambda^{(\ell)})),
$$
so that
\begin{align*}
\UU(\beta_{\bt}(\bl^{+\mathfrak{n}})) &=
\UU(\beta_{\bt}(\bl)) \cup \{ \uu_i(\cont_{\bt}(\mathfrak{n})) \} \setminus \{ \uu_i(\cont_{\bt}(\mathfrak{n})) -1 \} \\
&= \beta_{|\bt|}(\Phi_{\bt}(\bl)) \cup \{ \cont_{|\bt|}(\phi(\mathfrak{n})) \} \setminus \{ \cont_{|\bt|}(\phi(\mathfrak{n})) -1 \}
= \beta_{|\bt|}((\Phi_{\bt}(\bl))^{+\phi(\mathfrak{n})}),
\end{align*}
so that $\Phi_{\bt}(\bl^{+\mathfrak{n}}) = (\Phi_{\bt}(\bl))^{+\phi(\mathfrak{n})}$.
Similarly, one also gets $\Phi_{\bt}(\bl^{-\mathfrak{n}}) = (\Phi_{\bt}(\bl))^{-\phi(\mathfrak{n})}$ if $\mathfrak{n}$ is removable.
\end{proof}

For each $(\bl;\bt) \in \PP^{\ell} \times \ZZ^{\ell}$, define
$$
\core_e(\bl;\bt) := \core_e(\UU(\beta_{\bt}(\bl))) \qquad \text{and} \qquad
\wt_e(\bl;\bt) = \wt_e(\UU(\beta_{\bt}(\bl))).
$$
Note that while our $\wt_e(\bl;\bt)$ is the same as that in \cite{LT}, our $\core_e(\bl;\bt)$ is defined slightly differently---here $\core_e(\bl;\bt)$ is a $\beta$-set, while $\core_e(\bl;\bt)$ in \cite{LT} is defined to be the associated partition.

\subsection{The extended affine Weyl group $\EW_{m}$} \label{SS:Weyl}

Let $m \in \Z^+$.  The symmetric group on $m$ letters is denoted by $\sym{m}$.  We view $\sym{m}$ as a group of functions acting on $\{1,2,\dotsc,m\}$ so that we compose from right to left; for example, $(1,2)(2,3) = (1,2,3)$.  This is a Coxeter group of type $A_{m-1}$, with Coxeter generators $\cs_i = (i,i+1)$ for $1 \leq i \leq m-1$.

Given any nonempty set $X$, $\sym{m}$ has a natural right place permutation action on $X^{m}$ via
$$
(x_1,\dotsc, x_{m})^{\sigma} = (x_{\sigma(1)},\dotsc, x_{\sigma(m)}).
$$

Let $\{ \Be_1, \dotsc, \Be_m\}$ denote the standard $\Z$-basis for the free (left) $\Z$-module $\Z^m$.
The right place permutation action of $\sym{m}$ on $\Z^m$ induces the semidirect product $\EW_m = \sym{m} \ltimes \Z^m$,
where
\begin{equation}
(\sigma \bt)(\tau \bu) = (\sigma\tau) (\bt^{\tau} + \bu) = (\sigma\tau) (t_{\tau(1)} + u_1, \dotsc, t_{\tau(m)}+ u_m) \label{E:extendedaffineWeylgroup}
\end{equation}
for all $\sigma,\tau \in \sym{m}$ and $\bt,\bu \in \Z^m$ with $\bt = (t_1,\dotsc, t_m)$ and $\bu = (u_1,\dotsc, u_m)$.
This is an extended affine Weyl group of type $A^{(1)}_{m-1}$, and it contains the affine Weyl group $\AW_m$ of type $\widetilde{A}_{m-1}$ generated by $\{ \cs_0,\cs_1, \dotsc, \cs_{m-1}\}$, where $\cs_a = (a,a+1) \in \sym{m}$ for $1 \leq a \leq m-1$ as before, and $\cs_0 = (\Be_1 - \Be_m)(1,m)$.
Note that under this realisation,
\begin{equation}
\AW_m = \{ \sigma \bt \in \EW_m \mid \sigma \in \sym{m},\ \bt \in \ZZ^m,\ |\bt| = 0 \}. \label{E:affineWeylgroup}
\end{equation}

It is sometimes convenient to write an element of $\EW_m$ in the form $\bt \sigma$ where $\bt \in \ZZ^m$ and $\sigma \in \sym{m}$, which is equal to $\sigma \bt^{\sigma}$.

Given $w = \sigma \bt \in \EW_m$ where $\sigma \in \sym{m}$ and $\bt \in \ZZ^{m}$, we write $\overline {w}$ for $\sigma$ (thus $\overline{w}$ denotes the projection of $w$ onto $\sym{m}$).

\begin{comment}
For each integer $k$, we have a left action $\DDot{k}$ of $\EW_m$ on $\ZZ^m$ defined by
$$
(t_1,\dotsc, t_m) \sigma \DDot{k} (a_1,\dotsc, a_m) := (kt_1 + a_{\sigma^{-1}(1)}, \dotsc, kt_m + a_{\sigma^{-1}(m)}).
$$
Note that for $k=0$, we have
$$
(t_1,\dotsc, t_m) \sigma \DDot{0} (a_1,\dotsc, a_m) = (a_{\sigma^{-1}(1)},\dotsc, a_{\sigma^{-1}(m)}) = a^{((t_1,\dotsc, t_m)\sigma)^{-1}}
$$
so that $\DDot{0}$ is induced by the right conjugation action of $\EW_e$ on $\ZZ^m$, and hence commutes with the latter.

For each $w \in \EW_m$, we write $\overline{w} \in \sym{e}$ for its projection onto the quotient $\sym{e}$ of $\EW_m$.
The left conjugation of $\EW_m$ on its normal subgroup $\ZZ^m$ will be denoted by $\cdot$.
Since $\ZZ^{\ell}$ is Abelian, it acts trivially on itself by conjugation, and so we have
$$ w \cdot \mathbf{a} = \overline{w} \cdot \mathbf{a} = \mathbf{a}^{\overline{w}^{-1}}$$
for all $w \in \EW_m$ and $\mathbf{a} \in \ZZ^m$.
\end{comment}

We have a right action of $\EW_{\ell}$ on $\Z^{\ell}$ and $\PP^{\ell}$  via
\begin{align*}
\bt^{\sigma \bu} = \bt^{\sigma} + e\bu \quad \text {and} \quad
\bl^{\sigma \bu} = \bl^{\sigma}
\end{align*}
for $\bl \in \PP^{\ell}$, $\bt, \bu \in \Z^{\ell}$ and $\sigma \in \sym{\ell}$, which further induces a right action on $\PP^{\ell}\times \Z^{\ell}$.
We write $x^{\EW_{\ell}}$ for the orbit of $x$ (for $x$ in $\Z^{\ell}$ or $\PP^{\ell}$ or $\PP^{\ell}\times \Z^{\ell}$), and note that $\res_e(\bl;\bt)$ and $\Hub_e(\bl;\bt)$ are invariant under this action; i.e.\
\begin{equation*}
\res^{\bt^w}_e(\bl^w) = \res^{\bt}_e(\bl) \quad \text{and} \quad \Hub_e((\bl;\bt)^w) = \Hub_e(\bl;\bt) \label{E:res-hub}
\end{equation*}
for all $w \in \EW_{\ell}$.

Let
\begin{align*}
\AAF &:= \{ (a_1,\dotsc,  a_{\ell}) \in \ZZ^{\ell} \mid 0 \leq a_1 \leq a_2 \leq \dotsb \leq a_{\ell} < e \}; \\
\AAnobar &:= \{ (a_1,\dotsc, a_{\ell}) \in \ZZ^{\ell} \mid a_1 \leq a_2 \leq \dotsb \leq a_{\ell} < a_1 + e \}; \\
\AAbar &:= \{ (a_1,\dotsc, a_{\ell}) \in \ZZ^{\ell} \mid a_1 \leq a_2 \leq \dotsb \leq a_{\ell} \leq a_1 + e \}.
\end{align*}
Clearly, $\AAF \subseteq \AAnobar \subseteq \AAbar$, and $\AAF$ is a fundamental domain of the action of $\EW_{\ell}$ on $\ZZ^{\ell}$.

\begin{lem} \label{L:smallest}
Let $(\bl;\bt) \in \PP^{\ell} \times \ZZ^{\ell}$.  If $\hub_i(\bl;\bt) \leq 0$ for all $i \in \ZZ/e\ZZ$, then
$\core_e(\bl;\bt) = \UU(\beta_{\bt^*}(\EP))$ for a unique $\bt^* \in \AAbar$.
\end{lem}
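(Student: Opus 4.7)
The plan is to construct $\bt^*$ by inverting the Uglov map on $\B^* := \core_e(\bl; \bt) = \core_e(\UU(\beta_{\bt}(\bl)))$, an $e$-core $\beta$-set. Since the sets $\uu_i(\ZZ)$ for $i \in [1,\,\ell]$ partition $\ZZ$ by Lemma \ref{L:Uglov map}(3), there is a canonical right inverse of $\UU$: for any $\beta$-set $\B$, setting $\B^{(i)} := \uu_i^{-1}(\B \cap \uu_i(\ZZ))$ gives $\UU(\B^{(1)}, \dotsc, \B^{(\ell)}) = \B$. Applied to $\B^*$, this yields an $\ell$-tuple $(\B^{*,(1)},\dotsc,\B^{*,(\ell)})$ of $\beta$-sets with $\UU$-image equal to $\B^*$. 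The task reduces to proving that each $\B^{*,(i)}$ is an initial segment $\ZZ_{<t_i^*}$, so that $(\B^{*,(1)},\dotsc,\B^{*,(\ell)}) = \beta_{\bt^*}(\EP)$ for $\bt^* := (t_1^*,\dotsc,t_\ell^*)$ lying in $\AAbar$.

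To achieve this, I first translate the hub hypothesis into structural constraints on $\B^*$. Combining Lemma \ref{L:ARl1} (the residue-preserving bijection between addable/removable nodes for $j \ne 0$) with \eqref{E:hub-core}, we get $\hub_j(\B^*) = \hub_j(\bl; \bt) \leq 0$ for every $j \in [1,\,e-1]$. A direct telescoping of \eqref{E:hub} shows that the $e$ hubs of any $\beta$-set sum to $-1$, which combined with $\hub_0(\bl;\bt) \leq 0$ forces $\hub_0(\B^*) \in [-1,\,\ell-1]$. Feeding these bounds back into \eqref{E:hub}, the $e$-quotient charges $r_b := \fs(\B_b^*)$ satisfy
\[
r_0 \geq r_1 \geq \dotsb \geq r_{e-1} \quad \text{and} \quad 0 \leq r_0 - r_{e-1} \leq \ell.
\]

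Next I would use the abacus description of $\UU$ to determine the $e$-quotient of each $\B^{*,(i)}$. A position $ae + b$ on runner $b$ of $\B^*$'s $e$-abacus lies in $\uu_i(\ZZ)$ precisely when $a \equiv_\ell \ell - i$, in which case it equals $\uu_i(ce + b)$ where $a = c\ell + \ell - i$; so on runner $b$ of $\B^{*,(i)}$ the beads sit at positions $ce + b$ with $c < (r_b + i - \ell)/\ell$, giving $\fs((\B^{*,(i)})_b) = \lceil(r_b + i - \ell)/\ell\rceil =: s_{b,i}$. Monotonicity of $(r_b)_b$ yields $s_{0,i} \geq \dotsb \geq s_{e-1,i}$, while $r_0 - r_{e-1} \leq \ell$ forces $s_{0,i} - s_{e-1,i} \in \{0,1\}$. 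A weakly decreasing sequence with range at most $1$ is exactly the $e$-quotient charge profile of an initial segment $\ZZ_{<t_i^*}$, where $t_i^* := \sum_{b=0}^{e-1} s_{b,i}$. Hence $\B^{*,(i)} = \ZZ_{<t_i^*} = \beta_{t_i^*}(\varnothing)$, and so $\B^* = \UU(\beta_{\bt^*}(\EP))$.

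Finally, $\bt^* \in \AAbar$ because $\UU(\beta_{\bt^*}(\EP)) = \B^*$ is by construction an $e$-core, and uniqueness is immediate since $t_i^* = \fs\big(\uu_i^{-1}(\B^* \cap \uu_i(\ZZ))\big)$ is recovered directly from $\B^*$. The main obstacle lies in the third paragraph: the careful abacus combinatorics computing $s_{b,i}$ in terms of $r_b$ and the verification that the resulting profile is exactly that of an initial segment of $\ZZ$; the remaining steps are routine reductions.
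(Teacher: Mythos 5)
Your proof is correct and shares the paper's key intermediate step --- that the $e$-quotient charges of $\core_e(\bl;\bt)$ are weakly decreasing, with the difference between the first and last bounded by $\ell$ --- but you obtain it by a different internal route. Where the paper cites \cite[Lemma 2.6(3)]{LT} for the identity $\hub_j(\UU(\beta_{\bt}(\bl))) = \hub_j(\bl;\bt) + (\ell-1)\delta_{j0}$, you recover the needed inequalities from Lemma~\ref{L:ARl1} (for $j\neq 0$) together with the telescoping of \eqref{E:hub}, which makes the argument self-contained within the paper. One step needs to be spelled out: for the upper bound $\hub_0(\core_e(\bl;\bt)) \leq \ell-1$, you must also apply the telescoping to each $\beta_{t_i}(\lambda^{(i)})$ to get $\sum_j \hub_j(\bl;\bt) = -\ell$; combining this with $\sum_j \hub_j(\core_e(\bl;\bt)) = -1$ and $\hub_j(\core_e(\bl;\bt)) = \hub_j(\bl;\bt)$ for $j\neq 0$ yields $\hub_0(\core_e(\bl;\bt)) = \hub_0(\bl;\bt) + \ell - 1 \leq \ell-1$, which is precisely the $j=0$ case of the cited LT lemma. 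For the conclusion you compute the $e$-quotient of each $\uu_i^{-1}(\core_e(\bl;\bt)\cap \uu_i(\ZZ))$ explicitly, showing the charge on runner $b$ is $\lceil(r_b + i - \ell)/\ell\rceil$, weakly decreasing in $b$ with range at most $1$, hence the profile of an initial segment of $\ZZ$; this is more explicit than the paper's brief observation about bead positions, and the computation is correct. The only further omission is that the step ``$\UU(\beta_{\bt^*}(\EP))$ is an $e$-core, hence $\bt^*\in\AAbar$'' is not a tautology and should cite \cite[Proposition 2.14]{JL-cores}, as the paper does.
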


\begin{proof}
Let $\quot_e(\UU(\beta_{\bt}(\bl))) = (\C_1,\dotsc, \C_{e})$.
Then
$$
\fs(\C_{j+1}) - \fs(\C_{j}) - \delta_{j0} = \hub_j(\UU(\beta_{\bt}(\bl))) = \hub_j(\bl;\bt) + (\ell-1)\delta_{j0}$$
for all $j \in \ZZ/e\ZZ$
by \eqref{E:hub} and \cite[Lemma 2.6(3)]{LT}, where $\C_0$ is to be read as $\C_e$.
Thus
$$
\fs(\C_1) \geq \fs(\C_2) \geq \dotsb \geq \fs(\C_{e}) \geq \fs(\C_1) - \ell.
$$
Consequently, since $\quot_e(\core_e(\bl;\bt)) = \beta_{\bs}(\EP)$ where $\bs = (\fs(\C_1), \dotsc, \fs(\C_{e}))$, this shows that for each $x \in \core_e(\bl;\bt)$ with $x\equiv_e j \in \ZZ/e\ZZ$, we have $x-1-\delta_{j0} \ell \in \core_e(\bl;\bt)$.  Thus $\core_e(\bl;\bt) = \UU(\beta_{\bt^*}(\EP))$ for some unique $\bt^* \in \ZZ^{\ell}$.
That $\bt^* \in \AAbar$ follows from \cite[Proposition 2.14]{JL-cores}.
\end{proof}

\begin{lem} \label{L:AA}
Let $\bt \in \ZZ^{\ell}$, and let $\rr_{\ell} = (1,2,\dotsc, \ell) \in \sym{\ell}$.
\begin{enumerate}
\item For $a \in \ZZ$ and $b \in \ZZ/\ell\ZZ$, $(\rr_{\ell}\Be_{\ell})^{a\ell + b} = \rr_{\ell}^b \bv_{\ell}(a\ell + b)$ (recall from Subsection \ref{SS:notations} that
$$
\bv_{\ell}(a\ell+b) = (a,\dotsc, a, \overbrace{a+1,\dotsc, a+1}^{b \text{ times}}) \in \ZZ^{\ell}).
$$
In particular, $\rr_{\ell}\Be_{\ell}$ has infinite order as an element of $\EW_{\ell}$.

\item We have $\bt \in \AAbar$ if and only if $\bt^{\rr_{\ell}\Be_{\ell}} \in \AAbar$.

\item If $\bt \in \AAnobar$ and $\bt^{\sigma \bv} \in \AAbar$ for some $\sigma \in \sym{\ell}$ and $\bv \in \ZZ^{\ell}$, then $\bv = \bv_{\ell}(a\ell+b)$ and $\sigma\rr_{\ell}^{-b} \in \left< \cs_i \mid t_i = t_{i+1} \right>$ for some $a \in \ZZ$ and $b \in \ZZ/\ell\ZZ$.

\item For $\bl \in \PP^{\ell}$,
$$\UU(\beta_{\bt^{\rr_{\ell}\Be_{\ell}}}(\bl^{\rr_{\ell}\Be_{\ell}})) =
(\UU(\beta_{\bt}(\bl)))^{+e}.
$$

\item
Restricting the right action of $\EW_{\ell}$ on $\ZZ^{\ell}$ to $\left< \rr_{\ell}\Be_{\ell} \right>$, we get a transitive and faithful action of $\left< \rr_{\ell}\Be_{\ell} \right>$ on $\bt^{\EW_{\ell}} \cap \AAbar$.
%$\bt_{\mathsf{F}} \in \bt^{\EW_{\ell}} \cap \AAF$.
%The map $\phi_{\bt} := \left< \rr_{\ell}\Be_{\ell} \right> \to \bt^{\EW_{\ell}} \cap \AAbar$ defined by %$\phi_{\bt}((\rr_{\ell}\Be_{\ell})^k) = \bt_{\mathsf{F}}^{(\rr_{\ell}\Be_{\ell})^k}$ is bijective.

In particular, if $w,w' \in \EW_{\ell}$ such that $\bt^w,\bt^{w'} \in \AAbar$,
then $\bt^{w'} = (\bt^{w})^{(\rr_{\ell} \Be_{\ell})^k}$ for a unique $k \in \ZZ$.
\end{enumerate}
\end{lem}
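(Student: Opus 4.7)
My plan is to prove parts (1)--(5) in order, since each relies on the previous.

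\textbf{Parts (1) and (2).} For (1), I would induct on $n \geq 0$, using the semidirect-product rule $(\sigma_1\bu_1)(\sigma_2\bu_2) = \sigma_1\sigma_2(\bu_1^{\sigma_2}+\bu_2)$ in $\EW_\ell$. Given $(\rr_\ell\Be_\ell)^n = \rr_\ell^b\bv_\ell(n)$ for $n = a\ell+b$ with $0 \leq b < \ell$, right-multiplying by $\rr_\ell\Be_\ell$ yields $\rr_\ell^{b+1}(\bv_\ell(n)^{\rr_\ell}+\Be_\ell)$; a direct entry-wise check shows $\bv_\ell(n)^{\rr_\ell}+\Be_\ell = \bv_\ell(n+1)$, treating $b+1<\ell$ and $b+1=\ell$ separately (the latter uses $\rr_\ell^\ell=\mathrm{id}$ to absorb the carry). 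Negative exponents follow by inversion, and infinite order is immediate: $\rr_\ell^b\bv_\ell(n) = \mathrm{id}$ forces $b=0$, whereupon $\bv_\ell(a\ell)=(a,\dotsc,a)=(0,\dotsc,0)$ forces $n=0$. Part (2) is a one-line computation: $\bt^{\rr_\ell\Be_\ell} = (t_2,\dotsc,t_\ell,t_1+e)$, and the chain $t_2 \leq \dotsb \leq t_\ell \leq t_1+e \leq t_2+e$ is literally equivalent to $t_1 \leq \dotsb \leq t_\ell \leq t_1+e$.

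\textbf{Part (3), the main obstacle.} Set $u_i := t_{\sigma(i)}+ev_i$. Since $\bt \in \AAnobar$ forces all pairwise differences $t_j-t_{j'}$ into $[-(e-1),e-1]$, the weak-increase conditions $u_i \leq u_{i+1}$ give $v_{i+1}-v_i \geq \bbone_{t_{\sigma(i)}>t_{\sigma(i+1)}}$, while the spread $u_\ell-u_1 \leq e$ forces $v_\ell-v_1 \leq 1$; summing these forces $\bt^\sigma$ to have at most one descent. If there is none, then $\bt^\sigma$ is weakly increasing with the same multiset as $\bt$, so $\bt^\sigma = \bt$ and $\sigma \in \langle \cs_i : t_i=t_{i+1}\rangle$; the sharpened spread inequality $v_\ell-v_1 \leq 1-(t_\ell-t_1)/e$ then forces $\bv = \bv_\ell(a\ell+0)$ when $\bt$ is non-constant, while constant $\bt$ has stabiliser $\sym{\ell}$ and so any $b$ works. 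If there is a single descent at position $k$, the spread bound additionally forces $t_{\sigma(1)} \geq t_{\sigma(\ell)}$, and a counting argument (unambiguous even when boundary values coincide) shows that the multiset of values at positions $1,\dotsc,k$ of $\bt^\sigma$ is precisely $\{t_{\ell-k+1},\dotsc,t_\ell\}$; since both blocks are weakly increasing this yields $\bt^\sigma = \bt^{\rr_\ell^b}$ with $b := \ell-k$, equivalently $\sigma\rr_\ell^{-b} \in \langle \cs_i : t_i=t_{i+1}\rangle$, and the forced jump of $\bv$ at position $k$ gives $\bv = \bv_\ell(a\ell+b)$.

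\textbf{Parts (4) and (5).} For (4), I would unwind definitions to get $\UU(\beta_{\bt^{\rr_\ell\Be_\ell}}(\bl^{\rr_\ell\Be_\ell})) = \bigcup_{j=1}^{\ell-1}\uu_j(\beta_{t_{j+1}}(\lambda^{(j+1)})) \cup \uu_\ell((\beta_{t_1}(\lambda^{(1)}))^{+e})$, and apply Lemma~\ref{L:Uglov map}(4), which gives $\uu_j(S) = (\uu_{j+1}(S))^{+e}$ for $j<\ell$ and $\uu_\ell(S^{+e}) = (\uu_1(S))^{+e}$; reindexing then collects everything as $(\UU(\beta_\bt(\bl)))^{+e}$. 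For (5), transitivity follows from (3) combined with $\AAF$ being a fundamental domain: picking $\bt^* \in \bt^{\EW_\ell}\cap\AAF \subseteq \AAnobar$, part (3) shows every $\bu \in \bt^{\EW_\ell}\cap\AAbar$ has the form $(\bt^*)^{(\rr_\ell\Be_\ell)^{n_\bu}}$ for some $n_\bu \in \ZZ$, so any two such $\bu,\bu'$ differ by $(\rr_\ell\Be_\ell)^{n_{\bu'}-n_\bu}$. Faithfulness is a sum-of-entries check: if $\bt^{(\rr_\ell\Be_\ell)^n} = \bt$, then summing $\bt = \bt^{\rr_\ell^b}+e\bv_\ell(n)$ gives $|\bt| = |\bt|+en$, so $n=0$. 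The uniqueness of $k$ in the final assertion is then an immediate consequence of transitivity and faithfulness.
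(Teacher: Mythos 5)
Your proposal is correct.  For parts (1)--(4) the paper simply cites \cite[Proposition 3.5]{LT} and \cite[Proposition 3.3(1)]{LT}, so your direct arguments are genuinely new and self-contained relative to the paper; the computations check out.  In particular, the key point of your part (3) is verified: from $\bt\in\AAnobar$ the weak-increase constraints give $v_{i+1}-v_i\geq\bbone_{t_{\sigma(i)}>t_{\sigma(i+1)}}$ and the spread constraint gives $v_\ell-v_1\leq 1$, so there is at most one descent; in the single-descent case one gets $v_\ell-v_1=1$ forcing $t_{\sigma(\ell)}\leq t_{\sigma(1)}$, whence the two weakly-increasing blocks concatenated in the order $(t_{\sigma(k+1)},\dotsc,t_{\sigma(\ell)},t_{\sigma(1)},\dotsc,t_{\sigma(k)})$ form a weakly-increasing rearrangement of $\bt$ and must therefore equal $\bt$ termwise, which is exactly your ``counting argument'' and pins down $\sigma\rr_\ell^{-b}$ in the stabiliser and $\bv=\bv_\ell(a\ell+b)$.

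For part (5) your route is the same as the paper's (pick $\bt^*\in\bt^{\EW_\ell}\cap\AAF$, apply part (3) to $w_0^{-1}w$, reassemble as $(\rr_\ell\Be_\ell)^{a\ell+b}$ via part (1)), and there is one small improvement: you establish faithfulness by summing entries, $|\bt^{(\rr_\ell\Be_\ell)^n}|=|\bt|+en$, which in fact shows the action is \emph{free}, whereas the paper argues more indirectly that the orbit $\bt^{\EW_\ell}\cap\AAbar$ is infinite and therefore the transitive $\left<\rr_\ell\Be_\ell\right>$-action cannot have nontrivial kernel.  Your computation is cleaner and also transparently supplies the uniqueness of $k$ claimed in the final assertion.  (As an aside, the paper's line ``By part (4), since $\bt_{\mathsf F}\in\AAnobar\dotsc$'' is a typo for ``By part (3)'' — your write-up invokes the correct part.)
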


\begin{proof}
Parts (1)--(3) is \cite[Proposition 3.5]{LT}, while part (4) follows from \cite[Proposition 3.3(1)]{LT}.

For part (5), that $\left< \rr_{\ell}\Be_{\ell} \right>$ acts on $\bt^{\EW_{\ell}} \cap \AAbar$ follows from part (3).
Let $\bt_{\mathsf{F}} = (u_1,\dotsc, u_{\ell}) \in \bt^{\EW_{\ell}} \cap \AAF$, and let $w_0 \in \EW_{\ell}$ be such that $\bt_{\mathsf{F}} = \bt^{w_0}$.
To show that the action of $\left< \rr_{\ell}\Be_{\ell} \right>$ on $\bt^{\EW_{\ell}} \cap \AAbar$ is transitive, let $w\in \EW_{\ell}$ be such that $\bt^w \in \AAbar$.
By part (4), since $\bt_{\mathsf{F}} \in \AAnobar$ and $\bt^w= (\bt^{w_0})^{w_0^{-1}w} = \bt_{\mathsf{F}}^{w_0^{-1}w}\in \AAbar$, we have that
$w_0^{-1} w = \sigma \bv_{\ell}(a\ell+b)$ for some $\sigma \in \sym{\ell}$, $a\in \ZZ$ and $b \in \ZZ/\ell\ZZ$ such that $\sigma \rr_{\ell}^{-b} \in \left< \cs_i \mid u_i = u_{i+1} \right>$.
Consequently,
$$
\bt^w = \bt_{\mathsf{F}}^{w_0^{-1}w} = (\bt_{\mathsf{F}}^{\sigma\rr_{\ell}^{-b}})^{\rr_{\ell}^b \bv_{\ell}(a\ell+b)} =
    \bt_{\mathsf{F}}^{(\rr_{\ell} \Be_{\ell})^{a\ell + b}} ,
$$
as desired.
That the action is faithful now follows immediately from the fact that $\bt^{\EW_{\ell}} \cap \AAbar$ is an infinite set.
\end{proof}

The $e$-cores and $e$-weights of $\ell$-partitions with their associated charges in the same $\EW_{\ell}$-orbit are related as follows:
%elements in $\PP^{\ell}\times \ZZ^{\ell}$
%For example, this subset appears in the following theorem:

\begin{thm}[{\cite[Theorem 3.6]{LT}}] \label{T:core-n-weight}
Let $(\bl;\bt) \in \PP^{\ell} \times \ZZ^{\ell}$, and let $(\bm;\bu) \in (\bl;\bt)^{\EW_{\ell}}$.  Then
\begin{enumerate}
  \item $\beta^{-1}(\core_e(\bm;\bu)) = \beta^{-1}(\core_e(\bl;\bt))$;
 \item $\wt_e(\bm;\bu) = \min( \wt_e( (\bl;\bt)^{\EW_{\ell}} ) )$ if and only if $\bu \in \AAbar$.
\end{enumerate}
\end{thm}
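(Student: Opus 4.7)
The plan is to reduce to the action of the generators of $\EW_{\ell}$ on $\UU(\beta_{\bt}(\bl))$, exploiting a natural sub-runner refinement of its $e$-abacus. Specifically, each runner $b \in \{0, \dotsc, e-1\}$ of the $e$-abacus of $\UU(\beta_{\bt}(\bl))$ admits a further partition into $\ell$ sub-runners indexed by the residue of the row label modulo $\ell$; unwinding the definition of $\uu_j$ shows that the sub-runner of runner $b$ at sub-index $(\ell - j) \bmod \ell$ carries the bead pattern of the $b$-th component of the $e$-quotient of $\beta_{t_j}(\lambda^{(j)})$. Since a row $a\ell + k$ (with $0 \leq k < \ell$) of the $e$-abacus is non-negative if and only if $a \geq 0$, this yields the key identity
\[
\fs(\B_b) \;=\; \sum_{j=1}^{\ell} \fs\bigl(\quot_e(\beta_{t_j}(\lambda^{(j)}))_b\bigr)
\]
for each $b \in \{0, \dotsc, e-1\}$, where $\B_b$ denotes runner $b$ of $\UU(\beta_{\bt}(\bl))$.

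For part (1), it suffices to verify invariance of $\beta^{-1}(\core_e(\bl;\bt))$ under generators of $\EW_{\ell}$, which (since $\Be_{\ell} = \rr_{\ell}^{-1} \cdot \rr_{\ell}\Be_{\ell}$ and $\sym{\ell}$-conjugates of $\Be_{\ell}$ give all $\Be_j$) may be taken to be $\sym{\ell}$ together with $\rr_{\ell}\Be_{\ell}$. For $\rr_{\ell}\Be_{\ell}$, Lemma \ref{L:AA}(4) gives $\UU \mapsto \UU^{+e}$, and since $\beta^{-1}(\B^{+e}) = \beta^{-1}(\B)$ for any $\beta$-set $\B$, the partition $\beta^{-1}(\core_e(\UU))$ is preserved. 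For $\sigma \in \sym{\ell}$, the effect on the sub-runner decomposition is a permutation (induced by $\sigma$) of the sub-runners within each runner, which by the displayed identity leaves every $\fs(\B_b)$ invariant; since $\core_e(\UU)$ is determined by the tuple $(\fs(\B_0), \dotsc, \fs(\B_{e-1}))$, invariance of $\beta^{-1}(\core_e(\bl;\bt))$ follows.

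For part (2), combining Lemma \ref{L:AA}(4) with (5) immediately yields that $\wt_e$ is constant on $\bt^{\EW_{\ell}} \cap \AAbar$: the generator $\rr_{\ell}\Be_{\ell}$ acts by a $+e$-shift (which preserves $\wt_e$), and $\langle \rr_{\ell}\Be_{\ell}\rangle$ acts transitively on $\bt^{\EW_{\ell}} \cap \AAbar$. It remains to show that this constant is strictly less than $\wt_e(\bm;\bu)$ whenever $\bu \notin \AAbar$. My approach is to prove that for every $\bu \notin \AAbar$ there exists a simple reflection $\cs_i \in \AW_{\ell} \leq \EW_{\ell}$ (for some $i \in \{0, 1, \dotsc, \ell - 1\}$) whose action strictly decreases $\wt_e$ and brings $\bu$ closer to $\AAbar$ in the affine Bruhat sense; iterating eventually lands in $\AAbar$, completing the proof.

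The main obstacle is establishing this strict decrease. Using the decomposition $\wt_e(\bl;\bt) = \sum_b |\beta^{-1}(\B_b)|$ together with the observation that each $\B_b$ is itself the Uglov image (for the level-$\ell$, $e=1$ map) of the $b$-th components of the $e$-quotients of the $\beta_{t_j}(\lambda^{(j)})$'s---on which $\EW_{\ell}$ acts uniformly across all $b$---the strict-decrease claim reduces to a runner-wise comparison. For a transposition $\cs_i$ ($i \in [1,\,\ell-1]$) applied when $t_i > t_{i+1}$, direct comparison of bead positions via Lemma \ref{L:hub} and the formula for $|\beta^{-1}|$ should show that every $|\beta^{-1}(\B_b)|$ weakly decreases and that at least one runner witnesses a strict decrease; the boundary reflection $\cs_0$ (handling the case $u_{\ell} > u_1 + e$) can be treated analogously, or reduced to the transposition case by a conjugation with $\rr_{\ell}\Be_{\ell}$.
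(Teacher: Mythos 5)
The paper does not prove this theorem; it simply cites \cite[Theorem 3.6]{LT}, so there is no internal proof here to compare against.

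Your sub-runner decomposition, and the key identity $\fs(\B_b) = \sum_{j=1}^{\ell}\fs\bigl(\quot_e(\beta_{t_j}(\lambda^{(j)}))_b\bigr)$, are correct (this is \cite[Lemma 2.6(2)]{LT}), and your argument for part (1) is sound.

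For part (2), the global strategy---constancy on $\AAbar$-charges via Lemma \ref{L:AA}(4),(5), strict decrease under a simple reflection when the $\ell$-charge violates $\AAbar$---is reasonable, but your proposed mechanism for the strict drop is flawed. You claim that applying $\cs_i$ when $t_i > t_{i+1}$ makes \emph{every} $|\beta^{-1}(\B_b)|$ weakly decrease, with strict decrease on at least one runner. This is false. Swapping slots $i$ and $i+1$ in the $\ell$-fold interleaving that builds runner $b$ changes $|\beta^{-1}(\B_b)|$ by exactly $\fs\bigl(\quot_e(\beta_{t_{i+1}}(\lambda^{(i+1)}))_b\bigr) - \fs\bigl(\quot_e(\beta_{t_{i}}(\lambda^{(i)}))_b\bigr)$, and these runner-wise differences need not share a sign, since the $e$-quotient charges of $\beta_{t_i}(\lambda^{(i)})$ are constrained only by their total $t_i$: for instance with $e=2$ take $e$-quotient charges $(2,-1)$ for component $i$ and $(0,0)$ for component $i+1$ (so $t_i=1>0=t_{i+1}$), and runner $0$ drops while runner $1$ rises after the swap. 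The correct salvage is that the \emph{total} change telescopes: for any $\beta$-set with $\ell$-quotient $(\mathsf{D}_0,\dotsc,\mathsf{D}_{\ell-1})$, interchanging runners $k$ and $k+1$ changes $|\beta^{-1}(\cdot)|$ by precisely $\fs(\mathsf{D}_k) - \fs(\mathsf{D}_{k+1})$, since the only affected cross-runner contribution changes by $|\mathsf{D}_k\setminus\mathsf{D}_{k+1}| - |\mathsf{D}_{k+1}\setminus\mathsf{D}_k|$, which equals $\fs(\mathsf{D}_k)-\fs(\mathsf{D}_{k+1})$ by Lemma \ref{L:hub}, and all other terms are invariant. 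Summing over the $e$ runners $b$ shows that $\cs_i$ changes $\wt_e$ by exactly $t_{i+1}-t_i$: strictly negative when $t_i > t_{i+1}$, and zero when $t_i = t_{i+1}$ (the latter also disposes of the stabiliser ambiguity implicit in your ``constant on $\bt^{\EW_\ell}\cap\AAbar$'' claim, which as stated conflates $\ell$-charges with pairs). With this telescoping computation in place of the false runner-by-runner monotonicity, your outline goes through: the iteration terminates because each step drops $\wt_e$ by a positive integer, and the affine reflection $\cs_0$ (for the constraint $t_\ell \leq t_1+e$) reduces to the interior case by $\rr_\ell\Be_\ell$-conjugation as you suggest.
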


%\begin{Def}
%Let $\bt \in \ZZ^{\ell}$.  We define the distinguished element $w_{\bt} \in \EW_{\ell}$ such that $\bt^{w_{\bt}} \in \AAbar$, as follows: $w_{\bt} = \bu\sigma$ where $\bu \in \ZZ^{\ell}$ is the unique element such that $\bt^{\bu} \in \{0,1,\dotsc, e-1\}^{\ell}$ and $\sigma \in \sym{\ell}$ is the element with minimal length such that $(\bt^{\bu})^{\sigma}$ is ascending.
%\end{Def}

For each $k \in \ZZ$, we have a left action $\DDot{k}$ of $\EW_e$ on $\Z$ defined by
\begin{alignat*}{2}
\cs_i \DDot{k} x &=
\begin{cases}
x-1, &\text{if } x \equiv_e i \\
x+1, &\text{if } x \equiv_e i-1 \\
x, &\text{otherwise}
\end{cases}
&\qquad & (i \in [ 1,\, e-1]); \\
\Be_j \DDot{k} x &=
\begin{cases}
x+k e, &\text{if } x \equiv_e j-1 \\
x, &\text{otherwise}
\end{cases}
&& (j \in [1,\, e]).
\end{alignat*}
%This action is easily described using the $e$-abacus:
%\cs_i$ ($1 \leq i \leq e-1$) swaps the runners $i-1$ and $i$, while %$\Be_j$ ($1 \leq j \leq e$) moves each position on runner $j-1$ down %$k$ positions.
%We note that $\cs_i \DDot{k} - = \cs_i \DDot{1} -$ for all $i \in [1,\,e]$ and $k \in \Z^+$, while $\cs_0 \DDot{1} - $ has a similar description as $\cs_i \DDot{1} -$ for $i \in [1,\,e]$, i.e.
%$$
%\cs_0 \DDot{1} x =
%\begin{cases}
%x-1, &\text{if } x \equiv_e 0; \\
%x+1, &\text{if } x \equiv_e e-1; \\
%x, &\text{otherwise}.
%\end{cases}
%$$

\begin{comment}
Thus, if $\B$ is a $\beta$-set with $\quot_e(\B) = (\B_0,\dotsc, \B_{e-1})$, and $\quot_e(\cs_i \DDot{k} B) = (\mathsf{C}_0, \dotsc, \mathsf{C}_{e-1})$, then $\B_j = \mathsf{C}_j$ if $j \notin \{ i,i-1 \}$ while
\begin{equation}
\mathsf{C}_i = (\B_{i-1})^{+\delta_{i0}k} \qquad \text{and} \qquad \mathsf{C}_{i-1} = (\B_i)^{+(-\delta_{i0}k)} , \label{E:cs_i-quot}
\end{equation}
where $\delta_{i0}$ equals $1$ when $i=0$ and is zero otherwise, and $i-1$ is to be read as $e-1$ when $i =0$.

The relationship between the actions $\DDot{k}$ of $\EW_e$ on $\ZZ$ and $\ZZ^e$ is as follows:

\begin{lem}
For a $\beta$-set $\B$ with $\quot_e(\B) = (\B_0,\dotsc, \B_{e-1})$, write $\pmb{\fs}(\B) := (\fs(\B_0),\dotsc, \fs(B_{e-1}))$.  Then
$$
\pmb{\fs}(w \DDot{k} \B) = w \DDot{k} \pmb{\fs}(\B)
$$
for all $k \in \ZZ$, $w \in \EW_e$ and $\B \in \BB$.
\end{lem}
\end{comment}

We extend this left action $\DDot{k}$ to the set $\BB$ of $\beta$-sets by $w \DDot{k} \B = \{ w \DDot{k} b \mid b \in \B \}$ for $w \in \EW_e$ and $\B \in \BB$.
For each $j \in \ZZ/e\ZZ$, we have
\begin{align*}
\fs(\cs_j \DDot{k} \B) &= \fs(\B);  \\
\wt_e(\cs_j \DDot{k} \B) &= \wt_e(\B);  \\
\core_e(\cs_j \DDot{k} \B) &= \cs_j \DDot{k} \core_e(\B).
\end{align*}
%In addition, we note also that
%\begin{equation}
%\cs_i \DDot{k} (\B^{+me}) = (\cs_i \DDot{k} \B)^{+me} \label{E:cs+}
%\end{equation}
%for all $\B \in \BB$ and $m \in \Z$.
We extend $\DDot{k}$ further to $\BB^{\ell}$ by acting on each and every component.
Then
\begin{equation}
\UU(\cs_j \DDot{1} (\B^{(1)},\dotsc, \B^{(\ell)})) = \cs_j \DDot{\ell} \UU(\B^{(1)},\dotsc, \B^{(\ell)}) \label{E:cs_i-U}
\end{equation}
for all $j \in \ZZ/e\ZZ$ by \cite[(2.16)]{LT}.

For $\bl = (\lambda^{(1)},\dotsc, \lambda^{(\ell)}) \in \PP^{\ell}$ and $\bt = (t_1,\dotsc, t_{\ell}) \in \Z^{\ell}$, we have
%\begin{align*}
%\cs_i \DDot{1} \beta_{\bt}(\bl) = (\cs_i \DDot{1} \beta_{t_1}(\lambda^{(1)}), \dotsc, \cs_i \DDot{1} \beta_{t_{\ell}}(\lambda^{(\ell)})).
%\end{align*}
%Since $\fs(\cs_i \DDot{1} \beta_{t_j}(\lambda^{(j)})) = \fs(\beta_{t_j}(\lambda^{(j)})) = t_j$ for all $j \in \{1,\dotsc, \ell\}$, we have
%$\cs_i \DDot{1} \beta_{t_j}(\lambda^{(j)}) = \beta_{t_j}(\mu^{(j)})$ for a unique $\mu^{(j)} \in \PP^{\ell}$.
%Writing $\cs_i \DDot{\bt} \bl$ for $\bm := (\mu^{(1)},\dotsc, \mu^{(\ell)})$,
a left action $\DDot{\bt}$ of $\AW_e$  on $\PP^{\ell}$,
where for each $j \in \ZZ/e\ZZ$, $\cs_j \DDot{\bt} \bl$ is the $\ell$-partition obtained from $\bl$ by removing all its removable nodes of $(e,\bt)$-residue $j$ and adding all its addable nodes of $(e,\bt)$-residue $j$.
This satisfies
\begin{equation}
\beta_{\bt}(\cs_j \DDot{\bt} \bl) = \cs_j \DDot{1} \beta_{\bt}(\bl)
\label{E:t-action}
\end{equation}
by \cite[(2.17)]{LT},
and further induces a left action $\DDot{}$ of $\AW_e$ on $\PP^{\ell} \times \Z^{\ell}$ via $\cs_j \DDot{} (\bl;\bt) = (\cs_j \DDot{\bt} \bl; \bt)$ for all $j \in \ZZ/e\ZZ$,
%Note that $\cs_i \DDot{\bt} \bl$ is the multipartition obtained from $\bl$ by removing all its removable $i$-nodes and adding all its addable $i$-nodes, both relative to $\bt$.
%Furthermore, this left action of $\AW_e$ on $\PP^{\ell} \times \ZZ^{\ell}$
which commutes with the right action of $\EW_{\ell}$ described above (interested readers please see last paragraph before Section 3 of \cite{LT} for details).

We also have a natural left action $\CDot{k}$ of $\EW_e$ on $\BB^e$ and $\ZZ^e$ as follows:
\begin{align*}
\cs_i \CDot{k} (\B_1,\dotsc, \B_{e}) &= (\B_1,\dotsc,\B_{i-1}, \B_{i+1}, \B_{i}, \B_{i+2},\dotsc, \B_{e}), \\
\Be_j \CDot{k} (\B_1,\dotsc, \B_{e}) &= (\B_1,\dotsc,\B_{j-1}, \B_j^{+k}, \B_{j+1},\dotsc, \B_{e}), \\
\cs_i \CDot{k} (a_1,\dotsc, a_{e}) &= (a_1,\dotsc,a_{i-1}, a_{i+1}, a_{i}, a_{i+2},\dotsc, a_{e}), \\
\Be_j \CDot{k} (a_1,\dotsc, a_{e}) &= (a_1,\dotsc,a_{j-1}, a_j +k, a_{j+1},\dotsc, a_{e}),
\end{align*}
for all $i \in [1,\,e-1]$, $j \in [1,\,e]$, $(\B_1,\dotsc, \B_e) \in \BB^e$ and $(a_1,\dotsc, a_e) \in \ZZ^e$.
Note that $\CDot{k}$ and $\DDot{k}$ are related in the following way:
\begin{align}
w \CDot{k} \quot_e(\B) &= \quot_e( w \DDot{k} \B) \label{E:CDot-tuple-beta-sets}, \\
w \CDot{k} \bfs(\quot_e(\B)) &= \bfs(w \CDot{k} \quot_e(\B)) = \bfs(\quot_e(w \DDot{k} \B)) \label{E:CDot-tuple-integers}
\end{align}
for all $w \in \EW_e$ and $\B \in \BB$ (recall the function $\bfs$ in Definition \ref{D:bfs}).

%The exact description of this action is:
%\begin{align*}
%\end{align*}
%Note that
%satisfying
%\begin{equation}
%w \CDot{k} \quot_e(\B) := \quot_e( w \DDot{k} \B) \label{E:CDot-tuple-%beta-sets}
%\end{equation}
%for all $w \in \EW_e$ and $\B \in \BB$.
Note that when $k = 0$, we have
\begin{equation*}
\mathbf{t}\sigma \CDot{0} (a_1,\dotsc, a_e) = (a_{\sigma^{-1}(1)},\dotsc, a_{\sigma^{-1}(e)}) \label{E:CDot0}
\end{equation*}
for all $\mathbf{t} \in \ZZ^e$ and $\sigma \in \sym{e}$, so that $\CDot{0}$ is really just the left conjugation action of $\EW_{e}$ on its Abelian normal subgroup $\ZZ^e$.

\begin{lem} \label{L:Weyl action} \hfill
\begin{enumerate}
\item
Let $(\bm;\bu) \in \PP^e \times \ZZ^e$ and $k \in \ZZ$.
Then
$$w \CDot{k} \beta_{\bu}(\bm) = \beta_{\smash[t]{w \CDot{k} \bu}}(\bm^{\overline{w}^{-1}})$$ for all $w \in \EW_{e}$.
(Recall that $\overline{w}$ denotes the projection of $w$ onto $\sym{e}$.)

\item
Let $(\bl;\bt) \in \PP^{\ell} \times \ZZ^{\ell}$.
Then $(w \DDot{\bt} \bl)^{w'} = w \DDot{\bt^{w'}} \bl^{w'}$ for all $w \in \AW_e$ and $w' \in \EW_{\ell}$.
\end{enumerate}
\end{lem}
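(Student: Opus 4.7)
Both $\CDot{k}$ and $\DDot{\bt}$ are left group actions (of $\EW_e$ and $\AW_e$ respectively), so once either identity is established for two elements $w_1, w_2$, it automatically follows for their product: e.g., for (1),
\begin{align*}
(w_1w_2)\CDot{k}\beta_{\bu}(\bm)
&= w_1\CDot{k}\bigl(w_2\CDot{k}\beta_{\bu}(\bm)\bigr)
= w_1\CDot{k}\beta_{w_2\CDot{k}\bu}\bigl(\bm^{\overline{w_2}^{-1}}\bigr) \\
&= \beta_{(w_1w_2)\CDot{k}\bu}\bigl(\bm^{\overline{w_1w_2}^{-1}}\bigr),
\end{align*}
using that $w \mapsto \overline{w}$ is a homomorphism and that $(\mathbf{x}^{\tau_1})^{\tau_2} = \mathbf{x}^{\tau_1\tau_2}$ for the right place-permutation action. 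The plan is therefore to verify (1) and (2) only on generators of $\EW_e$ and $\AW_e$.

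For part (1), take the generating set $\{\cs_1, \dotsc, \cs_{e-1}\} \cup \{\Be_1, \dotsc, \Be_e\}$ of $\EW_e$. When $w = \cs_i$, so $\overline{w}^{-1} = (i, i+1)$, both sides simply transpose the $i$-th and $(i+1)$-th components of $\beta_{\bu}(\bm)$, matching termwise directly from the defining formulas of $\CDot{k}$ on $\BB^e$ and $\ZZ^e$. When $w = \Be_j$, $\overline{w}$ is the identity and both sides leave components other than the $j$-th unchanged while shifting the $j$-th component's charge by $k$; this reduces to the elementary identity $(\beta_u(\mu))^{+k} = \beta_{u+k}(\mu)$, immediate from $\beta_u(\mu) = \{\mu_a + u - a : a \in \ZZ^+\}$.

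For part (2), take the generating set $\{\cs_j : j \in \ZZ/e\ZZ\}$ of $\AW_e$ and write $w' = \sigma\bu$ with $\sigma \in \sym{\ell}$ and $\bu \in \ZZ^{\ell}$, so that $\bt^{w'} = \bt^{\sigma} + e\bu$ and $\bl^{w'} = \bl^{\sigma}$ has $i$-th component $\lambda^{(\sigma(i))}$. The crux of the argument is that the map $(a,b,i) \leftrightarrow (a,b,\sigma(i))$ is a bijection between nodes of $[\bl^{w'}]$ and $[\bl]$ that preserves the removable/addable attribute, and since $(\bt^{w'})_i = t_{\sigma(i)} + e u_i \equiv_e t_{\sigma(i)}$, it also satisfies $\res_e^{\bt^{w'}}(a,b,i) = \res_e^{\bt}(a,b,\sigma(i))$. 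Consequently the $(e, \bt^{w'})$-residue-$j$ removable/addable nodes in the $i$-th component of $\bl^{w'}$ correspond bijectively to the $(e, \bt)$-residue-$j$ removable/addable nodes in the $\sigma(i)$-th component of $\bl$; so the effect of $\cs_j \DDot{\bt^{w'}}$ on the $i$-th component of $\bl^{w'}$ agrees with the effect of $\cs_j \DDot{\bt}$ on the $\sigma(i)$-th component of $\bl$, and both sides of the identity in (2) describe the same $\ell$-partition. The main obstacle in both parts is purely clerical---tracking the inverses on the $\sym{e}$ piece of $\EW_e$ and the order of composition---rather than anything conceptually deep.
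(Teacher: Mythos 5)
Your proof is correct. For part (1) your reduction-to-generators plan is essentially what "straightforward verification" in the paper must mean: once one knows $\CDot{k}$ is a left action of $\EW_e$ and the $\beta$-shift identity $(\beta_u(\mu))^{+k}=\beta_{u+k}(\mu)$, checking on $\cs_i$ and $\Be_j$ suffices, and your bookkeeping of the inverse in $\bm^{\overline{w}^{-1}}$ (via $\overline{w_1w_2}^{-1}=\overline{w_2}^{-1}\overline{w_1}^{-1}$ and the right place-permutation action) is exactly right.

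For part (2) you take a genuinely different route from the paper. The paper's proof is a one-line chain of equalities that invokes the asserted fact that the left $\DDot{}$-action of $\AW_e$ on $\PP^{\ell}\times\ZZ^{\ell}$ commutes with the right $\EW_{\ell}$-action, deferring its justification to [LT]; part (2) then falls out by unpacking both sides. You instead prove the generator case $w=\cs_j$ directly from the combinatorics: the bijection $(a,b,i)\leftrightarrow(a,b,\sigma(i))$ between $[\bl^{w'}]$ and $[\bl]$ preserves addability/removability and, since $(\bt^{w'})_i\equiv_e t_{\sigma(i)}$, also preserves residues, so $\cs_j\DDot{\bt^{w'}}$ on the $i$-th component of $\bl^{w'}$ reproduces $\cs_j\DDot{\bt}$ on the $\sigma(i)$-th component of $\bl$. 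Your reduction to generators then also needs the (easily verified) fact that the validity of the identity for $w_1$ and $w_2$ implies it for $w_1w_2$; your computation of that step is correct. The trade-off: the paper's version is shorter but externalises the key content, whereas yours is self-contained and in fact supplies the omitted argument for the commuting claim; your residue-level formulation is also morally equivalent to the $\beta$-set manipulation sketched (but commented out) in the paper's source.
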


\begin{proof}
It is straightforward to verify the statement for $w= \Be_i$ ($i \in [1,e]$) and $w=\cs_j$ ($j \in [1,e-1]$), and the general statement follows immediately.

For part (2), we have
$$
(w \DDot{\bt^{w'}} \bl^{w'}; \bt^{w'}) = w \DDot{} (\bl^{w'};\bt^{w'})
=  w \DDot{} ((\bl;\bt)^{w'})
= ( w \DDot{} (\bl;\bt))^{w'}
= (w \DDot{\bt} \bl;\bt)^{w'}
= ( (w\DDot{\bt} \bl)^{w'}; \bt^{w'}),
$$
so that $w \DDot{\bt^{w'}} \bl^{w'} = (w\DDot{\bt} \bl)^{w'}$ as desired.
\end{proof}

\subsection{Ariki-Koike algebras} \label{SS:AK-alg}

From now on, $\FF$ denotes a fixed field of characteristic $p$, where we allow $p=0$, and we assume that $\FF$ contains a primitive $e$-th root of unity if $e \ne p$.
Let $q \in \FF$ be such that $q = 1$ if $e = p$, and $q$ is a primitive $e$-th root of unity otherwise.
Fix $\br = (r_1,\dotsc, r_{\ell}) \in \Z^{\ell}$, and let $n \in \Z^+$.
The {\em Ariki-Koike algebra} $\HH_n = \HH_{\FF, q, \br}(n)$ is the unital $\FF$-algebra generated by $\{ T_0,\dotsc, T_{n-1} \}$ subject to the following relations:
\begin{alignat*}{2}
(T_0 - q^{r_1})(T_0 - q^{r_2}) \dotsm (T_0 - q^{r_{\ell}}) &= 0; \\
(T_a - q)(T_a+1) &= 0 &\qquad &(a \in [1,\, n-1] ); \\
T_0T_1T_0T_1 &= T_1T_0T_1T_0; && \\
T_aT_{a+1}T_a &= T_{a+1}T_a T_{a+1} &\qquad &(a \in [1,\, n-2]); \\
T_aT_b &= T_aT_b &\qquad &(|a-b| \geq 2).
\end{alignat*}
It is clear from the definition that $\HH_n$ only depends on the orbit $\br^{\EW_{\ell}}$ and not on $\br$.

The set of {\em Specht modules} $\{\Sp^{\bl} = \Sp^{\bl}_{\br} \mid \bl \in \PP^{\ell}(n) \}$ is a distinguished class of $\HH_n$-modules.  Here, for our purposes, we use the Specht modules defined by \cite{Du-Rui} instead of the original ones constructed by Dipper, James and Mathas \cite{DJM}.
We note that, unlike $\HH_n$, these Specht modules actually depend on the order of the $r_j$'s.
  %has a simple head $D^{\bl}$ when $(\bl;\br)$ is a Kleshchev multipartition, where $\br = (r_1,\dotsc, r_{\ell})$, and the set of $D^{\bl}$'s, as $(\bl;\br)$ runs over all Kleshchev $\ell$-partitions of $n$, gives a complete set of non-isomorphic simple $\HH_n$-modules.
%We note that unlike $\HH_n$, the Specht module $S^{\bl}$ depends not only on $\RR$ but also on the order of $r_1,\dotsc, r_{\ell}$.

For $\bl \in \PP^{\ell}$, define its
%Denote by $\br^{\HH} = (r^{\HH}_1,\dotsc, r^{\HH}_{\ell})$ the unique element in $\br^{\EW_{\ell}} \cap \AAF$.  Furthermore, let $w_{\br} \in \EW_{\ell}$ be such that $\br^{w_{\br}} = \br^{\HH}$.
%Define the
{\em $\HH$-core}, {\em $\HH$-weight} and {\em $\HH$-hub}, denoted $\core_{\HH}(\bl)$, $\wt_{\HH}(\bl)$ and $\Hub_{\HH}(\bl) = (\hub^{\HH}_0(\bl), \dotsc, \hub^{\HH}_{e-1}(\bl)) $,
by
\begin{align*}
\core_{\HH}(\bl) &= \beta^{-1} (\core_e((\bl;\br)^w)), \\
\wt_{\HH}(\bl) &= \wt_e((\bl;\br)^w), \\
\Hub_{\HH}(\bl) & = \Hub_e((\bl;\br)^w),
\end{align*}
where $w$ is any element of $\EW_{\ell}$ such that $\br^w \in \AAbar$.
%
%to be those of $(\bl;\br)^{w_{\br}}$.
%$$
%\core_{\HH}(\bl) = \core_e((\bl;\br)^{w_{\br}}) \qquad \text{and} \qquad \wt_{\HH}(\bl) = \wt_e((\bl;\br)^{w_{\br}})  \qquad \text{and} \qquad \Hub_e.
%$$
By Theorem \ref{T:core-n-weight}, $\core_{\HH}(\bl)$ and $\wt_{\HH}(\bl)$ are well-defined, i.e.\ do not depend on the choice of $w$ such that $\br^w \in \AAbar$, while $\Hub_{\HH}(\bl)$ is well-defined by the definition of $\Hub_e$.

Given a block $B$ of $\HH_n$, we say that an $\ell$-partition $\bl$ {\em lies in $B$} if $\Sp^{\bl}$ lies in $B$.

\begin{thm}[{\cite[Corollary 4.5]{LT}}] \label{T:Naka}
Let $\bl,\bm \in \PP^{\ell}$.  Then $\bl$ and $\bm$ lie in the same block of $\HH_n$ if and only if $\core_{\HH}(\bl) = \core_{\HH}(\bm)$ and $\wt_{\HH}(\bl) = \wt_{\HH}(\bm)$.
\end{thm}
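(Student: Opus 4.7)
The identity should follow from combining the Lyle--Mathas block classification for Ariki--Koike algebras with the translation between residue multisets and core/weight provided by Uglov's map. My plan has four steps.

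First, invoke the combinatorial block classification: $\bl$ and $\bm$ lie in the same block of $\HH_n$ if and only if the multisets $\res^{\br}_e(\bl)$ and $\res^{\br}_e(\bm)$ coincide. This reduces the task to showing that equality of these residue multisets is equivalent to the coincidence of both $\HH$-core and $\HH$-weight.

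Second, reduce to a convenient multicharge. Both the residue multiset and the $\HH$-core/$\HH$-weight are invariant under the right $\EW_\ell$-action on charges (the former as noted in the excerpt, the latter by Theorem \ref{T:core-n-weight} and their definitions). Thus I may replace $\br$ by a representative in $\AAbar$, so that $\core_{\HH}(\bl)$ is identified with $\beta^{-1}(\core_e(\UU(\beta_\br(\bl))))$ and $\wt_{\HH}(\bl)$ with $\wt_e(\UU(\beta_\br(\bl)))$.

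Third, translate residues through the Uglov map. Writing $\Phi_\br(\bnu) := \beta^{-1}(\UU(\beta_\br(\bnu)))$, the goal is to show that the multiset $\res^{\br}_e(\bl)$ matches, as a residue-count vector, the ``difference'' of residue multisets of $\Phi_\br(\bl)$ and $\Phi_\br(\EP)$ at charge $|\br|$. Building $\bl$ from $\EP$ one box at a time, Lemma \ref{L:ARl1}(3) handles addable nodes of nonzero residue: these correspond exactly to addable nodes of the same residue in $\Phi_\br(-)$. The residue-$0$ case requires a separate calculation using the explicit formula for $\uu_i$ together with \eqref{E:cs_i-U}, but the net effect is still a single residue-$0$ contribution per added box.

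Fourth, apply the classical level-$1$ fact: for a fixed $e$-core partition, the residue multiset of any partition with that core is determined entirely by the $e$-weight, since each $e$-rim-hook contributes exactly one node of each residue class in $\ZZ/e\ZZ$; and conversely, the core and weight are determined by the residue multiset. Combined with Step~3, equality of $\res^{\br}_e(\bl)$ and $\res^{\br}_e(\bm)$ is equivalent to equality of both the $e$-core and $e$-weight of $\Phi_\br(\bl)$ and $\Phi_\br(\bm)$, which is precisely equality of $\core_{\HH}$ and $\wt_{\HH}$.

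The main technical obstacle is Step~3: precisely because $\uu_i$ is not distance-preserving at residue-$0$ crossings (one computes $\uu_i(ae) - \uu_i(ae-1) = (\ell-1)e+1$), adding a residue-$0$ box in $\bl$ does not translate straightforwardly into adding a single residue-$0$ box in $\Phi_\br(\bl)$. Careful bookkeeping---either by induction on $|\bl|$ or by a direct analysis of the Uglov $e$-abacus using the left $\cs_0$-action via \eqref{E:cs_i-U}---is needed to justify the residue-multiset identity underpinning Step~4.
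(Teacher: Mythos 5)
The paper does not prove this statement itself --- it cites it verbatim as \cite[Corollary 4.5]{LT}, so there is no in-paper proof to compare against. On its own merits your strategy is sound: reduce to $\br\in\AAbar$, invoke the Lyle--Mathas residue-multiset block classification, translate to level $1$ via the Uglov map, and finish with the classical (level-$1$) Nakayama equivalence between residue multisets and (core, weight). Given that [LT] is built around exactly this $\UU$-based machinery for defining $\core_{\HH}$ and $\wt_{\HH}$, this is almost certainly the same route.

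There is, however, a genuine error in Step~3. You assert that the residue-count vector of $\bl$ \emph{equals} the difference of residue multisets of $\Phi_\br(\bl)$ and $\Phi_\br(\EP)$, with ``a single residue-$0$ contribution per added box.'' This is false, and your own computation $\uu_i(ae)-\uu_i(ae-1) = (\ell-1)e+1$ shows why: adding a residue-$0$ node to $\bl$ at content $c$ moves a bead in $\UU(\beta_\br(\bl))$ by $(\ell-1)e+1$ positions, which adds $(\ell-1)e+1$ nodes to $\Phi_\br(\bl)$ whose residues run through $\ell-1$ complete cycles of $\ZZ/e\ZZ$ and one extra $0$ --- i.e.\ $\ell$ nodes of residue $0$ and $\ell-1$ nodes of each nonzero residue, not one node of residue $0$. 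Consequently, if $\bl$ has residue-count vector $(n_0,n_1,\dotsc,n_{e-1})$, the difference vector for $\Phi_\br(\bl)$ against $\Phi_\br(\EP)$ is $(n'_0,\dotsc,n'_{e-1})$ with $n'_0 = \ell n_0$ and $n'_j = (\ell-1)n_0 + n_j$ for $j\neq 0$ --- a nontrivial linear transformation, not the identity. The argument is salvageable because this transformation is invertible, so equality of residue multisets for $\bl$ and $\bm$ is still equivalent to equality of residue multisets for $\Phi_\br(\bl)$ and $\Phi_\br(\bm)$, and then Step~4 closes the proof exactly as you intend. But you must replace the claimed equality of multisets by this invertibility statement, otherwise Step~3 as written is simply wrong.
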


By Theorem \ref{T:Naka},
we may unambiguously define the core and weight of a block $B$, denoted $\core(B)$ and $\wt(B)$, as the common $\HH$-core and the common $\HH$-weight of the $\ell$-partitions lying in $B$. Moreover, these two attributes completely determine the block.

By \eqref{E:hub-core}, $\HH$-hub is another block invariant of the Ariki-Koike algebras, and we write $\Hub(B) = (\hub_0(B),\dotsc,\hub_{e-1}(B))$ for the common $\HH$-hub of the $\ell$-partitions lying in $B$.

Recall that we have a left action $\DDot{\br}$ of $\AW_e$ on $\PP^{\ell}$.
\begin{comment}
Given $\bl \in \PP^{\ell}(n)$ and $i \in \ZZ/e\ZZ$, we have $\cs_i \DDot{\br} \bl \in \PP^{\ell}(n-\hub_i(\bl))$ with
$\core_e((\cs_i \DDot{\br} \bl;\br)^w) = \cs_i \DDot{\ell} \core_e((\bl;\br)^w)$ and
$\wt_e((\cs_i \DDot{\br} \bl;\br)^w) = \wt_e((\bl;\br)^w)$ for any $w \in \EW_{\ell}$ with $\br^w \in \AAbar$.
\begin{alignat*}{3}
\core_e((\cs_i \DDot{\br} \bl;\br)^w) &=
\core_e( (\cs_i \DDot{} (\bl;\br))^w) &&=
\core_e( \cs_i \DDot{} ((\bl;\br)^w)) &&=
\cs_i \DDot{\ell} \core_e((\bl;\br)^w), \\
\wt_e((\cs_i \DDot{\br} \bl;\br)^w) &=
\wt_e( (\cs_i \DDot{} (\bl;\br))^w) &&=
\wt_e( \cs_i \DDot{} ((\bl;\br)^w)) &&=
\wt_e((\bl;\br)^w),
\end{alignat*}
where $\br^w \in \AAbar$.
\end{comment}
This induces a left action $\DDot{}$ of $\AW_e$ on the set of blocks of Ariki-Koike algebras (with common $\ell$-charge $\br$) such that if $B$ is a block of $\HH_n$, then $\cs_j \DDot{} B$ is the block of $\HH_{n-\hub_j(B)}$ with
$$
\core(\cs_j \DDot{} B ) = \beta^{-1}(\cs_j \DDot{\ell} \beta_{|\br|}(\core(B))) \quad \text{ and } \quad \wt(\cs_j \DDot{} B) = \wt(B)
$$
(see, for example, \cite[Proposition 4.8]{LT}).
%We refer the interested reader to \cite[pp.\ 16--17]{LT} for details of these claims.

\begin{comment}
\subsection{$i$-induction and $i$-restriction functors}

Since $\HH_n$ is a subalgebra of $\HH_{n+1}$, there are induction and restriction functors, denoted $\mathrm{Ind}^{n+1}$ and $\mathrm{Res}_{n}$, relating their respective module categories.  These functors further decomposes as follows:
$$
\mathrm{Ind}^{n+1} = \bigoplus_{i \in \mathbb{Z}/e\mathbb{Z}} \iInd^{n+1}; \qquad \mathrm{Res}_n = \bigoplus_{i \in \mathbb{Z}/e\mathbb{Z}} \iRes_n.
$$
The actions of these $i$-induction and $i$-restricted functors on the Specht modules have the following description:
\begin{itemize}
\item $\iInd^{n+1} \Sp^{\bl}$ has a filtration such that the factors are Specht modules $\Sp^{\bm}$ where $\bm$ is obtained from $\bl$ by adding an addable $i$-node, with each $\Sp^{\bm}$ appearing exactly once.

\item $\iRes^{n-1} \Sp^{\bl}$ has a filtration such that the factors are Specht modules $\Sp^{\bm}$ where $\bm$ is obtained from $\bl$ by removing a removable $i$-node, with each $\Sp^{\bm}$ appearing exactly once.
\end{itemize}
\end{comment}

\section{Moving Vectors} \label{S:moving-vectors}

The concept of moving vectors was first introduced by the first and second authors in \cite{LQ-movingvector} for $\ell$-partitions with associated $\ell$-charge lying in $\AAbar$ to classify the blocks of Ariki-Koike algebras with finite representation type.

In this section, we will use moving vectors to classify the core blocks of Ariki-Koike algebras, and obtain some immediate consequences for the weight graphs for $\ell$-partitions.

\subsection{Definition and properties}
We present in this subsection a slightly different perspective of moving vectors, generalising it to all $\ell$-charges.

Recall the Uglov map $\UU$ in Subsection \ref{SS:Uglov}.  Post-composing $\UU$ with $\quot_e$ gives a bijection from $\BB^{\ell}$ to $\BB^{e}$.  Together with the bijection $\beta$ between $\PP \times \ZZ$ and $\BB$, this further induces a bijection $\bij_{\ell,e} : \PP^{\ell} \times \ZZ^{\ell} \to \PP^e \times \ZZ^e$ defined by $\bij_{\ell,e}(\bl;\bt) = (\bm;\bu)$ where $\quot_e(\UU(\beta_{\bt}(\bl))) = \beta_{\bu}(\bm)$.  We like to think of $\bij_{\ell,e}$ as the {\em rank-level duality map}.

\begin{lem} \label{L:ell-residue}
Let $(\bl;\bt) \in \PP^{\ell} \times \ZZ^{\ell}$, and let $\bij_{\ell,e}(\bl;\bt) = (\bm;\bu)$ (thus $(\bm;\bu) \in \PP^e \times \ZZ^e$ with $\beta_{\bu}(\bm) = \quot_e(\UU(\beta_{\bt}(\bl)))$). Then
\begin{enumerate}
\item $\quot_e(\core_e(\bl;\bt)) = \beta_{\bu}(\EP)$; %in particular, $\bu$ is completely determined by $\core(\bl;\bt)$;
\item $\res^{\bu}_{\ell}(\bm)$ is completely determined by $\bt$, $\core_e(\bl;\bt)$ and $\wt_e(\bl;\bt)$;
\item $\bij_{\ell,e}((\bl;\bt)^{\rr_{\ell} \Be_{\ell}}) = (\bm;\bu + \bone)$, where $\rr_{\ell} = (1,\dotsc, \ell) \in \sym{\ell}$.
\end{enumerate}
\end{lem}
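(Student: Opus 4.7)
Parts (1) and (3) follow directly from material already assembled in the excerpt, while (2) requires an appeal to block theory through a rank-level duality argument.

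For (1), set $\B := \UU(\beta_{\bt}(\bl))$, so by hypothesis $\quot_e(\B) = \beta_{\bu}(\bm)$ and hence $\bfs(\quot_e(\B)) = \bu$. The subsection preceding the lemma records that $\core_e(\B)$ is the unique $\beta$-set whose $e$-quotient is $(\beta_{\fs(\B_0)}(\varnothing), \dotsc, \beta_{\fs(\B_{e-1})}(\varnothing))$; this tuple equals $\beta_{\bu}(\EP)$, giving (1). For (3), I combine Lemma \ref{L:AA}(4), which identifies $\UU(\beta_{\bt^{\rr_{\ell}\Be_{\ell}}}(\bl^{\rr_{\ell}\Be_{\ell}}))$ with $\B^{+e}$, together with \eqref{E:quot}, which yields $\quot_e(\B^{+e}) = (\B_0^{+1}, \dotsc, \B_{e-1}^{+1})$; the elementary identity $\beta_s(\nu)^{+1} = \beta_{s+1}(\nu)$, immediate from $\beta_s(\nu) = \{\nu_i + s - i\}_i$, then converts this to $\beta_{\bu+\bone}(\bm)$, which is the conclusion of (3).

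For (2), note first that $\bu$ is determined by $\core_e(\bl;\bt)$ by (1), and that $|\bm| = \wt_e(\bl;\bt)$ by the identity $\wt_e(\B) = \sum_i |\beta^{-1}(\B_i)|$ recorded just before the lemma. It therefore suffices to show that if $\bl$ is replaced by any $\bl'$ sharing the same $\bt$, $e$-core, and $e$-weight, then the corresponding $\bm'$ satisfies $\res^{\bu}_{\ell}(\bm') = \res^{\bu}_{\ell}(\bm)$. The plan is to invoke block theory on both sides: such $\bl, \bl'$ lie in a common block of the Ariki-Koike algebra at level $\ell$, and the rank-level duality encoded by $\bij_{\ell,e}$ should place $\bm, \bm'$ in a common block of an Ariki-Koike algebra at level $e$ with charge $\bu$; since the $\ell$-residue multiset of an $e$-multipartition is a block invariant (being determined by its $\ell$-core and total size), the claim follows.

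The main obstacle will be the rank-level duality step in (2). Concretely, one needs to show that the operations relating $\bl$ and $\bl'$ within a block translate under $\bij_{\ell,e}$ into operations that preserve the $\ell$-core of $(\bm;\bu)$. I expect this to reduce to a careful abacus bookkeeping exercise, tracking how bead positions in the $e$-abacus of $\UU(\beta_{\bt}(\bl))$, read via $\uu_1,\dotsc,\uu_{\ell}$, translate into bead configurations in the $\ell$-abacus of $\beta_{\bu}(\bm)$; the explicit form of the Uglov map recalled in Subsection \ref{SS:Uglov} should make this tractable.
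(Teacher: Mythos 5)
Parts (1) and (3) match the paper's proof exactly: (1) is immediate from the characterisation of $\core_e$ via quotients, and (3) is the same two-line calculation combining Lemma \ref{L:AA}(4) with \eqref{E:quot}. No issues there.

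Part (2) is where you diverge, and your proposal contains a genuine gap. You correctly observe that $\bu$ is pinned down by (1) and that $|\bm| = \wt_e(\bl;\bt)$, and you then propose to deduce that $\res^{\bu}_{\ell}(\bm)$ is determined by appealing to the fact that the residue multiset is a block invariant of the level-$e$ Ariki-Koike algebra with multicharge $\bu$. But to apply this you need $\bm$ and $\bm'$ to lie in the same block, i.e.\ to have the same $\ell$-core and $\ell$-weight, and you have no argument for same $\ell$-core: your only data are "same $\bt$, same $e$-core of $(\bl;\bt)$, same $e$-weight," and the assertion that these determine the $\ell$-core of $(\bm;\bu)$ is precisely the rank-level duality statement that would need proof. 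You acknowledge this yourself, calling it the "main obstacle" and saying you "expect" it to reduce to abacus bookkeeping — but that deferral is exactly where the mathematical content lies, so the proof is incomplete as written. Also note the circularity risk: once you know the $\ell$-core and $\ell$-weight (equivalently, the size) of $\bm$, the residue multiset is already determined without any block theory (this is elementary combinatorics, not a block-theoretic fact), so the detour through blocks buys nothing.

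For contrast, the paper's argument is direct and elementary: it tracks what happens to the $\ell$ component charges $(t_1,\dotsc,t_{\ell})$ when a removable node of $\bm$ is stripped, obtaining $t^*_{\ell-i} = t_{\ell-i} - C_i + C_{i-1}$ for each $i$, where $C_k$ is the number of nodes of residue $k$. This yields a system of $\ell$ linear equations in the $C_i$'s whose solution space is one-dimensional, and then $\sum_i C_i = \wt_e(\bl;\bt)$ provides an independent equation which pins the $C_i$'s, and hence $\res^{\bu}_{\ell}(\bm)$, uniquely. This is both shorter and sidesteps the need to establish any rank-level duality for cores. If you want to repair your proof you would essentially be forced to reproduce this bookkeeping anyway, at which point the block-theoretic framing becomes superfluous.
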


\begin{proof}
Part (1) is clear.

For part (2), let $(\bl^*;\bt^*) \in \PP^{\ell} \times \ZZ^{\ell}$ be such that $\UU(\beta_{\bt^*}(\bl^*)) = \core_e(\bl;\bt)$.
Note that a removable node of $\bm$ with $(\ell,\bu)$-residue $i$ corresponds to an $x \in \UU(\beta_{\bt}(\bl))$ such that $\lfloor x/e \rfloor \equiv_{\ell} i$ and $x-e \notin \UU(\beta_{\bt}(\bl))$,
i.e.\ an $x \in \uu_{\ell-i}(\beta_{t_{\ell-i}}(\lambda^{(\ell-i)}))$ such that $x-e \notin \uu_{\ell-i+1}(\beta_{t_{\ell-i+1}}(\lambda^{(\ell-i+1)}))$ (where $\ell-i+1$ is to be read as $1$ if $i = 0$).
Removing this node from $\bm$ thus corresponds to removing $\uu_{\ell-i}^{-1}(x)$ from $\beta_{t_{\ell-i}}(\lambda^{(\ell-i)})$ and adding $\uu_{\ell-i+1}^{-1}(x-e)$ to $\beta_{t_{\ell-i+1}}(\lambda^{(\ell-i+1)})$.
Consequently, this results in subtracting 1 from the charge of $\beta_{t_{\ell-i}}(\lambda^{(\ell-i)})$ and adding 1 to the charge of $\beta_{t_{\ell-i+1}}(\lambda^{(\ell-i+1)})$.
Since repeatedly removing the removable nodes from $\bm$ eventually arrives at $\EP$, and
$$
\beta_{\bu}(\EP) = \quot_e(\core_e(\bl;\bt)) = \quot_e(\UU(\beta_{\bt^*}(\bl^*))),
$$
we have $t^*_{\ell-i} = t_{\ell -i} - C_{i} + C_{i+1}$, where $C_k$ is the total number of nodes of $\bm$ with $(\ell,\bu)$-residue $k$ for each $k \in \ZZ/\ell\ZZ$ and $\bt^* = (t^*_1,\dotsc, t^*_{\ell})$. This is true for all $i \in \ZZ/\ell\ZZ$, so that we get a system of $\ell$ linear equations in $C_i$'s which has a one-dimensional solution space.  However, we further require that
$$\sum_{i = 1}^{\ell} C_i = \wt_e(\UU(\beta_{\bt}(\bl))) = \wt_e(\bl;\bt),$$
which is a linear equation in $C_i$'s independent of the earlier linear equations.
Thus the $C_i$'s, and hence $\res^{\bu}_{\ell}(\bm)$, have a unique solution when $\bt$, $\wt_e(\bl;\bt)$ and $\core_e(\bl;\bt)$ are fixed.

For part (3), we have
%$$\UU(\beta_{\bt^{\rr_{\ell} \Be_{\ell}}}(\bl^{\rr_{\ell}\Be_{\ell}})) = (\UU(\beta_{\bt}(\bl)))^{+e}$$
%so that
\begin{alignat*}{2}
%\bij_{\ell,e}((\bl;\bt)^{\rr_{\ell}\Be_{\ell}}) &=
\quot_e(\UU(\beta_{\bt^{\rr_{\ell} \Be_{\ell}}}(\bl^{\rr_{\ell}\Be_{\ell}})))
&= \quot_e((\UU(\beta_{\bt}(\bl)))^{+e}) &&(\text{by Lemma \ref{L:AA}(4)}) \\
&= ((\beta_{u_1}(\mu^{(1)}))^{+1},\dotsc, (\beta_{u_e}(\mu^{(e)}))^{+1}) &&(\text{by \eqref{E:quot}})\\
&= (\beta_{u_1+1}(\mu^{(1)}),\dotsc, \beta_{u_e+1}(\mu^{(e)})) = \beta_{\bu+\bone}(\bm).
\end{alignat*}
Thus $\bij_{\ell,e}((\bl;\bt)^{\rr_{\ell}\Be_{\ell}}) = (\bm;\bu + \bone)$.
\end{proof}

\begin{Def}
Let $(\bl;\bt) \in \PP^{\ell} \times \ZZ^{\ell}$.  The {\em $e$-moving vector of $(\bl;\bt)$}, denoted $\mv_e(\bl;\bt)$, is defined as
$$
\mv_e(\bl;\bt) := (C_{\ell-1}, C_{\ell-2}, \dotsc, C_0),
$$
where $C_i$ is the total number of nodes of $\bm$ with $(\ell,\bu)$-residue $i$ and $\bij_{\ell,e}(\bl;\bt) = (\bm;\bu)$.
\end{Def}

\begin{eg} \label{Eg:mv}
Let $e = 5$, $\ell = 4$, $\bt = (t_1,t_2,t_3,t_4) = (1,3,3,6)$, and $\bl = (\lambda^{(1)}, \lambda^{(2)}, \lambda^{(3)}, \lambda^{(4)}) = ((3,2,1^4), (4,2,1), (2^2,1), (1))$.

\begin{center}
\begin{tikzpicture}[scale = 0.5]
\node [left] at (-8,1) {$\beta_{t_1}(\lambda^{(1)})$};
\node [left] at (-8,2) {$\beta_{t_2}(\lambda^{(2)})$};
\node [left] at (-8,3) {$\beta_{t_3}(\lambda^{(3)})$};
\node [left] at (-8,4) {$\beta_{t_3}(\lambda^{(4)})$};

\foreach \y in {1,2,3,4}
\foreach \x in {-7,9.2}
\node at (\x,\y) {$\dotsm$};

\foreach \y in {2,3,4}
\foreach \x in {-2,-3,-4,-5,-6}
\draw [radius=2mm, fill=black] (\x,\y) circle;
\foreach \x in {-2,-3,-4,-5}
\draw [radius=2mm, fill=black] (\x,1) circle;
\draw (-6.15, 1) -- (-5.85, 1);

\foreach \x in {-1,0,1,2,3}
\draw [radius=2mm, fill=black] (\x,4) circle;

\foreach \x in {0,2,3}
\draw [radius=2mm, fill=black] (\x,3) circle;
\foreach \x in {-1,1}
\draw (\x-0.15, 3) -- (\x+ 0.15, 3);

\foreach \x in {0,2}
\draw [radius=2mm, fill=black] (\x,2) circle;
\foreach \x in {-1,1,3}
\draw (\x-0.15, 2) -- (\x+ 0.15, 2);

\foreach \x in {0,2}
\draw [radius=2mm, fill=black] (\x,1) circle;
\foreach \x in {-1,1,3}
\draw (\x-0.15, 1) -- (\x+ 0.15, 1);

\foreach \x in {4,5,6,7,8}
\foreach \y in {1,2,3,4}
\draw (\x-0.15, \y) -- (\x+ 0.15, \y);
\foreach \y in {4,2}
\draw [radius=2mm, fill=black] (5,\y) circle;

\draw [dashed] (-1.45,0) -- (-1.45, 5);
\foreach \x in {-6.45,3.55, 8.45}
\draw [dashed] (\x,0.5) -- (\x, 4.5);

\draw[->] (10,2) -- (11,2);

\foreach \x in {12,13,14,15,16}
\foreach \y in {10,-2.5}
\node at (\x,\y) {$\vdots$};

\foreach \y in {7,8,9}
\foreach \x in {12,13,14,15,16}
\draw [radius=2mm, fill=black] (\x,\y) circle;
\foreach \x in {13,14,15,16}
\draw [radius=2mm, fill=black] (\x,6) circle;
\draw (11.85, 6) -- (12.15, 6);

\foreach \x in {12,13,14,15,16}
\draw [radius=2mm, fill=black] (\x,5) circle;

\foreach \x in {13,15,16}
\draw [radius=2mm, fill=black] (\x,4) circle;
\foreach \x in {12,14}
\draw (\x-0.15, 4) -- (\x+ 0.15, 4);

\foreach \x in {13,15}
\draw [radius=2mm, fill=black] (\x,3) circle;
\foreach \x in {12,14,16}
\draw (\x-0.15, 3) -- (\x+ 0.15, 3);

\foreach \x in {13,15}
\draw [radius=2mm, fill=black] (\x,2) circle;
\foreach \x in {12,14,16}
\draw (\x-0.15, 2) -- (\x+ 0.15, 2);

\foreach \x in {12,13,14,15,16}
\foreach \y in {-2,-1,0,1}
\draw (\x-0.15, \y) -- (\x+ 0.15, \y);
\foreach \y in {-1,1}
\draw [radius=2mm, fill=black] (13,\y) circle;

\draw[dashed] (11.5,5.5) -- (16.5,5.5);

\node [right] at (16.5, 2.5) {$\UU(\beta_{\bt}(\bl))$};

\end{tikzpicture}
\end{center}
Thus $\bij_{4,5}(\bl;\bt) = (((1), (1), \varnothing, \varnothing,\varnothing); (0,6,1,4,2))$, and $\mv_e(\bl;\bt) = (0,1,0,1)$.
\end{eg}

\begin{rem}
Let $(\bl;\bt) \in \PP^{\ell} \times \ZZ^{\ell}$ with $\bij_{\ell,e}(\bl;\bt) = (\bm;\bu)$.
A node $(a,b,j)$ of $[\bm]$ corresponds bijectively to a triple $(x,y,z) \in \ZZ^3$ with $x < y \leq z$ and $x\equiv_e y \equiv_e z$ such that $x \notin \UU(\beta_{\bt}(\bl))$, $z \in \UU(\beta_{\bt}(\bl))$, and
$y = \cont_{\bu}(a,b,j) e + j-1$.
Since  $\res^{\bu}_{\ell}(a,b,j) \equiv_{\ell} \cont_{\bu}(a,b,j)$,
we see that $(a,b,j)$ has $(\ell,\bu)$-residue $i$ if and only if $y \in \uu_{\ell-i}(\ZZ)$.
From this description of nodes of $[\bm]$ with $(\ell,\bu)$-residue $i$, one can see that our definition of the moving vector is equivalent to that first defined in \cite{LQ-movingvector}, even though the latter only does so for $\bt \in \AAbar$.
\end{rem}

We record the following result which is already obtained in the proof of Lemma \ref{L:ell-residue}(2).

\begin{cor} \label{C:t*}
Let $(\bl;\bt) \in \PP^{\ell} \times \ZZ^{\ell}$, and let $\UU(\beta_{\bt^*}(\bl^*)) = \core_e(\bl;\bt)$.  Then
$\bt^* = (t^*_1,\dotsc, t^*_{\ell})$ satisfies $$t^*_i = t_i - C_{\ell-i} + C_{\ell-i+1}$$ for all $i \in [1,\, \ell]$, where $C_k$ is the number of nodes of $\bm$ with $(\ell,\bu)$-residue $k$ and $\bij_{\ell,e}(\bl;\bt) = (\bm;\bu)$, and $\ell-i+1$ is to be read as $0$ if $i = 1$.

In particular, $\bt^*$ is completely determined by $\bt$ and $\mv_e(\bl;\bt)$.
\end{cor}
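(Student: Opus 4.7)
The plan is to extract the formula directly from the bead-slide analysis already carried out inside the proof of Lemma~\ref{L:ell-residue}(2); no new ideas are required. I would revisit how removing a single removable node of $\bm$ with $(\ell,\bu)$-residue $i$ translates back through $\bij_{\ell,e}$ into an operation on $\beta_{\bt}(\bl)$. As recorded in that proof, such a removal corresponds to a bead slide in $\UU(\beta_{\bt}(\bl))$ from a position $x = \uu_{\ell-i}(y)$ to $x - e$. By Lemma~\ref{L:Uglov map}(4), the target $x - e$ equals $\uu_{\ell-i+1}(y)$ when $i \in [1,\ell-1]$ and $\uu_{1}(y-e)$ when $i = 0$, so in either case the bead leaves the $(\ell-i)$-th component of $\beta_{\bt}(\bl)$ and enters its $(\ell-i+1)$-th component (with the cyclic convention that $\ell+1$ is read as $1$); consequently $t_{\ell-i}$ drops by $1$ and $t_{\ell-i+1}$ rises by $1$.

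Next, I would iterate this process over all removable nodes until $\bm$ is reduced to $\EP$, at which point the associated $\ell$-charge is $\bt^*$ by definition. For each $j \in [1,\ell]$, the net change to $t_j$ is the number of residue-$i$ removals with $\ell - i \equiv j \pmod{\ell}$ (each contributing $-1$), namely $-C_{\ell - j}$, plus the number of residue-$i$ removals with $\ell - i + 1 \equiv j \pmod{\ell}$ (each contributing $+1$), namely $+C_{\ell - j + 1}$ (read as $C_0$ when $j = 1$). Summing yields $t^*_j = t_j - C_{\ell-j} + C_{\ell-j+1}$, which is the claimed formula. The final assertion is then immediate, since $\mv_e(\bl;\bt) = (C_{\ell-1},\dotsc,C_0)$ encodes precisely the tuple of $C_k$'s appearing on the right-hand side.

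I do not anticipate a genuine obstacle: the substantive work has already been done inside the proof of Lemma~\ref{L:ell-residue}(2). The only delicate step is the bookkeeping of cyclic indices; one must carefully match the wrap-around of $\uu_{\ell}$ into $\uu_{1}$ (and the corresponding shift of the first component of $\beta_{\bt}(\bl)$) with the subscript convention stating that $\ell - i + 1$ is read as $0$ when $i = 1$.
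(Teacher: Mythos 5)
Your proposal is correct and takes essentially the same route as the paper: the paper's proof of this corollary is just the remark that it was already obtained inside the proof of Lemma~\ref{L:ell-residue}(2), and you are carefully unwinding that same bead-slide iteration, with the cyclic bookkeeping via Lemma~\ref{L:Uglov map}(4) handled correctly. One small bonus of your re-derivation: the formula $t^*_{\ell-i} = t_{\ell-i} - C_i + C_{i-1}$ displayed inside the proof of Lemma~\ref{L:ell-residue}(2) appears to have an index typo (it should read $C_{i+1}$, matching the Corollary statement), and your argument gives the corrected indices directly.
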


The following lemma generalises \cite[Lemma 4.3.1]{LQ-movingvector}:

\begin{lem} \label{L:mv-construction}
Let $\bl = (\lambda^{(1)},\dotsc, \lambda^{(\ell)}) \in \PP^{\ell}$ and let $\bt = (t_1,\dotsc, t_{\ell}) \in \ZZ^{\ell}$.
%Let $\mv_e(\bl;\bt) = (m_1,\dotsc, m_{\ell})$.
Let $k_1,\dotsc, k_{\ell-1} \in \mathbb{Z}_{\geq 0}$ and define
  $\bu = (u_1,\dotsc, u_{\ell}) \in \ZZ^{\ell}$ by
  $u_i := t_i + k_i - k_{i-1}$
  for all $i \in [1,\,\ell]$, where $k_0 = k_{\ell} = 0$.
If $t_1\leq \dotsb \leq t_{\ell}$ and $u_1 \leq \dotsb \leq u_{\ell}$, then there exists $\bm \in \PP^{\ell}$ such that $\core_e(\bm;\bu) = \core_e(\bl;\bt)$ and $\mv_e(\bm;\bu) = \mv_e(\bl;\bt) + (k_1,\dotsc, k_{\ell-1},0)$.
\end{lem}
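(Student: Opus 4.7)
My plan is to transfer the problem via the rank-level duality map $\bij_{\ell, e}$ of Subsection \ref{SS:Uglov} to a question about $e$-multipartitions with prescribed residue counts.  Setting $(\bnu; \bv) := \bij_{\ell, e}(\bl; \bt)$ and writing $(C_{\ell-1}, \dotsc, C_0) := \mv_e(\bl;\bt)$, I would look for $\bnu' \in \PP^e$ such that the preimage $(\bm; \bu') := \bij_{\ell,e}^{-1}(\bnu'; \bv)$ has all the properties we want.  The core condition $\core_e(\bm; \bu') = \core_e(\bl;\bt)$ is automatic for any such choice, since the $e$-core of $\UU(\beta_{\bu'}(\bm))$ is determined by the runner charges $\bv$ alone (cf.\ Lemma \ref{L:ell-residue}(1)).

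For the charge compatibility $\bu' = \bu$, I would apply Corollary \ref{C:t*} to both $(\bl;\bt)$ and $(\bm;\bu')$.  They share the same core charge $\bt^*$ (their cores coincide), so the corollary gives $u'_i - t_i = (C'_{\ell-i} - C_{\ell-i}) - (C'_{\ell-i+1} - C_{\ell-i+1})$, where $C'_k$ is the residue count for $\bnu'$.  If $\bnu'$ has $C_{\ell-j} + k_j$ nodes of $(\ell, \bv)$-residue $\ell - j$ for each $j \in [1,\, \ell-1]$ and $C_0$ nodes of residue $0$ (with $k_\ell := 0$ by convention), a routine substitution collapses the right-hand side to $k_i - k_{i-1}$, matching $u_i - t_i$.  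Hence the problem reduces to exhibiting such a $\bnu'$.

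The main technical step is therefore the construction of $\bnu'$.  My plan is to add nodes to $\bnu$ in a controlled pattern: for each $j \in [1,\, \ell-1]$, adjoin a strip contributing $k_j$ boxes of $(\ell, \bv)$-residue $\ell - j$ to an appropriately chosen component.  The weak monotonicity hypotheses on $\bt$ and $\bu$ (which together amount to $k_{i-1} + k_{i+1} + (t_{i+1} - t_i) \geq 2k_i$ for all $i$) are precisely what provide enough room for such additions to assemble into a valid Young-diagram shape.

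I expect this construction step to be the main obstacle.  A cleaner alternative I would try is induction on $\sum_j k_j$: the base case $\sum k_j = 0$ gives $\bm = \bl$ immediately, and the inductive step decrements a suitable $k_{j_0}$ by one while maintaining the monotonicity of $\bu$.  In degenerate cases where no single-step reduction preserves the hypothesis (for example when $\bu$ becomes constant), I would fall back on a direct abacus manipulation on $\UU(\beta_{\bt}(\bl))$: transferring beads between layers within each runner of the $e$-abacus preserves the runner charges (hence the $e$-core) while realizing the prescribed layer-charge shift $u_i - t_i = k_i - k_{i-1}$, and a careful bookkeeping of which rows the moved beads land on tracks the required increase in each component of the moving vector.
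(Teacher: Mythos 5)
Your high-level framing is right: the problem lives naturally in the rank-level dual picture, the $e$-core is fixed by the runner charges $\bv$ alone, and Corollary~\ref{C:t*} pins down the relationship between the charge shift $u_i - t_i$ and the shift in the moving vector.  Your instinct to induct on $\sum_j k_j$ is also the route the paper takes.  But the inductive step you describe has a real gap.

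You propose decrementing a \emph{single} $k_{j_0}$ by one, falling back to unspecified ``direct abacus manipulation'' when this fails (you flag the constant-$\bu$ case as suspicious).  That suspicion is well founded and the gap is not a fringe degeneracy: take $\ell = 3$, $\bt = (0,1,2)$, $(k_1,k_2) = (1,1)$, giving $\bu = (1,1,1)$.  Decrementing $k_1$ alone yields $(k'_1,k'_2) = (0,1)$ and $\bu' = (0,2,1)$; decrementing $k_2$ alone yields $(1,0)$ and $\bu' = (1,0,2)$ — neither is weakly increasing, so the hypothesis is destroyed either way and induction cannot proceed.  The paper's inductive step is different and is exactly what makes the argument go through: one chooses indices $a < b$ (determined by where the $k_i$'s vanish and where $\bt$ is strictly increasing) and moves a \emph{single bead} from $\B_b$ to $\B_a$ in $\beta_{\bt}(\bl)$.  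This single bead move increments a \emph{contiguous block} $\mv_a, \dotsc, \mv_{b-1}$ of the moving vector by one each, and correspondingly one sets $k'_i = k_i - \bbone_{i\in[a,\,b-1]}$, decrementing the whole range.  The two monotonicity hypotheses on $\bt$ and $\bu$ are used precisely to guarantee that the new charge vector $\bv = \bt + \Be_a - \Be_b$ is still weakly increasing and the new $k'_i$'s are still nonnegative with $\bu$ still of the required form relative to $\bv$.  In the example above this chooses $a=1$, $b=3$, decrements both $k_1,k_2$, and lands at $\bu = \bt$ in one step.  Without this ``contiguous block'' decrement, or an explicit workout of your fallback, the proof is incomplete.
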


\begin{proof}

Note first that $\sum_{i=1}^{\ell} u_i = \sum_{i=1}^{\ell} t_i$.

We prove by induction on $K = \sum_{i=1}^{\ell-1} k_i$.
If $K = 0$, then $k_i = 0$ for all $i \in [1,\,\ell-1]$, so that $\bu = \bt$, and hence we can let $\bm = \bl$.
If $K > 0$, let $M = \max\{ i \in [1,\, \ell-1] \mid k_i > 0\}$.
Then $u_{M+1} = t_{M+1} - k_M < t_{M+1}$, and $u_i = t_i$ for all $i > M+1$.
Since $\sum_{i=1}^{\ell} u_i = \sum_{i=1}^{\ell} t_i$, there exists $j \in [1,\,\ell]$ such that $u_j > t_j$, and we let $m$ be the largest such index.
Then $m \leq M$,
%and $u_j \geq t_j$ for all $j > m$,
so that
$t_m < u_m \leq u_{M+1} < t_{M+1}$, hence
$$t_m + 1 \leq u_m \leq u_{M+1} \leq t_{M+1}-1.$$
Let
\begin{align*}
a &:= \max\{ i \in [1,\ell] \mid t_i = t_m \}; \\
b &:= \min\{ i \in [1,\ell] \mid t_i = t_{M+1} \}.
\end{align*}
Then $m \leq a < b \leq M+1$, and $t_a < t_i < t_b$ for all $i \in [a+1,\,b-1]$.
Let $\beta_{\bt}(\bl) = (\B^{(1)},\dotsc, \B^{(\ell)})$.
Since $\fs(\B^{(a)}) = t_a = t_m < t_{M+1} = t_b = \fs(\B^{(b)})$,
there exists $x \in \B^{(b)}$ such that $x \notin \B^{(a)}$ by Lemma \ref{L:hub}(1).
For $i \in [1,\,\ell]$, let
$$
\C^{(i)} =
\begin{cases}
\B^{(a)} \cup \{x \}, &\text{if } i = a; \\
\B^{(b)} \setminus \{x \}, &\text{if } i = b; \\
\B^{(i)}, &\text{otherwise}.
\end{cases}
$$
Let $(\bnu; \bv) \in \PP^{\ell} \times \ZZ^{\ell}$ be such that $\beta_{\bv}(\bnu) = (\C^{(1)},\dotsc, \C^{(\ell)})$.
Then
\begin{align*}
\core_e(\bnu;\bv) &= \core_e(\bl;\bt),\qquad
\mv_e(\bnu;\bv) = \mv_e(\bl;\bt) + \smash{\sum_{i=a}^{b-1}} \Be_{i}.
\end{align*}
If $\bv = (v_1,\dotsc, v_{\ell})$, then $v_i = t_i + \delta_{ia} - \delta_{ib}$ for all $i \in [1,\ell]$, and so
$$
v_{i+1} - v_i = t_{i+1} - t_i + \delta_{i,a-1} - \delta_{ia} + \delta_{ib} - \delta_{i,b-1}
\geq 0
$$
for all $i \in [1,\,\ell-1]$,
since $t_{a+1} > t_a$ and $t_b > t_{b-1}$ and $t_b - t_a = t_M - t_m \geq 2$.
Moreover, if $i > a$, then $i > m$, so that $t_i \geq u_i$ by definition of $m$.
Since $u_i = t_i + k_i - k_{i-1}$, this yields $k_{i-1} \geq k_i$ when $i > a$.
In particular, we have
$$
k_a \geq k_{a+1} \geq \dotsb \geq k_{M} > 0.
$$
For each $i \in [1,\,\ell-1]$,
let $k'_i := k_i - \bbone_{i \in [a,\,b-1]}$.
Then $k'_i \in \ZZ_{\geq 0}$ and
\begin{align*}
u_i = t_i + k_i - k_{i-1}
= (v_i - \delta_{ia}+ \delta_{ib}) + (k'_i - k'_{i-1} + \delta_{ia} - \delta_{ib})
= v_i + k'_i - k'_{i-1}
\end{align*}
for all $i \in [1,\,\ell]$, where $k'_0 = k'_{\ell} = 0$.
Since $\sum_{i=1}^{\ell-1} k'_i = \sum_{i=1}^{\ell-1} k_i - (b-a) < \sum_{i=1}^{\ell-1} k_i$, by induction, there exists $\bm \in \PP^{\ell}$ such that
\begin{align*}
\core_e(\bm;\bu) &= \core_e(\bnu;\bv) = \core_e(\bl;\bt); \\
\mv_e(\bm;\bu) &= \mv_e(\bnu;\bv) + (k'_1,\dotsc, k'_{\ell-1},0)
= \mv_e(\bl;\bt) + (k_1,\dotsc, k_{\ell-1}, 0),
\end{align*}
as desired.
\end{proof}

\begin{lem} \label{L:mv}
Let $(\bl;\bt) \in \PP^{\ell} \times \ZZ^{\ell}$, and let $\rr_{\ell} = (1,2,\dotsc, \ell) \in \sym{\ell}$.
\begin{enumerate}
\item
Then
$$\mv_e((\bl;\bt)^{\rr_{\ell}\Be_{\ell}}) = (\mv_e(\bl;\bt))^{\rr_{\ell}}.$$

\item If $w, w' \in \EW_{\ell}$ such that $\bt^w, \bt^{w'} \in \AAbar$, then $$\mv_e((\bl;\bt)^{w'}) = (\mv_e((\bl;\bt)^{w}))^{\rr_{\ell}^k}$$ for some $k \in \ZZ/\ell\ZZ$.

\item If $w, w' \in \EW_{\ell}$ such that $\bt^w = \bt^{w'} \in \AAbar$, then
$$\mv_e((\bl;\bt)^w) = \mv_e((\bl;\bt)^{w'}).$$
\end{enumerate}
\end{lem}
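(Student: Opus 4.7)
The plan is to prove the three parts in the order (1), (3), (2): part (1) is an immediate consequence of Lemma~\ref{L:ell-residue}(3); part (3) exploits the uniqueness statement in (the proof of) Lemma~\ref{L:ell-residue}(2); and part (2) combines the previous two via Lemma~\ref{L:AA}(5).

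\emph{Part (1).}  Let $\bij_{\ell,e}(\bl;\bt)=(\bm;\bu)$ and write $\mv_e(\bl;\bt)=(C_{\ell-1},\dotsc,C_0)$.  By Lemma~\ref{L:ell-residue}(3), $\bij_{\ell,e}((\bl;\bt)^{\rr_\ell\Be_\ell})=(\bm;\bu+\bone)$.  Since every $(\ell,\bu)$-content of a node of $\bm$ increases by $1$ when $\bu$ is replaced by $\bu+\bone$, every $(\ell,\bu)$-residue shifts by $+1\pmod\ell$, so the new residue multiplicities satisfy $C'_i=C_{i-1}$ and $\mv_e((\bl;\bt)^{\rr_\ell\Be_\ell})=(C_{\ell-2},\dotsc,C_0,C_{\ell-1})$, which is precisely the right place-permutation action of $\rr_\ell=(1,2,\dotsc,\ell)$ on $(C_{\ell-1},\dotsc,C_0)$.

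\emph{Part (3).}  Lemma~\ref{L:ell-residue}(2) shows that $\mv_e(\bl;\bt)$, being the vector of multiplicities of $\res^\bu_\ell(\bm)$, is determined by the triple $(\bt,\core_e(\bl;\bt),\wt_e(\bl;\bt))$, with the core viewed as a $\beta$-set.  I will verify that these three data agree for $(\bl;\bt)^w$ and $(\bl;\bt)^{w'}$: (i) $\bt^w=\bt^{w'}$ by hypothesis; (ii) both weights attain the orbit minimum by Theorem~\ref{T:core-n-weight}(2), hence are equal; (iii) the cores have the same associated partition by Theorem~\ref{T:core-n-weight}(1), and they share the charge $|\bt^w|=|\bt^{w'}|$, via the identity $\fs(\UU(\beta_\bt(\bl)))=|\bt|$.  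The latter is a short check: $\uu_i(\ZZ_{\geq 0})\subseteq\ZZ_{\geq 0}$ and $\uu_i(\ZZ_{<0})\subseteq\ZZ_{<0}$ for every $i$, and the $\uu_i(\ZZ)$'s partition $\ZZ$ by Lemma~\ref{L:Uglov map}(3), so counting beads above $0$ and gaps below $0$ in each $\uu_i$-slice yields $\fs(\UU(\B_1,\dotsc,\B_\ell))=\sum_i\fs(\B_i)$.  Since a $\beta$-set is uniquely determined by its associated partition together with its charge, the two cores coincide as $\beta$-sets, and (3) follows.

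\emph{Part (2).}  Lemma~\ref{L:AA}(5) produces $k\in\ZZ$ with $\bt^{w'}=(\bt^w)^{(\rr_\ell\Be_\ell)^k}$.  Setting $w'':=w(\rr_\ell\Be_\ell)^k$, so that $\bt^{w''}=\bt^{w'}$, part (3) gives $\mv_e((\bl;\bt)^{w''})=\mv_e((\bl;\bt)^{w'})$, while iterating (1) gives $\mv_e((\bl;\bt)^{w''})=(\mv_e((\bl;\bt)^w))^{\rr_\ell^k}$; combining these yields the claim with $k$ replaced by $-k\bmod\ell$.  The step I expect to be subtlest is (iii) in the argument for part (3): one must upgrade equality of cores as partitions to equality as $\beta$-sets, and this upgrade hinges on the small auxiliary identity $\fs(\UU(\beta_\bt(\bl)))=|\bt|$, without which Lemma~\ref{L:ell-residue}(2) cannot be applied directly.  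Everything else is routine bookkeeping.
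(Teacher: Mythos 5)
Your proof is correct and follows essentially the same route as the paper's: part (1) via Lemma~\ref{L:ell-residue}(3), part (3) via Theorem~\ref{T:core-n-weight} plus Lemma~\ref{L:ell-residue}(2), and part (2) via Lemma~\ref{L:AA}(5). Your ordering (1), (3), (2) is arguably a small improvement: the paper proves (2) before (3), but the first equality in its proof of (2), namely $\mv_e((\bl;\bt)^{w'}) = \mv_e\bigl(((\bl;\bt)^w)^{(\rr_{\ell}\Be_{\ell})^k}\bigr)$, needs exactly the content of (3), since Lemma~\ref{L:AA}(5) only gives $\bt^{w'}=\bt^{w(\rr_\ell\Be_\ell)^k}$ and not $w'=w(\rr_\ell\Be_\ell)^k$; you make that dependency explicit. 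Your aside verifying $\fs(\UU(\beta_\bt(\bl)))=|\bt|$ to upgrade equality of cores from partitions to $\beta$-sets is correct and a worthwhile clarification, though the identity is already used elsewhere in the paper (e.g.\ in the proof of Corollary~\ref{C:mv-construction}).
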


\begin{proof} \hfill
\begin{enumerate}
\item
Let $\bij_{\ell,e}(\bl;\bt) = (\bm;\bu)$.
By Lemma \ref{L:ell-residue}(3), $\bij_{\ell,e}((\bl;\bt)^{\rr_{\ell}\Be_{\ell}}) = (\bm;\bu + \bone)$.
A node of $[\bm]$ has $(\ell;\bu)$-residue $i$ if and only if it has $(\ell; \bu+\bone)$-residue $i+1 \pmod{\ell}$.
Consequently, if $C_i(\bnu;\bv)$ denotes the number of nodes of $[\bnu]$ of $(\ell,\bv)$-residue $i$, then $C_i(\bm;\bu) = C_{i+1}(\bm;\bu+\bone)$.
Thus,
\begin{align*}
\mv_e(\bl;\bt) &= (C_{\ell-1}(\bij_{\ell,e}(\bl;\bt)), C_{\ell-2}(\bij_{\ell,e}(\bl;\bt)), \dotsc, C_{0}(\bij_{\ell,e}(\bl;\bt))) \\
&= (C_{\ell-1}(\bm;\bu), C_{\ell-2}(\bm;\bu), \dotsc, C_{0}(\bm;\bu)) \\
&= (C_{0}(\bm;\bu+\bone), C_{\ell-1}(\bm;\bu+\bone), \dotsc, C_{1}(\bm;\bu+\bone)) \\
&= (C_{\ell-1}(\bm;\bu+\bone), C_{\ell-2}(\bm;\bu+\bone), \dotsc, C_{0}(\bm;\bu+\bone))^{\rr_{\ell}^{-1}} \\
&= (C_{\ell-1}(\bij_{\ell,e}((\bl;\bt)^{\rr_{\ell}\Be_{\ell}})), C_{\ell-2}(\bij_{\ell,e}((\bl;\bt)^{\rr_{\ell}\Be_{\ell}})), \dotsc, C_{0}(\bij_{\ell,e}((\bl;\bt)^{\rr_{\ell}\Be_{\ell}})))^{\rr_{\ell}^{-1}} \\
&= (\mv_e((\bl;\bt)^{\rr_{\ell}\Be_{\ell}}))^{\rr_{\ell}^{-1}}
\end{align*}
as desired.

\item
By Lemma \ref{L:AA}(5), $\bt^{w'} = (\bt^w)^{(\rr_{\ell}\Be_{\ell})^k}$ for some unique $k \in \ZZ$.
Consequently,
$$\mv_e((\bl;\bt)^{w'}) =
\mv_e(((\bl;\bt)^w)^{(\rr_{\ell}\Be_{\ell})^k}) =
(\mv_e((\bl;\bt)^w))^{\rr_{\ell}^k}
$$
by part (1).  Since $\rr_{\ell}$ has order $\ell$, $\mv_e((\bl;\bt)^{w'}) = (\mv_e((\bl;\bt)^w))^{\rr_{\ell}^{k \textrm{ mod } \ell}}$ as desired.

\item
By Theorem \ref{T:core-n-weight},
\begin{align*}
 \core_e((\bl;\bt)^w) &= \core_e((\bl;\bt)^{w'}), \\
\wt_e((\bl;\bt)^w) &= \min\{ \wt_e((\bl;\bt)^{\EW_{\ell}}) \} = \wt_e((\bl;\bt)^{w'}).
\end{align*}
By Lemma \ref{L:ell-residue}, $\mv_e((\bl;\bt)^w)$ and $\mv_e((\bl;\bt)^{w'})$ are both completely determined by $\bt^w = \bt^{w'}$, $\core_e((\bl;\bt)^w) = \core_e((\bl;\bt)^{w'})$ and $\wt_e((\bl;\bt)^w) = \wt_e((\bl;\bt)^{w'})$.
Thus, $$\mv_e((\bl;\bt)^w) = \mv_e((\bl;\bt)^{w'}).$$
\end{enumerate}
\end{proof}

Lemmas \ref{L:mv-construction} and \ref{L:mv}(1) give us the following partial converse to Corollary \ref{C:t*}, which is a generalisation of \cite[Lemma 4.3.2 and Remark 4.3.3]{LQ-movingvector}.

\begin{cor} \label{C:mv-construction}
Let $(\bl^*;\bt^*) \in \PP^{\ell} \times \ZZ^{\ell}$ be such that $\UU(\beta_{\bt^*}(\bl^*))$ is an $e$-core.
Let $\mathbf{m} = (m_1,\dotsc, m_{\ell}) \in (\ZZ_{\geq 0})^{\ell}$, and suppose that $\bt = (t_1,\dotsc, t_{\ell}) \in \ZZ^{\ell}$ satisfies
$t_i = t^*_i + m_i - m_{i-1}$ for all $i\in [1,\, \ell]$, where $\bt^* = (t^*_1,\dotsc, t^*_{\ell})$ and $m_0 = m_{\ell}$.
If $t_1 \leq \dotsb \leq t_{\ell}$, then there exists $\bl = (\lambda^{(1)},\dotsc, \lambda^{(\ell)}) \in \PP^{\ell}$ such that
$\core_e(\bl;\bt) = \UU(\beta_{\bt^*}(\bl^*))$ and
$\mv_e(\bl;\bt) = \mathbf{m}$.  Furthermore, $\wt_e(\lambda^{(i)}) \geq \min\{m_1,\dotsc, m_{\ell}\}$ for some $i \in [1,\,\ell]$.
\end{cor}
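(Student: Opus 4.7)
The plan is to construct $\bl$ by applying Lemma~\ref{L:mv-construction} twice, separated by a cyclic shift via $\rr_{\ell}\Be_{\ell}$ (using Lemma~\ref{L:mv}(1)). A single application of Lemma~\ref{L:mv-construction} forces the last entry of the moving vector to remain unchanged (since $k_{\ell}=0$), so it cannot directly produce the entry $m_{\ell}$; cyclically shifting moves the ``zero slot'' of the moving vector to a different position, so that a second application can fill in the remaining entry before we shift back.

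Since $\UU(\beta_{\bt^*}(\bl^*))$ is an $e$-core, $\mv_e(\bl^*;\bt^*)=\mathbf{0}$ and $\core_e(\bl^*;\bt^*)=\UU(\beta_{\bt^*}(\bl^*))$. I first apply Lemma~\ref{L:mv-construction} to $(\bl^*;\bt^*)$ with $k_i=m_i$ for $i\in[1,\ell-1]$, producing $(\bl^{(1)};\bt^{(1)})$ with $\mv_e=(m_1,\dotsc,m_{\ell-1},0)$, the same $e$-core, and charge $\bt^{(1)}=(t_1^*+m_1,\, t_2^*+m_2-m_1,\dotsc,\, t_{\ell}^*-m_{\ell-1})$. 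Next, applying $\rr_{\ell}\Be_{\ell}$ yields $(\bl^{(2)};\bt^{(2)})$ with $\mv_e=(m_2,\dotsc,m_{\ell-1},0,m_1)$ by Lemma~\ref{L:mv}(1), and with $e$-core $\UU(\beta_{\bt^*}(\bl^*))^{+e}$. A second application of Lemma~\ref{L:mv-construction}, with $k_{\ell-1}=m_{\ell}$ and $k_i=0$ otherwise, then produces $(\bl^{(3)};\bt^{(3)})$ with $\mv_e=(m_2,\dotsc,m_{\ell},m_1)$. Applying $(\rr_{\ell}\Be_{\ell})^{-1}$ recovers the desired $\bl$: a direct computation shows $\bt=(\bt^{(3)})^{(\rr_{\ell}\Be_{\ell})^{-1}}$ matches the given charge, Lemma~\ref{L:mv}(1) gives $\mv_e(\bl;\bt)=(m_2,\dotsc,m_{\ell},m_1)^{\rr_{\ell}^{-1}}=\mathbf{m}$, and the shift and inverse-shift cancel on the $e$-core to leave $\UU(\beta_{\bt^*}(\bl^*))$.

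The main obstacle is verifying the weakly-increasing hypothesis of Lemma~\ref{L:mv-construction} at each of its two applications: namely, that $\bt^*$, $\bt^{(1)}$, and $\bt^{(2)}$ are all weakly increasing. The hypothesis $t_1\leq\dotsb\leq t_{\ell}$ does not automatically yield the analogous properties for $\bt^*$ or the intermediate charges (as can be seen from examples where $\bt^*$ itself fails to be sorted), so a more delicate splitting of the $k_i$'s across the two applications, possibly iterating the two-step procedure many times with small increments (say, moving one bead at a time), will be needed to keep each intermediate charge weakly increasing. For the ``Furthermore'' assertion, I plan to track the bead movements executed by Lemma~\ref{L:mv-construction}: each move from $\B_b$ to $\B_a$ modifies only $\lambda^{(a)}$ and $\lambda^{(b)}$, and a pigeonhole-style argument on the moves contributing to each $i$-th entry of the moving vector should produce at least one component $\lambda^{(i)}$ accumulating at least $\min\{m_1,\dotsc,m_{\ell}\}$ bead shifts, each contributing one to its $e$-weight, yielding $\wt_e(\lambda^{(i)})\geq\min\{m_1,\dotsc,m_{\ell}\}$.
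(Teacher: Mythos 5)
Your proposal takes a genuinely different route from the paper, but it has a gap that you yourself identify, and that gap is real, not a technicality you can brush past.

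Concretely: your first application of Lemma~\ref{L:mv-construction} with $k_i=m_i$ for $i\in[1,\ell-1]$ produces the target charge $\bt^{(1)} = \bt + m_\ell(\Be_1 - \Be_\ell)$, and this need not be weakly increasing even when $\bt$ is. For instance take $\ell=3$, $\bt^*=(0,0,0)$, $\mathbf{m}=(1,2,3)$; then $\bt=(-2,1,1)$ is weakly increasing, but $\bt^{(1)}=(1,1,-2)$ is not. So Lemma~\ref{L:mv-construction} simply does not apply at your first step. The suggestion at the end — iterating with small increments and "a more delicate splitting" — is not a proof, and it is unclear how any splitting would restore the hypothesis, since the problem is structural: producing a nonzero last entry of the moving vector via Lemma~\ref{L:mv-construction} after a shift forces the charge to temporarily leave the weakly increasing cone. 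The pigeonhole argument for the "Furthermore" clause is likewise only a plan: the bead moves in Lemma~\ref{L:mv-construction} occur \emph{between} the $\ell$ components of $\beta_{\bt}(\cdot)$, and such a move does not in general add to the $e$-weight of either individual partition involved, so it is not true that "each contributing one to its $e$-weight."

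The paper avoids both problems by a different decomposition. Rather than invoking Lemma~\ref{L:mv-construction} twice, it first assumes $m_\ell=\min_i m_i$, sets $\mathbf{m}'=\mathbf{m}-m_\ell\bone$ (which has $m'_\ell=0$ and satisfies the \emph{same} relation $t_i=t^*_i+m'_i-m'_{i-1}$), applies Lemma~\ref{L:mv-construction} once to land exactly at the given charge $\bt$ with moving vector $\mathbf{m}'$, and then simply appends $m_\ell e$ boxes to the first row of $\mu^{(\ell)}$. That last operation does not change the charge, shifts $\mv_e$ by $m_\ell\bone$, and manifestly forces $\wt_e(\nu)\geq m_\ell$, which is exactly the "Furthermore" clause. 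The general case is handled by cyclically shifting so the minimum lands in position $\ell$, invoking the special case, and shifting back — your cyclic-shift idea appears in the paper, but only once and only to move the \emph{minimum} entry, not to cycle the zero slot. This reordering is essential: it is what keeps the intermediate data inside the regime where Lemma~\ref{L:mv-construction} actually applies.
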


\begin{proof}
Since $\UU(\beta_{\bt^*}(\bl^*))$ is an $e$-core, $\bt^* \in \AAbar$ by \cite[Proposition 2.14]{JL-cores}.
In particular, $t^*_1 \leq \dotsb \leq t^*_{\ell}$.
We note also that $|\bt| = |\bt^*|$.

We assume first that $m_{\ell} \leq m_i$ for all $i \in [1,\ell]$.
Let $\mathbf{m}' = (m'_1,\dotsc, m'_{\ell}) := \mathbf{m} - m_{\ell}\bone$.
Then $m'_{\ell} = 0$, and $m'_i \in \ZZ_{\geq 0}$ and
$t_i = t^*_i +m'_i - m'_{i-1}$ for all $i \in [1,\,\ell]$.
Thus, there exists $\bm = (\mu^{(1)},\dotsc, \mu^{(\ell)})\in \PP^{\ell}$ such that
\begin{alignat*}{2}
\core_e(\bm;\bt) &= \core_e(\bl^*;\bt^*) = \core_e(\UU(\beta_{\bt^*}(\bl^*))) =  \UU(\beta_{\bt^*}(\bl^*)), \\
\mv_e(\bm;\bt) &= \mv_e(\bl^*;\bt^*) + \mathbf{m}' = \mathbf{m}'
\end{alignat*}
by Lemma \ref{L:mv-construction}.
Let $\nu$ be the partition obtained from $\mu^{(\ell)}$ by adding $m_{\ell}e$ nodes to its first row.
Then $\wt_e(\nu) \geq m_{\ell}$.
Let $\bl = (\mu^{(1)}, \dotsc, \mu^{(\ell-1)}, \nu)$.
Then $\UU(\beta_{\bt}(\bl))$ can be obtained from $\UU(\beta_{\bt}(\bm))$ by replacing $\uu_{\ell}(x)$ with $\uu_{\ell}(x+ m_{\ell} e)  = \uu_{\ell}(x) + m_{\ell} e \ell$, where $x= \max(\beta_{t_{\ell}}(\mu^{(\ell)}))$.
Consequently, %$\core_e(\bl;\bt) = \core_e(\bm;\bt) = \core_e(\bl^*;\bt^*) = \UU(\beta_{\bt^*}(\bl^*))$ and
\begin{align*}
\core_e(\bl;\bt) &= \core_e(\bm;\bt) %= \core_e(\bl^*;\bt^*)
= \UU(\beta_{\bt^*}(\bl^*)); \\
\mv_e(\bl;\bt) &= \mv_e(\bm;\bt) + m_{\ell} \bone = \mathbf{m'} + m_{\ell}\bone = \mathbf{m}.
\end{align*}

In general, there exists $i \in [1,\ell]$ such that $m_i \leq m_j$ for all $j \in [1,\,\ell]$.
%Let $k = \ell - i$.
Then $(\bt^*)^{(\rr_{\ell}\Be_{\ell})^i} \in \AAbar$ by Lemma \ref{L:AA}(2).
Thus
$$\wt_e((\bl^*;\bt^*)^{(\rr_{\ell}\Be_{\ell})^i}) = \min(\wt_e((\bl^*;\bt^*)^{\EW_{\ell}})) = \wt_e(\bl^*;\bt^*) = 0$$
by Theorem \ref{T:core-n-weight}(2),
so that
$\UU(\beta_{(\bt^*)^{(\rr_{\ell}\Be_{\ell})^i}}( (\bl^*)^{\rr_{\ell}^i}))$ is an $e$-core.
Let
\begin{alignat*}{2}
\mathbf{m}' &= (m'_1,\dotsc, m'_{\ell}) &&:= \mathbf{m}^{\rr_{\ell}^i}; \\
\bu   &= (u_1,\dotsc, u_{\ell}) &&:= \bt^{(\rr_{\ell}\Be_{\ell})^i}; \\
\bu^* &= (u^*_1,\dotsc, u^*_{\ell}) &&:= (\bt^*)^{(\rr_{\ell}\Be_{\ell})^i}.
\end{alignat*}
Then $m'_j = m_{j+i \textrm{ mod } \ell}$,
$u_j = t_{j+i \textrm{ mod } \ell} + e\bbone_{j+i > \ell}$
and $u^*_j = t^*_{j+i \textrm{ mod } \ell} + e\bbone_{j+i > \ell}$ for all $j \in [1,\,\ell]$.
In particular, $m'_{\ell} = m_i \leq m'_j$  and
\begin{align*}
u_j &= t_{j+i \textrm{ mod } \ell} + e\bbone_{j+i > \ell} \\
&= t^*_{j+i \textrm{ mod } \ell} + m_{j+i \textrm{ mod } \ell} - m_{j+i-1 \textrm{ mod } \ell} + e\bbone_{j+i > \ell} \\
&= u^*_j + m'_{j} - m'_{j-1}
\end{align*}
for all $j \in [1,\,\ell]$.
Consequently, we can conclude from above that there exists $\bm = (\mu^{(1)},\dotsc, \mu^{(\ell)}) \in \PP^{\ell}$ such that $\core_e(\bm;\bu) = \UU(\beta_{\bu^*}((\bl^*)^{\rr_{\ell}^i}))$,
$\mv_e(\bm;\bu) = \mathbf{m}'$ and $\wt_e(\mu^{(\ell)}) \geq m'_{\ell} = m_i$.
Let $\bl = \bm^{\rr_{\ell}^{-i}}$.
Then
$$
\mv_e(\bl;\bt) = \mv_e((\bm;\bu)^{(\rr_{\ell}\Be_{\ell})^{-i}}) = (\mv_e(\bm;\bu))^{\rr_{\ell}^{-i}} = (\mathbf{m}')^{\rr_{\ell}^{-i}} = \mathbf{m},
$$
where the second equality follows from Lemma \ref{L:mv}(1).
Furthermore,
\begin{align*}
\beta^{-1}(\core(\bl;\bt)) &= \beta^{-1}(\core_e((\bm;\bu)^{(\rr_{\ell}\Be_{\ell})^{-i}})) = \beta^{-1}(\core_e(\bm;\bu)) = \beta^{-1}(\UU(\beta_{\bu^*}((\bl^*)^{\rr_{\ell}^i}))) \\
&= \beta^{-1}(\core_e((\bl^*;\bt^*)^{(\rr_{\ell}\Be_{\ell})^i}))
= \beta^{-1}(\core_e(\bl^*;\br^*))
= \beta^{-1}(\UU(\beta_{\bt^*}(\bl^*)))
\end{align*}
by Theorem \ref{T:core-n-weight}(1),
so that $\core(\bl;\bt) = \UU(\beta_{\bt^*}(\bl^*))$ since $\fs(\core(\bl;\bt)) = |\bt| = |\bt^*| = \fs(\UU(\beta_{\bt^*}(\bl^*)))$.
In addition, $\wt_e(\lambda^{(i)}) = \wt_e(\mu^{(\rr_{\ell}^{-i}(i))}) = \wt_e(\mu^{(\ell)}) \geq m_i$, where $\bl = (\lambda^{(1)},\dotsc, \lambda^{(\ell)})$.
\end{proof}

Recall the left action $\DDot{}$ of $\AW_e$ on $\PP^{\ell} \times \ZZ^{\ell}$ as described in Subsection \ref{SS:Weyl}.

\begin{lem} \label{L:cs_i-residue}
Let $(\bl; \bt) \in \PP^{\ell} \times \ZZ^{\ell}$.
\begin{enumerate}
\item  There exists $(\bm;\bt) \in \AW_e \DDot{} (\bl;\bt)$ such that $\hub_j(\bm;\bt) \leq 0$ for all $j \in \ZZ/e\ZZ$.  %    In addition, when this happens, $\core_e(\UU(\beta_{\bt}(\bm))) = \UU(\beta_{\bu}(\EP))$ for some $\bu \in \AAbar$.
\item
%$\res_{\ell}(\bij_{\ell,e}(\cs_i \DDot{} (\bl;\bt))) = \res_{\ell}(\bij_{\ell,e}(\bl;\bt))$ for all $i \in \ZZ/e\ZZ$.
%Equivalently,
For each $j \in \ZZ/e\ZZ$,
$\mv_e(\cs_j \DDot{} (\bl;\bt)) = \mv_e(\bl;\bt)$.
\end{enumerate}
\end{lem}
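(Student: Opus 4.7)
For part (1), the plan is a termination-by-descent argument on $|\bl|$. If $\hub_j(\bl;\bt) \leq 0$ for every $j$ already, I take $\bm = \bl$. Otherwise I pick any $j$ with $\hub_j(\bl;\bt) > 0$ and replace $\bl$ by $\cs_j \DDot{\bt} \bl$. Since $\cs_j \DDot{\bt}$ deletes every removable $j$-node and adds every addable $j$-node of $\bl$, the definition of the hub gives immediately
$$|\cs_j \DDot{\bt} \bl| = |\bl| - \hub_j(\bl;\bt) < |\bl|.$$
Iterating produces a strictly decreasing sequence of non-negative integers, which must terminate, and the terminal $\bm$ lies in $\AW_e \DDot{} (\bl;\bt)$ and satisfies $\hub_j(\bm;\bt) \leq 0$ for every $j \in \ZZ/e\ZZ$.

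For part (2), the plan is to compute $\bij_{\ell,e}(\cs_j \DDot{} (\bl;\bt))$ explicitly and then compare multisets of $(\ell,\cdot)$-residues. Writing $\bij_{\ell,e}(\bl;\bt) = (\bm;\bu)$, successive applications of \eqref{E:t-action}, \eqref{E:cs_i-U}, \eqref{E:CDot-tuple-beta-sets}, and Lemma \ref{L:Weyl action}(1) will yield
$$
\quot_e(\UU(\beta_{\bt}(\cs_j \DDot{\bt} \bl))) \;=\; \cs_j \CDot{\ell} \beta_{\bu}(\bm) \;=\; \beta_{\cs_j \CDot{\ell} \bu}\!\left(\bm^{\overline{\cs_j}^{-1}}\right),
$$
so that $\bij_{\ell,e}(\cs_j \DDot{} (\bl;\bt)) = (\bm^{\overline{\cs_j}}; \cs_j \CDot{\ell} \bu)$ (using that $\overline{\cs_j}$ is an involution).

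It then remains to check that the transformation $(\bm;\bu) \mapsto (\bm^{\overline{\cs_j}}; \cs_j \CDot{\ell} \bu)$ preserves the multiset of $(\ell,\cdot)$-residues of nodes, and this is where the only real subtlety lies. For $j \in [1,\,e-1]$ it is immediate: $\overline{\cs_j} = (j,j+1) \in \sym{e}$ is a simple transposition that swaps the corresponding components of $\bm$ and $\bu$ in parallel, a pure relabelling. For $j = 0$, $\overline{\cs_0} = (1,e)$ swaps components $1$ and $e$, while $\cs_0 \CDot{\ell}$ additionally sends the first charge to $u_e + \ell$ and the last to $u_1 - \ell$; the key observation is that these $\pm \ell$ adjustments cancel modulo $\ell$ the boundary ``twist'' of the affine generator, so residues are still preserved. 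Hence the counts $C_i$ defining $\mv_e$ are unchanged, giving $\mv_e(\cs_j \DDot{} (\bl;\bt)) = \mv_e(\bl;\bt)$.
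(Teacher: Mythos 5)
Your proof is correct and follows essentially the same route as the paper: a size-minimisation/descent argument for part (1), and for part (2) the same chain of equalities through \eqref{E:t-action}, \eqref{E:cs_i-U}, and \eqref{E:CDot-tuple-beta-sets} to identify $\bij_{\ell,e}(\cs_j \DDot{}(\bl;\bt))$, followed by the observation that the $\CDot{\ell}$-action permutes components (preserving residues by relabelling) and in the affine case shifts charges only by multiples of $\ell$ (invisible modulo $\ell$). The one small stylistic difference is that you invoke Lemma~\ref{L:Weyl action}(1) to rewrite $\cs_j \CDot{\ell}\beta_{\bu}(\bm)$ as a new pair directly, whereas the paper unpacks the $\CDot{\ell}$-action on the tuple of $\beta$-sets component by component; both are valid and reach the same conclusion.
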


\begin{proof} \hfill
\begin{enumerate}
\item Note first that $\AW_e \DDot{} (\bl;\bt) = (\AW_e\DDot{\bt} \bl; \bt)$.  Let $\bm$ be an $\ell$-partition in the orbit $\AW_e \DDot{\bt} \bl$ with the least size.  If $\hub_j(\bm;\bt) > 0$, then $\cs_j \DDot{\bt} \bm \in \AW_e \DDot{\bt} \bl$, and $\cs_j \DDot{\bt} \bm$ has size $|\bm| - \hub_j(\bm;\bt) < |\bm|$, contradicting our choice of $\bm$.

\item
Let $\bij_{\ell,e}(\bl;\bt) = (\bm;\bu)$ and $\bij_{\ell,e}(\cs_j \DDot{} (\bl;\bt)) = (\bnu;\bv)$.  Then
\begin{alignat*}{2}
\quot_e(\UU(\beta_{\bt}(\bl))) &= \beta_{\bu}(\bm)& & = (\beta_{u_1}(\mu^{(1)}),\dotsc, \beta_{u_e}(\mu^{(e)})), \\
\quot_e(\UU(\beta_{\bt}(\cs_i \DDot{\bt} \bl))) &= \beta_{\bv}(\bnu) && = (\beta_{v_1}(\nu^{(1)}),\dotsc, \beta_{v_e}(\nu^{(e)})).
\end{alignat*}
Since $\cs_j \DDot{} (\bl;\bt) = (\cs_j \DDot{\bt} \bl; \bt)$, and
$
\UU(\beta_{\bt}(\cs_j \DDot{\bt} \bl))=
\UU(\cs_j \DDot{1} \beta_{\bt}(\bl))=
\cs_j \DDot{\ell} \UU(\beta_{\bt}(\bl))
$ by \eqref{E:t-action} and \eqref{E:cs_i-U} respectively,
we have
$$
\beta_{\bv}(\bnu) = \quot_e(\UU(\beta_{\bt}(\cs_j \DDot{\bt} \bl))) = \quot_e(\cs_j \DDot{\ell} \UU(\beta_{\bt}(\bl)))
= \cs_j \CDot{\ell} \quot_e(\UU(\beta_{\bt}(\bl))) = \cs_j \CDot{\ell} \beta_{\bu}(\bm),
$$
by \eqref{E:CDot-tuple-beta-sets}, so that
$\beta_{v_{j'}}(\nu^{(j')}) = \beta_{u_{j'}}(\mu^{(j')})$ for all $j' \in [1,\, e] \setminus \{ j,j+1\}$, while
\begin{alignat*}{2}
\beta_{v_j}(\nu^{(j)}) &= (\beta_{u_{j+1}}(\mu^{(j+1)}))^{+(-\delta_{j0}\ell)} &&= \beta_{u_{j+1}-\delta_{je}\ell} (\mu^{(j+1)}),
\\
\beta_{v_{j+1}}(\nu^{(j+1)}) &= (\beta_{u_{j}}(\mu^{(j)}))^{+\delta_{j0}\ell} &&= \beta_{u_j+\delta_{je}\ell}(\mu^{(j)}),
\end{alignat*}
where $u_j$, $v_j$ and $\mu^{(j)}$ and $\nu^{(j)}$ are to be read as $u_e$, $v_e$ and $\mu^{(e)}$ and $\nu^{(e)}$ respectively when $j = 0$.
Consequently, $\res^{\bu}_{\ell}(\bm) = \res^{\bv}_{\ell}(\bnu)$, and hence $$\mv_e(\bij_{\ell,e}(\bl;\bt)) = \mv_e(\bm;\bu) = \mv_e(\bnu;\bv) = \mv_e(\bij_{\ell,e}(\cs_j \DDot{} (\bl;\bt))$$ as desired.
\end{enumerate}
\end{proof}

Recall the Ariki-Koike algebra $\HH_n = \HH_{\FF, q, \br}(n)$.

\begin{lem} \label{L:mv-block}
%Let $w, w' \in \EW_{\ell}$, and suppose that $\br^w = \br^{\HH}$.
Let $w_0 \in \EW_{\ell}$ such that $\br^{w_0} \in \AAF \cap \br^{\EW_{\ell}}$, and let $\bl \in \PP^{\ell}$.
\begin{enumerate}
\item Then
$$\{ \mv_e((\bl;\br)^{w}) \mid w \in \EW_{\ell},\, \br^{w} \in \AAbar \} = \{ (\mv_e((\bl;\br)^{w_0}))^{\rr_{\ell}^i} \mid i \in \ZZ/\ell\ZZ \}.$$

\item If $\bl, \bm$ lie in the same block $B$ of $\HH_n$ and $\br' \in \br^{\EW_{\ell}} \cap \AAbar$, then $\mv_e((\bl;\br)^{w}) = \mv_e((\bm;\br)^{w'})$ for all $w,w'\in \EW_{\ell}$ such that $\br^w =\br^{w'} = \br'$.

\end{enumerate}
\end{lem}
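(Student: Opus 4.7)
The plan is to assemble pieces already established. For part (1), I would first invoke Lemma \ref{L:AA}(5): the action of $\langle \rr_{\ell}\Be_{\ell}\rangle$ on $\br^{\EW_{\ell}} \cap \AAbar$ is transitive, so every $w \in \EW_{\ell}$ with $\br^w \in \AAbar$ satisfies $\br^w = \br^{w_0(\rr_{\ell}\Be_{\ell})^k}$ for a unique $k \in \ZZ$. Lemma \ref{L:mv}(3) then gives
$$\mv_e((\bl;\br)^w) = \mv_e((\bl;\br)^{w_0(\rr_{\ell}\Be_{\ell})^k}) = \mv_e\bigl(((\bl;\br)^{w_0})^{(\rr_{\ell}\Be_{\ell})^k}\bigr),$$
and iterating Lemma \ref{L:mv}(1) reduces this to $(\mv_e((\bl;\br)^{w_0}))^{\rr_{\ell}^k}$. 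Since $\rr_{\ell}$ has order $\ell$, only the class of $k$ modulo $\ell$ matters, establishing the inclusion $\subseteq$. The reverse inclusion is immediate: for each $i \in \ZZ/\ell\ZZ$, the element $w := w_0(\rr_{\ell}\Be_{\ell})^i$ satisfies $\br^w \in \AAbar$ by (iterated application of) Lemma \ref{L:AA}(2), and the same computation delivers $\mv_e((\bl;\br)^w) = (\mv_e((\bl;\br)^{w_0}))^{\rr_{\ell}^i}$.

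For part (2), the starting observation is that $\bl, \bm$ sharing a block $B$ of $\HH_n$ forces, by Theorem \ref{T:Naka}, the equalities $\core_{\HH}(\bl) = \core_{\HH}(\bm)$ and $\wt_{\HH}(\bl) = \wt_{\HH}(\bm)$. Unravelling these $\HH$-invariants using the prescribed $w, w'$ with $\br^w = \br^{w'} = \br'\in\AAbar$, one reads off $\core_e((\bl;\br)^w) = \core_e((\bm;\br)^{w'})$ and $\wt_e((\bl;\br)^w) = \wt_e((\bm;\br)^{w'})$. Lemma \ref{L:ell-residue}(2) then asserts that the multiset $\res^{\bu}_{\ell}(\cdot)$ attached to the image under $\bij_{\ell,e}$ is completely determined by the triple $(\br', \core_e, \wt_e)$, which is now common to both sides. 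Since the moving vector is precisely the (reverse-ordered) tuple of residue-class counts of this multiset, it follows that $\mv_e((\bl;\br)^w) = \mv_e((\bm;\br)^{w'})$, so the common value $\mv^{\br'}(B)$ depends only on $B$ and $\br'$.

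I do not anticipate a significant obstacle; the argument is a careful bookkeeping exercise combining the transitivity in Lemma \ref{L:AA}(5), the covariance of the moving vector under $(\rr_{\ell}\Be_{\ell})$-translation in Lemma \ref{L:mv}(1), the invariance statement of Lemma \ref{L:mv}(3), the block characterisation in Theorem \ref{T:Naka}, and the rigidity of the residue multiset in Lemma \ref{L:ell-residue}(2). The only point that warrants some care is extending the iteration of Lemma \ref{L:mv}(1) to negative exponents $k$, which follows at once from the invertibility of the $(\rr_{\ell}\Be_{\ell})$-action together with a substitution argument.
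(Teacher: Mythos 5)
Your argument is correct, and its core ideas mirror the paper's, though your routing of lemma citations differs slightly in both parts. For part (1) the paper simply cites Lemma \ref{L:mv}(2) and stops; you instead re-derive the content of that lemma inline, combining Lemma \ref{L:AA}(5) (transitivity of the $\langle \rr_{\ell}\Be_{\ell}\rangle$-action), Lemma \ref{L:mv}(3) (independence of the moving vector from the choice of $w$ with fixed $\br^w$), and an iteration of Lemma \ref{L:mv}(1). This is correct and has the virtue of making both inclusions explicit (the paper leaves the reverse inclusion implicit), but it is really an unwinding of Lemma \ref{L:mv}(2)'s own proof. For part (2) you take a genuinely shorter path: the paper first compares $\bl$ and $\bm$ at the \emph{same} $w$ (getting $\mv_e((\bl;\br)^w) = \mv_e((\bm;\br)^w)$ via Theorem \ref{T:Naka} and Lemma \ref{L:ell-residue}(2)) and then calls Lemma \ref{L:mv}(3) once more to pass from $w$ to $w'$ for $\bm$; you instead compare $(\bl;\br)^w$ and $(\bm;\br)^{w'}$ directly, noting that the defining triple $(\br', \core_e, \wt_e)$ in Lemma \ref{L:ell-residue}(2) agrees on both sides because $\core_{\HH}$ and $\wt_{\HH}$ are block invariants independent of the chosen $w$. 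This saves the extra appeal to Lemma \ref{L:mv}(3); the observation that the two $\core_e$'s agree as $\beta$-sets (not just as partitions) follows from the equal charges $|\br^w|=|\br^{w'}|$, which your ``unravelling'' step implicitly but correctly absorbs.
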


\begin{proof}
Part (1) follows from Lemma \ref{L:mv}(2).

For part (2), if $\bl,\bm$ lie in the same block, then
\begin{alignat*}{3}
\core_e((\bl;\br)^w) &= \beta_{|\br^w|} (\core_{\HH}(\bl)) &&= \beta_{|\br^w|} (\core_{\HH}(\bm)) &&= \core_e((\bm;\br)^w) \\
\wt_e((\bl;\br)^w) &= \wt_{\HH}(\bl) &&= \wt_{\HH}(\bm) &&= \wt_e((\bm;\br)^w)
\end{alignat*}
by Theorem \ref{T:Naka},
so that $\mv_e((\bl;\br)^w) = \mv_e((\bm;\br)^w)$ by Lemma \ref{L:ell-residue}(2).
Consequently,
$$
\mv_e((\bl;\br)^w) = \mv_e((\bm;\br)^w) = \mv_e((\bm;\br)^{w'})$$
by Lemma \ref{L:mv}(3).
\end{proof}

\begin{Def}
Given a block of $B$ of $\HH_n$,
we shall denote the common moving vector $\mv_e((\bl;\br)^w)$ for all $\bl \in \PP^{\ell}$ lying in $B$ and all $w\in \EW^{\ell}$ with $\br^w = \br'$ in Lemma \ref{L:mv-block}(2) as
$$\mv^{\br'}(B) = (\mvi^{\br'}_1(B), \dotsc, \mvi^{\br'}_{\ell}(B) ).$$
\end{Def}

\begin{cor}
Let $B$ be a block of $\HH_n$, and let $\br^{\HH} \in \AAF \cap \br^{\EW_{\ell}}$.  Then
$$
\{ \mv^{\br'}(B) \mid \br' \in \br^{\EW^{\ell}} \cap \AAbar \} = \{ (\mv^{\br^{\HH}}(B))^{\rr_{\ell}^i} \mid i \in \ZZ/\ell\ZZ \}.
$$
\end{cor}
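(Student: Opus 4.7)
The plan is to deduce this corollary directly from the two parts of the immediately preceding lemma by choosing a single multipartition $\bl$ lying in $B$ and translating the $w$-indexing into $\br'$-indexing.

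First, fix any $\bl \in \PP^{\ell}$ that lies in $B$; such a choice exists because every block contains at least one Specht module. Since $\br^{\HH} \in \br^{\EW_{\ell}}$, I can pick $w_0 \in \EW_{\ell}$ with $\br^{w_0} = \br^{\HH}$. Part (1) of the preceding lemma then gives
\begin{equation*}
\{ \mv_e((\bl;\br)^{w}) \mid w \in \EW_{\ell},\ \br^{w} \in \AAbar \} = \{ (\mv_e((\bl;\br)^{w_0}))^{\rr_{\ell}^i} \mid i \in \ZZ/\ell\ZZ \}.
\end{equation*}

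Next, I need to identify the left-hand side with $\{ \mv^{\br'}(B) \mid \br' \in \br^{\EW_{\ell}} \cap \AAbar \}$ and the right-hand side with $\{ (\mv^{\br^{\HH}}(B))^{\rr_{\ell}^i} \mid i \in \ZZ/\ell\ZZ \}$. For the left-hand side, observe that as $w$ ranges over all $w \in \EW_{\ell}$ with $\br^w \in \AAbar$, the element $\br^w$ ranges over all of $\br^{\EW_{\ell}} \cap \AAbar$; and by part (2) of the preceding lemma (using $\bl \in B$), for each such $w$ we have $\mv_e((\bl;\br)^w) = \mv^{\br^w}(B)$, which depends only on $\br^w$. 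Hence the two sets on the left are equal. For the right-hand side, $\mv_e((\bl;\br)^{w_0}) = \mv^{\br^{w_0}}(B) = \mv^{\br^{\HH}}(B)$ again by part (2) of the preceding lemma, so the right-hand sides match as well.

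There are no serious obstacles here: the argument is bookkeeping, and the only subtlety is to invoke part (2) of the preceding lemma to guarantee that $\mv^{\br'}(B)$ is well defined independently of both the choice of representative $\bl \in B$ and the choice of $w \in \EW_{\ell}$ with $\br^w = \br'$, so that our chosen $\bl$ faithfully records the block-level moving vectors through the family $\{\mv_e((\bl;\br)^w)\}$.
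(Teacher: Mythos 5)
Your proof is correct and is precisely the argument implicit in the paper, which leaves the corollary unproved as an immediate consequence of the preceding lemma: part (1) gives the set identity after a fixed $\bl$ lying in $B$ is chosen, and part (2) translates $\mv_e((\bl;\br)^w)$ into the block-level notation $\mv^{\br^w}(B)$ on both sides. The only point worth noting is that your choice of $w_0$ with $\br^{w_0} = \br^{\HH} \in \AAF$ exactly matches the hypothesis of part (1), which is what makes the application legitimate.
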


%Since
%$\mv((\bl;\br)^{w_{\br}})$ is completely determined by $\core_{\HH}(\bl)$, $\wt_{\HH}(\bl)$ and $\br^{w_{\br}}$ by Lemma \ref{L:ell-residue}, we see that all $\bl\in \PP^{\ell}$ lying in a block $B$ of $\HH_n$ share the same moving vector.
%We denote this common moving vector of the $\ell$-partitions lying in $B$ as $\mv(B) = (\mv_1(B),\dotsc, \mv_{\ell}(B))$ and call it the {\em moving vector of $B$}.

%By Lemma \ref{L:cs_i-residue}(2), if $\hat{w} \in \EW_{\ell}$ such that $\br^{\hat{w}} \in \AAbar$, we have $\mv(w \DDot{} (\bl;\br)^{\hat{w}}) = \mv(B)$ for all $w \in \AW_e$ and all blocks $B$ of Ariki-Koike algebras (associated with the same $\ell$-charge).  The converse is also true:

\begin{prop} \label{P:moving-vector-Weyl-orbit}
Let $B$ and $C$ be two blocks of Ariki-Koike algebras with the same $\ell$-charge $\br$.
Let $\hat{w} \in \EW_{\ell}$ such that $\br^{\hat{w}} \in \AAbar$.
Then $\mv^{\br^{\hat{w}}}(B) = \mv^{\br^{\hat{w}}}(C)$ if and only if $B = w \DDot{} C$ for some $w \in \AW_e$.
\end{prop}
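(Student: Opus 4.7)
The plan is to exploit the commutativity of the left action $\DDot{}$ of $\AW_e$ and the right action of $\EW_\ell$ on $\PP^\ell \times \ZZ^\ell$ (Lemma~\ref{L:Weyl action}(2)), together with the invariance of the moving vector under each $\cs_j \DDot{}$ (Lemma~\ref{L:cs_i-residue}(2)). The forward direction ($B = w \DDot{} C$ implies equal moving vectors) is essentially formal: pick $\bm \in C$ so that $\bl := w \DDot{\br} \bm \in B$, write $w = \cs_{i_1}\dotsm \cs_{i_k}$, and apply Lemma~\ref{L:Weyl action}(2) repeatedly to obtain $(\bl;\br)^{\hat{w}} = w \DDot{} (\bm^{\hat{w}}; \br^{\hat{w}})$; iterating Lemma~\ref{L:cs_i-residue}(2) then gives $\mv^{\br^{\hat{w}}}(B) = \mv_e((\bl;\br)^{\hat{w}}) = \mv_e((\bm;\br)^{\hat{w}}) = \mv^{\br^{\hat{w}}}(C)$.

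For the converse, assume $\mv^{\br^{\hat{w}}}(B) = \mv^{\br^{\hat{w}}}(C) =: \mathbf{m}$ and pick $\bl \in B$, $\bm \in C$. Applying Lemma~\ref{L:cs_i-residue}(1) to $(\bl^{\hat{w}}; \br^{\hat{w}})$ and $(\bm^{\hat{w}}; \br^{\hat{w}})$ produces words $\cs_{i_1}\dotsm \cs_{i_k}$ and $\cs_{j_1}\dotsm \cs_{j_r}$ in $\AW_e$ such that $\bl'' := \cs_{i_1}\dotsm \cs_{i_k} \DDot{\br^{\hat{w}}} \bl^{\hat{w}}$ and $\bm'' := \cs_{j_1}\dotsm \cs_{j_r} \DDot{\br^{\hat{w}}} \bm^{\hat{w}}$ have all $e$-hubs non-positive. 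Lemma~\ref{L:smallest} then yields unique $\bt^*_B, \bt^*_C \in \AAbar$ with $\core_e(\bl''; \br^{\hat{w}}) = \UU(\beta_{\bt^*_B}(\EP))$ and $\core_e(\bm''; \br^{\hat{w}}) = \UU(\beta_{\bt^*_C}(\EP))$. By Lemma~\ref{L:cs_i-residue}(2), $\mv_e(\bl''; \br^{\hat{w}}) = \mathbf{m} = \mv_e(\bm''; \br^{\hat{w}})$, and hence by Corollary~\ref{C:t*}, $\bt^*_B = \bt^*_C$; the two cores therefore coincide, as do the $e$-weights (both equal $\sum_i m_i$, the sum of the moving-vector components).

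To close the argument, set $\bl' := \cs_{i_1}\dotsm \cs_{i_k} \DDot{\br} \bl$ and $\bm' := \cs_{j_1}\dotsm \cs_{j_r} \DDot{\br} \bm$, which lie respectively in the blocks $B^{\flat} := \cs_{i_1}\dotsm \cs_{i_k} \DDot{} B$ and $C^{\flat} := \cs_{j_1}\dotsm \cs_{j_r} \DDot{} C$. By Lemma~\ref{L:Weyl action}(2), $(\bl')^{\hat{w}} = \bl''$ and $(\bm')^{\hat{w}} = \bm''$, so using $\hat{w}$ as the witness in the definitions of $\core_{\HH}$ and $\wt_{\HH}$ gives $\core_{\HH}(\bl') = \beta^{-1}(\core_e(\bl'';\br^{\hat{w}})) = \beta^{-1}(\core_e(\bm'';\br^{\hat{w}})) = \core_{\HH}(\bm')$ and $\wt_{\HH}(\bl') = \wt_{\HH}(\bm')$. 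By Theorem~\ref{T:Naka}, $\bl'$ and $\bm'$ lie in a common block $D$, whence $B^{\flat} = D = C^{\flat}$, i.e.\ $B = w \DDot{} C$ for $w := (\cs_{i_1}\dotsm \cs_{i_k})^{-1} \cs_{j_1}\dotsm \cs_{j_r} \in \AW_e$. The principal challenge is the bookkeeping between the $\br$-picture (in which $B$ and $C$ live) and the $\br^{\hat{w}}$-picture (where the charge lies in $\AAbar$, enabling Lemma~\ref{L:smallest} and Corollary~\ref{C:t*}); Lemma~\ref{L:Weyl action}(2) is precisely what permits the clean translation between the two.
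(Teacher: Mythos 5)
Your proof is correct and follows essentially the same route as the paper's: the forward direction combines the commutativity of the left $\AW_e$ and right $\EW_\ell$ actions (Lemma~\ref{L:Weyl action}(2)) with the invariance of the moving vector under $\cs_j \DDot{}$ (Lemma~\ref{L:cs_i-residue}(2)), and the converse reduces both blocks to representatives with non-positive hubs via Lemma~\ref{L:cs_i-residue}(1), then uses Lemma~\ref{L:smallest} and Corollary~\ref{C:t*} to deduce equality of core and weight and hence of blocks by Theorem~\ref{T:Naka}. The only cosmetic difference is that you carry out the bookkeeping at the level of multipartitions and translate back to blocks at the end, whereas the paper phrases the same reduction directly in terms of $B_0 \in \AW_e\DDot{}B$ and $C_0 \in \AW_e\DDot{}C$.
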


\begin{proof}
If $B = w \DDot{} C$ where $w \in \AW_e$, then for $\bl \in \PP^{\ell}$ lying in $B$, we have $w \DDot{\br} \bl$ lies in $C$, and so
$$
\mv^{\br^{\hat{w}}}(C) = \mv_e( (w\DDot{\br} \bl;\br)^{\hat{w}})
= \mv_e( w \DDot{} ((\bl;\br)^{\hat{w}}))
= \mv_e((\bl;\br)^{\hat{w}})
= \mv^{\br^{\hat{w}}}(B)
$$
by Lemma \ref{L:cs_i-residue}(2).

Conversely, suppose that $\mv^{\br^{\hat{w}}}(B) = \mv^{\br^{\hat{w}}}(C)$.
By Lemma \ref{L:cs_i-residue}(1), we can find $B_0 \in \AW_e \DDot{} B$ and $C_0 \in \AW_e \DDot{} C$ such that $\hub_j(B_0), \hub_j(C_0) \leq 0$ for all $j \in \ZZ/e\ZZ$.

Let $\bl \in \PP^{\ell}$ be lying in $B_0$.
Then
$ \hub_j((\bl;\br)^{\hat{w}}) = \hub^{\HH}_j(\bl) = \hub_j(B_0) \leq 0$
for all $j\in \ZZ/e\ZZ$.
Consequently $%\core_e(B_0) = \core_{\HH}(\bl) =
\core_e((\bl;\br)^{\hat{w}}) = \UU(\beta_{\bt^*}(\EP))$ for some unique $\bt^* \in \AAbar$ by Lemma \ref{L:smallest}.
By Corollary \ref{C:t*}, $\bt^*$ is completely determined by $\br^{\hat{w}}$ and $\mv_e((\bl;\br)^{\hat{w}}) = \mv^{\br^{\hat{w}}}(B_0) = \mv^{\br^{\hat{w}}}(B)$.  Consequently $\core_e(B_0) = \beta^{-1}(\core_e((\bl;\br)^{\hat{w}})) = \beta^{-1}(\UU(\beta_{\bt^*}(\EP)))$ is completely determined by $\br^{\hat{w}}$ and $\mv^{\br^{\hat{w}}}(B)$.

Similar statements hold for $C_0$, so that $\core_e(B_0) = \core_e(C_0)$ since $\mv^{\br^{\hat{w}}}(B) = \mv^{\br^{\hat{w}}}(C)$.
Since
$\wt_e(B_0) = |\mv^{\br^{\hat{w}}}(B_0)| = |\mv^{\br^{\hat{w}}}(B)|
= |\mv^{\br^{\hat{w}}}(C)| = |\mv^{\br^{\hat{w}}}(C_0)| = \wt_e(C_0)$, we see that $B_0$ and $C_0$ have the same core and the same weight, and hence $B_0 = C_0$.
Consequently, $B$ and $C$ lie in the same $\AW_e$-orbit.
\end{proof}

Recall the function $\bfs$ in Definition \ref{D:bfs}.

\begin{Def} \label{D:r*}
Let $B$ be a block of $\HH_n$, and let $w \in \EW_{\ell}$ such that $\br^w \in
\AAbar$.  Define
$$
\br^{w*}_B := \bfs(\quot_e(\beta_{|\br^w|}(\core(B)))).
$$
\end{Def}

\begin{lem} \label{L:bij-*}
Let $\bl \in \PP^{\ell}$ be lying in a block $B$ of $\HH_n$, and let $w \in \EW_{\ell}$ such that $\br^w \in \AAbar$.
\begin{enumerate}
\item We have $\bij_{\ell,e}((\bl;\br)^{w}) = (\bl^*; \br^{w*}_B)$ for some $\bl^* \in \PP^e$.
\item Let $\bl^* \in \PP^e$ be that satisfying part (1).  If $j \in \ZZ/e\ZZ$, then
$$
\bij_{\ell,e}((\cs_j \DDot{\br} \bl;\br)^{w}) = ((\bl^*)^{\overline{\cs_j}}; \cs_j \CDot{\ell} \br^{w*}_B).
$$
(Recall that $\overline{\cs_j}$ denotes the projection of $\cs_j$ onto $\sym{e}$.)

In particular, $\br^{w*}_{\cs_j \DDot{} B} = \cs_j \CDot{\ell} \br^{w*}_B$.
\end{enumerate}
\end{lem}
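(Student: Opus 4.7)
Part (1) amounts to showing that the second coordinate of $\bij_{\ell,e}((\bl;\br)^w)$, which by definition of $\bij_{\ell,e}$ is $\bfs(\quot_e(\UU(\beta_{\br^w}(\bl^{\overline{w}}))))$, coincides with $\br^{w*}_B = \bfs(\quot_e(\beta_{|\br^w|}(\core(B))))$. The plan is to use the elementary observation that sliding beads up their runners (the operation producing the $e$-core) changes neither the runners themselves nor their individual charges, so $\bfs(\quot_e(\B)) = \bfs(\quot_e(\core_e(\B)))$ for every $\beta$-set $\B$. Applying this to $\B := \UU(\beta_{\br^w}(\bl^{\overline{w}}))$ reduces the problem to identifying $\core_e(\B)$ with $\beta_{|\br^w|}(\core(B))$. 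The underlying partition of $\core_e(\B)$ is $\core_{\HH}(\bl) = \core(B)$ by the definition of $\core_{\HH}$ (and its well-definedness by Theorem \ref{T:core-n-weight}), while its charge equals $\fs(\B) = |\br^w|$ by the standard charge-preservation property of the Uglov map. Hence $\core_e(\B) = \beta_{|\br^w|}(\core(B))$ and the desired equality drops out.

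For part (2), I would carry out a chain of rewrites that successively pushes $\cs_j$ through the composition defining $\bij_{\ell,e}$. First, the commutativity of the $\AW_e$-action $\DDot{}$ with the right $\EW_{\ell}$-action (Lemma \ref{L:Weyl action}(2)) gives $(\cs_j \DDot{\br} \bl;\br)^w = \cs_j \DDot{}((\bl;\br)^w) = (\cs_j \DDot{\br^w} \bl^{\overline{w}}; \br^w)$. Then \eqref{E:t-action} carries $\cs_j$ inside $\beta_{\br^w}$, producing $\cs_j \DDot{1} \beta_{\br^w}(\bl^{\overline{w}})$; next \eqref{E:cs_i-U} carries $\cs_j$ through $\UU$ and upgrades the superscript from $1$ to $\ell$; and finally \eqref{E:CDot-tuple-beta-sets} carries $\cs_j$ through $\quot_e$, converting $\DDot{\ell}$ into $\CDot{\ell}$. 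Substituting the conclusion of part (1), the resulting expression is $\cs_j \CDot{\ell} \beta_{\br^{w*}_B}(\bl^*)$, which Lemma \ref{L:Weyl action}(1) decomposes as $\beta_{\cs_j \CDot{\ell} \br^{w*}_B}((\bl^*)^{\overline{\cs_j}^{-1}})$. Since $\overline{\cs_j} \in \sym{e}$ is a transposition (either $(j,j{+}1)$ for $j \in [1,\,e{-}1]$ or $(1,e)$ for $j=0$), it is its own inverse, and the superscript collapses to $\overline{\cs_j}$, giving the stated formula.

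The ``In particular'' clause then follows immediately: applying part (1) to the $\ell$-partition $\cs_j \DDot{\br} \bl$, which lies in the block $\cs_j \DDot{} B$, tells us that the second coordinate of $\bij_{\ell,e}((\cs_j \DDot{\br} \bl;\br)^w)$ is $\br^{w*}_{\cs_j \DDot{} B}$, and comparing with the second coordinate computed above forces $\br^{w*}_{\cs_j \DDot{} B} = \cs_j \CDot{\ell} \br^{w*}_B$. I do not foresee any genuine obstacle here: the entire argument is bookkeeping within the transport machinery already assembled in Subsection \ref{SS:Weyl}. The only minor point that needs care is keeping track of the $\sym{e}$-inverse in Lemma \ref{L:Weyl action}(1) and observing that $\overline{\cs_j}$ is an involution so that the inverse disappears.
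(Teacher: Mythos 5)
Your proposal is correct and follows essentially the same route as the paper. For part (1) the paper invokes Lemma \ref{L:ell-residue}(1) directly instead of re-deriving the elementary fact $\bfs(\quot_e(\B)) = \bfs(\quot_e(\core_e(\B)))$, and for part (2) the paper performs exactly the same chain of identities \eqref{E:cs_i-U}, \eqref{E:t-action}, Lemma \ref{L:Weyl action}(2), \eqref{E:CDot-tuple-beta-sets}, and Lemma \ref{L:Weyl action}(1), with the same implicit use of $\overline{\cs_j}$ being an involution; these are cosmetic reorganizations only.
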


\begin{proof}
\hfill
\begin{enumerate}
\item
We have $\core_e((\bl;\br)^{w}) = \beta_{|\br^w|}(\core(B))$, so that
$$\quot_e(\core_e((\bl;\br)^{w})) = \quot_e(\beta_{|\br^w|}(\core(B))) = \beta_{\br^{w*}_B}(\EP).$$
Now apply Lemma \ref{L:ell-residue}(1).

\item
By part (1), $\quot_e(\UU(\beta_{\br^{w}}(\bl^{w}))) = \beta_{\br^{w*}_B}(\bl^*)$.
\begin{comment}
Since the left action $\DDot{}$ of $\AW_e$ on $\PP^{\ell} \times \ZZ^{\ell}$ commutes with the right action of $\EW_{\ell}$, we have
\begin{align*}
(\cs_j \DDot{\br^{w}} \bl^{w};\br^{w}) &=
\cs_j \DDot{} (\bl^{w};\br^{w}) =
\cs_j \DDot{} (\bl;\br)^{w} \\&=
(\cs_j \DDot{} (\bl;\br))^{w}
= (\cs_j \DDot{\br} \bl;\br)^{w}
= ((\cs_j \DDot{\br} \bl)^{w};\br^{w}),
\end{align*}
so that $(\cs_j\DDot{\br} \bl)^{w} = \cs_j \DDot{\br^{w}} \bl^{w}$.
\end{comment}
Furthermore,
\begin{align*}
\cs_j \DDot{\ell} \UU(\beta_{\br^{w}}(\bl^{w}))
= \UU(\cs_j \DDot{1} \beta_{\br^{w}}(\bl^{w}))
= \UU(\beta_{\br^{w}}(\cs_j\DDot{\br^{w}} \bl^{w}))
= \UU(\beta_{\br^{w}}((\cs_j\DDot{\br} \bl)^{w}))
\end{align*}
by \eqref{E:cs_i-U}, \eqref{E:t-action} and Lemma \ref{L:Weyl action}(2) respectively.
Consequently,
\begin{align*}
\quot_e(\UU(\beta_{\br^{w}}((\cs_j\DDot{\br} \bl)^{w})))
&= \quot_e(\cs_j \DDot{\ell} \UU(\beta_{\br^{w}}(\bl^{w})))
= \cs_j \CDot{\ell} \quot_e(\UU(\beta_{\br^{w}}(\bl^{w}))) \\
&= \cs_j \CDot{\ell} \beta_{\br^{w*}_B}(\bl^*)
%&=
%\begin{cases}
%(\beta_{r^{w*}_1}(\lambda^{*(1)}), \dotsc, \beta_{r^*_e}(\lambda^{*(e)}))^{\cs_j} &\text{if } i > 0 \\
%((\beta_{r^*_e}(\lambda^{*(e)}))^{+\ell}, \beta_{r^*_2}(\lambda^{*(2)}), \dotsc, \beta_{r^*_{e-1}}(\lambda^{*(e-1)}), (\beta_{r^*_1}(\lambda^{*(1)}))^{+(-\ell)}) &\text{if } i = 0
%\end{cases}\\
=
\beta_{\cs_j \smash{\CDot{\ell}} \br^{w*}_B}((\bl^*)^{\overline{\cs_j}})
\end{align*}
by \eqref{E:CDot-tuple-beta-sets} and Lemma \ref{L:Weyl action}(1).  Part (2) thus follows.

The last assertion now follows from part (1), since $\cs_j \DDot{\br} \bl$ lies in $\cs_j \DDot{} B$.
\end{enumerate}
\end{proof}

\begin{comment}
\begin{lem} \label{L:s_i-r*}
Let $B$ be a block of $\HH_n$, and let $i \in \ZZ/e\ZZ$.  Then
$$
\br^*_{\cs_i \DDot{} B} = \cs_i \CDot{\ell} \br^*_{B}.
$$
\end{lem}

\begin{proof}
We have $\core(\cs_i \DDot{} B) = \cs_i \DDot{\ell} \core(B)$.
Thus
$$
\br^*_{\cs_i \DDot{} B} = \bfs(\quot_e(\core(\cs_i \DDot{} B))) = \bfs(\quot_e(\cs_i \DDot{\ell} \core(B)))
= \cs_i \CDot{\ell} (\quot_e(\core(B)))
= \cs_i \CDot{\ell} \br^*_B.
$$
\end{proof}
\end{comment}

\subsection{Core blocks}

The notion of core blocks was first introduced by Fayers in \cite{Fayers-Coreblock}.
These are blocks where all multipartitions lying in it are multicores, and are believed to be the most elementary non-simple blocks of Ariki-Koike algebras.

We first note the following lemma:

\begin{lem} \label{L:core-blocks-invariant}
The set of core blocks of Ariki-Koike algebras (associated with the same $\ell$-charge) is invariant under the left action of $\AW_e$.
\end{lem}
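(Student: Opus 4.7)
The plan is to show that the action of each Coxeter generator $\cs_j$ of $\AW_e$ preserves the property of being a core block; since $\AW_e$ is generated by these $\cs_j$'s, this will suffice.

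The first step is to observe that $\bl \mapsto \cs_j \DDot{\br} \bl$ is an involution on $\PP^{\ell}$, because $\cs_j^2 = 1$ in $\AW_e$. It maps each multipartition $\bl \in B$ to some $\cs_j \DDot{\br} \bl \in \cs_j \DDot{} B$, and since it is an involution, this restricts to a bijection between the multipartitions lying in $B$ and those lying in $\cs_j \DDot{} B$. In particular, every $\bm \in \cs_j \DDot{} B$ equals $\cs_j \DDot{\br} \bl$ for some $\bl \in B$, namely $\bl = \cs_j \DDot{\br} \bm$.

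The crux will be to verify that $\cs_j \DDot{\br}$ preserves the multicore property. By \eqref{E:t-action}, $\beta_{\br}(\cs_j \DDot{\br} \bl) = \cs_j \DDot{1} \beta_{\br}(\bl)$, and the action $\DDot{1}$ of $\cs_j$ on $\BB^{\ell}$ is componentwise. If $\bl$ is a multicore, then each component $\B_i := \beta_{r_i}(\lambda^{(i)})$ of $\beta_{\br}(\bl)$ is an $e$-core $\beta$-set, i.e.\ satisfies $\wt_e(\B_i) = 0$. Using the identity $\wt_e(\cs_j \DDot{1} \B_i) = \wt_e(\B_i)$ recorded in the excerpt, each component of $\beta_{\br}(\cs_j \DDot{\br} \bl)$ remains an $e$-core $\beta$-set, so $\cs_j \DDot{\br} \bl$ is again a multicore.

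Combining these observations, every multipartition in $\cs_j \DDot{} B$ is a multicore, and hence $\cs_j \DDot{} B$ is a core block whenever $B$ is. There is no substantial obstacle here; the argument is essentially a compatibility check between the action $\DDot{\br}$ of $\AW_e$ on $\PP^{\ell}$ and its componentwise action $\DDot{1}$ on $\BB^{\ell}$, together with the weight-preservation of $\DDot{1}$ already recorded in the preliminaries.
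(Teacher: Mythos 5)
Your argument is correct and follows the same route as the paper: both proofs reduce to observing that the $\cs_j$-action on an individual partition (equivalently, on its $\beta$-set via the componentwise action $\DDot{1}$) preserves $e$-weight, so multicores map to multicores. The only cosmetic difference is that you invoke the recorded $\beta$-set identity $\wt_e(\cs_j \DDot{k}\B)=\wt_e(\B)$ directly, while the paper cites the equivalent ``well-known'' statement about adding/removing all nodes of a fixed residue.
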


\begin{proof}
Let $\bl = (\lambda^{(1)},\dotsc, \lambda^{(\ell)}) \in \PP^{\ell}$.
Then $\cs_j \DDot{\br} \bl = (\cs_j \DDot{r_1} \lambda^{(1)},\dotsc, \cs_j \DDot{r_{\ell}} \lambda^{(\ell)})$ for each $j \in \ZZ/e\ZZ$,
where $\cs_j \DDot{r_i} \lambda^{(i)}$ is the partition obtained from $\lambda^{(i)}$ by adding all its addable nodes of $(e,r_i)$-residue $j$ and removing all its removable nodes of $(e,r_i)$-residue $j$.
It is well known that adding all addable nodes of $(e,t)$-residue $j$ and removing all removable nodes of $(e,t)$-residue $j$ from a partition preserves the $e$-weight of the partition, for all $j \in [1,\,e]$ and $t\in \ZZ$. (Alternatively, one can also use Lemma \ref{L:cs_i-residue}(2).)
Thus $\bl$ is a multicore if and only if $\cs_j \DDot{\br} \bl$ is.
\end{proof}

\begin{comment}
\begin{lem}[{\cite[Lemma 4.3.2 and Remark 4.3.3]{LQ-movingvector}}] \label{L:construct-mv}
Let $\bt = (t_1,\dotsc, t_{\ell}),\, \bt^* = (t^*_1,\dotsc, t^*_{\ell}) \in \AAbar$.
If $\mathbf{m} = (m_1,\dotsc, m_{\ell}) \in (\ZZ_{\geq 0})^{\ell}$ satisfies $t_i -t_i^* = m_i - m_{i-1}$ for all $i \in \{1, \dotsc, \ell\}$ (where $m_0$ is to be read as $m_{\ell}$),
then for any $\bl^* \in \PP^{\ell}$ such that $\UU(\beta_{\bt^*}(\bl^*))$ is an $e$-core, there exists $\bl \in \PP^{\ell}$ such that $\mv_e(\bl;\bt) = \mathbf{m}$ and $\core_e(\bl;\bt) = \UU(\beta_{\bt^*}(\bl^*))$.
\end{lem}
\end{comment}

\begin{thm} \label{T:moving-vector-of-core-block}
Let $B$ be a block of $\HH_n$, and let $w \in \EW_{\ell}$ such that $\br^w \in \AAbar$.  Then $B$ is a core block if and only if $\mvi^{\br^w}_i(B) = 0$ for some $i \in [1,\, \ell]$.
\end{thm}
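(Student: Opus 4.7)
The plan is to use the bijection $\bij_{\ell,e}$ from Lemma \ref{L:bij-*} to rephrase the multicore condition on any $\bl \in B$ as an $\ell$-multicore condition on its rank-level dual $\bm = (\mu^{(1)},\dotsc,\mu^{(e)}) \in \PP^e$ determined by $\bij_{\ell,e}((\bl;\br)^w) = (\bm;\br^{w*}_B)$. The crucial observation, to be verified by a direct abacus computation, is the following: writing $\bl^w = (\lambda^{(1)},\dotsc,\lambda^{(\ell)})$ and $\quot_e(\UU(\beta_{\br^w}(\bl^w))) = (\B_0,\dotsc,\B_{e-1})$ (so that $\B_j = \beta_{(\br^{w*}_B)_{j+1}}(\mu^{(j+1)})$), the formula $\uu_{i}(ae+j) = ae\ell + (\ell-i)e + j$ from Subsection \ref{SS:Uglov} implies that runner $k$ of the $\ell$-abacus of $\B_j$ coincides as a $\beta$-set with runner $j$ of the $e$-abacus of $\beta_{r^w_{\ell-k}}(\lambda^{(\ell-k)})$ (indices mod $\ell$, with $\ell-0$ read as $\ell$). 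Consequently, each $\B_j$ is an $\ell$-core if and only if the $e$-quotient of every $\beta_{r^w_{i}}(\lambda^{(i)})$ is all empty, i.e.\ $\bm$ is an $\ell$-multicore if and only if $\bl$ is an $e$-multicore.

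For the ``$\Leftarrow$'' direction, suppose $\mv^{\br^w}_i(B) = 0$ for some $i \in [1,\ell]$. By the definition of the moving vector, the number $C_{\ell-i}$ of nodes of $\bm$ with $(\ell,\br^{w*}_B)$-residue $\ell-i \pmod{\ell}$ is zero. Since the cells along any rim $\ell$-hook of a partition have $\ell$ consecutive contents, they cover all $\ell$ residues mod $\ell$ exactly once, so every $\mu^{(j)}$ admits no rim $\ell$-hook and is therefore an $\ell$-core. The equivalence above then forces $\bl^w$ (and hence $\bl$, up to permutation of components) to be an $e$-multicore. Since $\bl \in B$ was arbitrary, $B$ is a core block.

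For the ``$\Rightarrow$'' direction, the plan is to argue the contrapositive by invoking Corollary \ref{C:mv-construction} to manufacture an explicit non-multicore element of $B$. Suppose $m_i := \mv^{\br^w}_i(B) \geq 1$ for all $i \in [1,\ell]$. Let $(\bl^*;\bt^*) \in \PP^{\ell}\times\ZZ^{\ell}$ be the unique pair with $\UU(\beta_{\bt^*}(\bl^*)) = \beta_{|\br^w|}(\core(B))$, which is an $e$-core; Corollary \ref{C:t*} gives $t^*_i = r^w_i - m_i + m_{i-1}$ with $m_0 := m_{\ell}$. Since $\br^w \in \AAbar$ ensures $r^w_1 \leq \dotsb \leq r^w_{\ell}$, Corollary \ref{C:mv-construction} applies and yields $\bnu = (\nu^{(1)},\dotsc,\nu^{(\ell)}) \in \PP^{\ell}$ satisfying $\core_e(\bnu;\br^w) = \beta_{|\br^w|}(\core(B))$, $\mv_e(\bnu;\br^w) = (m_1,\dotsc,m_{\ell})$, and $\wt_e(\nu^{(i')}) \geq \min\{m_1,\dotsc,m_{\ell}\} \geq 1$ for some $i' \in [1,\ell]$. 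Setting $\bl^\# := \bnu^{w^{-1}}$, we have $(\bl^\#;\br)^w = (\bnu;\br^w)$, so $\core_{\HH}(\bl^\#) = \core(B)$ and $\wt_{\HH}(\bl^\#) = |\mv_e(\bnu;\br^w)| = \wt(B)$, placing $\bl^\# \in B$ by Theorem \ref{T:Naka}. But $\bl^\#$ is a mere permutation of the components of $\bnu$, hence contains the non-$e$-core component $\nu^{(i')}$ and fails to be a multicore; thus $B$ is not a core block, as required.

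The main obstacle will be the book-keeping of three distinct indexings—the $\ell$ components of $\bl$, the $e$ components of $\bm$, and the $\ell$ residues mod $\ell$—together with the \emph{reversing} correspondence $i \leftrightarrow \ell-i$ induced by the $(\ell-i)e$ summand in the formula for $\uu_i$. Correctly tracing this rank-level twist, which makes both the identity $\mv^{\br^w}_i(B) = C_{\ell-i}$ and the pairing between runner $k$ of the $\ell$-abacus of $\B_j$ and the $(\ell-k)$th component of $\bl^w$ come out consistently, is the delicate point on which the entire argument hinges.
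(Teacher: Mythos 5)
Your proof is correct. Both directions are sound, and your overall strategy parallels the paper's, with one notable reorganization. For the ``$\Rightarrow$'' direction (which you argue via contrapositive), you follow exactly the paper's route: invoke Corollary \ref{C:t*} to obtain the relation $r^w_i = t^*_i + m_i - m_{i-1}$, apply Corollary \ref{C:mv-construction} to manufacture $\bnu$ with $\wt_e(\nu^{(i')})\geq \min\{m_i\}>0$, and pull back along $w$ to place a non-multicore element in $B$. For the ``$\Leftarrow$'' direction, you repackage the paper's argument more conceptually: you isolate a rank-level \emph{multicore duality} (runner $k$ of the $\ell$-abacus of $\B_j$ equals runner $j$ of the $e$-abacus of $\beta_{r^w_{\ell-k}}(\lambda^{(\ell-k)})$, hence $\bl$ is an $e$-multicore iff its dual $\bm$ is an $\ell$-multicore), and then conclude via the rim-$\ell$-hook characterization of $\ell$-cores that vanishing of one component of $\mv^{\br^w}(B)$ forces $\bm$ to be an $\ell$-multicore. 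The paper instead works directly with a bead/gap pair $e\ell$ apart and asserts tersely that all residues occur; this is the same abacus computation your duality makes explicit. Your version has the merit of stating the duality as a reusable structural fact and of making the step ``hence all moving-vector components are positive'' transparent through the $\ell$-core language; it does not yield anything beyond the paper, but it is cleaner to read. One small remark: you flag the runner identification as ``to be verified by a direct abacus computation'' rather than writing it out in full, but the computation $\uu_{\ell-k}(be+j)=(b\ell+k)e+j$ (together with Lemma \ref{L:Uglov map}) does give precisely what you claim, so this is a cosmetic rather than substantive gap.
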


\begin{proof}
Let $w = \bu \sigma$ where $\bu = (u_1,\dotsc, u_{\ell}) \in \ZZ^{\ell}$ and $\sigma \in \sym{\ell}$.
Then for any $\bl = (\lambda^{(1)},\dotsc, \lambda^{(\ell)}) \in \PP^{\ell}$, we have $(\bl;\br)^{w} = ((\lambda^{(\sigma(1))}, \dotsc, \lambda^{(\sigma(\ell))});
(r_{\sigma(1)} + eu_{\sigma(1)}, \dotsc, r_{\sigma(\ell)} + eu_{\sigma(\ell)}))$.

Suppose that $\bl$ lies in $B$ with $\wt_e(\beta_{r_i}(\lambda^{(i)})) > 0$ for some $i$.
Let $\bl^w = \bm = (\mu^{(1)},\dotsc, \mu^{(\ell)})$ and let $\br^w = (r'_1,\dotsc, r'_{\ell})$.
Then
$\lambda^{(i)} = \mu^{(a)}$ and $r_i + u_i e = r'_a$, where $a = \sigma^{-1}(i)$, and
so
$$\wt_e( \beta_{r'_a}(\mu^{(a)})) =
\wt_e(\beta_{r_i +  u_i e}(\lambda^{(i)})) =
\wt_e((\beta_{r_i}(\lambda^{(i)}))^{+u_i e}) =
\wt_e(\beta_{r_i}(\lambda^{(i)})) > 0.$$
Thus
there exists an $x \in \beta_{r'_a}(\mu^{(a)})$ such that $x-e \notin \beta_{r'_a}(\mu^{(a)})$.
Consequently, $\uu_{a}(x) \in \UU(\beta_{\br^w}(\bm))$ and $\uu_{a}(x)- e\ell = \uu_{a}(x-e) \notin \UU(\beta_{\br^w}(\bm))$.
Hence all components of $\mv_e(\bm;\br^w) = \mv_e((\bl;\br)^{w}) = \mv^{\br^w}(B)$ would be positive.

Conversely, suppose that $\mvi^{\br^w}_i(B) >0$ for all $i \in [1,\,\ell]$.
Let $\bl \in \PP^{\ell}$ be lying in $B$, and let $\core_e((\bl;\br)^{\br^w}) = \UU(\beta_{\bt^*}(\bl^*))$.
Since $\mv_e((\bl;\bt)^{\br^w}) = \mv^{\br^w} (B)$,
if $\br^w = (r'_1,\dotsc, r'_{\ell})$ and $\bt^* = (t^*_1, \dotsc, t^*_{\ell})$
then
$ r'_i = t^*_i + \mvi^{\br^w}_i(B) - \mvi^{\br^w}_{i-1}(B)$ for all $i \in [1,\,\ell]$ by Corollary \ref{C:t*}, where $\mvi^{\br^w}_{0}(B) := \mvi^{\br^w}_{\ell}(B)$.
By Corollary \ref{C:mv-construction},
there exists $\bm = (\mu^{(1)},\dotsc, \mu^{(\ell)}) \in \PP^\ell$ such that $\core_e(\bm;\br^w) = \UU(\beta_{\bt^*}(\bl^*))$, $\mv_e(\bm;\br^w) = \mv^{\br^w}(B)$ and $\wt_e(\mu^{(i)}) \geq \min \{ \mvi^{\br^w}_{i'}(B) \mid i' \in [1,\,\ell] \} > 0$ for some $i \in [1,\ell]$.
Let $\bnu := \bm^{{w}^{-1}}$.
Then
\begin{gather*}
\core_e((\bnu;\br)^w) = \core_e(\bm;\br^w) = \UU(\beta_{\bt^*}(\bl^*)) = \core_e((\bl;\br)^w); \\
\mv_e((\bnu;\br)^w) = \mv_e(\bm;\br^w) = \mv^{\br^w}(B) = \mv_e((\bl;\br)^w).
\end{gather*}
In particular $\wt_e((\bnu;\br)^w) = |\mv_e((\bnu;\br)^w)| = |\mv_e((\bl;\br)^w)| = \wt_e((\bl;\br)^w)$.
Thus $\core_{\HH}(\bnu) = \core_{\HH}(\bl)$ and $\wt_{\HH}(\bnu) = \wt_{\HH}(\bl)$, so that $\bnu$ and $\bl$ lie in the same block, namely $B$.
If $\bnu = (\nu^{(1)},\dotsc, \nu^{(\ell)})$, then $\wt_e(\nu^{(\overline{w}(i))}) = \wt_e(\mu^{(i)}) > 0$, so that $B$ is not a core block.
\end{proof}

We end this subsection with some necessary and sufficient conditions, in terms of moving vectors, for a block to be a core block and for a partition to lie in a core block, which follow immediately from Theorem \ref{T:moving-vector-of-core-block} and Lemma \ref{L:mv}.

\begin{cor} \label{C:mv} \hfill
\begin{enumerate}
\item
Let $B$ be a block of $\HH_n$.
The following statements are equivalent:
\begin{enumerate}
\item $B$ is a core block.
\item For all $\br' \in \br^{\EW_{\ell}} \cap \AAbar$, $\mvi_i^{\br'}(B) = 0$ for some $i \in [1,\,\ell]$.
\item For all $i \in [1,\,\ell]$, there exists $\br' \in \br^{\EW_{\ell}} \cap \AAbar$ such that $\mvi_i^{\br'}(B) = 0$.
\item There exists $\br' \in \br^{\EW_{\ell}} \cap \AAbar$ such that $\mvi^{\br'}_{\ell}(B) = 0$.
\end{enumerate}

\item
Let $\bl \in \PP^{\ell}(n)$.
The following statements are equivalent:
\begin{enumerate}
\item $\bl$ lies in a core block.
\item For all $w \in \EW_{\ell}$ such that $\br^w \in \AAbar$, $\mv_e((\bl;\br)^w)$ has a zero component.
\item For all $i \in [1,\,\ell]$, there exists $w \in \EW_{\ell}$ such that $\br^w \in \AAbar$ and the $i$-th component of $\mv_e((\bl;\br)^w)$ equals zero.
\item There exists $w \in \EW_{\ell}$ such that $\br^w \in \AAbar$ and the last component of $\mv_e((\bl;\br)^w)$ equals zero.
\end{enumerate}
\end{enumerate}
\end{cor}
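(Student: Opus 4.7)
The plan is to deduce both parts directly from Theorem \ref{T:moving-vector-of-core-block} combined with the cyclic-shift behaviour of moving vectors recorded in Lemma \ref{L:mv} (equivalently, the corollary immediately preceding Proposition \ref{P:moving-vector-Weyl-orbit}). The key observation is that
$$
\{ \mv^{\br'}(B) \mid \br' \in \br^{\EW_{\ell}} \cap \AAbar \} = \{ (\mv^{\br^{\HH}}(B))^{\rr_{\ell}^i} \mid i \in \ZZ/\ell\ZZ \}
$$
for any fixed $\br^{\HH} \in \br^{\EW_{\ell}} \cap \AAF$, so that as $\br'$ ranges over $\br^{\EW_{\ell}} \cap \AAbar$, the tuple $\mv^{\br'}(B)$ runs through all cyclic permutations of a single fixed tuple. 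In particular, the multiset of components of $\mv^{\br'}(B)$ is independent of $\br'$, and a zero component can be cyclically moved into any prescribed position.

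For part (1), I would first note that (a)$\Leftrightarrow$(b) follows immediately from Theorem \ref{T:moving-vector-of-core-block}: for any $w$ with $\br^w \in \AAbar$, the theorem asserts that $B$ is a core block iff $\mv^{\br^w}_i(B) = 0$ for some $i$, and since the existence of a zero component is invariant under the cyclic $\rr_{\ell}$-action, this holds for some $\br' \in \br^{\EW_{\ell}} \cap \AAbar$ iff it holds for all such $\br'$. The implications (b)$\Rightarrow$(c), (b)$\Rightarrow$(d) and (c)$\Rightarrow$(d) are all clear (the last two are trivial instances). Finally, for (d)$\Rightarrow$(b), if $\mv^{\br'}_{\ell}(B) = 0$ for some $\br'$, then $\mv^{\br'}(B)$ has a zero component, which recovers (b). This closes the loop.

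For part (2), I would reduce to part (1) by letting $B$ be the block of $\HH_n$ containing $\bl$. By definition of $\mv^{\br'}(B)$, we have $\mv_e((\bl;\br)^w) = \mv^{\br^w}(B)$ whenever $\br^w \in \AAbar$. Moreover, $\bl$ lies in a core block precisely when $B$ is a core block. The four conditions in (2) are therefore the same as the four conditions in (1) translated through this identification, and the equivalences follow from part (1).

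There is no real obstacle: once the cyclic-permutation description of $\{\mv^{\br'}(B)\}$ is in hand, every statement in the corollary is a reformulation of the single property ``the moving vector has a zero component,'' and the only work is to verify that each of (b), (c), (d) is equivalent to that property. The proof is therefore essentially bookkeeping on top of Theorem \ref{T:moving-vector-of-core-block} and Lemma \ref{L:mv}.
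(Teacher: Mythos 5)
Your proof is correct and follows exactly the route the paper indicates: the paper states that the corollary ``follows immediately from Theorem~\ref{T:moving-vector-of-core-block} and Lemma~\ref{L:mv}'' without spelling out the details, and your argument is precisely that spelling-out, using the cyclic-shift description of $\{\mv^{\br'}(B)\}$ recorded in the corollary just before Proposition~\ref{P:moving-vector-Weyl-orbit}.
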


\subsection{Weight graphs of multipartitions lying in a core block}

In \cite[para.\ after Proposition 2.6]{LyleRuff-Decompositionnumber}, Lyle and Ruff define the {\em weight graph} $G(\bl)$ for $\bl = (\lambda^{(1)},\dotsc, \lambda^{(\ell)})$ lying in a core block of $\HH_n$ as follows:
$G(\bl)$ has vertex set $\{1,\dotsc, \ell\}$ and no loops, and the number of (undirected) edges between $i$ and $j$ ($i \ne j$) equals the $\HH^{ij}$-weight $\wt_{\HH^{ij}}(\lambda^{(i)}, \lambda^{(j)})$ of the bipartition $(\lambda^{(i)}, \lambda^{(j)})$ in the Ariki-Koike algebra $\HH^{ij} = \HH_{\FF,q,(r_i,r_j)}(|\lambda^{(i)}| + |\lambda^{(j)}|)$.
It follows from \cite[Subsection 3.5]{Fayers-Coreblock} that $\bl$ is `indecomposable' if and only if $G(\bl)$ is connected, and that $\bl$ can be decomposed into constituents naturally associated with the connected components of $G(\bl)$.
Furthermore, $G(\bl)$ and $G(\bm)$ have the same connected components when $\bl$ and $\bm$ lie in the same block.

In this subsection, we define a graph $\Gamma_{\br'}(B)$ that is naturally associated to the moving vector $\mv^{\br'}(B)$ of a core block $B$ of $\HH_n$ (where $\br' \in \br^{\EW_{\ell}} \cap \AAbar$), and show that there is a natural bijective correspondence between its connected components and those of $G(\bl)$ for any $\bl \in \PP^{\ell}$ lying in $B$.

\begin{Def}
Let $B$ be a core block of $\HH_n$ and let $\br' \in \br^{\EW_{\ell}} \cap \AAbar$.  Define a undirected graph $\Gamma_{\br'}(B)$ as follows:  Its vertex set $V(\Gamma_{\br'}(B)) = \{1,\dotsc, \ell\}$ and there is an edge between $i$ and $j$ if and only if $j \equiv_{\ell} i + 1$ and $\mvi^{\br'}_i(B) \ne 0$.
\end{Def}

We begin with a lemma to understand the situation when there is an edge between two vertices in $G(\bl)$.

\begin{lem} \label{L:weight-bipartition}
Let $(\B^{(1)},\dotsc, \B^{(\ell)}) \in \BB^{\ell}$.
Let $i \in [1,\,\ell]$ and $k \in \ZZ^+$, and write $k = a\ell + b$, where $a \in \ZZ$ and $b \in [1-i,\,\ell-i]$.
Let $y \in \ZZ$, and recall the maps $\uu_{e,\ell,i} = \uu_i$ and $\UU_{e,\ell} = \UU$ in Subsection \ref{SS:Uglov}.
\begin{enumerate}
\item We have
\begin{align*}
\uu_{e,\ell,i}(y) - ke &= \uu_{e,\ell,i+b}(y-ae), \\
\uu_{e,2,1}(y) - (2a+1)e &= \uu_{e,2,2}(y-ae), \\
\uu_{e,2,2}(y) - (2a-1)e &= \uu_{e,2,1}(y-ae).
\end{align*}

\item The following statements are equivalent:
\begin{enumerate}
\item $\uu_{e,\ell,i}(y) - ke \notin \UU_{e,\ell}(\B^{(1)},\dotsc, \B^{(\ell)})$.
%\item $\uu_{e,\ell,i}(y) - ke \notin \uu_{e,\ell,b}(\B_{i+b})$.
\item $y-ae \notin \B_{i+b}$.
%\item $\uu_{e,2,1}(y) - (2a+1)e \notin \uu_{e,2,2}(\B_{i+b})$.
\item $\uu_{e,2,1}(y) - (2a+1)e \notin \UU_{e,2}(\B^{(i)}, \B^{(i+b)})$.
\item $\uu_{e,2,2}(y) - (2a-1)e \notin \UU_{e,2}(\B^{(i+b)}, \B^{(i)})$.
%\item $\uu_{e,2,2}(y) - (2a-1)e \notin \uu_{e,2,1}(\B_{i+b})$.
\end{enumerate}

\item If $y \in \B^{(i)}$ and any of the statements in part (2) holds, then
$$
\wt_e(\UU_{e,2}(\B^{(\min\{i,i+b\})}, \B^{(\max\{i,i+b\})})) > 0.
$$
\end{enumerate}
\end{lem}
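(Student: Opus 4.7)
The plan is to prove the three parts in order, with Part (1) supplying the arithmetic identities that the later parts use.  For Part (1), I would expand both sides of each identity directly using the explicit formula $\uu_{e,\ell,i}(y) = (y-\bar{y})\ell + (\ell-i)e + \bar{y}$ from Subsection~\ref{SS:Uglov}, together with the observation that $\overline{y-ae} = \bar{y}$ since subtracting a multiple of $e$ preserves the residue class modulo $e$.  For instance, the first identity reduces after cancellation to $ke = ae\ell + be$, which holds by the definition $k = a\ell + b$; the other two identities are analogous, and indeed are the special cases $\ell = 2$ of the first identity (with $i=1$, $b=1$ and with $i=2$, $b=-1$, respectively).

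For Part (2), I would use Part (1) to rewrite each of the expressions appearing in statements (a), (c), (d) as a single Uglov image evaluated at $y - ae$, and then invoke the disjointness of the ranges $\{\uu_{e,m,j}(\ZZ)\}_j$ from Lemma~\ref{L:Uglov map}(3).  Explicitly, for (a)$\Leftrightarrow$(b): Part~(1) gives $\uu_{e,\ell,i}(y) - ke = \uu_{e,\ell,i+b}(y-ae)$, and since $\UU_\ell(\B_1, \dotsc, \B_\ell) = \bigsqcup_{j=1}^\ell \uu_{e,\ell,j}(\B_j)$ is a disjoint union, this element lies in $\UU_\ell(\B_1,\dotsc,\B_\ell)$ precisely when $y-ae \in \B_{i+b}$.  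The same disjointness argument applied with $\UU_2$ in place of $\UU_\ell$, using the second and third identities of Part~(1), yields (b)$\Leftrightarrow$(c) and (b)$\Leftrightarrow$(d).

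For Part (3), I would first extract sign constraints on $a$ from $k \geq 1$: combined with $b \leq \ell - i \leq \ell - 1$ this forces $a \geq 0$, and combined with $b \leq 0$ it forces $a \geq 1$.  I then split on the sign of $b$.  If $b \geq 0$, so $\min\{i, i+b\} = i$, I apply statement (c): since $y \in \B_i$, the position $\uu_{e,2,1}(y)$ is occupied in $\UU_2(\B_i, \B_{i+b})$, while $\uu_{e,2,1}(y) - (2a+1)e$ is vacant there; these two positions lie on the same runner of the $e$-abacus, and $2a+1 \geq 1$ places the vacant one strictly above the bead, so $\wt_e(\UU_2(\B_i, \B_{i+b})) > 0$.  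If $b < 0$, so $\min\{i, i+b\} = i+b$, I apply statement (d): the bead $\uu_{e,2,2}(y)$ lies in $\UU_2(\B_{i+b}, \B_i)$ while $\uu_{e,2,2}(y) - (2a-1)e$ is vacant, and since $a \geq 1$ we have $2a - 1 \geq 1$, yielding the same conclusion.

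The whole statement is a technical bookkeeping result rather than a conceptually deep one.  The only mildly subtle step, which I would treat as the main obstacle, is verifying in Part (3) that the arithmetic constraints on $k$, $a$, $b$ always place the vacant position strictly above the bead on its runner; this is why the statement of Part (1) is split into three identities rather than one, so that in Part (3) one can always select the identity producing a positive multiple of $e$ in the offset, regardless of the sign of $b$.
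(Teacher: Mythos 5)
Your proposal is correct and follows essentially the same route as the paper: Part~(1) by direct arithmetic from the definition of $\uu_{e,\ell,i}$ (the paper invokes Lemma~\ref{L:Uglov map}(4) to step through the identity, which amounts to the same computation), Part~(2) by disjointness of the $\uu_j$-ranges, and Part~(3) by the same case split on the sign of $b$ (equivalently $i \leq i+b$ versus $i > i+b$) with the same positivity check on the offset $2a+1$ or $2a-1$. No gaps.
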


\begin{proof}
Part (1) follows immediately from the definition of $\uu_{e,\ell,i}$ and Lemma \ref{L:Uglov map}(4), and part (2) follows from part (1) and the definition of $\UU_{e,2}$ and $\UU_{e,\ell}$.

For part (3), if $i < i+b$, then $b >0$, and so $a \geq 0$ since $k = a\ell + b > 0$.  Thus
$$
\UU_{e,2}(\B^{(i)},\B^{(i+b)}) \ni \uu_{e,2,1}(y) > \uu_{e,2,1}(y) - (2a+1)e \notin \UU_{e,2}(\B^{(i)},\B^{(i+b)})
$$
and hence $\wt_e(\UU_{e,2}(\B^{(i)},\B_{(i+b)})) > 0$.
On the other hand, if $i \leq i+b$, then $b \leq 0$, so that $a>0$.
Thus
$$\UU_{e,2}(\B^{(i+b)},\B^{(i)}) \ni \uu_{e,2,2}(y) > \uu_{e,2,2}(y) - (2a-1)e \notin \UU_{e,2}(\B^{(i+b)}, \B^{(i)}),$$
and hence $\wt_e(\UU_{e,2}(\B^{(i+b)},\B^{(i)})) > 0$.
\end{proof}

%Our $\Gamma(B)$ is related to $G(\bl)$ for $\bl$ lying in $B$ as follows:

\begin{thm} \label{T:weight-graph}
Let $B$ be a core block of $\HH_n$, and let $w\in \EW_{\ell}$ such that $\br^w \in \AAbar$. Let $\bl \in \PP^{\ell}$ be lying in $B$, and let $i, j \in \{1,\dotsc, \ell\}$ with $i \ne j$.
Then $i$ and $j$ are connected in $\Gamma_{\br^w}(B)$ if and only if $\overline{w}(i)$ and $\overline{w}(j)$ are connected in $G(\bl)$.
(Recall that $\overline{w}$ denotes the projection of $w$ onto $\sym{\ell}$.)
\end{thm}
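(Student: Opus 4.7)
The plan is to reduce the question, via the right action of $w$, to a statement purely about $\bm := \bl^w$ with charge $\br^w$, and then carry out a graph comparison using Lemma \ref{L:weight-bipartition}. Writing $\bm = (\mu^{(1)}, \dots, \mu^{(\ell)})$ and $\br^w = (r'_1, \dots, r'_\ell)$, we have $\mu^{(i)} = \lambda^{(\overline{w}(i))}$. Since the $\HH$-weight of any bipartition is invariant under shifting either component of its charge by a multiple of $e$, the vertex relabeling $a \mapsto \overline{w}(a)$ identifies $G(\bl)$ with the graph on $\{1,\dots,\ell\}$ whose edge multiplicity at $\{i,j\}$ is $\wt_{\HH_{\FF,q,(r'_i,r'_j)}}(\mu^{(i)}, \mu^{(j)})$.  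Thus it suffices to show that $i,j \in [1,\,\ell]$ are connected in $\Gamma_{\br^w}(B)$ if and only if they are connected in this latter graph.

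Set $\B_k := \beta_{r'_k}(\mu^{(k)})$ and $\UU := \UU_{\ell}(\B_1, \dots, \B_\ell)$.  Lemma \ref{L:weight-bipartition}(2) provides an equivalence between the non-membership $\uu_{e,\ell,i}(y) - ke \notin \UU$ and the analogous non-membership in $\UU_2(\B_{\min\{i,i+b\}}, \B_{\max\{i,i+b\}})$, where $k = a\ell + b$; combined with part~(3), this identifies an edge $\{a,b\}$ in the target graph (i.e.\ $\wt_e(\UU_2(\B_a, \B_b)) > 0$) with the existence of a ``descent'' in the stacked $\ell$-abacus of $\UU$, namely a bead $z \in \UU$ and a vacancy $z - ke \notin \UU$ with $k \in \ZZ^+$ and $k \equiv b-a \pmod \ell$.

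For direction ($\Leftarrow$), suppose $i$ and $j$ lie in different $\Gamma$-components, so some $\mv^{\br^w}_k(B) = 0$ separates them cyclically.  Using the triple characterization of nodes of $\bij_{\ell,e}(\bm;\br^w)$, the vanishing $\mv^{\br^w}_k(B) = 0$ translates into a structural constraint on $\UU$ that rules out every bead-vacancy descent whose ``level drop'' crosses level $k$; applying the equivalence above would then force $\wt_e(\UU_2(\B_a, \B_b)) = 0$ for every pair $\{a,b\}$ straddling $k$ cyclically, disconnecting $i$ and $j$ in the target graph.

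For direction ($\Rightarrow$), assuming $j \equiv i+1 \pmod \ell$ with $\mv^{\br^w}_i(B) > 0$, the triple $(x,y,z)$ associated with a node of $\bij_{\ell,e}(\bm;\br^w)$ having $y \in \uu_i(\ZZ)$ yields an explicit bead-vacancy inversion $(x,z)$ in $\UU$ whose descent passes through level $i$.  Each bead-to-vacancy transition along this descent produces, via Lemma \ref{L:weight-bipartition}(3), a direct edge in the target graph; together with the edge arising from the full inversion endpoints, these combine to connect $i$ and $j$ in the target graph.  The main obstacle will lie in this direction's combinatorial bookkeeping: when the descent spans several $\ell$-periods or when its topmost bead lies at a level other than $i$, the required connectivity between $i$ and $j$ must be assembled from a chain of several intermediate edges rather than a single direct one.
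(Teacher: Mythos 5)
Your overall strategy matches the paper's: work with $\bl^w$ and $\br^w$, use Lemma \ref{L:weight-bipartition} to relate bipartition weights to descents (bead above, vacancy below) in the stacked $\ell$-abacus, and compare these against moving-vector positivity. However, the proposal as written leaves both directions essentially unproved, and it also glosses over a step that is not immediate.

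First, identifying the edge multiplicity $\wt_{\HH^{ij}}(\lambda^{(\sigma(i))},\lambda^{(\sigma(j))})$ with $\wt_e(\UU_2(\B_i,\B_j))$ is not a consequence of shift-invariance by multiples of $e$ alone: $\wt_\HH$ is a \emph{minimum} over a $\EW_2$-orbit, and to drop the minimisation one must know that the bicharge $\bfs(\B_i,\B_j) = (r'_i,r'_j)$ already lies in $\overline{\mathcal{A}}^2_e$. This is exactly where the hypothesis $\br^w \in \AAbar$ is used, via Theorem \ref{T:core-n-weight}(2). Your proposal asserts the identification but does not supply this justification.

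Second, for your direction~$\Leftarrow$ you claim that $\mv^{\br^w}_k(B)=0$ ``rules out every bead-vacancy descent whose level drop crosses level $k$'' and thereby kills all straddling $G$-edges, but you give no argument; the content here is precisely the contrapositive of the paper's first direction, namely that a single $G$-edge $\{\sigma(i),\sigma(j)\}$ forces $\mv_a > 0$ along one entire cyclic arc from $i$ to $j$. That statement requires the case analysis in the paper (whether $x \in \uu_{e,2,1}(\B_i)$ or $x\in\uu_{e,2,2}(\B_j)$, giving the two arcs). Third, for your direction~$\Rightarrow$ you explicitly defer the crux: a single $\Gamma$-edge $\{i,i{+}1\}$ produces a nonempty set of beads/vacancies on the congruence class $\uu_{e,\ell,i}(x) + e\ZZ$, but the nearest bead $i_C$ and nearest vacancy $i_D$ need not sit at levels $i$ and $i{+}1$. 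The paper handles this by choosing $c_{\min}$ and $d_{\min}$ minimal and chaining together the three $G$-edges $\sigma(i)\!\leftrightarrow\!\sigma(i_C)$, $\sigma(i_C)\!\leftrightarrow\!\sigma(i_D)$, $\sigma(i_D)\!\leftrightarrow\!\sigma(i{+}1)$, each justified by Lemma \ref{L:weight-bipartition}(3); without that mechanism your chain is not constructed. In short: correct tools and framing, but the actual arguments for both implications, and the $\overline{\mathcal{A}}^2_e$ step, are missing.
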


\begin{proof}
Let $\beta_{\br^{w}}(\bl^{w}) = (\B^{(1)},\dotsc, \B^{(\ell)})$.
Let $w = \bt \sigma$ where $\bt = (t_1,\dotsc, t_{\ell}) \in \ZZ^{\ell}$ and $\sigma \in \sym{\ell}$.
Then $\overline{w} = \sigma$ and
%\begin{align*}
%\bl^w &= (\lambda^{(\sigma(1))}, \dotsc, \lambda^{(\sigma(\ell))}), \\
%\br^w &= (r_{\sigma(1)} + et_{\sigma(1)}, \dotsc, r_{\sigma(\ell)} + et_{\sigma(\ell)}) \in \AAbar.
%\end{align*}
%Consequently,
%\begin{align*}
$\B^{(a)} = \beta_{r_{\sigma(a)} + et_{\sigma(a)}}(\lambda^{(\sigma(a))})$
%, \\
%|(r_a + et_a) - (r_b + et_b)| &\leq e
%\end{align*}
for all $a \in [1,\,\ell]$.

Without loss of generality, we assume that $i < j$.

Let
\begin{align*}
\br^{ij} = (r_{\sigma(i)},r_{\sigma(j)}), \qquad
\bt^{ij} = (t_{\sigma(i)},t_{\sigma(j)}), \qquad
\bl^{ij} &= (\lambda^{(\sigma(i))},\lambda^{(\sigma(j))}).
\end{align*}
Then
$
\beta_{(\br^{ij})^{\bt^{ij}}}(\bl^{ij}) = (\B^{(i)}, \B^{(j)}).
$
%and so $(\br^{ij})^{\bt^{ij}} = \bfs(\B_i,\B_j)$.
Since $\bfs(\B^{(1)},\dotsc, \B^{(\ell)}) = \bfs(\beta_{\br^w}(\bl^w)) = \br^w \in \AAbar$, we see that $(\br^{ij})^{\bt^{ij}} = \bfs(\B^{(i)},\B^{(j)}) \in \overline{\mathcal{A}}_e^2$.
Thus,
\begin{align}
\wt_e(\UU_{e,2}(\B^{(i)},\B^{(j)}))
= \wt_e(\UU_{e,2}(\beta_{(\br^{ij})^{\bt^{ij}}}(\bl^{ij})))
= \wt_e((\bl^{ij}; \br^{ij})^{\bt^{ij}})
= \wt_{\HH^{\sigma(i),\sigma(j)}}(\bl^{ij}). \label{E:wt-bipartition}
\end{align}
Since $(\bl^{ji};\br^{ji}) = (\bl^{ij};\br^{ij})^{\cs_1} \in (\bl^{ij};\br^{ij})^{\EW_2}$, we have
\begin{equation}
  \wt_{\HH^{\sigma(j),\sigma(i)}}(\bl^{ji}) = \min(\wt_e((\bl^{ji};\br^{ji})^{\EW_2})) = \min(\wt_e((\bl^{ij};\br^{ij})^{\EW_2})) = \wt_{\HH^{\sigma(i),\sigma(j)}}(\bl^{ij}). \label{E:swap-bipartition}
\end{equation}

Suppose that $\wt_{\HH^{\sigma(i),\sigma(j)}}(\lambda^{(\sigma(i))},\lambda^{(\sigma(j))})
> 0$.
Then $\wt_e(\UU_{e,2}(\B^{(i)},\B^{(j)})) > 0$.
Thus there exists $x \in \UU_{e,2}(\B^{(i)},\B^{(j)}) = \uu_{e,2,1}(\B^{(i)}) \cup \uu_{e,2,2}(\B^{(j)})$ such that $x - e \notin \UU_{e,2}(\B^{(i)},\B^{(j)})$.

If $x \in \uu_{e,2,1}(\B^{(i)})$, say $x = \uu_{e,2,1}(y)$ where $y \in \B^{(i)}$,
then $\uu_{e,2,1}(y)-e = x - e \notin \UU_{e,2}(\B^{(i)},\B^{(j)})$, so that
$\uu_{e,\ell, i}(y) \in \uu_{e,\ell,i}(\B^{(i)}) \subseteq \UU_{e,\ell}(\B^{(1)},\dotsc, \B^{(\ell)})$ and $\uu_{e,\ell, i}(y) - (j-i)e \notin \UU_{e,\ell}(\B^{(1)},\dotsc, \B^{(\ell)})$ by Lemma \ref{L:weight-bipartition} (with $k = j-i$, $a = 0$ and $b = j-i$).
Consequently, $\mvi^{\br^w}_i(B), \mvi^{\br^w}_{i+1}(B),\dotsc, \mvi^{\br^w}_{j-1}(B) >0$ so that $i$ is connected to $j$ in $\Gamma_{\br^w}(B)$.

On the other hand, if $x \in \uu_{e,2,2}(\B^{(j)})$, say $x = \uu_{e,2,2}(y)$ where $y \in \B^{(j)}$,
then $\uu_{e,2,2}(y)-e = x - e \notin \UU_{e,2}(\B^{(i)},\B^{(j)})$, so that
$\uu_{e,\ell, j}(y) \in \uu_{e,\ell,j}(\B^{(j)}) \subseteq \UU_{e,\ell}(\B^{(1)},\dotsc, \B^{(\ell)})$ and $\uu_{e,\ell, j}(y) - (\ell-j+i)e \notin \UU_{e,\ell}(\B^{(1)},\dotsc, \B^{(\ell)})$ by Lemma \ref{L:weight-bipartition} (with $k = \ell - j+i$, $a = 1$ and $b = i-j$).
Consequently,
$$\mvi^{\br^w}_j(B), \mvi^{\br^w}_{j+1}(B), \dotsc, \mvi^{\br^w}_{\ell}(B), \mvi^{\br^w}_1(B),\dotsc, \mvi^{\br^w}_{i-1}(B) >0$$ so that $j$ is connected to $i$ in $\Gamma_{\br^w}(B)$.

Now suppose that $\mvi_i^{\br^w}(B) > 0$, and let $i^+ \in [1,\,\ell]$ be such that $i^+ \equiv_{\ell} i+1$.
Then the $i$-th component of $\mv_e((\bl;\br)^w)$ is positive.
Thus there exists $x \in \ZZ$ such that
\begin{align*}
C &:= \{ c \in \ZZ_{\geq 0} \mid \uu_{e,\ell,i}(x) + ce \in \UU_{e,\ell}(\B^{(1)},\dotsc, \B^{(\ell)}) \}, \\
D &:= \{ d \in \ZZ^+ \mid \uu_{e,\ell,i}(x) - de \notin \UU_{e,\ell}(\B^{(1)},\dotsc, \B^{(\ell)}) \}
\end{align*}
are both nonempty.
Let $c_{\min} := \min(C)$ and $d_{\min} := \min(D)$,
and let $y_C,y_D \in \ZZ$ and $i_C,i_D \in [1,\,\ell]$ be such that
\begin{align*}
\uu_{e,\ell,i_C}(y_C) &= \uu_{e,\ell,i}(x) + c_{\min}e, \\
\uu_{e,\ell,i_D}(y_D) &= \uu_{e,\ell,i}(x) - d_{\min}e.
\end{align*}
Then $\uu_{e,\ell,i_C}(y_C) \in \UU_{e,\ell}(\B^{(1)},\dotsc,\B^{(\ell)})$, so that $y_C \in \B_{i_C}$,
while $\uu_{e,\ell,i_D}(y_D) \notin \UU_{e,\ell}(\B^{(1)},\dotsc,\B^{(\ell)})$.
Consequently, $\wt_{\HH^{\sigma(i_C),\sigma(i_D)}} (\bl^{i_C,i_D}) > 0$ by Lemma \ref{L:weight-bipartition}(3), \eqref{E:wt-bipartition} and \eqref{E:swap-bipartition}, so that $\sigma(i_C)$ and $\sigma(i_D)$ are connected in $G(\bl)$.

If $c_{\min} = 0$, then $i = i_C$ and trivially $\sigma(i)$ is connected to $\sigma(i_C)$ in $G(\bl)$.
On the other hand, if $c_{\min} > 0$, then $\uu_{e,\ell,i}(x) \notin \UU_{e,\ell}(\B^{(1)},\dotsc, \B^{(\ell)})$.  Since $\uu_{e,\ell,i_C}(y_C) \in \UU_{e,\ell}(\B^{(1)},\dotsc, \B^{(\ell)})$, by the same argument used in the last paragraph, $\sigma(i)$ and $\sigma(i_C)$ are connected in $G(\bl)$.

If $d_{\min} = 1$, then $i_D = i^+$.  Thus, $\sigma(i^+)$ is connected to $\sigma(i_D)$ trivially.
On the other hand, if $d_{\min} > 1$, then $\uu_{e,\ell,i^+}(x) = \uu_{e,\ell,i}(x) - e \in \UU_{e,\ell}(\B^{(1)},\dotsc, \B^{(\ell)})$.  Since $\uu_{e,\ell,i_D}(y_D) \notin \UU_{e,\ell}(\B^{(1)},\dotsc, \B^{(\ell)})$, we see that $\sigma(i^+)$ and and $\sigma(i_D)$ are connected in $G(\bl)$ by the same argument.

Thus, $\sigma(i)$, $\sigma(i_C)$, $\sigma(i_D)$ and $\sigma(i^+)$ all lie in the same connected component of $G(\bl)$, so that $\sigma(i)$ and $\sigma(i^+)$ are connected in $G(\bl)$.
\end{proof}

Since $\Gamma_{\br^w}(B)$ is disconnected if and only if there exist distinct $i,j \in [1,\,\ell]$ such that $\mvi^{\br^w}_i(B) = \mvi^{\br^w}_j(B) = 0$, we get the following immediate corollary, where the equivalence of the second and third statements follows from Lemma \ref{L:mv-block}(1).

\begin{cor} \label{C:decomposable}
Let $\bl \in \PP^{\ell}$.  For each $w \in\EW_{\ell}$ with $\br^w \in \AAbar$, let $\mv_e((\bl;\br)^w) = (m^w_1,\dotsc, m^w_{\ell})$.
The following statements are equivalent:
\begin{enumerate}
\item $\bl$ is decomposable (in the sense of Fayers).
\item There exist $w \in\EW_{\ell}$ with $\br^w \in \AAbar$ and distinct $i,j \in [1,\,\ell]$ such that $m^w_i = m^w_{j} = 0$.
\item For all $w \in\EW_{\ell}$ with $\br^w \in \AAbar$, there exist distinct $i,j \in [1,\,\ell]$ such that $m^w_i = m^w_{j} = 0$.
\end{enumerate}
\end{cor}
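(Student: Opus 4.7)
The plan is to derive this corollary quickly from the paragraph of remarks immediately preceding it, together with one observation on how moving vectors at different $w$ are related. First I would establish that (2) $\Leftrightarrow$ (3). By Lemma \ref{L:mv}(2), any two moving vectors $\mv_e((\bl;\br)^w)$ and $\mv_e((\bl;\br)^{w'})$ with $\br^w, \br^{w'} \in \AAbar$ differ by a cyclic rotation (a power of $\rr_{\ell}$). Since the property ``the tuple has at least two zero components'' is manifestly invariant under cyclic rotation, (2) and (3) are equivalent.

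Next I would reduce to the case that $\bl$ lies in a core block. The weight graph $G(\bl)$, and hence Fayers' notion of decomposability, is only defined when $\bl$ lies in a core block, so (1) implicitly requires this. Conversely, if (2) holds, then in particular at least one component of $\mv_e((\bl;\br)^w)$ vanishes for some $w \in \EW_{\ell}$ with $\br^w \in \AAbar$, so $\bl$ lies in a core block by Corollary \ref{C:mv}(2). Let $B$ denote this core block.

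For (1) $\Leftrightarrow$ (2), Fayers' result (recalled just before Theorem \ref{T:weight-graph}) says that $\bl$ is decomposable if and only if $G(\bl)$ is disconnected. By Theorem \ref{T:weight-graph}, the connected components of $G(\bl)$ are in bijection, via the permutation $\overline{w}$, with those of $\Gamma_{\br^w}(B)$, so $G(\bl)$ is disconnected if and only if $\Gamma_{\br^w}(B)$ is. By the definition of $\Gamma_{\br^w}(B)$, it is the subgraph of the $\ell$-cycle on vertices $\{1,\dotsc,\ell\}$ in which the edge joining $i$ and $i+1$ (with $\ell+1$ read as $1$) is present exactly when $m^w_i \ne 0$. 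Such a subgraph of an $\ell$-cycle is connected when at most one of its edges has been removed, and disconnected when at least two have been removed; this gives (1) $\Leftrightarrow$ (2). I do not anticipate any real obstacle here, since the substantive work has already been done in Theorem \ref{T:weight-graph} and Lemma \ref{L:mv}(2), and what remains is elementary graph-theoretic bookkeeping.
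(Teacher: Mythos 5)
Your proposal is correct and takes essentially the same route as the paper: the corollary is derived from Theorem~\ref{T:weight-graph} together with the observation (made in the sentence immediately preceding the corollary) that $\Gamma_{\br^w}(B)$ is disconnected precisely when $\mv^{\br^w}(B)$ has at least two zero components. Your explicit appeal to Lemma~\ref{L:mv}(2) for (2)~$\Leftrightarrow$~(3) and the brief reduction to the core-block case are just slightly more spelled-out versions of what the paper leaves implicit.
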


\section{Scopes equivalence between core blocks} \label{S:Scopes}

Let $B$ be a block of $\HH_n$ and $j \in \ZZ/e\ZZ$.
Following \cite{Lyle-RoCK, Webster}, we say that $B$ and $\cs_j \DDot{} B$ are {\em Scopes
equivalent} if all $\ell$-partitions lying in $B$ have no addable node with $(e,\br)$-residue $j$ or all $\ell$-partitions lying in $B$ have no removable node with $(e,\br)$-residue $j$. Clearly, Scopes equivalence is symmetric by definition, and we further extend it to
an equivalence relation on all blocks by taking its reflexive and transitive closure.

In this section, we obtain a necessary and sufficient condition for two core blocks to be Scopes equivalent.

The importance of Scopes equivalence is due to the following theorem which is well known among experts.

\begin{thm}[see {\cite[Theorem 6.4]{CR}} and {\cite[Lemma 3.1]{Webster}}] \label{T:CR}
Let $B$ be a block of $\HH_n$ and $j \in \Z/e\Z$.  Suppose that for all $\ell$-partitions $\bl$ lying in $B$, $\bl$ has no addable node with $(e,\br)$-residue $j$.  Then $B$ and $\cs_j \DDot{} B$ are Morita equivalent. Under this equivalence, the Specht module $S^{\bl}$ lying in $B$ corresponds to $S^{\cs_j \DDot{\br} \bl}$.
\end{thm}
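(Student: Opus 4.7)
The plan is to invoke the $\mathfrak{sl}_2$-categorification machinery of Chuang--Rouquier, in the form adapted to Ariki--Koike algebras by Webster. First I would introduce the biadjoint pair of functors $F = j\text{-Ind}$ and $E = j\text{-Res}$, which refine the ordinary induction and restriction between $\HH_n$ and $\HH_{n \pm 1}$ according to the $(e,\br)$-residue of the added or removed node. On Specht modules these carry filtrations indexed by addable (resp.\ removable) $j$-nodes, each Specht factor appearing with multiplicity one. The direct sum $\bigoplus_n \HH_n\text{-mod}$ thereby acquires the structure of a categorified $\mathfrak{sl}_2$-weight module in the $j$-direction, with the block $B$ sitting in the weight space of weight $h := \hub_j(B)$.

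Next, I would exploit the hypothesis: since no $\ell$-partition in $B$ has an addable $j$-node, the Specht filtration of $F \cdot S^{\bl}$ is trivial for every $\bl$ lying in $B$, and hence $F$ vanishes identically on $B$-mod. Thus $B$ sits at the extremal end of its $\mathfrak{sl}_2$-string: the integer $h$ equals the common number of removable $j$-nodes of the $\ell$-partitions in $B$, and by the description of the action $\DDot{}$ recalled in Subsection~\ref{SS:AK-alg}, the block $\cs_j \DDot{} B$ lives in $\HH_{n-h}$ with Specht modules labelled by $\cs_j \DDot{\br} \bl$ for $\bl$ lying in $B$. Under the current hypothesis, $\cs_j \DDot{\br} \bl$ is obtained from $\bl$ simply by deleting all its $h$ removable $j$-nodes.

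In this extreme weight situation, the general Chuang--Rouquier/Webster theorem collapses to an equivalence of abelian categories: the divided power functor $E^{(h)} := E^h / h!$ from $B$-mod to $(\cs_j \DDot{} B)$-mod together with its biadjoint $F^{(h)}$ satisfies $F^{(h)} E^{(h)} \cong \mathrm{id}_B$ and $E^{(h)} F^{(h)} \cong \mathrm{id}_{\cs_j \DDot{} B}$, yielding the Morita equivalence. To track the image of a Specht module, I would apply $E$ iteratively to $S^{\bl}$: at each stage the Specht filtration removes one of the removable $j$-nodes, and after $h$ applications the only surviving Specht module is $S^{\cs_j \DDot{\br} \bl}$, appearing with multiplicity $h!$ which is absorbed by the divided power. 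The principal obstacle is establishing that the categorified $\mathfrak{sl}_2$-action produces an equivalence of abelian and not merely derived module categories; this requires the full reflection functor theorem of Chuang--Rouquier in the extreme weight case, where the degeneration of the Hecke-type relations governing the natural endomorphisms of $E^2$ and $F^2$ permits the passage from the derived to the Morita level, together with Webster's verification that Ariki--Koike algebras fit into his cyclotomic $2$-Kac--Moody framework.
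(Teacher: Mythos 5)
The paper does not actually supply a proof of this theorem: the attribution in the theorem header to \cite[Theorem 6.4]{CR} and \cite[Lemma 3.1]{Webster} is the paper's entire justification, and no argument follows. Your sketch is therefore not a competing route but a reconstruction of the cited machinery, and its skeleton is correct: $F = j$-Ind and $E = j$-Res give a categorical $\mathfrak{sl}_2$-action; the hypothesis that no $\bl$ in $B$ has an addable $j$-node forces $F|_{B\text{-}\mathrm{mod}} = 0$, so $B$ is at the extreme end of its $\mathfrak{sl}_2$-string with $h = \hub_j(B) > 0$ equal to the common number of removable $j$-nodes; at such an extreme, the Rickard complex $\Theta$ of Chuang--Rouquier collapses to a single term, the divided power $E^{(h)}$, which is then a Morita (not merely derived) equivalence onto $\cs_j \DDot{} B$, with $F^{(h)}$ as quasi-inverse. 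The Specht-module tracking is also correct: $E^h S^{\bl}$ has a filtration by the $h!$ ordered ways of stripping off the $h$ removable $j$-nodes (no new removable $j$-nodes are created in the process), so $E^{(h)} S^{\bl} \cong S^{\cs_j \DDot{\br} \bl}$.

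Two small cautions. First, the notation $E^{(h)} := E^h / h!$ is not meaningful as written; the divided power functor is defined as a direct summand of $E^h$ cut out by an idempotent in the nil(-affine) Hecke algebra acting on $E^h$, and one shows $E^h \cong (E^{(h)})^{\oplus h!}$ rather than ``dividing by $h!$.'' Second, the passage from derived to abelian equivalence at the extreme weight is not really a matter of ``degeneration of Hecke-type relations on $E^2$ and $F^2$''; the actual mechanism is that the complex $\Theta$ defining the derived equivalence is supported in a single homological degree when one of $E$, $F$ vanishes on the block, so the equivalence is tautologically abelian. These are presentational slips, not gaps; the underlying plan is the one the cited references carry out.
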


Recall $\br_B^{w*}$ for a block $B$ and $w \in \EW_{\ell}$ with $\br^w \in \AAbar$ in Definition \ref{D:r*}.

\begin{lem} \label{L:core-block-abacus}
Let $w \in \EW_{\ell}$ such that $\br^w \in \AAbar$.
Let $B$ be a block of $\HH_n$ with $\br_B^{w*} = (x^B_1, \dotsc, x^B_{e})$.
If $\mvi^{\br^w}_i(B) = 0$, then
for any $\bl \in \PP^{\ell}$ lying in $B$ with $\quot_e(\UU(\beta_{\br^{w}}(\bl^{w}))) = (\C_1,\dotsc, \C_e)$, we have
\begin{alignat*}{2}
\frac{\max(\C_j) +i}{\ell} < \left\lceil \frac{x^B_j+i}{\ell} \right\rceil  \qquad \text{and} \qquad
\frac{\min(\ZZ \setminus \C_j) +i}{\ell} \geq \left\lfloor \frac{x^B_j+i}{\ell} \right\rfloor
\end{alignat*}
for all $j \in [1,\,e]$.
\end{lem}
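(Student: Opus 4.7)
The plan is to translate the vanishing $\mv^{\br^w}_i(B) = 0$ into a node-counting statement about the $e$-multipartition $\bl^*$ produced by the rank-level duality bijection $\bij_{\ell,e}$, and then extract the two inequalities by evaluating the node count at a single content value in each case.

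First I would invoke Lemma \ref{L:bij-*}(1) to write $\bij_{\ell,e}((\bl;\br)^w) = (\bl^*;\br_B^{w*})$ with $\bl^* = (\mu^{(1)},\dotsc,\mu^{(e)})$ such that $\beta_{x^B_j}(\mu^{(j)}) = \C_j$ for each $j \in [1,e]$. Unpacking the definition of the moving vector, the hypothesis says precisely that for every $j$, no cell of $\mu^{(j)}$ has content $c = b - a + x^B_j$ congruent to $-i$ modulo $\ell$. The central technical step is to establish the closed formula
\[
N_j(c) := |\{(a,b) \in [\mu^{(j)}] : b - a + x^B_j = c\}| = |\C_j \cap \ZZ_{\geq c}| - \max(0, x^B_j - c),
\]
by a routine count on the content-$c$ diagonal using $\C_j = \{\mu^{(j)}_a - a + x^B_j \mid a \in \ZZ^+\}$. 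This forces $N_j(c) = 0$ for every $j$ and every $c \equiv -i \pmod{\ell}$.

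Next I would rearrange the two claimed inequalities as $\max(\C_j) < N$ and $\min(\ZZ \setminus \C_j) \geq M$, where $N := \ell \lceil (x^B_j + i)/\ell \rceil - i$ is the smallest integer $\geq x^B_j$ congruent to $-i$ modulo $\ell$, and $M := \ell \lfloor (x^B_j + i)/\ell \rfloor - i$ is the largest integer $\leq x^B_j$ congruent to $-i$ modulo $\ell$. For the first, if $\max(\C_j) \geq N$ then since $N \geq x^B_j$ the formula yields $N_j(N) = |\C_j \cap \ZZ_{\geq N}| \geq 1$, a contradiction. For the second, applying Lemma \ref{L:hub} to $\B = \C_j$ and $\C = \ZZ_{<x^B_j}$ (both of charge $x^B_j$) gives the identity $|\C_j \cap \ZZ_{\geq x^B_j}| = |\ZZ_{<x^B_j} \setminus \C_j|$; a short calculation then rewrites the formula at $c = M$ as $N_j(M) = |\ZZ_{<M} \setminus \C_j|$, and $\min(\ZZ \setminus \C_j) < M$ would force this to be $\geq 1$, again a contradiction.

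The main obstacle is the careful $\beta$-set bookkeeping behind the formula for $N_j(c)$; once this is in hand, the two inequalities drop out directly from evaluating $N_j$ at the single value $N$ or $M$ in the forbidden residue class.
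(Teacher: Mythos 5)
Your proof is correct, and it follows essentially the same route as the paper: both work in the rank-level dual $\bl^*$ via Lemma \ref{L:bij-*}(1), translate $\mv^{\br^w}_i(B)=0$ into ``no cell of any $\mu^{(j)}$ has content $\equiv -i \pmod{\ell}$,'' and then test the two critical contents $N = \ell\lceil (x^B_j+i)/\ell\rceil - i$ (the least integer $\geq x^B_j$ in the residue class) and $M = \ell\lfloor (x^B_j+i)/\ell\rfloor - i$ (the greatest integer $\leq x^B_j$ in the residue class), using the charge constraint $\fs(\C_j)=x^B_j$ to relate $\max(\C_j)$ and $\min(\ZZ\setminus\C_j)$. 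The only stylistic difference is that you first establish the closed node-count formula $N_j(c)=|\C_j\cap\ZZ_{\geq c}|-\max(0,x^B_j-c)$ and evaluate it at $N$ and $M$, whereas the paper bypasses the count and directly exhibits a triple $x<y\leq z$ with $x=\min(\ZZ\setminus\C_j)$, $z=\max(\C_j)$, $y\in\{N,M\}$ — but these are the same argument in different clothing, since your $N_j(N)\geq 1$ and $N_j(M)\geq 1$ are exactly equivalent to the existence of such a triple. Your presentation is arguably more self-contained; the paper's is more compact because it takes the $x<y\leq z$ characterization of a diagonal node as understood.
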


\begin{proof}
If $\mvi^{\br^w}_i(B) = 0$, then for all $\bl \in \PP^{\ell}$ lying in $B$, $\bij_{\ell,e}((\bl;\br)^{w})$ has no node with $(\ell,\br^{w*}_B)$-residue $\ell-i$.
Now, %if $\quot_e(\UU(\beta_{\br^{w}}(\bl^{w}))) = (\B_1,\dotsc, \B_e)$,
$\bij_{\ell,e}((\bl;\br)^{w})$ will have a node with $(\ell,\br^{w*}_B)$-residue $\ell-i$ if
there exist $x, y,z \in \ZZ$ such that $x < y \leq z$, $x \notin \C_j$, $y \equiv_{\ell} \ell - i$ and $z \in \C_j$ for some $j \in [1,\,e]$.
%Consequently,
%$$
%\{ y \in [\min(\ZZ \setminus \B_i), \max(\B_i)] \mid y \equiv_{\ell} \ell -j \} = %\emptyset$$
%for all $i \in [1,\,e]$.
%This follows from the fact that $\bij_{\ell,e}((\bl;\br)^{w})$ has no node with $\ell$-residue $\ell-j$ if $\mv_j(B) = 0$, and that if the inequalities in the lemma are not satisfied, then $\bij_{\ell,e}((\bl;\br)^{w})$ would have a node with $\ell$-residue $\ell-j$.

Since $\fs(\C_j) = x^B_j$, we have $\max(\C_j) \geq x^B_j$ if and only if $\min(\ZZ \setminus \C_j) < x^B_j$.
%$$|\{ x \in \B_i \mid x \geq x^B_i \}| = |\{ x \in \ZZ \setminus \B_i \mid x < x^B_i \}|.$$
Thus
if $\max(\C_j) \geq \left\lceil \frac{x^B_j+ i}{\ell} \right\rceil \ell - i$,
then $\max(\C_j) \geq x^B_j$ and hence $\min(\ZZ \setminus \C_j) < x^B_j$.
Consequently,
$$
\min(\ZZ \setminus \C_j) < x^B_j \leq \left\lceil \frac{x^B_j+ i}{\ell} \right\rceil \ell - i \leq \max(\C_j).$$
On the other hand, if $\min(\ZZ \setminus \C_j) < \left\lfloor \frac{x^B_j+ i}{\ell} \right\rfloor \ell - i$, then $\min(\ZZ \setminus \C_j) < x^B_j$ and hence $\max(C_j) \geq x^B_j$.
Thus
$$
\min(\ZZ \setminus \C_j) < \left\lfloor \frac{x^B_j+ i}{\ell} \right\rfloor \ell - i
\leq x^B_j \leq \max(\C_j).$$
This shows that in either of these cases, we have found $x,y,z \in \ZZ$ with $x = \min(\ZZ\setminus \C_j) \notin \C_j$, $y \in \left\{\left\lceil \frac{x^B_j+ i}{\ell} \right\rceil \ell - i, \left\lfloor \frac{x^B_j+ i}{\ell} \right\rfloor \ell - i \right\}$ and $z = \max(\C_j) \in \C_j$,
so that $\bij_{\ell,e}((\bl;\br)^{w})$ has a node with $(\ell,\br^{w*}_B)$-residue $\ell-i$.
\end{proof}

\begin{cor}
The weight of a core block of an Ariki-Koike algebra is bounded above by
$$
\left\lfloor \ell/2 \right\rfloor \left\lceil \ell/2 \right\rceil e.
$$
\end{cor}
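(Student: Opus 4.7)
The plan is to combine Corollary \ref{C:mv}(1) with Lemma \ref{L:core-block-abacus} to constrain the beads on each runner of the $e$-quotient, and then apply an elementary combinatorial bound. Fix any $w \in \EW_{\ell}$ with $\br^w \in \AAbar$. By Corollary \ref{C:mv}(1), since $B$ is a core block there exists $i \in [1,\,\ell]$ with $\mv^{\br^w}_i(B) = 0$. Pick any $\bl$ lying in $B$ and write $\quot_e(\UU(\beta_{\br^w}(\bl^w))) = (\C_1, \dotsc, \C_e)$, so that
$$\wt(B) = \wt_e(\UU(\beta_{\br^w}(\bl^w))) = \sum_{j=1}^e |\beta^{-1}(\C_j)|;$$
it thus suffices to bound each $|\beta^{-1}(\C_j)|$ individually.

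Writing $\br^{w*}_B = (x^B_1,\dotsc, x^B_e)$ and decomposing each $x^B_j + i = a_j\ell + b_j$ with $a_j \in \ZZ$ and $b_j \in [0,\,\ell-1]$, set $m_j := a_j \ell - i$. The two inequalities in Lemma \ref{L:core-block-abacus} then yield $\min(\ZZ\setminus \C_j) \geq m_j$ and $\max(\C_j) \leq m_j + \ell - 1$, so $\C_j = \ZZ_{<m_j} \cup S_j$ for some $S_j \subseteq [m_j,\, m_j + \ell - 1]$. Lemma \ref{L:hub}(2) then forces $|S_j| = x^B_j - m_j = b_j$.

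Writing $S_j = \{s_1 < \dotsb < s_{b_j}\}$, a straightforward enumeration of the pairs $(y,z)$ with $y < z$, $y \notin \C_j$, $z \in \C_j$ gives
$$|\beta^{-1}(\C_j)| = \sum_{k=1}^{b_j} (s_k - m_j - k + 1),$$
which is maximised when $S_j$ occupies the topmost $b_j$ positions of $[m_j,\, m_j+\ell-1]$; thus $|\beta^{-1}(\C_j)| \leq b_j(\ell - b_j)$. Since $b_j(\ell - b_j) \leq \lfloor \ell/2 \rfloor \lceil \ell/2 \rceil$ for every $b_j \in [0,\,\ell-1]$, summing over $j \in [1,\,e]$ produces $\wt(B) \leq e \lfloor \ell/2 \rfloor \lceil \ell/2 \rceil$, as required. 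I do not anticipate any serious obstacle: the only real bookkeeping is the interval-structure extraction from Lemma \ref{L:core-block-abacus} and the identification $|S_j| = b_j$, after which the bound is elementary.
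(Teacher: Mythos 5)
Your proof is correct and follows essentially the same route as the paper's: fix $w$ with $\br^w \in \AAbar$, use the core-block criterion to get a vanishing component of the moving vector, apply Lemma \ref{L:core-block-abacus} to confine each $\C_j$ to a window of width $\ell$, and bound $\sum_j |\beta^{-1}(\C_j)|$. The only difference is cosmetic: you spell out the elementary computation showing $|\beta^{-1}(\C_j)| \leq |L_j|(\ell-|L_j|)$ (via $|S_j|=b_j$ from Lemma \ref{L:hub}(2) and the explicit formula for $|\beta^{-1}(\C_j)|$), whereas the paper simply asserts that inequality.
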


\begin{proof}
Let $w \in \EW_{\ell}$ such that $\br^w \in \AAbar$.
Let $B$ be a core block with $\br^{w*}_B = (x^B_1,\dotsc, x^B_e)$.
By Theorem \ref{T:moving-vector-of-core-block}, there exists $i \in [1,\ell]$ such that  $\mvi^{\br^w}_i(B) = 0$.
Let $\bl \in \PP^{\ell}$ be lying in $B$, and
let $\quot_e(\UU(\beta_{\br^w}(\bl^w))) = (\C_1,\dotsc, \C_e)$.
By Lemma \ref{L:core-block-abacus},
we have $\C_j = \ZZ_{< m} \cup L_j$ for some $L_j \subseteq [m,\, m+\ell -1]$, where $m = \lfloor \frac{x^B_j+i}{\ell} \rfloor \ell - i$,
so that $|\beta^{-1}(\C_j)| \leq |L_j|(\ell - |L_j|) \leq \lfloor \ell/2 \rfloor \lceil \ell/2 \rceil$.
Consequently,
$$
\wt(B) = \wt_{\HH}(\bl) = \wt_e((\bl;\br)^w)
= \wt_e(\UU(\beta_{\br^w}(\bl^w)))
=
\sum_{j = 1}^e |\beta^{-1}(\C_j)| \leq \lfloor \ell/2 \rfloor \lceil \ell/2 \rceil e.
$$
\end{proof}

\begin{cor} \label{C:Scopes}
Let $w \in \EW_{\ell}$ with $\br^w \in \AAbar$.
Let $B$ be a block of $\HH_n$ with $\br^{w*}_B = (x^B_1,\dotsc, x^B_e)$.
If there exist $i \in [1,\,\ell]$ and $j \in [1,\,e]$ such that:
\begin{itemize}
\item $x^B_{j+1} > x^B_j + \delta_{je} \ell$;
\item $\mvi^{\br^w}_i(B) = 0$;
\item there exists $x \in [x^B_j,\, x^B_{j+1} - \delta_{je}\ell]$ such that $\ell \mid (x+i)$,
\end{itemize}
then for all $\bl \in \PP^{\ell}$ lying in $B$, $\bl$ does not have any addable node with $(e,\br)$-residue $\equiv_e j$;
in particular, $B$ and $\cs_{j} \DDot{} B$ are Scopes equivalent.

Here, $j+1$ and $\cs_j$ are to be read as $1$ and $\cs_0$ respectively when $j = e$.
\end{cor}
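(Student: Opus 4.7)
The plan is to prove the stronger assertion that every $\bl$ lying in $B$ has no addable node of $(e,\br)$-residue $j$; the Scopes equivalence conclusion is then immediate from the definition, with Theorem~\ref{T:CR} supplying the associated Morita equivalence.  Since the right $\EW_{\ell}$-action preserves $(e,-)$-residue data, it suffices to show that $\bl^w$ has no addable node of $(e,\br^w)$-residue $j$.  Writing $\beta_{\br^w}(\bl^w) = (\B_1, \dotsc, \B_{\ell})$, such an addable node corresponds to a pair $(a,y) \in [1,\ell] \times \ZZ$ with $y \equiv_e j$, $y - 1 \in \B_a$ and $y \notin \B_a$.  Via the Uglov map and Lemma~\ref{L:Uglov map}, this translates (directly for $j \in [1,e-1]$, and after a runner-shift computation for $j = e$) to the existence of a position $x$ with $x \equiv_e j$, $x - 1 \in \UU(\beta_{\br^w}(\bl^w))$ and $x \notin \UU(\beta_{\br^w}(\bl^w))$, interpreted cyclically between the wrap-around runners when $j = e$.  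The goal is to rule this out.

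Setting $\quot_e(\UU(\beta_{\br^w}(\bl^w))) = (\C_1,\dotsc,\C_e)$, and adopting the indexing in which $\C_k$ records residue $k-1 \pmod e$, the existence of such an addable $j$-node is equivalent to some $a \in \C_j \setminus \C_{j+1}$ when $j \in [1,e-1]$, and to some $a \in \C_e^{+1} \setminus \C_1$ when $j = e$.  The hypothesis $\mv^{\br^w}_i(B) = 0$ together with Lemma~\ref{L:core-block-abacus} yields, for every $k \in [1,e]$, both an upper bound $\max(\C_k) \leq \lceil (x^B_k + i)/\ell \rceil \ell - i - 1$ and a complement bound $\min(\ZZ \setminus \C_k) \geq \lfloor (x^B_k + i)/\ell \rfloor \ell - i$, so each $\C_k$ is sandwiched between two half-lines whose upper endpoints differ by at most $\ell - 1$.

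For $j \in [1, e-1]$, the desired inclusion $\C_j \subseteq \C_{j+1}$ will follow as soon as $\lceil (x^B_j + i)/\ell \rceil \leq \lfloor (x^B_{j+1} + i)/\ell \rfloor$, equivalently once some integer multiple of $\ell$ lies in the closed interval $[x^B_j + i,\, x^B_{j+1} + i]$; the third hypothesis supplies exactly such a multiple via $x + i$.  For $j = e$, the analogous $\C_e^{+1} \subseteq \C_1$ requires the strict inequality $\lceil (x^B_e + i)/\ell \rceil < \lfloor (x^B_1 + i)/\ell \rfloor$, i.e.\ an integer $k$ satisfying $x^B_e + i < k\ell$ and $(k+1)\ell \leq x^B_1 + i$; the first hypothesis $x^B_1 > x^B_e + \ell$ creates the necessary slack, while the third hypothesis (with the $\delta_{ie}\ell$ correction narrowing the right endpoint) selects such a $k$.

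The main obstacle is pure bookkeeping: tracking the $1$-indexed $\C_k$'s against the $0$-indexed residues, handling the interplay between $\uu_a$ and the wrap-around residue $0$, and separating the generic case $j \in [1,e-1]$ from the boundary case $j = e$.  Once the abacus picture is correctly aligned the remainder of the argument is an elementary floor/ceiling comparison against the bounds furnished by Lemma~\ref{L:core-block-abacus}.
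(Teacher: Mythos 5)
Your overall approach is the same as the paper's: reduce the no-addable-node claim to an inclusion between consecutive runners of $\quot_e(\UU(\beta_{\br^w}(\bl^w)))$, bound $\max(\C_j)$ and $\min(\ZZ\setminus\C_{j+1})$ via Lemma~\ref{L:core-block-abacus}, and supply the necessary multiple of $\ell$ via the third hypothesis. The paper handles all $j$ uniformly through the single chain
$b \leq \max(\C_j) < \lceil (x^B_j+i)/\ell\rceil\ell - i \leq k\ell - i \leq \lfloor (x^B_{j+1}-\delta_{je}\ell+i)/\ell\rfloor\ell - i \leq \min(\ZZ\setminus\C_{j+1}) - \delta_{je}\ell$, whereas you split into $j<e$ and $j=e$; this split is fine, but it is where the error occurs.

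For $j=e$ you assert that the obstruction is a nonempty difference $\C_e^{+1}\setminus\C_1$, i.e.\ that the wrap from runner $e-1$ to runner $0$ shifts the row index by $1$. That is the ordinary $e$-abacus wrap, not the Uglov one. Because $\uu_a(y)-\uu_a(y-1)=e\ell-e+1$ when $y\equiv_e 0$ (and $=1$ only when $y\not\equiv_e 0$), an addable node of residue $e$ in some $\B_a$ corresponds in the $e$-abacus of $\UU$ to a bead at row $r$ on runner $e-1$ together with a \emph{gap at row $r+\ell$} on runner $0$. So the condition you actually need is $\C_e^{+\ell}\subseteq\C_1$, which is precisely what the paper's $\delta_{je}\ell$ correction delivers ("$b+\ell\in\C_{j+1}$ if $j=e$"). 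Establishing only $\C_e^{+1}\subseteq\C_1$ would not rule out addable nodes of residue $e$. The gap is partially self-healing in your write-up: the floor/ceiling inequality $\lceil (x^B_e+i)/\ell\rceil<\lfloor(x^B_1+i)/\ell\rfloor$ that you claim to derive is in fact exactly the inequality equivalent to $\C_e^{+\ell}\subseteq\C_1$ (not the weaker one needed for $\C_e^{+1}\subseteq\C_1$). So the arithmetic points to the right conclusion, but the stated intermediate target is wrong, and a careful filling-in of the "runner-shift computation" you defer would reveal the mismatch. You should replace $\C_e^{+1}$ by $\C_e^{+\ell}$ and verify that the row offset of $\ell$ comes out of Lemma~\ref{L:Uglov map}(4).

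One smaller point: in the "i.e." unpacking of the $j=e$ inequality, $\lceil(x^B_e+i)/\ell\rceil\leq k$ gives $x^B_e+i\leq k\ell$ rather than the strict $x^B_e+i<k\ell$ you wrote; this does not affect the argument since the hypothesis furnishes $x\geq x^B_j$, but it should be fixed for precision.
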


\begin{proof}
Let $x + i = k\ell$.
Let $\bl \in \PP^{\ell}$ be lying in $B$, and let $\quot_e(\UU(\beta_{\br^{w_{\br}}}(\bl^{w_{\br}}))) = (\C_1,\dotsc, \C_{e})$.
For each $j \in [1,\,e]$ and $b \in \C_j$, we have, by Lemma \ref{L:core-block-abacus},
$$
b \leq \max(\C_j) < \left\lceil \tfrac{x^B_j+i}{\ell} \right\rceil \ell - i
\leq \left\lceil \frac{x+i}{\ell} \right\rceil \ell - i = k\ell -i
\leq \left\lfloor \tfrac{x^B_{j+1}-\delta_{je} \ell +i}{\ell} \right\rfloor \ell -i
\leq \min (\ZZ \setminus \C_{j+1}) - \delta_{je} \ell.
$$
Thus $b \in \C_{j+1}$ if $j \ne e$ and $b+\ell \in \C_{j+1}$ if $j = e$.
This shows that $\bl^w$ does not have any addable node with $(e,\br^w)$-residue $\equiv_e j$, and hence $\bl$ does not have any addable node with $(e,\br)$-residue $\equiv_e j$.
\end{proof}

For the remainder of this section, we shall be working towards providing a necessary and sufficient conditions for two core blocks of Ariki-Koike algebras (with common $\ell$-charge $\br$) to be Scopes equivalent.
As two Scopes equivalent blocks must lie in the same $\AW_e$-orbit, they have the same moving vectors by
Proposition \ref{P:moving-vector-Weyl-orbit}.
As such, we shall fix a $w_0 \in \EW_{\ell}$ such that $\br^{w_0} \in \AAbar$ and $\mvi^{\br^{w_0}}_{\ell}(B) = 0$ for all such core blocks considered, and there is no loss of generality in doing so by Corollary \ref{C:mv}(1).
We can then simplify our notations and write $\mv(B)$ for $\mv^{\br^{w_0}}(B)$, and $\br^*_B$ for $\br^{w_0*}_B$.

We shall also fix $x^B_1,\dotsc, x^B_e \in\ZZ$ satisfying $\br^*_B = (x^B_1,\dotsc, x^B_e)$.

\begin{Def} \label{D:yz}
Let $B$ be a core block of $\HH_n$.
%, and let $w \in \EW_{\ell}$ be such that $\br^w \in \AAbar$ and $\mv^{\br^w}_{\ell}(B) = 0$.
Define
$\by^B = (y^B_1,\dotsc, y^B_e)\in \ZZ^e$ and $\bz^B = (z^B_1,\dotsc, z^B_e) \in (\ZZ/\ell\ZZ)^e$ by
    $$\br^{*}_B = \by^B \ell + \bz^B.$$
Furthermore, define $\cy_B \in \ZZ$ and $j_B \in \ZZ/e\ZZ$ by
$$
|\by^B| = \cy_B e + j_B.
$$
    %(thus, $y^B_i = \lfloor \frac{x^B_i+j}{\ell} \rfloor$).
\end{Def}

\begin{eg} \label{Eg:y^Bz^B}
Let $e = 5$, $\ell = 4$, $\br = (1,3,3,6)$ and $\bl = ((3,2,1^4), (4,2,1), (2^2,1), (1))$.
We have seen in Example \ref{Eg:mv} that $\bij_{\ell,e}(\bl;\br) = (((1), (1), \varnothing, \varnothing,\varnothing); (0,6,1,4,2))$, and $\mv_e(\bl;\br) = (0,1,0,1)$.
Since $\br \in \AAbar$, we see that the block $B$ in which $\bl$ lies is a core block, with $\mv^{\br}(B) = (0,1,0,1)$.

We choose $w_0 = \rho_4 \Be_4$.  Then
\begin{align*}
(\bl;\br)^{w_0} &= (((4,2,1), (2^2,1), (1), (3,2,1^4)), (3,3,6,6)), \\
\bij_{\ell,e}((\bl;\br)^{w_0}) &= (((1), (1), \varnothing, \varnothing,\varnothing);(1,7,2,5,3)), \\
\mv^{\br^{w_0}}(B) &= (1,0,1,0).
\end{align*}
Thus $\br^*_B = (1,7,2,5,3)$, $\by^B = (0,1,0,1,0)$, $\bz^B = (1,3,2,1,3)$, $\cy_B = 0$ and $j_B = 2$.
\end{eg}

\begin{lem} \label{L:yz}
Let $B$ be a core block of $\HH_n$.
%  and let $w \in \EW_{\ell}$ be such that $\br^w \in \AAbar$ and $\mv^{\br^w}_{\ell}(B) = 0$.
For each $j \in \ZZ/e\ZZ$, we have
\begin{align*}
\by^{\cs_j \DDot{} B} &= \cs_j \CDot{1} \by^B \qquad \text{and} \qquad
\bz^{\cs_j \DDot{} B} = \cs_j \CDot{0} \bz^B = (\bz^B)^{\overline{\cs_j}}.
\end{align*}
In particular, $|\by^{\cs_j \DDot{} B}| = |\by^B|$, and so $\cy_{\cs_j \DDot{} B} = \cy_B$ and $j_{\cs_j \DDot{} B} = j_B$.
(Recall that $\overline{\cs_j}$ denotes the projection of $\cs_j$ onto $\sym{e}$.)
\end{lem}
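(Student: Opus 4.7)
The plan is to reduce the entire statement directly to Lemma \ref{L:bij-*}(2), which gives $\br^*_{\cs_j \DDot{} B} = \cs_j \CDot{\ell} \br^*_B$, and then trace how the left action $\CDot{\ell}$ on $\ZZ^e$ interacts with the unique decomposition $\br^*_B = \by^B \ell + \bz^B$ characterised by $\bz^B \in [0,\ell-1]^e$. I would split into two cases according to whether $j = 0$ or $j \in [1,e-1]$, since the generator $\cs_0 = (\Be_1 - \Be_e)(1,e)$ is the only one whose $\ZZ^e$-component is nontrivial and for which the `$\ell$' in $\CDot{\ell}$ actually appears.

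For $j \in [1, e-1]$, the definition of $\CDot{\ell}$ makes $\cs_j \CDot{\ell}$ simply the transposition of the $j$-th and $(j+1)$-th entries, independent of $\ell$. Hence the decomposition $\br^*_B = \by^B \ell + \bz^B$ is permuted componentwise by $\overline{\cs_j}$, yielding $\by^{\cs_j \DDot{} B} = \cs_j \CDot{1} \by^B$ and $\bz^{\cs_j \DDot{} B} = \cs_j \CDot{0} \bz^B = (\bz^B)^{\overline{\cs_j}}$ at once. For $j = 0$, unpacking $\cs_0 = (\Be_1 - \Be_e)(1,e)$ as a left action gives $\cs_0 \CDot{\ell} \br^*_B = (x^B_e + \ell,\, x^B_2,\, \dotsc,\, x^B_{e-1},\, x^B_1 - \ell)$. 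Writing $x^B_e + \ell = (y^B_e + 1)\ell + z^B_e$ and $x^B_1 - \ell = (y^B_1 - 1)\ell + z^B_1$ — which are the unique $[0,\ell-1]$-normalised decompositions precisely because the residues $z^B_e, z^B_1 \in [0, \ell-1]$ are untouched by the $\pm \ell$-shift — we see that $\by^{\cs_0 \DDot{} B}$ is obtained from $\by^B$ by swapping the first and last entries and then adding $+1$ and $-1$ respectively (exactly $\cs_0 \CDot{1} \by^B$), while $\bz^{\cs_0 \DDot{} B}$ is merely the swap, matching $\cs_0 \CDot{0} \bz^B = (\bz^B)^{\overline{\cs_0}}$.

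The `in particular' assertion is then immediate: in every case $\cs_j \CDot{1}$ acts on $\by^B$ either as a pure transposition (for $j \geq 1$) or as a swap composed with $(+1, -1)$ on the first and last entries (for $j=0$), both of which preserve $|\by^B|$. Hence $|\by^{\cs_j \DDot{} B}| = |\by^B|$, and the uniqueness of the decomposition $|\by^B| = \cy_B e + j_B$ with $j_B \in \ZZ/e\ZZ$ forces $\cy_{\cs_j \DDot{} B} = \cy_B$ and $j_{\cs_j \DDot{} B} = j_B$. There is no genuine obstacle here; the only mild subtlety is verifying in the $j=0$ case that the $\pm \ell$-shifts land entirely in the $\by$-component, which is exactly what the normalisation range $[0, \ell-1]$ for $\bz^B$ guarantees.
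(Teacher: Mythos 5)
Your proof is correct and follows exactly the paper's route: invoke Lemma \ref{L:bij-*}(2) to get $\br^*_{\cs_j\DDot{}B}=\cs_j\CDot{\ell}\br^*_B$, then read off $\by^{\cs_j\DDot{}B}$ and $\bz^{\cs_j\DDot{}B}$ from the unique decomposition in Definition \ref{D:yz}. The paper leaves the two-case bookkeeping ($j\ne 0$ a pure transposition, $j=0$ a transposition plus a $\pm\ell$-shift absorbed into the $\by$-component) as ``follows directly,'' which you have simply spelled out; the key observation you make explicit — that the $\pm\ell$-shift leaves the residues in $[0,\ell-1]$ untouched and therefore lands entirely in $\by$ — is exactly the point that makes the reduction work.
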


\begin{proof}
By Lemma \ref{L:bij-*}(2), $\br^*_{\cs_j \DDot{} B} = \cs_j \CDot{\ell} \br^{*}_B$.  The lemma thus follows directly from Definition \ref{D:yz}.
\end{proof}

The following is a reformulation of Corollary \ref{C:Scopes}.

\begin{lem} \label{L:Scopes}
Let $B$ be a core block of $\HH_n$.
%, and let $w \in \EW_{\ell}$ be such that $\br^w \in \AAbar$ and $\mv^{\br^w}_{\ell}(B) = 0$.
Let $j \in [1,\,e]$.
Suppose that one of the following holds:
\begin{itemize}
\item $y^B_{j+1} > y^B_j + \delta_{je}$;
\item $y^B_{j+1} = y^B_j + \delta_{je}$, $z^B_{j+1} > z^B_j$ and there exists $i\in [1,\,\ell]$ with $\mvi_i(B) =0$ such that
    $z^B_{j+1} \geq \ell - i \geq z^B_j$.
\end{itemize}
Then $\hub_j(B) >0$, and for all $\bl \in \PP^{\ell}$ lying in $B$, $\bl$ has no addable node with $(e,\br)$-residue $\equiv_e j$.
In particular, $B$ and $\cs_{j} \DDot{} B$ are Scopes equivalent.

Here, $j+1$, $\cs_j$ and $\hub_j(B)$ are to be read as $1$, $\cs_0$ and $\hub_0(B)$ respectively when $j = e$.
\end{lem}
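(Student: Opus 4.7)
The plan is to reduce both assertions to results already established: the no-addable-node claim (and hence the Scopes equivalence) follows from Corollary~\ref{C:Scopes}, while the hub-positivity will be a direct computation from \eqref{E:hub}. The only book-keeping subtlety throughout is handling the $j = e$ wraparound, where the sharpened hypotheses (with an extra $\delta_{je}$ on the right-hand side) are designed precisely to absorb the extra $-\delta_{je}$ appearing in the hub formula.

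For Corollary~\ref{C:Scopes}, I will verify its three hypotheses in each of the two cases. Writing $x^B_j = y^B_j \ell + z^B_j$ with $z^B_j \in [0,\, \ell-1]$, the inequality $x^B_{j+1} > x^B_j + \delta_{je}\ell$ is immediate in Case~1 (the $y$-gap of at least $\delta_{je}+1$ dominates the worst-case negative $z$-shift of $-(\ell-1)$) and in Case~2 reduces exactly to $z^B_{j+1} > z^B_j$. For the existence of $i \in [1,\,\ell]$ with $\mv_i(B) = 0$ and of $x \in [x^B_j,\, x^B_{j+1} - \delta_{je}\ell]$ satisfying $\ell \mid (x + i)$: in Case~1 I will take $i = \ell$, valid by the standing convention $\mv_\ell(B) = 0$, together with $x = (y^B_j + 1)\ell$, which lies in the interval since $x^B_j < (y^B_j+1)\ell \leq x^B_{j+1} - \delta_{je}\ell$ by the Case~1 hypothesis; in Case~2 I will take the supplied $i$ together with $x = y^B_j \ell + (\ell - i)$, which lies in $[y^B_j\ell + z^B_j,\, y^B_j\ell + z^B_{j+1}] = [x^B_j,\, x^B_{j+1}-\delta_{je}\ell]$ by virtue of $z^B_j \leq \ell - i \leq z^B_{j+1}$. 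Corollary~\ref{C:Scopes} then yields the vanishing of addable $(e,\br)$-residue-$j$ nodes for all $\bl$ lying in $B$, whence the Scopes equivalence follows by definition.

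For the positivity $\hub_j(B) > 0$, combining Lemma~\ref{L:bij-*}(1) with \eqref{E:hub-core} and \eqref{E:hub} yields $\hub_j(B) = x^B_{j+1} - x^B_j - \delta_{je}$, where $x^B_{j+1}$ is read cyclically as $x^B_1$ when $j = e$. Substituting the $(y,z)$-decomposition, either hypothesis forces this quantity to be strictly positive: in Case~1 the $y$-difference contributes at least $\ell$ (or $2\ell$ when $j = e$), which dominates both the worst-case negative $z$-shift and the $-\delta_{je}$ correction; in Case~2 the expression reduces to either $z^B_{j+1} - z^B_j$ (for $j < e$) or $\ell + z^B_1 - z^B_e - 1$ (for $j = e$), both strictly positive by the hypothesis $z^B_{j+1} > z^B_j$. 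I expect no conceptual obstacle; once the translation into the $(y^B, z^B)$-coordinates is in place, everything falls out by direct inspection.
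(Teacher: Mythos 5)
Your proposal follows essentially the same route as the paper's: verify the three hypotheses of Corollary~\ref{C:Scopes} by exhibiting the pair $(i,x)$ explicitly in each case, and then compute $\hub_j(B)$ from the $(y^B,z^B)$-decomposition of $\br^*_B$. The choices $i=\ell$, $x=(y^B_j+1)\ell$ in Case~1 and the supplied $i$, $x = y^B_j\ell + (\ell-i)$ in Case~2 are exactly the paper's choices, and your verifications that $x$ lies in the required interval and that $\ell \mid (x+i)$ are correct.

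The one genuine computational slip is in the hub formula: you claim $\hub_j(B) = x^B_{j+1} - x^B_j - \delta_{je}$, but the correct formula is $\hub_j(B) = x^B_{j+1} - x^B_j - \delta_{je}\ell$. The quantity $x^B_{j+1} - x^B_j - \delta_{je}$ that comes out of \eqref{E:hub} and \eqref{E:hub-core} is the hub of the $\beta$-set $\beta_{|\br^{w_0}|}(\core(B))$ (equivalently of $\UU(\beta_{\br^{w_0}}(\bl^{w_0}))$), whereas $\hub_j(B)$ is the $\HH$-hub of $\bl$, i.e.\ the hub of the multipartition $(\bl;\br)^{w_0}$, and these two differ by $(\ell-1)\delta_{je}$ — this correction comes from \cite[Lemma 2.6(3)]{LT} as recorded inside the proof of Lemma~\ref{L:smallest}. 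Consequently your intermediate value $\ell + z^B_1 - z^B_e - 1$ for $j=e$ in Case~2 is off (the correct value is $z^B_1 - z^B_e$), although of course both are strictly positive under $z^B_{j+1} > z^B_j$, so the stated conclusion $\hub_j(B) > 0$ survives. If you patch the formula to include the extra $\ell$, the cleanest route — and the one the paper takes — is to note that $\hub_j(B) > 0$ is \emph{equivalent} to $x^B_{j+1} > x^B_j + \delta_{je}\ell$, which you have already established as the first hypothesis of Corollary~\ref{C:Scopes}, so no separate case analysis is needed for the hub positivity.
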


\begin{proof}
Note first that $\hub_j(B) = x^B_{j+1} - x^B_j - \delta_{je}\ell$, so that $\hub_j(B) >0$ if and only if $x^B_{j+1} > x^B_j + \delta_{je}\ell$, which is equivalent to
%$y^B_{j+1}\ell + z^B_{j+1} > (y^B_{j}+\delta_{je}) \ell + z^B_j$, which is in turn equivalent to
$$
y^B_{j+1} > y^B_j + \delta_{je} \qquad \text{or} \qquad (y^B_{j+1} = y^B_j + \delta_{je} \text{ and } z^B_{j+1} > z^B_j).
$$

If $y^B_{j+1} > y^B_j + \delta_{je}$, then
$$
x^B_j = y^B_j \ell + z^B_j <  %(x^B_i + \ell - z^B_i) + I_B  =
(y^B_j + 1) \ell
\leq  y^B_{j+1}\ell - \delta_{je} \ell \leq x^B_{j+1} - \delta_{je}\ell,$$
so that $x = (y^B_{j+1}+1)\ell$ and $i = \ell$ satisfy the conditions in Corollary \ref{C:Scopes}.

On the other hand, if $y^B_{j+1} = y^B_j + \delta_{je}$, $z^B_{j+1} > z^B_j$, and $\mvi_i(B) = 0$ with $z^B_j \leq \ell -i \leq z^B_{j+1}$, %let $x = y^B_j\ell + (\ell - i)$.
then %$\ell \mid (x+i)$ and
$$
x^B_j = y^B_j\ell + z^B_j \leq y^B_j\ell + (\ell - i) \leq y^B_j\ell + z^B_{j+1} = y^B_{j+1}\ell - \delta_{je}\ell + z^B_{j+1} = x^B_{j+1} - \delta_{je}\ell.
$$
Thus the conditions in Corollary \ref{C:Scopes} are satisfied with $x= y^B_j\ell + (\ell - i)$.

Consequently, the Lemma follows from Corollary \ref{C:Scopes}.
\end{proof}

\begin{Def} \hfill
\begin{enumerate}
\item For each $\by = (y_1,\dotsc, y_e) \in \ZZ^e$, define $\sigma_{\by} \in \sym{e}$ by: for each $j \in [1,\,e-1]$,
$$ y_{\sigma_{\by}(j)} \leq y_{\sigma_{\by}(j+1)},$$
 with equality only if $\sigma_{\by}(j) < \sigma_{\by}(j+1)$.
 (In other words, $\sigma_{\by}$ is the element of $\sym{e}$ with minimal length such that $\by^{\sigma_{\by}}$ is weakly increasing.)

\item Let $I \subseteq \ZZ_{\geq 0}$ with $0 \in I$.  For $b \in \ZZ_{\geq 0}$, define
        $$\Ht_I(b) := \max\{ i \in I \mid b \geq i \}.$$
      Furthermore, for $\bz = (z_1,\dotsc, z_e) \in (\ZZ_{\geq 0})^e$, %define a total order $\order^I_{\bz}$ on $\bz$ by $z_i \order^I_{\bz} z_j$ if and only if $\Ht_J(z_i) > \Ht_J(z_j)$ or $z_i > z_j = \Ht_J(z_j) = \Ht_J(z_i)$ or ($z_i, z_j > \Ht_J(z_i) = \Ht_J(z_j)$ and $i < j$).
       %Finally
       define
      $\tau^I_{\bz} \in \sym{e}$ by: for each $j \in [1,\,e-1]$,
    $$
    \Ht_I(z_{\tau^I_{\bz}(j)}) \geq \Ht_I(z_{\tau^I_{\bz}(j+1)}),
    $$
    with equality only if %$\tau^I_{\bz}(i) < \tau^I_{\bz}(i+1)$ or $z_{\tau^I_{\bz}(i)} > z_{\tau^I_{\bz}(i+1)} = %\Ht(z_{\tau^I_{\bz}(i+1)}) =
    %\Ht(z_{\tau^I_{\bz}(i)})$.
    one of the following holds:
\begin{itemize}
     \item $z_{\tau^I_{\bz}(j)} > z_{\tau^I_{\bz}(j+1)} = \Ht_I(z_{\tau^I_{\bz}(j+1)})$;
     \item $\tau^I_{\bz}(j) < \tau^I_{\bz}(j+1)$ and ($z_{\tau^I_{\bz}(j)} = z_{\tau^I_{\bz}(j+1)}$ or
     %\item $\tau^I_{\bz}(j) < \tau^I_{\bz}(j+1)$ and
     $z_{\tau^I_{\bz}(j)}, z_{\tau^I_{\bz}(j+1)} > \Ht_I(z_{\tau^I_{\bz}(j+1)})$).
     \end{itemize}
     (It is an easy exercise to show that $\tau^I_{\bz}$ is unique/well-defined.)
\end{enumerate}
\end{Def}

\begin{eg} \label{Eg:sigmatau}
Let $\by = (0,1,0,1,0)$.  Then $\by^{\sigma_{\by}} = (0,0,0,1,1)$ and $\sigma_{\by} = \cs_2\cs_4\cs_3$.

Let $\bz = (1,2,3,3,1)$, and $I=\{0,2\}$.  Then $\bz^{\tau^I_{\bz}} = (3,3,2,1,1)$ and $\tau^I_{\bz} = \cs_2\cs_1\cs_3\cs_2\cs_3$.
\end{eg}

\begin{lem} \label{L:tau}
Let $\bz = (z_1,\dotsc, z_e) \in (\ZZ_{\geq 0})^e$ and $I \subseteq \ZZ_{\geq 0}$ with $0 \in I$.
If $j, j' \in [1,\, e]$ are such that $z_{\tau^I_{\bz}(j)} > z_{\tau^I_{\bz}(j')}$ and $%z_{\tau^I_{\bz}(j)} \geq 
\Ht_I(z_{\tau^I_{\bz}(j)}) \geq z_{\tau^I_{\bz}(j')}$, then $j < j'$.
\end{lem}

\begin{proof}
Since $z_{\tau^I_{\bz}(j)} > z_{\tau^I_{\bz}(j')}$, we see that $j \ne j'$.  Furthermore, $\Ht_I(z_{\tau^I_{\bz}(j)}) \geq \Ht_I(z_{\tau^I_{\bz}(j')})$ by the definition of $\Ht_I$.
If $j > j'$, then by the definition of $\tau^I_{\bz}$, we have 
$$
\Ht_I(z_{\tau^I_{\bz}(j')}) \geq \Ht_I(z_{\tau^I_{\bz}(j'+1)}) \geq \dotsb \geq \Ht_I(z_{\tau^I_{\bz}(j-1)}) \geq \Ht_I(z_{\tau^I_{\bz}(j)}).% \geq \Ht_I(z_{\tau^I_{\bz}(j')}),
$$
Combining with the inequality $\Ht_I(z_{\tau^I_{\bz}(j)}) \geq \Ht_I(z_{\tau^I_{\bz}(j')})$ obtained above, we get
$$
\Ht_I(z_{\tau^I_{\bz}(j')}) = \Ht_I(z_{\tau^I_{\bz}(j'+1)}) = \dotsb = \Ht_I(z_{\tau^I_{\bz}(j-1)}) = \Ht_I(z_{\tau^I_{\bz}(j)}).
$$
Now, the condition $\Ht_I(z_{\tau^I_{\bz}(j)}) \geq z_{\tau^I_{\bz}(j')}$ implies that
$$
\Ht_I(z_{\tau^I_{\bz}(j')}) = \Ht_I(z_{\tau^I_{\bz}(j)}) \geq z_{\tau^I_{\bz}(j')} \geq \Ht_I(z_{\tau^I_{\bz}(j')}) ,
$$
forcing $z_{\tau^I_{\bz}(j')} = \Ht_I(z_{\tau^I_{\bz}(j')})$.
Let $m = \max\{ b \in [j',j-1] \mid z_{\tau^I_{\bz}(b)} = \Ht_I(z_{\tau^I_{\bz}(b)}) \}$.
Then 
$$
z_{\tau^I_{\bz}(m)} = \Ht_I(z_{\tau^I_{\bz}(m)}) = \Ht_I(z_{\tau^I_{\bz}(m+1)}) < 
z_{\tau^I_{\bz}(m+1)},
$$
contradicting the definition of $\tau^I_{\bz}$.  Hence, $j < j'$.
\end{proof}

\begin{cor} \label{C:tau=1}
Let $\bz = (z_1,\dotsc, z_e) \in (\ZZ_{\geq 0})^e$ and $I \subseteq \ZZ_{\geq 0}$ with $0 \in I$.
The following statements are equivalent:
\begin{enumerate}
\item $\tau^I_{\bz} = 1_{\sym{e}}$.
\item Whenever $b,b' \in [1,\,e]$ such that $z_b > z_{b'}$ and $\Ht_{I}(z_b) \geq z_{b'}$, we have $b < b'$.
%\item $\tau_B = 1_{\sym{e}}$.
\item There does not exist $b \in [1,\,e-1]$ such that $z_{b+1} > z_{b}$ and $\Ht_{I}(z_{b+1}) \geq z_{b}$.
\item For all $b \in [1,\, e-1]$, we have $\Ht_{I}(z_{b}) \geq \Ht_{I}(z_{b+1})$,
      and if $\Ht_{I}(z_{b}) = \Ht_{I}(z_{b+1}) \in \{ z_{b}, z_{b+1} \}$, then $z_{b} \geq z_{b+1}$.
\end{enumerate}
\end{cor}

\begin{proof}
That $(1) \Rightarrow (2)$ follows from Lemma \ref{L:tau}, while $(2) \Rightarrow (3)$ is trivial.

If $\Ht_{I}(z_{b}) < \Ht_{I}(z_{b+1})$, then $z_{b} < \Ht_{I}(z_{b+1}) \leq z_{b+1}$ by the definition of $\Ht_I$, so that (3) does not hold.
Also, if $z_{b} < z_{b+1}$ and $\Ht_{I}(z_{b}) = \Ht_{I}(z_{b+1}) \in \{ z_{b}, z_{b+1} \}$, then
$$\{ z_{b}, z_{b+1} \} \ni  \Ht_{I}(z_{b+1}) = \Ht_{I}(z_{b}) \leq z_b < z_{b+1},$$ 
so that $z_b = \Ht_I(z_b) = \Ht_I(z_{b+1}) < z_{b+1}$, and once again (3) does not hold.
Thus $(3) \Rightarrow (4)$.

If (4) holds, then it is straightforward to verify that $1_{\sym{e}}$ satisfies the defining condition for $\tau^I_{\bz}$, so that $\tau^I_{\bz} = 1_{\sym{e}}$, i.e.\ (1) holds.
\end{proof}

\begin{prop} \label{P:sigma-tau}
Let $j \in [1,\, e]$.
\begin{enumerate}
\item Let $\by = (y_1,\dotsc, y_e) \in \ZZ^e$. If $y_{j+1} > y_j + \delta_{je}$, then
    $$\sigma_{\smash[t]{\cs_j \CDot{1} \by}} = \overline{\cs_j} \sigma_{\by}.$$

\item
Let $\bz = (z_1,\dotsc, z_e) \in (\ZZ_{\geq 0})^e$ and $I \subseteq \ZZ_{\geq 0}$ with $0 \in I$.
If $j < e$, $z_{j+1} > z_j$ and $\Ht_I(z_{j+1}) \geq z_j$,
then
$$
\tau^I_{\bz^{\cs_j}} = \cs_j \tau^I_{\bz}.
$$
\end{enumerate}
Again, $j+1$ and $\cs_j$ are to be read as $1$ and $\cs_0$ respectively when $j =e$.
\end{prop}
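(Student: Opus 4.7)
The plan for both parts is to exploit the uniqueness of the permutations $\sigma_{\by}$ and $\tau^I_{\bz}$, each of which is pinned down in $\sym{e}$ by its weak-inequality condition together with a tie-breaking rule. In each part I take the candidate permutation on the right-hand side and verify that it satisfies the defining property of the left-hand side.

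For part (1), I set $\tau := \overline{\cs_j} \sigma_{\by}$ and check that $\tau$ satisfies the defining condition of $\sigma_{\cs_j \CDot{1} \by}$. For $j \in [1,\, e-1]$, the $\CDot{1}$ action is the simple swap of the $j$-th and $(j+1)$-th components, so $\cs_j \CDot{1} \by = \by^{\overline{\cs_j}}$, and hence $(\cs_j \CDot{1} \by)_{\tau(k)} = y_{\sigma_{\by}(k)}$ after cancelling the two copies of $\overline{\cs_j}$. The weak inequality then reduces directly to that of $\sigma_{\by}$. For tie-breaking, the only way $\overline{\cs_j} = (j,j+1)$ can reverse the order of an increasing pair $\sigma_{\by}(k) < \sigma_{\by}(k+1)$ is when $(\sigma_{\by}(k), \sigma_{\by}(k+1)) = (j, j+1)$; but a tie at such a pair would force $y_j = y_{j+1}$, contradicting the hypothesis $y_{j+1} > y_j$.

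The case $j = e$ (so $\overline{\cs_j} = (1,e)$) is where I expect the main technical obstacle. Here $\cs_0 \CDot{1} \by = (y_e + 1, y_2, \dotsc, y_{e-1}, y_1 - 1)$, so a direct computation yields
$$(\cs_0 \CDot{1} \by)_{\tau(k)} = y_{\sigma_{\by}(k)} + \delta_{\sigma_{\by}(k), e} - \delta_{\sigma_{\by}(k), 1}.$$
I would then case-analyse on whether $\sigma_{\by}(k)$ and $\sigma_{\by}(k+1)$ take the value $1$, $e$, or neither. The hypothesis $y_1 \geq y_e + 2$ is exactly what is needed in the worst subcase, where a $+1$ at position $k$ and a $-1$ at position $k+1$ occur simultaneously; meanwhile, the tie-breaking convention of $\sigma_{\by}$ --- which places the index $e$ last and the index $1$ first among their respective value-ties --- yields strict inequalities $y_{\sigma_{\by}(q+1)} > y_e$ and $y_{\sigma_{\by}(p-1)} < y_1$, where $q = \sigma_{\by}^{-1}(e)$ and $p = \sigma_{\by}^{-1}(1)$, and these absorb the remaining $\pm 1$ shifts in the other subcases. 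A parallel case analysis verifies the tie-breaking condition.

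For part (2), since $j < e$, the element $\cs_j = (j,j+1) \in \sym{e}$ acts as a simple swap, so for $\rho' := \cs_j \tau^I_{\bz}$ one has $(\bz^{\cs_j})_{\rho'(k)} = z_{\tau^I_{\bz}(k)}$, and the weak inequality on $\Ht_I$ transfers directly. The three tie-breaking subcases for $\tau^I_{\bz^{\cs_j}}$ are inherited from those for $\tau^I_{\bz}$, with the only potential obstruction in the case $(\tau^I_{\bz}(k), \tau^I_{\bz}(k+1)) = (j, j+1)$, where $\cs_j$ reverses the order. In that case, subcase (b) would require $z_j = z_{j+1}$, contradicting $z_{j+1} > z_j$, and subcase (c) would require $z_j > \Ht_I(z_{j+1})$, contradicting $\Ht_I(z_{j+1}) \geq z_j$; subcase (a) depends only on the values and transfers without any condition on the order. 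Hence one of the three subcases for $\rho'$ always holds, and $\rho' = \tau^I_{\bz^{\cs_j}}$ by uniqueness.
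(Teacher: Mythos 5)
Your proposal is correct and follows essentially the same route as the paper: verify that the candidate permutation satisfies the defining (weak-inequality plus tie-breaking) properties and then invoke uniqueness. Your treatment of the $j<e$ case of part (1) and of part (2) is somewhat more compact than the paper's explicit six-case enumeration (by isolating the adjacent-transposition observation that the only order reversal is on the pair $(j,j+1)$), while the $j=e$ case of part (1) is left as a sketch that the paper works out in full; the ideas you outline there (the hypothesis $y_1 \geq y_e+2$ in the worst subcase, the strict inequalities $y_{\sigma_{\by}(q+1)} > y_e$ and $y_{\sigma_{\by}(p-1)} < y_1$ absorbing the other $\pm 1$ shifts) are exactly what makes the paper's Cases 2--5 go through.
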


\begin{proof} \hfill
\begin{enumerate}
\item
Let $\by' = (y'_1,\dotsc, y'_e) := \cs_j \CDot{1} \by$, %so that $y'_b = y_{\cs_i(b)} + \delta_{ie}(\delta_{bi} + \delta_
$\sigma_{\by}(a) = j$ and $\sigma_{\by}(b) = j+1$.
Then
\begin{gather*}
y'_c =
\begin{cases}
y_{j+1}- \delta_{je}, &\text{if } c = j; \\
y_j + \delta_{je}, &\text{if } c = j+1; \\
y_c, &\text{otherwise},
\end{cases} \\
y_{\sigma_{\by}(a)} = y_j < y_{j+1} = y_{\sigma_{\by}(b)},
\end{gather*}
so that $a < b$.
To show part (1), we show that $y'_{\overline{\cs_j} \sigma_{\by}(c)} \leq y'_{\overline{\cs_j} \sigma_{\by}(c+1)}$, with equality only if $\overline{\cs_j} \sigma_{\by}(c) < \overline{\cs_j} \sigma_{\by}(c+1)$ for all $c \in [1,\,e-1]$.  We do this by considering six cases separately.
\begin{description}
\item[Case 1. $\{c, c+1\} \cap \{ a,b \} = \emptyset$] this follows directly from the properties of $\sigma_{\by}$.

\item[Case 2. $c+1 = a$]  Since $a < b$, we have $c \notin \{a,b\}$, or equivalently, $\sigma_{\by}(c) \notin \{j,j+1\}$.  Thus
$$
y'_{\overline{\cs_j}\sigma_{\by}(c)} = y_{\sigma_{\by}(c)} \leq y_{\sigma_{\by}(c+1)} = y_{\sigma_{\by}(a)} = y_j = y'_{j+1} - \delta_{je} \leq y'_{j+1} = y'_{\overline{\cs_j}\sigma_{\by}(a)} = y'_{\overline{\cs_j}\sigma_{\by}(c+1)},
$$
with equality only if $y_{\sigma_{\by}(c)} = y_{\sigma_{\by}(c+1)}$ and $\delta_{je} = 0$, which yields $\sigma_{\by}(c) < \sigma_{\by}(c+1) = j < e$, in which case $\overline{\cs_j}\sigma_{\by}(c) < \overline{\cs_j}\sigma_{\by}(c+1)$.

\item[Case 3. $c = b$]  This is similar to Case 2, and we omit the details here.

\item[Case 4. $c = a$ and $c+1 \ne b$]
By the property of $\sigma_{\by}$, we have
either $y_{\sigma_{\by}(c)} = y_{\sigma_{\by}(c+1)}$ and $\sigma_{\by}(c) < \sigma_{\by}(c+1)$ --- in which case, $j = \sigma_{\by}(c) < e$ and so $\delta_{je} = 0$ --- or $y_{\sigma_{\by}(c)} \leq y_{\sigma_{\by}(c+1)}-1$.
In the former case, we get
$$
y'_{\overline{\cs_j} \sigma_{\by}(c)} = y'_{j+1} = y_j = y_{\sigma_{\by}(c)} = y_{\sigma_{\by}(c+1)} = y'_{\overline{\cs_j} \sigma_{\by}(c+1)}
$$
and $\overline{\cs_j}\sigma_{\by}(c) = j +1 < \sigma_{\by}(c+1) = \overline{\cs_j}\sigma_{\by}(c+1)$.
In the latter case, we have
$$
y'_{\overline{\cs_j}\sigma_{\by}(c)} = y'_{j+1} = y_j + \delta_{je} = y_{\sigma_{\by}(c)} +\delta_{je} \leq y_{\sigma_{\by}(c+1)} - 1 + \delta_{je} \leq  y_{\sigma_{\by}(c+1)} = y'_{\overline{\cs_j}\sigma_{\by}(c+1)},
$$
with equality only if $j=e$ in which case $\overline{\cs_j}\sigma_{\by}(c) = 1 < \overline{\cs_j}\sigma_{\by}(c+1)$.

\item[Case 5. $c \ne a$ and $c+1 = b$] This is similar to Case 4, and we omit the details here.

\item[Case 6. $c = a$ and $c+1 = b$]
We have
$$
y'_{\overline{\cs_j}\sigma_{\by}(c)} = y'_{j+1} = y_j + \delta_{je} < y_{j+1} = y'_j + \delta_{je}.
$$
Thus
$$
y'_{\overline{\cs_j}\sigma_{\by}(c)} \leq y'_j + \delta_{je} - 1 \leq y'_j = y'_{\overline{\cs_j}\sigma_{\by}(c+1)}
$$
with equality only if $j=e$, which case $\overline{\cs_j}\sigma_{\by}(c) = 1 < e = \overline{\cs_j}\sigma_{\by}(c+1)$.
\end{description}

\item
Let $(z'_1, \dotsc, z'_{e}) = \bz^{\cs_j}$.  Then $z'_b = z_{\cs_j(b)}$ for all $b \in [1,\, e]$.
Thus for $b \in [1,\, e-1]$, we have
$$
\Ht_I(z'_{\cs_j\tau^I_{\bz}(b)}) = \Ht_I(z_{\tau^I_{\bz}(b)}) \geq \Ht_I(z_{\tau^I_{\bz}(b+1)}) = \Ht_I(z'_{\cs_j\tau^I_{\bz}(b+1)}),
$$
with equality only if %$\tau^I_{\bz}(b) < \tau^I_{\bz}(b+1)$ or $z_{\tau^I_{\bz}(b)} > z_{\tau^I_{\bz}(b+1)} = \Ht(z_{\tau^I_{\bz}(b)})$,
%which is easily seen to be equivalent to %$\cs_i \tau^I_{\bz}(b) < \cs_i \tau^I_{\bz}(b+1)$ or $z'_{\cs_i \tau^I_{\bz}(b)} > z'_{\cs_i \tau^I_{\bz}(b+1)} = \Ht(z'_{\cs_i \tau^I_{\bz}(b)})$, except possibly
%when $\tau^I_{\bz}(b) = i$ and $\tau^I_{\bz}(b+1) = i+1$.
one of the following holds:
\begin{itemize}
     \item $z_{\tau^I_{\bz}(b)} > z_{\tau^I_{\bz}(b+1)} = \Ht(z_{\tau^I_{\bz}(b+1)})$;
     \item $\tau^I_{\bz}(b) < \tau^I_{\bz}(b+1)$ and ($z_{\tau^I_{\bz}(b)} = z_{\tau^I_{\bz}(b+1)}$ or
     %\item $\tau^I_{\bz}(b) < \tau^I_{\bz}(b+1)$ and
     $z_{\tau^I_{\bz}(b)}, z_{\tau^I_{\bz}(b+1)} > \Ht(z_{\tau^I_{\bz}(b+1)})$).
     \end{itemize}
These conditions can easily be seen to be equivalent to the following, except when $\{ j, j+1 \} = \{ \tau^I_{\bz}(b), \tau^I_{\bz}(b+1) \}$:
\begin{itemize}
     \item $z'_{\cs_j \tau^I_{\bz}(b)} > z'_{\cs_j \tau^I_{\bz}(b+1)} = \Ht(z'_{\cs_j \tau^I_{\bz}(b+1)})$;
     \item $\cs_j \tau^I_{\bz}(b) < \cs_j \tau^I_{\bz}(b+1)$ and ($z'_{\cs_j \tau^I_{\bz}(b)} = z'_{\cs_j \tau^I_{\bz}(b+1)}$ or
     %\item $\cs_j \tau^I_{\bz}(b) < \cs_j \tau^I_{\bz}(b+1)$ and
     $z'_{\cs_j \tau^I_{\bz}(b)}, z'_{\cs_j \tau^I_{\bz}(b+1)} > \Ht_I(z'_{\cs_j \tau^I_{\bz}(b)})$).
\end{itemize}
Now if $\{\tau^I_{\bz}(b), \tau^I_{\bz}(b+1)\} = \{j,j+1\}$ and $\Ht_I(z_{\tau^I_{\bz}(b)}) = \Ht_I(z_{\tau^I_{\bz}(b+1)})$, then $\Ht_I(z_j) = \Ht_I(z_{j+1})$.
Since we are given that $z_{j+1} > z_j$ and $\Ht_I(z_{j+1}) \geq z_j$, this implies that $$\Ht_I(z_{j+1}) \geq z_j \geq \Ht_I(z_j) = \Ht_I(z_{j+1}),$$ forcing equality throughout, yielding
$$
z_{j+1} > z_j = \Ht_I(z_j) = \Ht_I(z_{j+1}).
$$
Consequently none of the conditions for $\Ht_I(z_{\tau^I_{\bz}(b)}) = \Ht_I(z_{\tau^I_{\bz}(b+1)})$ holds when $\tau^I_{\bz}(b) = j$ and $\tau^I_{\bz}(b+1) = j+1$.
Thus $\tau^I_{\bz}(b) = j+1$ and $\tau^I_{\bz}(b+1) = j$, and so $z_{j+1} > z_j = \Ht_I(z_j)$ yields
$$
z'_{\cs_j \tau^I_{\bz}(b)} > z'_{\cs_j \tau^I_{\bz}(b+1)} = \Ht(z'_{\cs_j \tau^I_{\bz}(b+1)}).
$$
Hence $\cs_j \tau^I_{\bz}$ satisfies the conditions for $\tau^I_{\bz^{\cs_j}}$ so that
$\cs_j \tau^I_{\bz} = \tau^I_{\bz^{\cs_j}}$ as desired.
\end{enumerate}
\end{proof}

\begin{Def} \label{D:initial}
Let $B$ be a core block of $\HH_n$. % with $\wt(B) >0$.
%Let $w \in \EW_{\ell}$ such that $\br^w \in \AAbar$ and $\mv^{\br^w}_{\ell}(B) = 0$.
Recall $\by^B = (y^B_1, \dotsc, y^B_e)$ and $\bz^B = (z^B_1,\dotsc, z^B_e)$ in Definition \ref{D:yz}, and let
$$I_B := \{ \ell-i \mid \mvi_{i}(B) = 0 \}.$$
\begin{enumerate}
\item
Define $\sigma_B,\tau_B \in \sym{e}$ by
$$
\sigma_B := \sigma_{\by^B} \qquad \text{and} \qquad \tau_B := \tau^{I_B}_{(\bz^B)^{\sigma_B}}.
$$

\item The {\em Scopes vector of $B$}, denoted $\Sc(B)$, is defined to be
$$
\Sc(B) := (\bz^B)^{\sigma_B\tau_B} = (z^B_{\sigma_B\tau_B(1)}, \dotsb, z^B_{\sigma_B\tau_B(e)}). $$

\item We call $B$ an {\em initial} core block if
    \begin{enumerate}
    \item[(I)] $y^B_{\sigma_B(e)} \leq y^B_{\sigma_B(1)} + \bbone_{\sigma_B(e) < \sigma_B(1)}$;
    \item[(II)] $\tau_B = 1_{\sym{e}}$.
%    for all $b \in [1,\, e-1]$, we have $\Ht_{I_B}(z^B_{\sigma_B(b)}) \geq \Ht_{I_B}(z^B_{\sigma_B(b+1)})$,
%        with equality only if one of the following holds:
%        \begin{itemize}
%     \item $z^B_{\sigma_B(b)} > z^B_{\sigma_B(b+1)} = \Ht_{I_B}(z^B_{\sigma_B(b+1)})$;
%     \item $\sigma_B(b) < \sigma_B(b+1)$ and $z^B_{\sigma_B(b)} = z^B_{\sigma_B(b+1)}$;
%     \item $\sigma_B(b) < \sigma_B(b+1)$ and $z^B_{\sigma_B(b)}, z^B_{\sigma_B(b+1)} > \Ht_{I_B}(z^B_{\sigma_B(b+1)})$.
%     \end{itemize}
    \end{enumerate}
\end{enumerate}
\end{Def}

\begin{eg}
Let $e = 5$, $\ell = 4$, $\br = (1,3,3,6)$, and let $B$ be the block of $\HH_{22}$ containing the $4$-partition $\bl = ((3,2,1^4), (4,2,1), (2^2,1), (1))$.
Let $w_0 = \rho_4\Be_4$.
Then $\mv(B) = (1,0,1,0)$, $\by^B = (0,1,0,1,0)$ and $\bz^B = (1,3,2,1,3)$ by Example \ref{Eg:y^Bz^B}.
Thus $I_B = \{0,2\}$.

By Example \ref{Eg:sigmatau}, $\sigma_B = \sigma_{\by^B} = \cs_2\cs_4\cs_3$, so that $(\bz^B)^{\sigma_B} = (1,2,3,3,1)$.
Thus, by the same example, $\tau_B = \tau^{I_B}_{(\bz^B)^{\sigma_B}} = \cs_2\cs_1\cs_3\cs_2\cs_3$.
Hence $\Sc(B) = (\bz^B)^{\sigma_B\tau_B} = (3,3,2,1,1)$.
\end{eg}

\begin{lem} \label{L:y_{i+1}=y_i}
Let $B$ be a core block of $\HH_n$. % with $\wt(B) > 0$.
If $y^B_{j+1} = y^B_j + \delta_{je}$ for some $j \in [1,\,e]$,
then $\sigma_B \cs_{\sigma_B^{-1}(j)} = \overline{\cs_j} \sigma_B$.
(Again, $j+1$ and $\cs_j$ are to be read as $1$ and $\cs_0$ respectively when $j =e$.)
\end{lem}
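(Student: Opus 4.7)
The plan is to identify $\sigma_B^{-1}(j')$ explicitly in terms of $a:=\sigma_B^{-1}(j)$, where $j':=j+1$ if $j<e$ and $j':=1$ if $j=e$, and then verify the claimed identity by a direct computation in $\sym{e}$. Write $\sigma:=\sigma_B$ and $\by:=\by^B$; recall that $\overline{\cs_j}=(j,j')\in\sym{e}$ (for $j=e$ this uses $\cs_0=(\Be_1-\Be_e)(1,e)$). For $j<e$ the hypothesis is $y^B_{j+1}=y^B_j$; any further index $k$ with $y^B_k=y^B_j$ satisfies $k<j$ or $k>j+1$, so by the tie-breaking rule defining $\sigma_{\by}$ (equal values sorted in increasing order of index) the indices $j$ and $j+1$ occupy consecutive positions in the sorted sequence. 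Hence $\sigma^{-1}(j+1)=a+1$, and in particular $a<e$.

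For $j=e$ the hypothesis reads $y^B_1=y^B_e+1$. The strict inequality $y^B_1>y^B_e$ forces index $e$ not to be a maximizer of $y^B$, hence $\sigma(e)\ne e$ and so $a<e$. Among indices $k$ with $y^B_k=y^B_e$, the largest such index is $e$ itself, so all of them sit in positions $\le a$ of the sorted sequence; thus position $a+1$ carries the smallest $y$-value strictly greater than $y^B_e$. Integrality of the $y^B_i$ together with $y^B_1=y^B_e+1$ force this next value to be exactly $y^B_1$, and the tie-breaking rule then places the smallest index carrying that value, namely $1$, into position $a+1$; hence $\sigma^{-1}(1)=a+1$.

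In both cases we have $a\in[1,e-1]$ with $\sigma(a)=j$ and $\sigma(a+1)=j'$. Direct evaluation on $[1,e]$ yields $(\sigma\cs_a)(a)=\sigma(a+1)=j'=(\overline{\cs_j}\sigma)(a)$ and $(\sigma\cs_a)(a+1)=\sigma(a)=j=(\overline{\cs_j}\sigma)(a+1)$, while for $c\notin\{a,a+1\}$ one has $\sigma(c)\notin\{j,j'\}$ and so $(\sigma\cs_a)(c)=\sigma(c)=(\overline{\cs_j}\sigma)(c)$. Hence $\sigma\cs_a=\overline{\cs_j}\sigma$, as required. The only real subtlety is the cyclic wrap-around in the case $j=e$: one must correctly interpret $\overline{\cs_e}$ as $\overline{\cs_0}=(1,e)$, and use the strict inequality $y^B_1>y^B_e$ to guarantee $a<e$ so that $\cs_a$ is a genuine simple transposition in $\sym{e}$.
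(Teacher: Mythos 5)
Your proof is correct and follows the same route as the paper: identify $a=\sigma_B^{-1}(j)$, show $\sigma_B(a+1)=j'$, and check the identity $\sigma_B\cs_a=\overline{\cs_j}\sigma_B$ by evaluation. The paper's proof is a one-liner that simply asserts $\sigma_B(a+1)=j+1$ ``by the definition of $\sigma_B$''; you have spelled out the tie-breaking argument in both cases $j<e$ and $j=e$, which is exactly the content the paper leaves to the reader.
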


\begin{proof}
let $\sigma_B(a) = j$. If $y^B_{j+1} = y^B_j + \delta_{je}$, then $\sigma_B(a+1) = j+1$ by the definition of $\sigma_B = \sigma_{\by^B}$.  Consequently, $\sigma_B \cs_{a} = \overline{\cs_j} \sigma_B$, as desired.
\end{proof}

\begin{prop} \label{P:Scopes-vector}
Let $B$ be a core block of $\HH_n$, and % with $\wt(B) > 0$.
let $j \in [1,\, e]$.
\begin{enumerate}
\item If $y^B_{j+1} > y^B_j + \delta_{je}$, then
\begin{align*}
\sigma_{\cs_j \DDot {} B} = \overline{\cs_j} \sigma_B \qquad \text{and} \qquad
\tau_{\cs_j \DDot {} B} = \tau_B.
\end{align*}

\item If $y^B_{j+1} = y^B_{j} + \delta_{je}$, $z^B_{j+1} > z^B_j$ and
     $\Ht(z^B_{j+1}) \geq z^B_{j}$,
     then
     \begin{align*}
     \sigma_{\cs_{j} \DDot {} B} = \sigma_B \qquad \text{and} \qquad
         \tau_{\cs_{j} \DDot{} B} = \cs_{\sigma_B^{-1}(j)} \tau_B.
     \end{align*}
\end{enumerate}
Furthermore, we have $\Sc(\cs_j \DDot{} B) = \Sc(B)$ in both of these cases. (Once again, $j+1$ and $\cs_j$ are to be read as $1$ and $\cs_0$ respectively when $j =e$.)
\end{prop}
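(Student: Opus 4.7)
The strategy is to compute $\by^{\cs_j \DDot{} B}$ and $\bz^{\cs_j \DDot{} B}$ via Lemma \ref{L:yz}, then transfer the effect on $\sigma_B$ and $\tau_B$ through Proposition \ref{P:sigma-tau} (with $I = I_B$) and Lemma \ref{L:y_{i+1}=y_i}. Throughout, I would use that $I_{\cs_j \DDot{} B} = I_B$ (since $\cs_j \DDot{} B$ lies in the $\AW_e$-orbit of $B$, hence they share a common moving vector by Proposition \ref{P:moving-vector-Weyl-orbit}), that $0 \in I_B$ (since $\mv^{\br^{w_0}}_\ell(B) = 0$ by choice of $w_0$), and that $\overline{\cs_j}^2 = 1$.

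For part (1), Proposition \ref{P:sigma-tau}(1) applied to $\by^{\cs_j \DDot{} B} = \cs_j \CDot{1} \by^B$ gives $\sigma_{\cs_j \DDot{} B} = \overline{\cs_j} \sigma_B$ directly. Then
\[
(\bz^{\cs_j \DDot{} B})^{\sigma_{\cs_j \DDot{} B}} = ((\bz^B)^{\overline{\cs_j}})^{\overline{\cs_j} \sigma_B} = (\bz^B)^{\sigma_B},
\]
so $\tau_{\cs_j \DDot{} B} = \tau^{I_B}_{(\bz^B)^{\sigma_B}} = \tau_B$. For part (2), I would first observe that the hypothesis $y^B_{j+1} = y^B_j + \delta_{je}$ forces $\cs_j \CDot{1} \by^B = \by^B$: for $j \in [1,\,e-1]$ the two swapped entries are equal, and for $j = e$ the $\pm 1$ shifts introduced by the affine $\cs_0 = (\Be_1 - \Be_e)(1,e)$ are exactly cancelled because $y^B_1 = y^B_e + 1$. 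Hence $\sigma_{\cs_j \DDot{} B} = \sigma_B$. Setting $a := \sigma_B^{-1}(j)$, one checks that $a < e$ (otherwise $\sigma_B(a+1)$ would not exist), so $\cs_a$ is an honest adjacent transposition, and the identity $\overline{\cs_j} \sigma_B = \sigma_B \cs_a$ from Lemma \ref{L:y_{i+1}=y_i} yields
\[
(\bz^{\cs_j \DDot{} B})^{\sigma_{\cs_j \DDot{} B}} = (\bz^B)^{\overline{\cs_j} \sigma_B} = ((\bz^B)^{\sigma_B})^{\cs_a}.
\]
Writing $\bz' := (\bz^B)^{\sigma_B}$, the hypotheses $z^B_{j+1} > z^B_j$ and $\Ht(z^B_{j+1}) \geq z^B_j$ translate to $z'_{a+1} > z'_a$ and $\Ht_{I_B}(z'_{a+1}) \geq z'_a$ (using that $\sigma_B(a+1) = j+1$ for $j < e$ and $\sigma_B(a+1) = 1$ for $j = e$), so Proposition \ref{P:sigma-tau}(2) gives $\tau_{\cs_j \DDot{} B} = \cs_a \tau_B = \cs_{\sigma_B^{-1}(j)} \tau_B$.

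The final identity $\Sc(\cs_j \DDot{} B) = \Sc(B)$ then follows in both cases by direct substitution: part (1) uses $\overline{\cs_j}^2 = 1$ via
\[
\Sc(\cs_j \DDot{} B) = (\bz^B)^{\overline{\cs_j}\,\overline{\cs_j} \sigma_B \tau_B} = (\bz^B)^{\sigma_B \tau_B} = \Sc(B),
\]
while part (2) uses $\cs_a^2 = 1$ together with Lemma \ref{L:y_{i+1}=y_i}:
\[
\Sc(\cs_j \DDot{} B) = (\bz^B)^{\overline{\cs_j} \sigma_B \cs_a \tau_B} = (\bz^B)^{\sigma_B \cs_a \cs_a \tau_B} = \Sc(B).
\]
I expect the main obstacle to be the careful bookkeeping around the wrap-around case $j = e$ --- specifically verifying $\cs_0 \CDot{1} \by^B = \by^B$ and that $\sigma_B^{-1}(e) < e$ with $\sigma_B(\sigma_B^{-1}(e) + 1) = 1$ --- all of which hinge on the fact that $y^B_1 - y^B_e = 1$ is the smallest possible positive difference, leaving no integer values strictly between $y^B_e$ and $y^B_1$.
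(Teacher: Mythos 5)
Your proposal is correct and follows essentially the same route as the paper: compute $\by^{\cs_j \DDot{} B}$ and $\bz^{\cs_j \DDot{} B}$ via Lemma~\ref{L:yz}, transfer through Proposition~\ref{P:sigma-tau} and Lemma~\ref{L:y_{i+1}=y_i}, then use $\overline{\cs_j}^2 = 1$ (resp.\ $\cs_a^2 = 1$) for $\Sc$. Your extra verifications for the wrap-around case $j = e$ — that $\cs_0 \CDot{1}\by^B = \by^B$ and that $a := \sigma_B^{-1}(j) < e$ — are sound and are the same facts the paper leaves implicit in its citation of Lemma~\ref{L:y_{i+1}=y_i}.
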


\begin{proof}
Let $\cs_j \DDot{} B = C$.
Since $\mv(B) = \mv(C)$ by Proposition \ref{P:moving-vector-Weyl-orbit}, we have $I_{B} = I_C$.
Furthermore, we have $\by^{C} = \cs_j \CDot{1} \by^B$ and $\bz^{C} = (\bz^B)^{\overline{\cs_j}}$ by Lemma \ref{L:yz}.

\begin{enumerate}
\item
  We have
  $$\sigma_{C} = \sigma_{\by^{C}} = \sigma_{\smash[t]{\cs_j \CDot{1} \by^B}} = \overline{\cs_j} \sigma_{\by^B} = \overline{\cs_j} \sigma_B,$$
  where the third equality follows from Proposition \ref{P:sigma-tau}(1).
  Thus
  $$\tau_{C} = \tau^{I_{C}}_{(\bz^{C})^{\sigma_{C}}}
  = \tau^{I_{B}}_{(\bz^{B})^{\overline{\cs_j}\sigma_{C}}}
  = \tau^{I_{B}}_{(\bz^{B})^{\sigma_{B}}} = \tau_B.
  $$
  Hence,
  $$
  \Sc(C) = (\bz^C)^{\sigma_C\tau_C} = (\bz^B)^{\overline{\cs_j} \sigma_C \tau_C} = (\bz^B)^{\sigma_B \tau_B} = \Sc(B).
  $$

\item
  Let $\sigma_B(a) = j$.  Then $\sigma_B {\cs_a} = \overline{\cs_j} \sigma_B$ by Lemma \ref{L:y_{i+1}=y_i}.
  Furthermore, we have
  $\by^{C} = \cs_j \CDot{1} \by^B = \by^B$ so that $\sigma_{C} = \sigma_{\by^{C}} = \sigma_{\by^B} = \sigma_B$.
  Thus
  $$
  \tau_{C} = \tau^{I_{C}}_{(\bz^{C})^{\sigma_{C}}}
  = \tau^{I_B}_{(\bz^B)^{\overline{\cs_j} \sigma_B}}
  = \tau^{I_B}_{(\bz^B)^{\sigma_B\cs_a}}
  %= \tau^{I_B}_{((\bz^B)^{\sigma_B})^{\cs_j}}
  = \cs_a \tau^{I_B}_{(\bz^B)^{\sigma_B}} = \cs_a \tau_B
  $$
  where the penultimate equality follows from Proposition \ref{P:sigma-tau}(2) (note that the $a$-th and $(a+1)$-th component of $(\bz^B)^{\sigma_B}$ are $z^B_{\sigma_B(a)} = z^B_j$ and $z^B_{\sigma_B(a+1)} = z^B_{j+1}$ respectively).  Now
  $$
  \Sc(C) = (\bz^C)^{\sigma_C\tau_C} = (\bz^B)^{\overline{\cs_j} \sigma_C \tau_C} = (\bz^B)^{\overline{\cs_j} \sigma_B \cs_a \tau_B} = (\bz^B)^{\sigma_B \tau_B} = \Sc(B).
  $$
\end{enumerate}
\end{proof}

\begin{lem} \label{L:equivalent}
Let $B$ be a core block of $\HH_n$.
%\begin{enumerate}
%\item 
The following statements are equivalent:
\begin{enumerate}
  \item $B$ satisfies Condition (I) of being initial, i.e.\ $y^B_{\sigma_B(e)} \leq y^B_{\sigma_B(1)} + \bbone_{\sigma_B(e) < \sigma_B(1)}$.
%  \item There exists $j_B \in [1,\,e]$ such that $\sigma_B(i) = \overline{i + j_B}$ for all $i \in [1,\,e]$.
  \item
  For all $j \in [1,\,e]$, $y^B_j =
  \cy_B + \bbone_{j \leq j_B}.$

  \item For all $j \in [1,\,e] \setminus \{ \overline{j_B} \}$, $y^B_{\overline{j+1}} = y^B_j + \delta_{je}$.

  \item For all $j \in [1,\,e]$, $y^B_{\overline{j+1}} \leq y^B_{j} + \delta_{je}$.

  \item $y^B_{\sigma_B(e)} = y^B_{\sigma_B(1)} + \bbone_{\sigma_B(e) < \sigma_B(1)}$.
\end{enumerate}
Here, and hereafter, for $x \in \ZZ$, set $\overline{x}$ to be the unique element in $[1,\,e]$ satisfying $\overline{x} \equiv_e x$.
%
%\item The following statements are equivalent:
%\begin{enumerate}
%\item $B$ satisfies Condition (II) of being initial; i.e.\ $\tau_B = 1_{\sym{e}}$.
%       with equality only if one of the following holds:
%       \begin{itemize}
%    \item $z^B_{\sigma_B(b)} > z^B_{\sigma_B(b+1)} = \Ht_{I_B}(z^B_{\sigma_B(b+1)})$;
%    \item %$\sigma_B(b) < \sigma_B(b+1)$ and
%    $z^B_{\sigma_B(b)} = z^B_{\sigma_B(b+1)}$;
%    \item %$\sigma_B(b) < \sigma_B(b+1)$ and
%    $z^B_{\sigma_B(b)}, z^B_{\sigma_B(b+1)} > \Ht_{I_B}(z^B_{\sigma_B(b+1)})$.
%    \end{itemize}
%\item Whenever $b,b' \in [1,\,e]$ such that $z^B_{\sigma_B(b)} > z^B_{\sigma_B(b')}$ and $\Ht_{I_B}(z^B_{\sigma_B(b)}) \geq z^B_{\sigma_B(b')}$, we have $b < b'$.
%\item $\tau_B = 1_{\sym{e}}$.
%\item There does not exist $b \in [1,\,e-1]$ such that $z^B_{\sigma_B(b+1)} > z^B_{\sigma_B(b)}$ and $\Ht_{I_B}(z^B_{\sigma_B(b+1)}) \geq z^B_{\sigma_B(b)}$.
%\item For all $b \in [1,\, e-1]$, we have $\Ht_{I_B}(z^B_{\sigma_B(b)}) \geq \Ht_{I_B}(z^B_{\sigma_B(b+1)})$,
%      and if $\Ht_{I_B}(z^B_{\sigma_B(b)}) = \Ht_{I_B}(z^B_{\sigma_B(b+1)}) \in \{ z^B_{\sigma_B(b)}, z^B_{\sigma_B(b+1)} \}$, then $z^B_{\sigma_B(b)} \geq z^B_{\sigma_B(b+1)}$.
%\end{enumerate}
%\end{enumerate}
\end{lem}

\begin{proof} %\hfill
%\begin{enumerate}
%  \item
  For (1) $\Rightarrow$ (2), let $m := \max\{j \in [1,\,e] \mid y^B_{\sigma_B(j)} = y^B_{\sigma_B(1)} \}$.
Then
$y^B_{\sigma_B(1)} = y^B_{\sigma_B(2)} = \dotsb = y^B_{\sigma_B(m)}$, and so $\sigma_B(1) < \sigma_B(2) < \dotsb < \sigma_B(m)$.
If $m=e$, then $y^B_j = y^B_{\sigma_B(1)}$ for all $j \in [1,\,e]$, so that
$\cy_B e + j_B = |\by^B| = y^B_{\sigma_B(1)} e$.  Hence $y^B_{\sigma_B(1)} = \cy_B$ and $j_B = 0$, and so $y^B_j = \cy_B + \bbone_{j \leq j_B}$ for all $j \in [1,\,e]$.

On the other hand, if $m < e$, then
$$y^B_{\sigma_B(1)} = y^B_{\sigma_B(m)} < y^B_{\sigma_B(m+1)} \leq y^B_{\sigma_B(m+2)}  \leq \dotsb \leq y^B_{\sigma_B(e)} \leq y^B_{\sigma_B(1)} + \bbone_{\sigma_B(e) < \sigma_B(1)} \leq y^B_{\sigma_B(1)} +1,$$
forcing
$$
y^B_{\sigma_B(m+1)} = y^B_{\sigma_B(m+2)}  = \dotsb =  y^B_{\sigma_B(e)} = y^B_{\sigma_B(1)} + \bbone_{\sigma_B(e) < \sigma_B(1)} = y^B_{\sigma_B(1)} +1.
$$
Hence
$$ \cy_B e + j_B = |\by^B| = m y^B_{\sigma_B(1)} + (e-m)(y^B_{\sigma_B(1)} + 1) = y^B_{\sigma_B(1)} e + (e-m)
$$
so that $\cy_B = y^B_{\sigma_B(1)}$ and $j_B = e-m$.
Furthermore,
$\sigma_B(m+1) < \sigma_B(m+2) < \dotsb < \sigma_B(e)$ and $\sigma_B(e) <\sigma_B(1)$.
We thus have
$$\sigma_B(m+1) < \dotsb < \sigma_B(e) < \sigma_B(1) < \dotsb < \sigma_B(m).$$
Consequently,
\begin{alignat*}{4}
&\sigma_B(m+1) = 1, &\quad& \sigma_B(m+2) = 2, & & \dotsc,\ &\quad& \sigma_B(e) = e-m, \\
&\sigma_B(1) = e-m+1, && \sigma_B(2) = e-m+2, &\quad &\dotsc,\ && \sigma_B(m) = e.
\end{alignat*}
Thus $y^B_j = \cy_B + \bbone_{j \leq j_B}$.

It is straightforward to verify (2) $\Rightarrow$ (3) $\Rightarrow$ (4), and (5) $\Rightarrow$ (1) is trivial.

For (4) $\Rightarrow$ (5), we have
$$
y^B_1 \geq \dotsb \geq y^B_e \geq y^B_1 - 1,$$
by (4).
Let $M = \max\{ j \mid y^B_j = y^B_1\}$.
Then
$$y^B_1 = \dotsb = y^B_M = y^B_{M+1} +1 = \dotsb = y^B_e + 1.$$
If $M = e$, then $\sigma_B(1) = 1$ and $\sigma_B(e) = e$, and
$$y^B_{\sigma_B(e)} = y^B_e = y^B_1 = y^B_{\sigma_B(1)} = y^B_{\sigma_B(1)} + \bbone_{\sigma_B(1) > \sigma_B(e)}.$$
On the other hand, if $M < e$, then $\sigma_B(1) = M+1$ and $\sigma_B(e) = M$,
$$y^B_{\sigma_B(e)} = y^B_M = y^B_{M+1} + 1 = y^B_{\sigma_B(1)} +1 = y^B_{\sigma_B(1)} + \bbone_{\sigma_B(1) > \sigma_B(e)}.$$
\end{proof}

%\begin{rem} \label{R:tau}
%The equivalent statements for $\tau^{I_B}_{(\bz^B)^{\sigma_B}} = 1_{\sym{e}}$ in Lemma \ref{L:equivalent}(2) holds even when $\sigma_B$ is replaced by any $\rho \in \sym{e}$, with exactly the same proof.
%\end{rem}

We record the following corollary, which is immediate from Lemma \ref{L:equivalent}(2).

\begin{cor} \label{C:I-sigma_B}
Let $B$ be a core block of $\HH_n$
satisfying Condition (I) of being initial.
Then
\begin{align*}
\sigma_B &= \rr_e^{j_B} = \rr_e^{|\by^B|}; \\
\by^B &= (\bv_e(|\by^B|))^{\rho_e^{-j_B}} = (\smash[t]{\overbrace{\cy+1, \dotsc, \cy+1}^{j_B \text{ times}}, \overbrace{\cy, \dotsc, \cy}^{e-j_B \text{ times}}}),
\end{align*}
where $\rho_e = (1,2,\dotsc,e) \in\sym{e}$.
(Recall the notation $\bv_e(x)$ in Subsection \ref{SS:notations}.)
\end{cor}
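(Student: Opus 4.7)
The plan is to invoke Lemma \ref{L:equivalent}(1) to convert Condition (I) into the fully explicit description (b): for all $j \in [1,\,e]$, $y^B_j = \cy_B + \bbone_{j \leq j_B}$. This pins down $\by^B$ completely---$j_B$ copies of $\cy_B + 1$ occupying positions $1, \dotsc, j_B$, followed by $e - j_B$ copies of $\cy_B$ in positions $j_B+1, \dotsc, e$---and both claims of the corollary then follow by direct unravelling of definitions.

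For the first identity, I would apply the defining property of $\sigma_B = \sigma_{\by^B}$: it is the unique permutation of minimal length making $(\by^B)^{\sigma_B}$ weakly increasing. Reading off from (b), the smaller entries $\cy_B$ (from the higher indices $j_B+1,\dotsc, e$) must come first in increasing order of indices, followed by the larger entries $\cy_B+1$ (from the lower indices $1,\dotsc, j_B$), again in increasing order. Thus $\sigma_B(b) = b + j_B$ for $b \in [1,\,e-j_B]$ and $\sigma_B(b) = b + j_B - e$ for $b \in [e-j_B+1,\,e]$, i.e.\ $\sigma_B(b) \equiv b + j_B \pmod e$ (normalised to $[1,\,e]$). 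Since the cycle $\rr_e = (1,2,\dotsc,e)$ acts as $b \mapsto b+1 \pmod e$ on positions, this is precisely $\rr_e^{j_B}$. Because $|\by^B| = \cy_B e + j_B$ and $\rr_e$ has order $e$, we also have $\rr_e^{j_B} = \rr_e^{|\by^B|}$.

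For the second identity, I would unpack $\bv_e(|\by^B|) = \bv_e(\cy_B e + j_B) = (\underbrace{\cy_B, \dotsc, \cy_B}_{e - j_B}, \underbrace{\cy_B+1, \dotsc, \cy_B+1}_{j_B})$ from the definition in Subsection \ref{SS:notations}. Then the $i$-th component of the place-permuted tuple $(\bv_e(|\by^B|))^{\rr_e^{-j_B}}$ is the $\rr_e^{-j_B}(i)$-th component of $\bv_e(|\by^B|)$. For $i \leq j_B$ we have $\rr_e^{-j_B}(i) = i - j_B + e \in [e-j_B+1,\, e]$, giving $\cy_B + 1$; for $i > j_B$ we have $\rr_e^{-j_B}(i) = i - j_B \in [1,\,e-j_B]$, giving $\cy_B$. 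In either case the $i$-th component equals $\cy_B + \bbone_{i \leq j_B} = y^B_i$, proving $\by^B = (\bv_e(|\by^B|))^{\rr_e^{-j_B}}$.

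There is really no serious obstacle: the whole corollary is extraction of consequences from Lemma \ref{L:equivalent}(1)(b). The only step that demands care is tracking indices modulo $e$ so that the cycle convention for $\rr_e$ matches the direction in which $\sigma_B$ needs to shift positions, but this is purely mechanical.
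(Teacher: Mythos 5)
Your proof is correct and takes exactly the approach the paper intends: the paper states the corollary is ``immediate from Lemma \ref{L:equivalent}(1)(b),'' and your argument is a careful spelling-out of precisely that derivation, converting the closed form $y^B_j = \cy_B + \bbone_{j \leq j_B}$ into the two claimed identities by unwinding the definitions of $\sigma_{\by}$, $\bv_e(\cdot)$, and right place permutation.
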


\begin{lem} \label{L:initial-r^*}
Let $B$ be an initial core block of $\HH_n$.  Then
$$\br^*_B = (\bv_e(|\by^B|)\ell + \Sc(B))^{\rho_e^{-j_B}}.$$
\end{lem}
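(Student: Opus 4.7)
The plan is to prove this by direct substitution, combining the two defining features of an initial core block with the formulas already in hand.

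First I would recall that, by definition of $\bz^B$ and $\by^B$ in Definition \ref{D:yz}, we have $\br^*_B = \by^B\ell + \bz^B$. Since $B$ is initial, Condition (II) gives $\tau_B = 1_{\sym{e}}$, so the Scopes vector simplifies to $\Sc(B) = (\bz^B)^{\sigma_B \tau_B} = (\bz^B)^{\sigma_B}$. Applying $\sigma_B^{-1}$ to both sides yields $\bz^B = \Sc(B)^{\sigma_B^{-1}}$.

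Next I would invoke Condition (I), which by Corollary \ref{C:I-sigma_B} tells us that $\sigma_B = \rho_e^{j_B}$ (so $\sigma_B^{-1} = \rho_e^{-j_B}$) and that $\by^B = (\bv_e(|\by^B|))^{\rho_e^{-j_B}}$. Substituting these into the expression for $\br^*_B$ gives
$$\br^*_B = (\bv_e(|\by^B|))^{\rho_e^{-j_B}} \ell + \Sc(B)^{\rho_e^{-j_B}}.$$

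Finally, since the right place permutation action of $\sym{e}$ on $\Z^e$ is $\Z$-linear and therefore commutes with scalar multiplication by $\ell$ and with componentwise addition, I can factor out the $\rho_e^{-j_B}$ on the right to obtain
$$\br^*_B = \bigl(\bv_e(|\by^B|)\ell + \Sc(B)\bigr)^{\rho_e^{-j_B}},$$
as required. There is no real obstacle here---the substance of the result is packaged entirely in Corollary \ref{C:I-sigma_B} and in the observation that $\tau_B = 1_{\sym{e}}$ collapses $\Sc(B)$ to $(\bz^B)^{\sigma_B}$; the lemma is essentially a bookkeeping consequence of these two facts.
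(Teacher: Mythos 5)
Your proof is correct and follows the same route as the paper's: both use Corollary \ref{C:I-sigma_B} to identify $\sigma_B$ and $\by^B$, and both use Condition (II) to collapse $\Sc(B)$ to $(\bz^B)^{\sigma_B}$, then combine linearly. The only cosmetic difference is that you solve for $\bz^B$ in terms of $\Sc(B)$ before substituting, while the paper writes everything in terms of $\sigma_B^{-1}$ and converts to $\rho_e^{-j_B}$ at the very end.
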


\begin{proof}
By Corollary \ref{C:I-sigma_B}, $\by^B = (\bv_e(|\by^B|))^{\sigma_B^{-1}}$, and $\sigma_B = \rho_e^{j_B}$.
Since $\Sc(B) = (\bz^B)^{\sigma_B\tau_B} = (\bz^B)^{\sigma_B}$,% by definition of $\Sc(B)$, and since $B$ satisfies Condition (II) of being initial.
we have
\begin{align*}
\br^*_B &= \by^B \ell + \bz^B = (\bv_e(|\by^B|))^{\sigma_B^{-1}} \ell + (\Sc(B))^{\sigma_B^{-1}} \\
&= (\bv_e(|\by^B|)\ell + \Sc(B))^{\sigma_B^{-1}} = (\bv_e(|\by^B|)\ell + \Sc(B))^{\rho_e^{-j_B}}.
\end{align*}
%by Corollary \ref{C:I-sigma_B}.
\end{proof}

\begin{lem} \label{L:unique-j-initial}
Let $\bv \in [0,\, \ell-1]^e$ and $k \in \ZZ_{\geq 0}$.  There is at most one initial core block $B$ with $\wt(B) = k$ and $\Sc(B) = \bv$.
\end{lem}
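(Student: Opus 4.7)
The plan is to show that the triple $(\wt(B), \Sc(B), \br^{w_0})$ already forces the core of the block, so that two initial core blocks agreeing on the first two quantities (with the ambient $\br^{w_0}$ fixed throughout this section) must coincide. Concretely, given initial core blocks $B$ and $B'$ with $\wt(B) = \wt(B') = k$ and $\Sc(B) = \Sc(B') = \bv$, I aim to prove $\core(B) = \core(B')$ and then invoke Theorem \ref{T:Naka} to conclude $B = B'$.

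First I would observe that the core of any block is recoverable from $\br^{*}$. By Definition \ref{D:r*}, $\br^{*}_B$ is the tuple of charges of $\quot_e(\beta_{|\br^{w_0}|}(\core(B)))$; since $\core(B)$ is an $e$-core, this $e$-quotient must be $\beta_{\br^{*}_B}(\EP)$, which determines the $\beta$-set $\beta_{|\br^{w_0}|}(\core(B))$ and hence $\core(B)$ itself. So it suffices to show $\br^{*}_B = \br^{*}_{B'}$.

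Next I would apply Lemma \ref{L:initial-r^*} to write
$$\br^{*}_B = \bigl(\bv_e(|\by^B|)\ell + \bv\bigr)^{\rho_e^{-j_B}}, \qquad \br^{*}_{B'} = \bigl(\bv_e(|\by^{B'}|)\ell + \bv\bigr)^{\rho_e^{-j_{B'}}}.$$
Since $j_B$ is by Definition \ref{D:yz} the residue of $|\by^B|$ modulo $e$, the right-hand sides depend only on the single integer $|\by^B|$ (resp.\ $|\by^{B'}|$) once $\bv$ is fixed. So the problem reduces to proving $|\by^B| = |\by^{B'}|$.

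For this last step I would use a global counting constraint. From $\br^{*}_B = \by^B \ell + \bz^B$ we get $|\br^{*}_B| = \ell |\by^B| + |\bz^B|$; and because $\Sc(B)$ is obtained from $\bz^B$ by a permutation (specifically $\sigma_B\tau_B$), the two tuples have the same entry sum, so $|\bz^B| = |\Sc(B)| = |\bv|$. On the other hand, a direct computation using the identity $\sum_i \fs(\B_i) = \fs(\B)$ gives
$$|\br^{*}_B| = \fs(\beta_{|\br^{w_0}|}(\core(B))) = |\br^{w_0}|,$$
and likewise for $B'$. Equating yields $\ell|\by^B| + |\bv| = |\br^{w_0}| = \ell|\by^{B'}| + |\bv|$, so $|\by^B| = |\by^{B'}|$, completing the proof. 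The argument is essentially combinatorial bookkeeping; there is no serious obstacle, the only mildly subtle point being the observation that $\Sc(B)$ is a rearrangement of $\bz^B$, which is what permits the single global identity $|\br^{*}_B| = |\br^{w_0}|$ to pin down $|\by^B|$ in terms of $\bv$ alone.
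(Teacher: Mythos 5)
Your proof is correct and follows essentially the same route as the paper: both pin down $\br^*_B$ from $\bv$ and $|\br^{w_0}|$ via $|\bz^B| = |\Sc(B)|$ together with $|\br^*_B| = |\br^{w_0}|$, and then conclude $\core(B)$ and hence $B$ (given the fixed weight $k$) is determined. The only stylistic difference is that you invoke Lemma \ref{L:initial-r^*} to package the dependence of $\br^*_B$ on $|\by^B|$, whereas the paper re-derives the same conclusion by first showing $j_B$, $\cy_B$, $\by^B$, $\bz^B$ are each forced — you also make explicit the identity $|\br^*_B| = |\br^{w_0}|$ that the paper uses implicitly.
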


\begin{proof}
Let $B$ be an initial core block $B$ with $\wt(B) = k$ and $\Sc(B) = \bv$.
Then $|\bv| = |\Sc(B)| = |(\bz^B)^{\sigma_B\tau_B}| = |\bz^B|$.
Thus,
\begin{align*}
\cy_B = \frac{|\by^B| - j_B}{e} = \frac{(|\br^*_B| - |\bz^B|)/\ell - j_B}{e} =
\frac{|\br^*_B| - |\bv| - j_B\, \ell}{e\ell}.
\end{align*}
The same would also hold for another initial core block $B'$ with $\wt(B') = k$ and $\Sc(B') = \bv$.
Thus,
\begin{align*}
\ZZ \ni \cy_B - \cy_{B'} = \frac{j_{B'} - {j_B}}{e},
\end{align*}
forcing ${j_{B'}} = {j_B}$, since $|{j_{B'}} - {j_B}| < e$, and hence $\cy_B = \cy_{B'}$.
Thus $\sigma_B = \sigma_{B'}$ and $\by^{B} = \by^{B'}$ by Corollary \ref{C:I-sigma_B}.
Hence $\br^*_B = \br^*_{B'}$  by Lemma \ref{L:initial-r^*},
%Since $\tau_B = 1_{\sym{e}} = \tau_{B'}$ as both $B$ and $B'$ are initial, we have
%$$
%(\bz^B)^{\sigma_B} = (\bz^B)^{\sigma_B\tau_B} = \Sc(B) = \bv = \Sc(B') = (\bz^{B'})^{\sigma_{B'}\tau_{B'}} = (\bz^{B'})^{\sigma_{B'}}.
%$$
%Consequently, since $\sigma_B = \sigma_{B'}$, we have $\bz^{B} = \bz^{B'}$.
%Thus $\br^*_B = \by^B \ell + \bz^B = \by^{B'} \ell + \bz^{B'} = \br^*_{B'}$,
so that
$\core(B) = \core(B')$ (since $\quot_e(\beta_{|\br^{w_0}|}(\core(B))) = \beta_{\br^*_B}(\EP)$ and similarly for $\core(B')$ by Lemma \ref{L:ell-residue}(1)).
Since $\wt(B) = k = \wt(B')$, we see that $B = B'$.
\end{proof}

\begin{prop} \label{P:Scopes}
Let $B$ be a core block of $\HH_n$.
There exists a sequence $B_0, \dotsc, B_k$ of blocks such that:
%for each $0\leq j \leq k$, $B_j$ is a block of $\HH_{n_j}$ satisfying
\begin{enumerate}
%\item $n_0 < n_1 < \dotsc < n_k$;
\item $B_0 = B$, and $B_k$ is an initial core block with $\br^*_{B_k} = (\bv_e(|\by^B|)\ell + \Sc(B))^{\rho_e^{-j_B}}$  (recall the notation $\bv_e(x)$ in Subsection \ref{SS:notations} and $\rho_e = (1,2,\dotsc, e) \in \sym{e}$);
\item for each $a \in [0, k-1]$, there exists $j_a \in \ZZ/e\ZZ$ such that $\hub_{j_a}(B_a) >0$ and $\bl$ has no addable nodes of $(e,\br)$-residue $j_a$ for all $\bl \in \PP^{\ell}$ lying in $B_a$ ;
\item $\Sc(B_0) = \Sc(B_1) = \dotsb = \Sc(B_k)$.
\end{enumerate}
In particular, $B$ and $B_k$ are Scopes equivalent.

Furthermore, under the Scopes equivalence between $B$ and $B_k$, the Specht module $\Sp^{\bl}$ of $B$ with $\bij_{\ell,e}((\bl;\br)^{w_0}) = (\bl^*;\br^*_B)$ corresponds to the Specht module $\Sp^{\bm}$ of $B_k$ with
$$
\bij_{\ell,e}((\bm;\br)^{w_0}) = ((\bl^*)^{\sigma_B\tau_B\rho_e^{-j_B}} ; \br^*_{B_k}).
%\bv_e(|\by^B|)\ell + \Sc(B))^{\rho_e^{-j_B}}.
$$
%where $\rho_e = (1,2,\dotsc, e) \in \sym{e}$.
%if and only if
%$(\bl^*)^{\sigma_B\tau_B} = (\bm^*)^{\sigma_{B_k}}$, where $\bij_{\ell,e}((\bl;\br)^{w_{0}}) = (\bl^*,\br^*_B)$ and $\bij_{\ell,e}((\bm;\br)^{w_{0}}) = (\bm^*,\br^*_{B_k})$.
\end{prop}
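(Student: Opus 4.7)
The plan is to induct on a lexicographic measure $M(B) = (M_1(B), M_2(B))$ vanishing precisely when $B$ is initial: $M_1$ gauges failure of Condition~(I), for instance
$M_1(B) := (\max_j y^B_j - \min_j y^B_j - \bbone_{j_B \ne 0},\, |\{(i,j) : 1 \leq i < j \leq e,\, y^B_i < y^B_j\}|)$
ordered lexicographically, while $M_2(B) := \ell(\tau_B)$ gauges failure of Condition~(II). By Lemma~\ref{L:equivalent}, $M(B) = 0$ iff $B$ is initial; in this base case, take the trivial sequence $(B_0 = B)$, use Lemma~\ref{L:initial-r^*} to verify property~(1), and use the identity $\sigma_B\tau_B\rho_e^{-j_B} = \rho_e^{j_B}\cdot 1_{\sym{e}}\cdot\rho_e^{-j_B} = 1_{\sym{e}}$ to match the trivial Specht correspondence.

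For the inductive step, assume $B$ is not initial. If Condition~(I) fails, Lemma~\ref{L:equivalent}(1)(d) furnishes some $j\in[1,e]$ with $y^B_{\overline{j+1}} > y^B_j + \delta_{je}$; we pick $j$ so that applying $\cs_j$ strictly reduces $M_1$, using a bubble-sort-style argument on $\cs_j$-moves with $j<e$ to cyclically sort $\by^B$ into weakly-decreasing form, and a $\cs_0$-move to shrink the range afterwards. If Condition~(I) holds but Condition~(II) fails, set $j_0 := \sigma_B(b)$ where $b \in [1,e-1]$ violates Condition~(II) per Lemma~\ref{L:equivalent}(2)(c); since $\sigma_B = \rho_e^{j_B}$ (Corollary~\ref{C:I-sigma_B}) and $b<e$, the condition $y^B_{\overline{j_0+1}} = y^B_{j_0} + \delta_{j_0 e}$ holds automatically, and the tie-breaking rules defining $\tau^{I_B}_{(\bz^B)^{\sigma_B}}$ imply that every such $b$ is a left descent of $\tau_B$, forcing $M_2$ to strictly decrease. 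In either case, Lemma~\ref{L:Scopes} delivers $\hub_{j_0}(B)>0$, the absence of addable $(e,\br)$-residue-$j_0$ nodes in every $\bl$ lying in $B$, and Scopes-equivalence with $B' := \cs_{j_0} \DDot{} B$. Proposition~\ref{P:Scopes-vector} preserves $\Sc$, and Lemma~\ref{L:yz} preserves $j_B$ and $|\by^B|$. Applying the inductive hypothesis to $B'$ and prepending $B_0 = B$ yields the desired sequence; properties~(1)--(3) follow from Lemma~\ref{L:initial-r^*} together with the invariance of $\Sc$, $j_B$, and $|\by^B|$.

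For the Specht-module correspondence, I would maintain the invariant
$\bigl(\overline{\cs_{j_0}}\,\overline{\cs_{j_1}}\cdots\overline{\cs_{j_{a-1}}}\bigr)\,\sigma_{B_a}\tau_{B_a}\rho_e^{-j_B} = \sigma_B\tau_B\rho_e^{-j_B}$
for $a\in[0,k]$, trivially true at $a=0$. The inductive step uses Proposition~\ref{P:Scopes-vector}: in Case~(I fails), $\sigma_{B_{a+1}}=\overline{\cs_{j_a}}\sigma_{B_a}$ and $\tau_{B_{a+1}}=\tau_{B_a}$, so $\overline{\cs_{j_a}}\sigma_{B_{a+1}}\tau_{B_{a+1}} = (\overline{\cs_{j_a}})^2\sigma_{B_a}\tau_{B_a} = \sigma_{B_a}\tau_{B_a}$; in Case~(II fails), setting $b_a := \sigma_{B_a}^{-1}(j_a)$, we have $\sigma_{B_{a+1}} = \sigma_{B_a}$ and $\tau_{B_{a+1}} = \cs_{b_a}\tau_{B_a}$, and Lemma~\ref{L:y_{i+1}=y_i} gives $\overline{\cs_{j_a}}\sigma_{B_a} = \sigma_{B_a}\cs_{b_a}$, so $\overline{\cs_{j_a}}\sigma_{B_{a+1}}\tau_{B_{a+1}} = \sigma_{B_a}\cs_{b_a}^{2}\tau_{B_a} = \sigma_{B_a}\tau_{B_a}$. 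Setting $a=k$ and using $\sigma_{B_k} = \rho_e^{j_B}$, $\tau_{B_k}=1_{\sym{e}}$ (since $B_k$ is initial) collapses the invariant to $\overline{\cs_{j_0}}\cdots\overline{\cs_{j_{k-1}}} = \sigma_B\tau_B\rho_e^{-j_B}$, and iterating Lemma~\ref{L:bij-*}(2) then yields the claimed formula for $\bij_{\ell,e}((\bm;\br)^{w_0})$.

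The main obstacle is the combinatorial verification, in the Condition~(I)-failure case, that an admissible $\cs_j$-move strictly reducing $M_1$ is always available; this needs care because a $\cs_0$-move that shrinks the range of $\by^B$ can temporarily reintroduce adjacent inversions, which is precisely why $M_1$ is defined as a two-coordinate lexicographic pair. By contrast, the Condition~(II) analysis is clean: the tie-breaking rules in the definition of $\tau^{I_B}_{(\bz^B)^{\sigma_B}}$ are designed exactly so that every Condition~(II)-violation index $b$ is automatically a left descent of $\tau_B$, yielding a $\cs_b$-move that strictly decreases $\ell(\tau_B)$.
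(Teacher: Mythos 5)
Your Specht-module correspondence and Scopes-chain analysis is correct and mirrors the paper's: the invariant $\overline{\cs_{j_0}}\cdots\overline{\cs_{j_{a-1}}}\sigma_{B_a}\tau_{B_a}\rho_e^{-j_B} = \sigma_B\tau_B\rho_e^{-j_B}$ is exactly the identity $\overline{\cs_j}\sigma_{B_1}\tau_{B_1} = \sigma_B\tau_B$ that the paper verifies for a single step via Proposition~\ref{P:Scopes-vector} and Lemma~\ref{L:y_{i+1}=y_i}, and your Condition-(II) claim that $b = \sigma_B^{-1}(j)$ is automatically a left descent of $\tau_B$ (so $\ell(\tau_{B_1}) < \ell(\tau_B)$) does check out upon inspecting the tie-break rules. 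However, your induction measure $M_1$ has a genuine gap: it does not strictly decrease under the $\cs_0$-move, and your second lexicographic coordinate does not rescue it.

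Concretely, take $\by^B = (3,3,0,0)$, $\bz^B = (0,0,0,0)$ with $e=4$. Here $|\by^B| = 6$, so $\cy_B = 1$, $j_B = 2$, and Condition~(I) fails since $\by^B \ne (2,2,1,1)$. But the only index $j$ with $y^B_{\overline{j+1}} > y^B_j + \delta_{je}$ is $j=4$, and since $\bz^B$ is identically zero, the alternative hypothesis of Lemma~\ref{L:Scopes} also never fires --- so the unique available Scopes move is $\cs_0$, which yields $\by^{B_1} = \cs_0 \CDot{1} \by^B = (1,3,0,2)$. The range $\max - \min$ is still $3$, $j_B$ is unchanged, so the first coordinate of $M_1$ stays $3-1=2$, while your ascent count $|\{(i,j): i<j,\ y_i<y_j\}|$ jumps from $0$ to $3$. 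Hence $M_1(B_1) = (2,3) > (2,0) = M_1(B)$ lexicographically, and the induction cannot close. You flagged precisely this concern as "the main obstacle," and indeed it is fatal for the measure as written: there is no admissible move from $(3,3,0,0)$ that decreases $M_1$, because there is only one admissible move at all.

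The paper avoids this entirely by inducting on $n$, the size of the underlying Ariki--Koike algebra. Since every $\cs_j$-move supplied by Lemma~\ref{L:Scopes} satisfies $\hub_j(B) > 0$, the block $B_1 = \cs_j \DDot{} B$ lives in $\HH_{n - \hub_j(B)}$ with $n - \hub_j(B) < n$, so termination is immediate with no combinatorial bookkeeping. I would suggest replacing your $M$-measure with this; everything else in your argument then goes through unchanged.
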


\begin{proof}
We prove by induction on $n$.
If $B$ is an initial core block, then all statements hold by Lemma \ref{L:initial-r^*} and Corollary \ref{C:I-sigma_B}.
Assume thus that $B$ is not initial.

%We shall find an $i \in \ZZ/e\ZZ$ such that $\hub_i(B) > 0$ --- so that $\cs_i \DDot{} B$ is a block of $\HH_{n-\hub_i(B)}$ and hence the Theorem holds for $\cs_i \DDot{} B$ by induction --- and that $i$ satisfies the condition in Propositions  \ref{P:Scopes} and \ref{P:Scopes-vector}, so that $\cs_i \DDot{} B$ is Scopes equivalent to $B$, with $\Sc(B) = \Sc(\cs_i \DDot{} B)$, by these propositions, and so parts (1)--(3) hold for $B$.
%By Proposition \ref{P:sigma-tau} and Lemma \ref{L:yz},
%\begin{align*}
%\sigma_{\cs_i \DDot{} B} = \sigma_{\by_{\cs_i \DDot{} B}} = \sigma_{\cs_i \CDot{1} \by_B} = \overline{\cs_i}\sigma_{\by_B} = \overline{\cs_i} \sigma_B \\
%\tau_{\cs_i \DDot{} B} = \tau_{\bz_{\cs_i \DDot{} B}} = \sigma_{\cs_i \CDot{0} \bz_B} = \overline{\cs_i}\tau_{\bz_B} = \overline{\cs_i} \tau_B
%\end{align*}

If $B$ does not satisfy Condition (I) of being initial, then there exists $j \in [1,\,e]$ such that $y^B_{\overline{j+1}} > y^B_j + \delta_{je}$ by Lemma \ref{L:equivalent}(4).

On the other hand, if $B$ satisfies Condition (I) but not Condition (II) of being initial,
then $y^B_{\overline{j+1}} = y^B_j + \delta_{je}$ for all $j \in [ 1,\, e] \setminus \{ \overline{j_B} \}$ by Lemma \ref{L:equivalent}(3), $\sigma_B = \rho_e^{j_B}$ by Corollary \ref{C:I-sigma_B},
and there exists $b \in [1,\,e-1]$ such that $z^B_{\sigma_B(b+1)}  > z^B_{\sigma_B(b)}$ and $\Ht_{I_B}(z^B_{\sigma_B(b+1)}) \geq z^B_{\sigma_B(b)}$ by Corollary \ref{C:tau=1}(3).
Let $j = \sigma_B(b)$.
Then $j \ne \sigma_B(e) = j_B$ and
$$\sigma_B(b+1) = \rho_e^{j_B}(\rho_e(b)) = \rho_e(\rho_e^{j_B}(b)) = \rho_e(\sigma_B(b)) = \rho_e (j) = \overline{j+1}.$$
Thus we have $y^B_{\overline{j+1}} = y_j + \delta_{je}$, $z^B_{\overline{j+1}} > z^B_j$ and $\Ht_{I_B}(z^B_{\overline{j+1}}) \geq z^B_j$.

Consequently, for both cases, we have $\hub_j(B) >0$, and $\bl$ has no addable nodes of $(e,\br)$-residue $j$ for all $\bl \in \PP^{\ell}$ lying in $B$ by Lemma \ref{L:Scopes}.
Let $B_1 = \cs_j \DDot{} B$.
Then $B_1$ is a core block of $\HH_{n-\hub_j(B)}$, with $\mv(B) = \mv(B_1)$ by Proposition
\ref{P:moving-vector-Weyl-orbit},
$|\by^{B_1}| = |\by^B|$ by Lemma \ref{L:yz}
and $\Sc(B) = \Sc(B_1)$ by Proposition \ref{P:Scopes-vector}.
Since $n- \hub_j(B) < n$, by induction, parts (1)--(3) hold for $B_1$, and hence for $B$.

We now show the correspondence of $\ell$-partitions lying $B$ and $B_k$ under the Scopes equivalence between them.
For each $\bl \in \PP^{\ell}$ with $\bij_{\ell,e}((\bl;\br)^{w_0}) = (\bl^*;\br^*_B)$, the Specht module $\Sp^{\bl}$ lying in $B$ corresponds to $\Sp^{\cs_j\DDot{\br} \bl}$ lying in $B_1$ by Theorem \ref{T:CR}, and
$\bij_{\ell,e} ((\cs_j \DDot{\br} \bl; \br)^{w_{0}}) = ((\bl^*)^{\overline{\cs_j}}; \br^*_{\cs_j \DDot{} B})$
by Lemma \ref{L:bij-*}(2).
By induction, $\Sp^{\cs_j\DDot{\br} \bl}$ corresponds to $\Sp^{\bm}$ lying in the initial core block $B_k$ with
\begin{align*}
\bij((\bm;\br)^{w_{0}}) = (((\bl^*)^{\overline{\cs_j}})^{\sigma_{B_1}\tau_{B_1}\rho_e^{-j_{B_1}}};
\br^*_{B_k}).
\end{align*}
By Lemma \ref{L:yz}, $j_{B_1} = j_{\cs_j \DDot{} B} = j_B$.
%
%\br^*_{B_k})$, $(\bm^*)^{\sigma_{B_k}} = ((\bl^*)^{\overline{\cs_j}})^{\sigma_{B_1}\tau_{B_1}}$.
Furthermore, if $B$ does not satisfy Condition (I) of being initial, then $\sigma_{B_1} = \overline{\cs_j}\sigma_B$ and $\tau_{B_1} = \tau_B$ by Proposition \ref{P:Scopes-vector}(1), and so $\overline{\cs_j}\sigma_{B_1}\tau_{B_1} = \sigma_B \tau_B$.
On the other hand,
if $B$ satisfies Condition (I) but not Condition (II) of being initial, then setting $a = \sigma_B^{-1}(j)$, we have $\overline{\cs_j}\sigma_B = \sigma_B\cs_{a}$ by Lemma \ref{L:y_{i+1}=y_i}, and
$\sigma_{B_1} = \sigma_B$ and $\tau_{B_1} = \cs_{a}\tau_B$ by Proposition \ref{P:Scopes-vector}(2),
so that
$\overline{\cs_j}\sigma_{B_1}\tau_{B_1} = \overline{\cs_j}\sigma_{B}\cs_a\tau_{B} = \sigma_B \tau_B$ too.
In both cases, we get
\begin{align*}
\bij((\bm;\br)^{w_{0}}) = ((\bl^*)^{\sigma_{B}\tau_{B}\rho_e^{-j_B}};
\br^*_{B_k})
\end{align*} as desired.
\end{proof}

\begin{prop} \label{P:not-Scopes}
Let $B$ be a core block of $\HH_n$.  Suppose that there exists $j \in [1,\,e]$ such that
$$
y^B_{j+1} = y^B_j + \delta_{je} \quad \text{ and } \quad z^B_j,z^B_{j+1} > \Ht_{I_B}(z^B_{j}) = \Ht_{I_B}(z^B_{j+1}).
$$
Then there exists $\bl \in \PP^{\ell}$ lying in $B$ having both an addable node and a removable node, both with $(e;\br)$-residue $\equiv_e j$.

As before, $j+1$ is to be read as $1$ when $j=e$.
\end{prop}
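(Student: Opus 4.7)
The plan is to construct $\bl \in \PP^\ell$ lying in $B$ explicitly, exploiting the freedom in the $e$-quotient $(\C_1, \ldots, \C_e)$ of $\UU(\beta_{\br^{w_0}}(\bl^{w_0}))$. By symmetry, first assume $z^B_j \leq z^B_{j+1}$. Set $h := \Ht_{I_B}(z^B_j) = \Ht_{I_B}(z^B_{j+1})$ and let $h^+$ denote the smallest element of $I_B$ strictly greater than $h$ (or $\ell$ if no such element exists). The hypothesis forces $I_B \cap (h, z^B_{j+1}] = \emptyset$, hence $h^+ - h \geq z^B_{j+1} + 1 - h \geq 2$. Then Lemma \ref{L:core-block-abacus} pins down, for every $\bl$ lying in $B$, that $\C_i = \ZZ_{<y^B_i\ell + h} \cup L_i$ for $i \in \{j, j+1\}$ with $L_i$ a subset of a window of width $h^+ - h$ and with $|L_i| = z^B_i - h$ strictly between $0$ and $h^+ - h$; thus each $L_i$ is non-empty and a proper subset of its window, and for $j \ne e$ the two windows coincide while for $j = e$ they differ only by the shift $\ell$.

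Next, I would construct $\bl$ via Corollary \ref{C:mv-construction}, applied with $(\bl^*; \bt^*) = (\EP; \bt^*)$ where $\bt^*$ is the charge satisfying $\UU(\beta_{\bt^*}(\EP)) = \core(B)$ (obtained via Corollary \ref{C:t*} from $\br^{w_0}$ and $\mv(B)$), and with the vector $\mathbf{m}$ encoded by $\mv(B)$. The iterative construction in the proof of Lemma \ref{L:mv-construction} offers, at each step, a choice of $x \in \B_b \setminus \B_a$; since $|L_j|, |L_{j+1}|$ are strictly between $0$ and the window width, these choices can be orchestrated so that the final configuration has $L_j$ and $L_{j+1}$ differing non-trivially, with at least one element in $L_j \setminus L_{j+1}$ and one in $L_{j+1} \setminus L_j$. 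Such an element of $L_j \setminus L_{j+1}$ corresponds, via the $e$-abacus interpretation of addable nodes used in the proof of Corollary \ref{C:Scopes}, to an addable node of $\UU(\beta_{\br^{w_0}}(\bl^{w_0}))$ of residue $\equiv_e j$; likewise an element of $L_{j+1} \setminus L_j$ yields a removable one.

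Finally, I would translate these abacus features back to nodes of $\bl$ itself: for $j \in [1, e-1]$, Lemma \ref{L:ARl1} transports an addable (resp.\ removable) node of residue $j$ on $\UU(\beta_{\br^{w_0}}(\bl^{w_0}))$ bijectively to one on $\bl^{w_0}$, hence to one on $\bl$ via the $\EW_\ell$-action; for $j = e$, the same structural argument applies using the wrap-around identification between $\C_e$ and $\C_1^{-\ell}$ seen in the proof of Corollary \ref{C:Scopes}. The main obstacle will be the orchestration of the iterative construction: one must check that simultaneously respecting the moving vector constraint across all components and forcing the non-trivial $L_j \neq L_{j+1}$ configuration is feasible, which rests essentially on the window width being at least $2$ together with the strict non-extremality of $|L_j|, |L_{j+1}|$.
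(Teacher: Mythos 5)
Your preliminary analysis via Lemma \ref{L:core-block-abacus} is correct: for every $\bl$ lying in $B$, the $e$-quotient component $\C_i$ ($i \in \{j, j+1\}$) has the form $\ZZ_{< y^B_i\ell + h} \cup L_i$ with $L_i$ a proper, nonempty subset of a window of width $h^+ - h \geq 2$, where $|L_i| = z^B_i - h$. Your reformulation of the goal is also right (for $j < e$): an addable node of residue $j$ on $\bl$ corresponds to an element of $\C_j \setminus \C_{j+1} = L_j \setminus L_{j+1}$, and a removable node to $L_{j+1} \setminus L_j$, so you need a $\bl$ with neither inclusion $L_j \subseteq L_{j+1}$ nor $L_{j+1} \subseteq L_j$. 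However, the one step that actually carries the weight of the proof --- the ``orchestration'' of the iterative construction in Lemma \ref{L:mv-construction} --- is asserted, not proved, and in fact is the whole difficulty. The iterative choices in that lemma are bead moves between the $\ell$-level $\beta$-sets $\B_a$, whereas the sets $L_j, L_{j+1}$ live in the $e$-quotient of $\UU(\beta_{\br^{w_0}}(\bl^{w_0}))$; the passage from $\ell$-level to $e$-level is the rank--level duality, and it is not transparent (or even true in any obvious sense) that a given sequence of $\ell$-level moves can be steered to hit a prescribed pair $(L_j, L_{j+1})$ while simultaneously achieving the right moving vector in all other components. You flag this as ``the main obstacle'' but then do not overcome it.

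The paper proceeds quite differently and never attempts to control $L_j, L_{j+1}$ directly. It first determines $i_1 := \ell - \Ht_{I_B}(z^B_j)$ and $i_2 := \max(\{0\} \cup \{i < i_1 \mid \mv_i(B) = 0\})$, so that $h^+ - h = i_1 - i_2$, and uses Corollary \ref{C:mv-construction} to build an auxiliary $\bm$ with the \emph{reduced} moving vector $\mv(B) - \sum_{a=i_2+1}^{i_1-1}\Be_a$, relative to a perturbed $\ell$-charge $\bt$. It then explicitly moves one bead $b_i$ from the $i_1$-th $\ell$-component of $\bm$ into the $(i_2+1)$-th to manufacture multipartitions $\bnu_1,\bnu_2$ that verifiably have the correct core and weight, hence lie in $B$. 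The required addable and removable nodes of residue $\equiv_e j$ are then located by a three-way case analysis: if $\bm$ already has both kinds (Case 1), one of the $\bnu_i$ inherits them (using the position of $b_i$ modulo $e$); if $\bm$ lacks addable (Case 2) or removable (Case 3) nodes of that residue, a more delicate second bead move produces the missing node. This case analysis is essential --- there is no reason an arbitrary construction would happen to produce the required configuration --- and has no analogue in your sketch. A final minor point: for $j = e$ the node correspondence of Lemma \ref{L:ARl1} does not apply (it is stated only for $j \in [1,e-1]$), so the ``wrap-around identification'' you invoke would also need genuine justification rather than an appeal to symmetry.
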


\begin{proof}
Let $(\bk;\bu) \in \PP^{\ell} \times \ZZ^{\ell}$ be such that $\UU(\beta_{\bu}(\bk)) = \beta_{|\br^{w_0}|}(\core(B))$.  Then $\bu \in \AAbar$ \cite[Proposition 2.14]{JL-cores}.
%
%then for each $a \in \{1,\dotsc, \ell-1\}$, %$\bu \in \AAbar$, and
%\begin{align*}
%u_a - u_{a-1} &= \fs(\beta_{u_a}(\kappa^{(a)})) - \fs(\beta_{u_{a-1}}(\kappa^{(a-1)})) \\
%&= |\beta_{u_a}(\kappa^{(a)}) \setminus \beta_{u_{a-1}}(\kappa^{(a-1)})| - |\beta_{u_{a-1}}(\kappa^{(a-1)}) \setminus \beta_{u_{a}}(\kappa^{(a)})| \\
%&= |\{ x \in \uu_{a}(\beta_{u_a}(\kappa^{(a)})) \mid x+e \notin \uu_{a-1}(\beta_{u_{a-1}}(\kappa^{(a-1)})) \}|  \\
%&\qquad -
%|\{ x \in \uu_{a-1}(\beta_{u_{a-1}}(\kappa^{(a-1)})) \mid x-e \notin \uu_{a}(\beta_{u_a}(\kappa^{(a)})) \}| \\
%&= |\{ b \in [1,\,e] \mid x^B_b \equiv_{\ell} a \}|
%= |\{ b \in [1,\,e] \mid z^B_b \equiv_{\ell} a+j \}|.
%\end{align*}
%(This in particular shows that $\bu \in \AAbar$ as is widely known.)

Let $i_1 \in [1,\,\ell]$ be such that $\ell - i_1 = \Ht_{I_B}(z^B_j)$.
Let $i_2 = \max(\{0\} \cup \{ i < i_1 \mid \mvi_i(B) = 0\})$.
%If there exists $i'$ such that $i' < i_1$ and $\mv_{i'}(B) = 0$, let $i_2$ be the largest such $i'$;
%otherwise, let $i_2 = \ell$.
Since $z^B_j,z^B_{j+1} > \Ht_{I_B}(z^B_{j}) = \Ht_{I_B}(z^B_{j+1})$, we have $\ell - i_2 > z^B_j, z^B_{j+1} > \ell - i_1$; in particular, $i_2 +1 < i_1$.

Now $\mvi_{i_2+1}(B) , \mvi_{i_2+2}(B) , \dotsc , \mvi_{i_1 -1}(B) > 0$ by definition of $i_2$.
Set
$$\bv = (v_1,\dotsc, v_{\ell}) := \mv(B) - \Be_{i_2+1} - \Be_{i_2+2} - \dotsb - \Be_{i_1 -1}.$$
Then $\bv \in (\ZZ_{\geq 0})^{\ell}$.
Let $\br^{w_0} = (r'_1,\dotsc, r'_{\ell})$, and recall that $\br^{w_{0}} \in \AAbar$.
Set $\bt = (t_1,\dotsc, t_{\ell}) \in \ZZ^{\ell}$ by
$$t_a := r'_a  - \delta_{a,i_2+1} + \delta_{a,i_1}$$
for all $a \in [1,\,\ell]$.
Then for $a \in [1, \ell-1]$,
$$t_{a+1} - t_a = r'_{a+1} - r'_a - \delta_{a+1,i_2+1} + \delta_{a+1,i_1} + \delta_{a,i_2+1} - \delta_{a,i_1}, $$
so that $t_{a+1} \geq t_a$ when $a \notin \{i_1,i_2\}$.
When $a \in \{ i_1,i_2\}$,
since $\bu = (u_1,\dotsc, u_{\ell}) \in \AAbar$ and
$u_a = r'_a - \mvi_a(B) + \mvi_{a-1}(B)
$ for all $a \in [1,\ell]$
by Corollary \ref{C:t*}, we have
\begin{align*}
t_{a+1} - t_a &= r'_{a+1} - r'_a - \delta_{a+1,i_2+1} + \delta_{a+1,i_1} + \delta_{a,i_2+1} - \delta_{a,i_1} \\
&= u_{a+1} - u_a + \mvi_{a+1}(B) - 2\mvi_a(B) + \mvi_{a-1}(B) - \delta_{a+1,i_2+1} + \delta_{a+1,i_1} + \delta_{a,i_2+1} - \delta_{a,i_1} \\
&= u_{a+1} - u_a + \mvi_{a+1}(B) + \mvi_{a-1}(B) - \delta_{a+1,i_2+1} + \delta_{a+1,i_1} + \delta_{a,i_2+1} - \delta_{a,i_1} \geq 0
\end{align*}
since $\mvi_{i_1-1}(B), \mvi_{i_2+1}(B) \geq 1$.
%Furthermore,
%$$t_{\ell} - t_1 = r'_{\ell} - r'_1 - \delta_{i_2+1,\ell} + \delta_{i_1,\ell} + \delta_{1,i_2+1} - \delta_{1,i_1}.$$
%Thus $t_{\ell} - t_1 \leq e$ since $\br^{w_{\br}} \in \AAnobar$.
%This shows $\bt \in \AAbar$.
Consequently,
as
$$
t_a = r'_a - \delta_{a,i_2+1} + \delta_{a,i_1} = u_a + \mvi_a(B) - \mvi_{a-1}(B) - \delta_{a,i_2+1} + \delta_{a,i_1} = u_a + v_a - v_{a-1},$$
there exists $\bm = (\mu^{(1)},\dotsc, \mu^{(\ell)}) \in \PP^{\ell}$ such that $\mv_e(\bm;\bt) = \bv$ and $\core_e(\bm;\bt) = \UU(\beta_{\bu}(\bk))$ by Corollary \ref{C:mv-construction}.

Note that
\begin{alignat}{3}
t_{i_1} - t_{i_2+1} &= r'_{i_1} - r'_{i_2+1} + 2 &&\geq 2; \label{E:i_1-i_2+1} \\
t_{i_1} - t_a &= r'_{i_1} - r'_a + 1 &&\geq 1 ; \label{E:i_1-i_a} \\
t_{a} - t_{i_2+1} &= r'_a - r'_{i_2+1} + 1 &&\geq 1 \label{E:i_a-i_2+1}
%t_{i_1} - t_{i_2+1} &= r'_{i_1} - r'_{i_2+1} + 2 &&\geq 2.
\end{alignat}
for all $a \in [i_2+2, \, i_1-1]$.
In particular, there exist distinct $b_1, b_2 \in \beta_{t_{i_1}}(\mu^{(i_1)}) \setminus \beta_{t_{i_2+1}}(\mu^{(i_2+1)})$ by Lemma \ref{L:hub}(1).

For $k \in \{ 1,2 \}$,
define $\bnu_k = (\nu_k^{(1)},\dotsc, \nu_k^{(\ell)}) \in \PP^{\ell}$ by
$$
\beta_{r'_c}(\nu_k^{(c)}) =
\begin{cases}
\beta_{t_{i_2+1}}(\mu^{(i_2+1)}) \cup \{b_k\},  &\text{if } c = i_2 + 1; \\
\beta_{t_{i_1}}(\mu^{(i_1)}) \setminus \{b_k\},  &\text{if } c = i_1; \\
\beta_{t_c}(\mu^{(c)}),  &\text{otherwise}.
\end{cases}
$$
Then $\UU(\beta_{\br^{w_0}}(\bnu_k))$ can be obtained from $\UU(\beta_{\bt}(\bm))$ by replacing $\uu_{i_1}(b_k)$ with $\uu_{i_2+1}(b_k)$, which equals $\uu_{i_1}(b_k) + (i_1 - (i_2+1))e$ by Lemma \ref{L:Uglov map}(4).
Thus
\begin{align*}
\core_e(\bnu_k;\br^{w_0}) &= \core_e(\UU(\beta_{\br^{w_0}}(\bnu_k))) = \core_e(\UU(\beta_{\bt}(\bm))) = \core_e(\bm;\bt) = \UU(\beta_{\bu}(\kappa)); \\
\wt_e(\bnu_k;\br^{w_0}) &= \wt_e(\UU(\beta_{\br^{w_0}}(\bnu_k))) = \wt_e(\UU(\beta_{\bt}(\bm))) + i_1 - (i_2+1) \\
& = |\mv_e(\bm;\bt)| +i_1 - (i_2+1) = |\mv(B)| = \wt(B). %\\
%\mv(\bm;\bt) + \Be_{i_2+1} + \Be_{i_2+2} + \dotsb + \Be_{i_1-1} = \bv + \Be_{i_2+2} + \dotsb + \Be_{i_1-1} = \mv(B); \\
\end{align*}
Consequently,
\begin{gather*}
\core_{\HH}(\bnu_k^{{w_0}^{-1}}) = \beta^{-1}(\core_e(\bnu_k;\br^{w_0})) = \beta^{-1}(\UU(\beta_{\bu}(\kappa))) = \core(B); \\
\wt_{\HH}(\bnu_k^{{w_0}^{-1}}) = \wt_e(\bnu_k;\br^{w_0}) = \wt(B),
\end{gather*}
so that $\bnu_k^{{w_0}^{-1}}$ lies in $B$.

%In what follows, we will assume that $i_2 < i_1$; the arguments for $i_2  = j_B$ are similar.
We now consider three separate cases:
\medskip

%\begin{enumerate}
\noindent \textbf{Case 1. $\bm$ has some addable node and some removable node, both with $(e,\bt)$-residue $\equiv_e j$:}

If there exists $k \in \{1,2\}$ such that $b_k \not\equiv_e j$ and $b_k \not\equiv_e j-1$, then clearly $\bnu_k$ has some addable node and some removable node,  both with $(e;\br^{w_0})$-residue $\equiv_e j$, inherited from $(\bm;\bt)$.

On the other hand, if for all $k \in \{1,2\}$, $b_k \equiv_e j$ or $b_k \equiv_e j-1$, let $b_k = q_k e + b'_k$ where $b'_k \in \ZZ/e\ZZ$ and $q_i \in \ZZ$.
Then
\begin{alignat*}{2}
\uu_{i_1}(b_k) &= ((q_k+1)\ell - i_1)e + b'_k &&\notin \UU(\beta_{\br^{w_0}}(\bnu_k)); \\
\uu_{i_2+1}(b_k) &= ((q_k+1)\ell - i_2-1)e + b'_k &&\in \UU(\beta_{\br^{w_0}}(\bnu_k)).
\end{alignat*}
This implies, by Lemma \ref{L:core-block-abacus} with $i = \ell$, that
\begin{align*}
(q_k+1)\ell - i_1 &\geq \left\lfloor (x^B_{b'_k+1})/\ell \right\rfloor  \ell = y^B_{b'_k+1}\ell ; \\
(q_k+1)\ell - i_2-1 &< \left\lceil (x^B_{b'_k+1})/\ell \right\rceil \ell \leq (y^B_{b'_k+1}+1)\ell.
\end{align*}
Consequently,
$$
y^B_{b'_k+1}\ell - (\ell - i_1) \leq q_k\ell \leq y^B_{b'_k+1}\ell + i_2
$$
forcing $q_k = y^B_{b'_k+1}$ since $i_2, \ell-i_1 \in [0,\,\ell-1]$.
Since $b_1 \ne b_2$, this forces $b'_1 \ne b'_2$, and hence $b_1 \not\equiv_e b_2$.
Without loss of generality, we may assume that $b_1 \equiv_e j-1$, and $b_2 \equiv_e j$.
Then $b'_1 = j-1$ and $b'_2 = j - \delta_{je} e$, and so
$$b_2 = q_2e + b'_2 = y^B_{j+1}e + j-\delta_{je}e = y^B_je + j = q_1e + b'_1 + 1 = b_1 + 1$$
since $y^B_{j+1} = y^B_j + \delta_{je}$.
Thus moving $b_2$ from $\beta_{t_{i_1}}(\mu^{(i_1)})$ to $\beta_{t_{i_2+1}}(\mu^{(i_2+1)})$ will yield an addable node of $\nu_2^{(i_1)}$ with $(e,r'_{i_1})$-residue $\equiv_e j$ corresponding to $b_2$.
Thus $\bnu_2$ will have an addable node with $(e,\br^{w_0})$-residue $\equiv_e j$, as well as any removable node with $(e,\br^{w_0})$-residue $\equiv_e j$ inherited from $\bm$.

In all cases, $\bnu_k$ has an addable node and a removable node, both with $(e;\br^{w_0})$-residue $\equiv_e j$ for some $k \in \{1,2\}$.
Let $\bl = \bnu^{{w_0}^{-1}}_k$.  Then $\bl$ lies in $B$, and has an addable node and a removable node, both with $(e,\br)$-residue $\equiv_e j$.

\medskip

\noindent \textbf{Case 2. $\bm$ has no addable node with $(e,\bt)$-residue $\equiv_e j$:}

Let $\quot_e(\UU(\beta_{\br^{w_0}}(\bnu_1))) = (\C_1,\dotsc, \C_e)$.
Firstly,
$$\max(\C_{j}) \geq \fs(\C_{j}) - 1 = x^B_{j} - 1 = y^B_{j}\ell + z^B_{j} - 1 \geq (y^B_{j} + 1)\ell - i_1$$
since $z^B_{j} > \ell -i_1$ so that $z^B_j -1 \geq \ell -i_1$.
On the other hand, if $i_2 >0$, then by Lemma \ref{L:core-block-abacus} with $i = i_2$,
\begin{equation}
\max(\C_{j}) < \lceil (x^B_{j} + i_2)/\ell \rceil \ell - i_2 = \lceil (y^B_{j}\ell + z^B_{j} + i_2)/\ell \rceil \ell - i_2 = (y^B_{j}+ 1)\ell - i_2   \label{E:C_j}
\end{equation}
since $0 \leq z^B_{j} < \ell - i_2$.
If $i_2 = 0$, then by Lemma \ref{L:core-block-abacus} with $i = \ell$,
$$
\max(\C_{j}) < \lceil x^B_{j} /\ell \rceil \ell = \lceil (y^B_{j}\ell + z^B_{j})/\ell \rceil \ell = (y^B_{j}+ 1)\ell = (y^B_{j}+ 1)\ell - i_2
$$
since $0\leq \ell-i_1 < z^B_{j} < \ell$.  Thus, in all cases,
$$
(y^B_{j} + 1)\ell - i_1 \leq \max(\C_j) \leq (y^B_{j}+ 1)\ell - i_2 -1.
$$
Consequently, there exists $a \in [i_2+1,\, i_1]$ such that
$\max(\C_{j}) = y^B_{j} \ell + (\ell -a)$,
so that
$$\uu_{a}(y^B_{j}e + j - 1) = y^B_j\ell e + (\ell-a)e + j- 1 = \max(\C_j)e + j-1 \in \UU(\beta_{\br^{w_0}}(\bnu_1)),$$
and hence $x := y^B_je + j-1 \in \beta_{r'_a}(\nu^{(a)}_1)$.
This implies that $x \in \beta_{t_{a_1}}(\mu^{(a_1)})$ for some $a_1 \in [i_2+1,\, i_1]$.
Since $\bm$ has no addable node with $(e,\bt)$-residue $\equiv_e j$, we have $x+1 =  y^B_je + j \in \beta_{t_{a_1}}(\mu^{(a_1)})$ too.

Now,
$$
\min(\ZZ \setminus \C_{j+1}) \leq \fs(\C_{j+1}) = x^B_{j+1} = y^B_{j+1} \ell + z^B_{j+1} \leq y^B_{j+1}\ell + \ell- i_2-1.
$$
%the argument that we used above to obtain $\max(\C_j) \leq %(y^B_j+1)\ell - i_2-1$ remains valid when we replace $j$ by $j+1$ (read as $1$ if $j = e$), and so we have $\max(\C_{j+1}) \leq (y^B_{j+1}+1)\ell - i_2-1$.
%Thus
%$$
%\min(\ZZ \setminus \C_{j+1}) \leq \max(\C_{j+1}) + 1 \leq (y^B_{j+1}+1)\ell - i_2.
%$$
On the other hand, by Lemma \ref{L:core-block-abacus} with $i = i_1$, we have
$$
\min(\ZZ \setminus \C_{j+1}) \geq \lfloor (x^B_{j+1} +i_1)/\ell \rfloor \ell - i_1 \geq
\lfloor (y^B_{j+1}\ell + z^B_{j+1} + i_1)/\ell \rfloor \ell - i_1 = (y^B_{j+1} + 1)\ell - i_1
$$
since $\ell- i_1 \leq z_{j+1} < \ell$.
Thus, there exists $a' \in [i_2+1,\, i_1]$ such that $\min(\ZZ\setminus \C_{j+1}) = y^B_{j+1}\ell + (\ell - a')$,
so that
\begin{align*}
\uu_{a'}(x+1) &=
\uu_{a'}(y^B_{j}e + j) = \uu_{a'}((y^B_j+ \delta_{je})e + j - \delta_{je}e)
= \uu_{a'}(y^B_{j+1}e + j-\delta_{je}e) \\&
=y^B_{j+1}\ell e + (\ell-a')e + j-\delta_{je}e = \min(\ZZ \setminus \C_{j+1})e + j-\delta_{je}e \notin \UU(\beta_{\br^{w_0}}(\bnu_1)).
\end{align*}
Hence $x+1 \notin \beta_{r'_{a'}}(\nu^{(a')}_1)$, and so $x+1 \notin \beta_{t_{a_2}}(\mu^{(a_2)})$ for some $a_2 \in [i_2+1,i_1]$.
Since $(\bm;\bt)$ has no addable node with $(e,\bt)$-residue $\equiv_e j$, we have $x \notin \beta_{t_{a_2}}(\mu^{(a_2)})$ too.

If $a_1 < i_1$, then $t_{a_1} < t_{i_1}$ by \eqref{E:i_1-i_2+1} and \eqref{E:i_1-i_a}, so there exists $y \in \beta_{t_{i_1}}(\mu^{(i_1)}) \setminus \beta_{t_{a_1}}(\mu^{(a_1)})$.
If $a_2 > i_2 + 1$, then $t_{i_2+1} < t_{a_2}$ by \eqref{E:i_1-i_2+1} and \eqref{E:i_a-i_2+1}, so there exists $y' \in \beta_{t_{a_2}}(\mu^{(a_2)}) \setminus \beta_{t_{i_2+1}}(\mu^{(i_2+1)})$.
Note that $\{y,y'\} \cap \{ x,x+1 \} = \emptyset $ since $x,x+1 \in \beta_{t_{a_1}}(\mu^{(a_1)}) \setminus
\beta_{t_{a_2}}(\mu^{(a_2)})$.
Define $\bnu = (\nu^{(1)},\dotsc, \nu^{(\ell)}) \in \PP^{\ell}$ by
$$\beta_{r'_c}(\nu^{(c)}) =
\begin{cases}
\beta_{t_{i_1}} (\mu^{(i_1)}) \setminus \{ x +1 \}, & \text{if } c = i_1 = a_1; \\
\beta_{t_{i_1}} (\mu^{(i_1)}) \setminus \{ y \}, &\text{if } c = i_1 > a_1; \\
\beta_{t_{a_1}} (\mu^{(a_1)}) \cup \{y \} \setminus \{ x+1 \}, & \text{if } c = a_1 < i_1; \\
\beta_{t_{a_2}} (\mu^{(a_2)}) \cup \{x+1 \} \setminus \{ y' \}, & \text{if } c = a_2 > i_2 + 1; \\
\beta_{t_{i_2+1}} (\mu^{(i_2+1)}) \cup \{x+1 \},  & \text{if } c = i_2 + 1 = a_2; \\
\beta_{t_{i_2+1}} (\mu^{(i_2+1)}) \cup \{ y' \}, & \text{if } c = i_2 + 1 < a_2; \\
\beta_{t_c}(\mu^{(c)}), &\text{otherwise}.
\end{cases}$$
Then $\nu^{(a_1)}$ has an addable node with $(e, r'_{a_1})$-residue $\equiv_e j$ while $\nu^{(a_2)}$ has a removable node with $(e,r'_{a_2})$-residue $\equiv_e j$, with both nodes corresponding to $x+1$.
Thus $\bnu$ has an addable node and a removable node, both with $(e,\br^{w_0})$-residue $\equiv_e j$.
Similar to how we showed that $\bnu_k^{{w_0}^{-1}}$ lies in $B$, it is straightforward to verify that $\bnu^{{w_0}^{-1}}$ also lies in $B$. Let $\bl = \bnu^{{w_0}^{-1}}$.
Then $\bl$ lies in $B$ and has an addable node and a removable node, both with $(e,\br)$-residue $\equiv_e j$.
%\end{enumerate}
\medskip

\noindent \textbf{Case 3. $\bm$ has no removable node with $(e,\bt)$-residue $\equiv_e j$:}

This is similar to Case 2, and its proof just involves interchanging the roles of $\C_j$ and $\C_{j+1}$.
\end{proof}

\begin{thm} \label{T:Scopes}
Let $B$ and $C$ be two core blocks of Ariki-Koike algebras with (common $\ell$-charge $\br$ and) $\mv(B) = \mv(C)$.
Then $B$ and $C$ are Scopes equivalent if and only if
$\Sc(B) = \Sc(C)$,
in which case $\Sp^{\bl}$ lying in $B$ corresponds to $\Sp^{\bm}$ lying in $C$ if and only if $(\bl^*)^{\sigma_B\tau_B} = (\bm^*)^{\sigma_C\tau_C}$, where $\bij_{\ell,e}((\bl;\br)^{w_{0}}) = (\bl^*; \br^*_B)$ and $\bij_{\ell,e}((\bm;\br)^{w_{0}}) = (\bm^*; \br^*_C)$.
\end{thm}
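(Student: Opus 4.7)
The plan is to reduce both directions of the theorem to Proposition \ref{P:Scopes} (which produces a canonical initial representative of each Scopes class) and Lemma \ref{L:unique-j-initial} (uniqueness of initial core blocks with prescribed weight and Scopes vector), using Proposition \ref{P:not-Scopes} as the obstruction tool for the ``only if'' direction.

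For ($\Leftarrow$), I would apply Proposition \ref{P:Scopes} to $B$ and to $C$ separately, obtaining initial core blocks $B_k$ and $C_{k'}$ Scopes equivalent to $B$ and $C$ respectively, with $\Sc(B_k)=\Sc(B)=\Sc(C)=\Sc(C_{k'})$ by part (3) of that proposition.  Since weight is an $\AW_e$-invariant of blocks and $\mv(B)=\mv(C)$ gives $\wt(B)=|\mv(B)|=|\mv(C)|=\wt(C)$, we also have $\wt(B_k)=\wt(C_{k'})$; a short bookkeeping check using $|\br^*_B|=|\br^{w_0}|=|\br^*_C|$ together with $|\bz^B|=|\Sc(B)|=|\Sc(C)|=|\bz^C|$ yields $|\by^B|=|\by^C|$, hence $j_B=j_C$, so that the explicit formulas for $\br^*_{B_k}$ and $\br^*_{C_{k'}}$ from Proposition \ref{P:Scopes}(1) literally coincide. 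Lemma \ref{L:unique-j-initial} then forces $B_k=C_{k'}$, and the Scopes equivalence of $B$ and $C$ follows by transitivity through this common initial block.  The Specht-module correspondence is then read off from the last statement of Proposition \ref{P:Scopes}: $\Sp^{\bl}$ and $\Sp^{\bm}$ are carried to the same module in $B_k=C_{k'}$ iff $(\bl^*)^{\sigma_B\tau_B\rho_e^{-j_B}}=(\bm^*)^{\sigma_C\tau_C\rho_e^{-j_C}}$, and cancelling $\rho_e^{-j_B}=\rho_e^{-j_C}$ on the right yields exactly $(\bl^*)^{\sigma_B\tau_B}=(\bm^*)^{\sigma_C\tau_C}$.

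For ($\Rightarrow$), it suffices to prove that a single Scopes move preserves $\Sc$. So let $C=\cs_j\DDot{}B$ and assume $B$ and $C$ are Scopes equivalent.  If $\hub_j(B)=0$ the move fixes the block and there is nothing to do; otherwise, swapping $B$ and $C$ if necessary, we may assume $\hub_j(B)>0$ and that no $\bl\in B$ has an addable node of $(e,\br)$-residue $j$.  The contrapositive of Proposition \ref{P:not-Scopes} then forbids the configuration $y^B_{j+1}=y^B_j+\delta_{je}$ with $z^B_j,z^B_{j+1}>\Ht_{I_B}(z^B_j)=\Ht_{I_B}(z^B_{j+1})$, while $\hub_j(B)>0$ (i.e.\ $x^B_{j+1}>x^B_j+\delta_{je}\ell$) forces either $y^B_{j+1}>y^B_j+\delta_{je}$ or $y^B_{j+1}=y^B_j+\delta_{je}$ with $z^B_{j+1}>z^B_j$.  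In the first case Proposition \ref{P:Scopes-vector}(1) directly gives $\Sc(C)=\Sc(B)$.  In the second case, the forbidden configuration is precisely the complement of the hypothesis $\Ht_{I_B}(z^B_{j+1})\geq z^B_j$ of Proposition \ref{P:Scopes-vector}(2): using monotonicity of $\Ht_{I_B}$ and $\Ht_{I_B}(x)\in I_B\cap[0,x]$, a short case check distinguishing whether $z^B_j\in I_B$, whether $z^B_{j+1}\in I_B$, and whether $\Ht_{I_B}(z^B_j)<\Ht_{I_B}(z^B_{j+1})$ shows that one of these alternatives must supply an $i\in I_B$ with $z^B_j\leq i\leq z^B_{j+1}$, yielding $\Ht_{I_B}(z^B_{j+1})\geq z^B_j$; Proposition \ref{P:Scopes-vector}(2) then again gives $\Sc(C)=\Sc(B)$.

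The main delicate point is the combinatorial reconciliation in the ($\Rightarrow$) direction: one must verify that the obstruction furnished by Proposition \ref{P:not-Scopes}, together with $\hub_j(B)>0$, is sharp enough to force the remaining configurations into the domain of Proposition \ref{P:Scopes-vector}(1) or (2), with no uncovered case.  Once this invariance of $\Sc$ under single Scopes moves is established, the rest of the argument is essentially organisational bookkeeping of $\sigma_B$, $\tau_B$ and $j_B$ under the canonical reduction to initial representatives.
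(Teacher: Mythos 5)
Your proof is correct and follows essentially the same strategy as the paper: reduce the ``if'' direction to Proposition \ref{P:Scopes} plus Lemma \ref{L:unique-j-initial} via the common initial representative, and reduce the ``only if'' direction to invariance of $\Sc$ under a single Scopes move, using Proposition \ref{P:not-Scopes} to rule out the one configuration not covered by Proposition \ref{P:Scopes-vector}. Your extra bookkeeping showing $j_B=j_C$ (needed to cancel $\rho_e^{-j_B}=\rho_e^{-j_C}$ in the Specht-module correspondence) is a detail the paper leaves implicit, and the case analysis you outline for deducing $\Ht_{I_B}(z^B_{j+1})\geq z^B_j$ from the obstruction of Proposition \ref{P:not-Scopes} is exactly the reconciliation the paper compresses into one sentence.
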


\begin{proof}
If %$\mv(B) = \mv(C) = \mathbf{0}$, then $B$ and $C$ are Scopes equivalent
$\Sc(B) = \Sc(C)$, then $B$ and $C$ are Scopes equivalent to initial core blocks $B_k$ and $C_l$ respectively, with $\Sc(B_k) = \Sc(B) = \Sc(C) = \Sc(C_l)$ by Proposition \ref{P:Scopes}.  Since
$$\wt(B_k) = |\mv(B_k)| = |\mv(B)| = |\mv(C)| = |\mv(C_l)| = \wt(C_l)$$
by Proposition \ref{P:moving-vector-Weyl-orbit},  it follows from Lemma \ref{L:unique-j-initial} that $B_k = C_l$.  The correspondence between the Specht modules of $B$ and $C$ then follows from Proposition \ref{P:Scopes}.

For the converse, it suffices to show that if $C = \cs_j \DDot{} B$, and $\bl$ has no addable node with $(e,\br)$-residue $j$ for all $\bl \in \PP^{\ell}$ lying in $B$, then $\Sc(B) = \Sc(C)$.
Let $\bl \in \PP^{\ell}$ be lying in $B$.  Since $\bl$ has no addable node with $(e,\br)$-residue $j$, $\bl^{w_0}$ has no addable node with $(e,\br^{w_0})$-residue $j$.  Hence $\hub_j((\bl;\br)^{w_0}) \geq 0$.
Thus $0 \leq \hub_j((\bl;\br)^{w_0}) = \hub_j(B) = x^B_{j+1} - x^B_{\overline{j}} - \delta_{j0}\ell$.
%Since every $\bl \in \PP^{\ell}$ lying in $B$ has no addable node of $e$-residue $i$, we must have $x^B_{i+1} \geq x^B_i + \delta_{ie}\ell$.
If $x^B_{j+1} = x^B_{\overline{j}} + \delta_{j0} \ell$, then $B = C$, and the conclusion is trivial.
Thus we may assume that $y^B_{j+1} > y^B_{\overline{j}} + \delta_{j0}$, or $y^B_{j+1} =y^B_{\overline{j}} + \delta_{j0}$ and $z^B_{j+1} > z^B_{\overline{j}}$.
If the former holds, then we have $\Sc(B) = \Sc(C)$ by Proposition \ref{P:Scopes-vector}.
If the latter holds, then we have $\Ht_{I_B}(z^B_{j+1}) \geq z^B_j$ by Proposition \ref{P:not-Scopes}, so that $\Sc(B) = \Sc(C)$ by Proposition \ref{P:Scopes-vector} again.
\end{proof}

\begin{prop} \label{P:number-Scopes-inequivalent}
Let $B$ be a core block of $\HH_n$.
For each $a \in \ZZ/\ell\ZZ$, let $k_a = |\{j \in [1,\,e] \mid z^B_j = a \}|$.
Let $I_B = \{ i_1,\dotsc, i_m \}$ where $0 = i_1 < i_2 < \dotsb < i_m$ (recall that $I_B = \{ \ell - i \mid \mvi_i(B) = 0\}$). Then the $\AW_e$-orbit of $B$ contains exactly
$$
\prod_{b=1}^m \frac{(\sum_{a= i_b + 1}^{i_{b+1}-1} k_a)!}{\prod_{a=i_b + 1}^{i_{b+1}-1} k_a!}$$
Scopes equivalence classes, where $i_{b+1}$ is to be read as $\ell$ when $b = m$.
\end{prop}

\begin{proof}
Let
$$
X = \{ (\bz^B)^{\rho} \mid \rho \in \sym{e},\ j < j' \text{ if } z^B_{\rho(j)} > z^B_{\rho(j')} \text{ and } %z^B_{\rho(j)} \geq 
\Ht_{I_B}(z^B_{\rho(j)}) \geq z^B_{\rho(j')} \}.
$$
If $\Ht_{I_B}(z^B_{\rho(j)}) = i_b$, then $z^B_{\rho(j)} > z^B_{\rho(j')}$ and $\Ht_{I_B}(z^B_{\rho(j)}) \geq z^B_{\rho(j')}$ if and only if ($z^B_{\rho(j)} \in [i_b+1,i_{b+1}-1]$ and $z^B_{\rho(j')} \leq i_b$) or ($i_b = z^B_{\rho(j)} > z^B_{\rho(j')}$).
Thus $X$ contains precisely all rearrangements of $\bz^B$ such that the $z^B_j$'s lying in $[i_b+1,\, i_{b+1}-1]$ appears before those which are equal to $i_b$, which in turn appears before those lying in $[i_{b-1}+1,i_{b}-1]$, for each $b \in [2,\, m]$.
Since there are exactly
$$
\frac{(\sum_{a= i_b + 1}^{i_{b+1}-1} k_a)!}{\prod_{a=i_b + 1}^{i_{b+1}-1} k_a!}$$
distinct ways of arranging the $\bz^B_j$'s lying in $[i_b+1,\, i_{b+1}-1]$ for each $b \in [1,m]$, we see that
$$
|X| = \prod_{b=1}^m \frac{(\sum_{a= i_b + 1}^{i_{b+1}-1} k_a)!}{\prod_{a=i_b + 1}^{i_{b+1}-1} k_a!}.
$$

Now, for each $w \in \AW_e$, let $B_w = w \DDot{} B$.  Then $\bz^{B_w} = (\bz^B)^{\overline{w}^{-1}}$ by Lemma \ref{L:yz}, so that $$\Sc(B_w) = (\bz^{B_w})^{\sigma_{B_w}\tau_{B_w}} = (\bz^B)^{\overline{w}^{-1}\sigma_{B_w}\tau_{B_w}}.$$
Let $\rho_w = \overline{w}^{-1}\sigma_{B_w}\tau_{B_w}$.  Then $\rho_w \in \sym{e}$, and
applying Lemma \ref{L:tau} to $\tau_{B_w} = \tau^{I_{B_w}}_{(\bz^{B_w})^{\sigma_{B_w}}} = \tau^{I_B}_{(\bz^B)^{\overline{w}^{-1}\sigma_{B_w}}}$, we get $j < j'$ whenever $z^B_{\rho_w(j)} > z^B_{\rho_w(j')}$ and $\Ht_{I_B}(z^B_{\rho_w(j)}) \geq z^B_{\rho_w(j')}.$
Consequently, $\Sc(B_w) = (\bz^B)^{\rho_w} \in X$.

For the converse, let $(\bz^B)^{\rho} \in X$ where $\rho \in \sym{e}$.
Let $B_0$ be an initial core block that is Scopes equivalent to $B$.
Then $(\bz^B)^{\rho} = \Sc(B_0)^{\rho'} = (\bz^{B_0})^{\sigma_{B_0} \rho'}$ for some $\rho' \in \sym{e}$.
Since $w \mapsto \overline{w}$ is an isomorphism from $\left< \cs_j \mid j \in \ZZ/e\ZZ \setminus \{j_0 \} \right>$ to $\sym{e}$ for all $j_0 \in \ZZ/e\ZZ$,
let $w' \in \left< \cs_j \mid j \in \ZZ/e\ZZ \setminus \{j_{B_0} \} \right>$ such that $\overline{w'} = \sigma_{B_0}\rho'^{-1}\sigma_{B_0}^{-1}$.
Let $B' = w' \DDot{} B_0$.
%Then $\by^{B'} = \by^{B}$, so that
%$\sigma_{B_0} = \sigma_{B'}$, and
%Since $B_0$ is an initial core block that is Scopes equivalent to $B$, we have $j_{B_0} = j_B$ by Lemma \ref{L:yz}.
Then $\by^{B'} = w' \CDot{1} \by^{B_0} = \by^{B_0}$ by Lemma \ref{L:yz} and Lemma \ref{L:equivalent}(3) since $w' \in \left< \cs_j \mid j \in \ZZ/e\ZZ \setminus \{j_{B_0} \} \right>$, so that
$\sigma_{B'} = \sigma_{\by^{B'}} = \sigma_{\by^{B_0}} = \sigma_{B_0}$.
Thus,
$$(\bz^{B'})^{\sigma_{B'}} = (\bz^{B_0})^{\overline{w'}^{-1}\sigma_{B'}} =
(\bz^{B_0})^{(\sigma_{B_0} \rho' \sigma_{B_0}^{-1}) \sigma_{B_0}} =
(\bz^{B_0})^{\sigma_{B_0} \rho'} = (\bz^B)^{\rho}.$$
As $I_{B'} = I_{B_0} = I_{B}$ by Proposition \ref{P:moving-vector-Weyl-orbit}, we have
$$ \tau_{B'} = \tau^{I_{B'}}_{(\bz^{B'})^{\sigma_{B'}}} = \tau^{I_{B}}_{(\bz^{B})^{\rho}} = 1_{\sym{e}}$$
by Corollary \ref{C:tau=1}(2),
since $(\bz^B)^{\rho} \in X$.
Consequently,
$$
\Sc(B') = (\bz^{B'})^{\sigma_{B'}\tau_{B'}} = (\bz^{B'})^{\sigma_{B'}} = (\bz^B)^{\rho}.$$

We have thus shown that $X = \{ \Sc(w \DDot{} B) \mid w \in \AW_e \}$.
By Theorem \ref{T:Scopes} and Proposition \ref{P:moving-vector-Weyl-orbit}, each distinct $\Sc(w \DDot{} B)$ indexes a distinct Scopes equivalence class in $\AW_e \DDot{} B$, and so there are exactly $|X|$ distinct Scopes equivalence classes in $\AW_e \DDot{} B$.
\end{proof}

%We claim first that there are at most $N_k$ compositions of length $\ell$ which have at least one zero part each.
%If $k=0$, clearly there is exactly one composition of $k$ of length $\ell$.
%On the other hand, if $k>0$, there are exactly $\binom{k+\ell-2}{k}$ compositions of $k$ of length $\ell-1$.
%For each such composition, there are at most $\ell$ distinct compositions of $k$ that can be obtained by adding a zero part to it, proving the claim.

%Now, each such composition is the common moving vector of at most one $\AW_e$-orbit of core blocks by Theorem \ref{T:moving-vector-of-core-block} and Proposition \ref{P:moving-vector-Weyl-orbit}.
%For each such orbit, there are at most $e\ell$ distinct Scopes vectors since each Scopes vector is an element of $(\ZZ/\ell\ZZ)^e$, and hence at most $e\ell$ distinct Scopes equivalence classes by Theorem \ref{T:Scopes}.
%Combining all this information, the Corollary follows.
%\end{proof}

\begin{cor} \label{C:type-B}
Let $\ell =2$. All core blocks in a $\AW_e$-orbit are Scopes equivalent.
Equivalently, all core blocks with the same moving vector (with respect to any $\br' \in \br^{\EW_{2}} \cap \overline{\mathcal{A}}^2_e$) are Scopes equivalent.
\end{cor}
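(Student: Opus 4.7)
The plan is to combine Proposition \ref{P:moving-vector-Weyl-orbit} and Theorem \ref{T:Scopes}. The former tells us that two blocks lie in the same $\AW_e$-orbit if and only if they share a common moving vector, so the two assertions of the corollary are equivalent. The latter tells us that two core blocks with common moving vector are Scopes equivalent if and only if they share a common Scopes vector. Thus it suffices to show: when $\ell = 2$, the Scopes vector $\Sc(B)$ of a core block is determined by $\mv(B)$ alone. Since the weight of the block equals $|\mv(B)|$, the case $\wt(B) = 0$ is trivial: only a single $\ell$-partition lies in $B$ (the multicore itself), and the orbit contains only $B$.

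Now assume $\wt(B) > 0$. With $\ell = 2$ and our fixed $w_0$ giving $\mv^{\br^{w_0}}_2(B) = 0$, we have $\mv(B) = (\wt(B), 0)$, so $\mv_1(B) > 0$ and therefore $I_B = \{\ell - i : \mv_i(B) = 0\} = \{0\}$. The key observation is that, since $\ell = 2$, every component of $\bz^B$ lies in $\{0,1\}$. With $I_B = \{0\}$, we have $\Ht_{I_B}(b) = 0$ for every $b \in \ZZ_{\geq 0}$, so the relation $\Ht_{I_B}(z_{\tau(j)}) \geq \Ht_{I_B}(z_{\tau(j+1)})$ defining $\tau^{I_B}_{(\bz^B)^{\sigma_B}}$ is always an equality. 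A direct inspection of the three equality cases shows that $\tau_B$ is the unique permutation in $\sym{e}$ which stably sorts $(\bz^B)^{\sigma_B}$ so that the $1$'s precede the $0$'s. Consequently
$$
\Sc(B) = (\bz^B)^{\sigma_B \tau_B} = (\underbrace{1,\dotsc,1}_{|\bz^B|\,\text{times}},\underbrace{0,\dotsc,0}_{e - |\bz^B|\,\text{times}}),
$$
so $\Sc(B)$ is determined entirely by the single integer $|\bz^B|$.

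Finally, we verify that $|\bz^B|$ is constant on the $\AW_e$-orbit of $B$. By Lemma \ref{L:yz}, $\bz^{\cs_j \DDot{} B} = (\bz^B)^{\overline{\cs_j}}$, which merely permutes the components of $\bz^B$; in particular $|\bz^{\cs_j \DDot{} B}| = |\bz^B|$ for every $j \in \ZZ/e\ZZ$. Iterating, $|\bz^B|$ is invariant under the $\AW_e$-action, so two core blocks in the same $\AW_e$-orbit share the same $|\bz^B|$, hence the same Scopes vector, hence are Scopes equivalent by Theorem \ref{T:Scopes}. There is no real obstacle here beyond patiently unpacking the definition of $\tau^{I_B}$; the crux is the combinatorial miracle that, at $\ell = 2$, the vector $\bz^B$ is binary, collapsing the multi-tiered sorting prescribed by $\tau^{I_B}$ into a simple separation of $1$'s from $0$'s whose only invariant is the sum $|\bz^B|$.
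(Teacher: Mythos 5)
Your overall approach is identical to the paper's: you reduce to showing that $\Sc(B)$ depends only on $\mv(B)$ when $\ell=2$, observe that $\bz^B$ is a $\{0,1\}$-vector, unwind the definition of $\tau^{I_B}$ to see that it stably sorts the $1$'s before the $0$'s, and then use Lemma \ref{L:yz} to see that $|\bz^B|$ (hence $\Sc(B)$) is an $\AW_e$-orbit invariant. This matches the paper exactly, and your computation $I_B = \{0\}$ for $\wt(B) > 0$ is in fact the correct one (the printed text reads ``$I_B = \{2\}$ or $\{1,2\}$,'' which given Definition \ref{D:initial} must be a slip for $\{0\}$ or $\{0,1\}$).

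There is, however, a genuine error in your treatment of the case $\wt(B)=0$. You dismiss it by asserting that ``the orbit contains only $B$,'' but that is false: the $\AW_e$-orbit of a weight-zero core block generally contains infinitely many distinct blocks. For example, with $\ell = 2$, $e = 2$, $\br = (0,0)$ and $B$ the block of $\EP$, the block $\cs_0 \DDot{} B$ contains $((1),(1))$ and is a different block of $\HH_2$. So the weight-zero case is not vacuous and needs an argument. The fix is immediate and requires no case split at all: when $\wt(B) = 0$ one has $I_B = \{0,1\}$, so $\Ht_{I_B}(0)=0$ and $\Ht_{I_B}(1)=1$, and the defining inequality $\Ht_{I_B}(z_{\tau(j)}) \geq \Ht_{I_B}(z_{\tau(j+1)})$ becomes $z_{\tau(j)} \geq z_{\tau(j+1)}$. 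Together with the tie-breaking conditions, $\tau_B$ again stably sorts $1$'s before $0$'s, so the identity $\Sc(B) = (1,\dotsc,1,0,\dotsc,0)$ with $|\bz^B|$ ones holds uniformly in $\wt(B)$, as the paper's proof (implicitly) exploits. I would recommend dropping the spurious case distinction entirely rather than patching it.
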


\begin{proof}
Let $B$ be a core block.
Then $I_B = \{0\}$ or $I_B = \{ 0,1\}$.
In both cases, we apply Proposition \ref{P:number-Scopes-inequivalent} to conclude that the $\AW_e$-orbit of $B$ has only one Scopes equivalence class.
\end{proof}

\begin{comment}
\begin{proof}
Let $B$ be a core block with $\wt(B) = k$.
Then $\mv(B) = (k,0)$ and so $I_B = \{0\}$ or $\{0,1\}$ (the latter only when $k = 0$).
Furthermore, $z^B_j \in \{0,1\}$.
%, and hence $\Ht_{I_B}(z^B_j) = 0$, for all $j \in [1,\,e]$.
Thus,
$$\Sc(B) = (\bz^B)^{\sigma_B\tau_B} =
(\overbrace{1,\dotsc, 1}^{a \text{ times}}, \overbrace{0,\dotsc, 0}^{e-a \text{ times}})$$
where $a = |\{ j \in [1,\,e] \mid z^B_j = 1 \}|$.
Let $B' = w' \DDot{} B$ where $w' \in \AW_e$.
Then, %by the same argument as $B$,
$$\Sc(B') = (\overbrace{1,\dotsc, 1}^{a' \text{ times}}, \overbrace{0,\dotsc, 0}^{e-a' \text{ times}})$$
where $a' = |\{ j \in [1,\,e] \mid z^{B'}_j = 1 \}|$.
But $\bz^{B'} = (\bz^B)^{\overline{w'}^{-1}}$ by Lemma \ref{L:yz}.
Consequently $a = a'$.
Thus $\Sc(B) = \Sc(B')$, and hence $B$ and $B'$ are Scopes equivalent by Theorem \ref{T:Scopes}.
\end{proof}
\end{comment}

We end this section with a proof that all core blocks of Ariki-Koike algebras are Scopes equivalent to Rouquier blocks.
Rouquier blocks were first defined for the group algebras of the symmetric groups and Schur algebras by Chuang and the third author in \cite{CT}.
These blocks are very well-behaved, and they have since been generalised to other algebras such as the group algebras of finite general linear groups, Iwahori-Hecke algebras of type $A$ and $q$-Schur algebras.
In \cite{Lyle-RoCK}, Lyle generalises these blocks to Ariki-Koike algebras.
She defines a block $B$ of $\HH_n$ to be Rouquier if and only if
for all $\bl = (\lambda^{(1)},\dotsc, \lambda^{(\ell)}) \in \PP^{\ell}$ lying in $B$,
with
$\bfs(\quot_e(\beta_{r_i}(\lambda^{(i)}))) = (s^{\bl,i}_1,\dotsc, s^{\bl,i}_e)$ for all $i\in [1,\,\ell]$,
then $s^{\bl,i}_{j+1} \geq s^{\bl,i}_{j} + \wt_e(\lambda^{(i)})-1$ for all $i \in [1,\,\ell]$ and $j \in [1,\,e-1]$.

\begin{cor}[{cf.\ \cite[Theorem C]{MNSS}}]
  Every core block of $\HH_n$ is Scopes equivalent to a Rouquier block as defined by Lyle in \cite{Lyle-RoCK}.
\end{cor}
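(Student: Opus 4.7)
The plan is to combine the reduction to initial core blocks from Proposition \ref{P:Scopes} with an iteration of Scopes moves in the ``growing'' direction to reach a Rouquier block. First, by Proposition \ref{P:Scopes} every core block $B$ is Scopes equivalent to an initial core block $B_k$, so since Scopes equivalence is transitive and symmetric, it suffices to exhibit a Rouquier block in the Scopes equivalence class of $B_k$.

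Next, I would iteratively apply Scopes moves of the form $B' \leftrightsquigarrow \cs_j \DDot{} B'$ going in the direction of increasing $|B'|$. Such a move is a valid Scopes equivalence, via Theorem \ref{T:CR}, provided $\hub_j(B') < 0$ and no $\ell$-partition lying in $\cs_j \DDot{} B'$ has a removable node of $(e,\br)$-residue $j$. The key claim is: as long as $B'$ is not yet Rouquier, such a growing Scopes move is available. To locate a suitable $j$, I would analyse the combined charges $\br^*_{B'}$ using Lemma \ref{L:core-block-abacus} together with (the contrapositive of) Proposition \ref{P:not-Scopes}, which together identify configurations of $(\by^{B'},\bz^{B'})$ that force the nonexistence of removable residue-$j$ nodes. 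Since each such move strictly increases $|B'|$, but the sequence is contained in a single Scopes equivalence class whose moving vector $\mv(B')$ is fixed by Proposition \ref{P:moving-vector-Weyl-orbit}, the process must stabilise, and it can only stabilise once no violations of the Rouquier inequality remain.

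Finally, I would verify that the terminal block $B^*$ is Rouquier. For every $\bl = (\lambda^{(1)},\dotsc,\lambda^{(\ell)})$ lying in $B^*$, I need to show that the $e$-quotient charges $(s^{\bl,i}_1,\dotsc,s^{\bl,i}_e) := \bfs(\quot_e(\beta_{r_i}(\lambda^{(i)})))$ satisfy $s^{\bl,i}_{j+1} \geq s^{\bl,i}_j - 1$ for all $i \in [1,\ell]$ and $j \in [1,e-1]$. Here each $\lambda^{(i)}$ is an $e$-core, so $\wt_e(\lambda^{(i)}) = 0$ and the remaining inequality is the Rouquier condition in this setting. The verification uses the Uglov map to translate bounds on the bead positions in each runner of $\quot_e(\UU(\beta_{\br^{w_0}}(\bl^{w_0})))$, which are controlled by $\br^*_{B^*}$ via Lemma \ref{L:core-block-abacus}, back down to bounds on the individual component quotients $\quot_e(\beta_{r_i}(\lambda^{(i)}))$, using the interleaving structure of $\uu_1,\dotsc,\uu_\ell$ recorded in Lemma \ref{L:Uglov map}.

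The main obstacle will be the existence step in the second paragraph: ensuring that a growing Scopes move is always available while $B'$ fails to be Rouquier. Concretely, one must translate the component-level violation $s^{\bl,i}_{j+1} < s^{\bl,i}_j - 1$ into the existence of a residue $j' \in \ZZ/e\ZZ$ satisfying simultaneously $\hub_{j'}(B') < 0$ and the absence of removable $(e,\br)$-residue $j'$ nodes across all $\bl$ lying in $B'$. I expect this to require a bead-by-bead analysis on the combined $e$-abacus analogous to (but inverse to) the construction of the Scopes reduction chain in the proof of Proposition \ref{P:Scopes}, with Proposition \ref{P:not-Scopes} providing the necessary obstructions that pin down which $j'$ must work.
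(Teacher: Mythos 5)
Your proposal takes a genuinely different route from the paper, and it has a gap in its termination step. You claim the iteration of growing Scopes moves must stabilise because $\mv(B')$ is fixed across the Scopes class; but a fixed moving vector places no bound on $|B'|$. A Scopes equivalence class is an infinite family of blocks (one can always apply further Scopes moves in the direction that increases $n$), so constancy of $\mv(B')$ does not force the process to halt, and the intermediate blocks could in principle drift away from the Rouquier condition rather than toward it. Moreover, you leave the existence step --- locating, for each non-Rouquier $B'$, a residue $j'$ at which a growing Scopes move is available --- as an admitted open obstacle. So the proposal is incomplete at both points where it needs to be quantitatively pinned down.

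The paper sidesteps both difficulties by working in the opposite direction: it first verifies, using Lemma \ref{L:core-block-abacus} together with the interleaving of the Uglov maps (just as you anticipate in your third paragraph, and using the fact that each component of a multicore has weight zero), that a core block $B$ is automatically Rouquier whenever $\by^B$ is \emph{strictly increasing}. It then chooses in advance a strictly increasing target $\by$ with $|\by| = |\by^B|$; since translations in $\AW_e$ act on $\ZZ^e$ under $\CDot{1}$ by zero-sum vectors, there exists $w\in\AW_e$ with $w\CDot{1}\by^B=\by$, and taking a reduced expression $\cs_{j_m}\dotsm\cs_{j_1}$ for a minimal-length such $w$ produces a finite chain $B_0=B, B_1,\dotsc, B_m$ ending at a block with $\by^{B_m}=\by$. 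Each consecutive pair is Scopes equivalent by Lemma \ref{L:Scopes}, because minimality of the reduced word rules out $y^{B_{a-1}}_{j_a+1}=y^{B_{a-1}}_{\overline{j_a}}+\delta_{\overline{j_a},e}$ (otherwise $\cs_{j_a}$ could be deleted from the word). Prescribing the target $\by$ in advance and using the length of $w$ to bound the chain both supplies the existence of each move and guarantees termination, which is exactly what your iterative scheme lacks. Finally, your opening reduction to initial core blocks via Proposition \ref{P:Scopes} is a detour (and in the wrong direction --- initial core blocks are the shrunken canonical representatives, while Rouquier blocks sit at larger $n$); the paper does not pass through initial core blocks at all.
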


\begin{proof}
\begin{comment}
Let $B$ be a core block of $\HH_n$, and let $C = \sigma_B^{-1} \DDot{} B$.
By Lemma \ref{L:yz}(2),
\begin{align*}
\by^{C} = \sigma_B^{-1} \CDot{1} \by^B &= (\by^B)^{\sigma_B},\\ \\
\bz^{C} = \sigma_B^{-1} \CDot{0} \bz^B &= (\bz^B)^{\sigma_B},
\end{align*}
since $\sigma_B \in \sym{e}$.
Thus $\by^{C}$ is ascending, so that $\sigma_{C} = 1_{\sym{e}}$.
Also, $I_C = I_B$ since $\mv^{\br^{w_0}}(C) = \mv^{\br^{w_0}}(B)$.
Consequently,
as $\tau_{C} = \tau^{I_C}_{(\bz^C)^{\sigma_C}} = \tau^{I_B}_{(\bz^B)^{\sigma_B}} = \tau_B$.
Hence $\Sc(C) = (\bz^C)^{\sigma_C\tau_C} = ((\bz^B)^{\sigma_B})^{ \tau_B} = \Sc(B)$ and
so $B$ and $C$ are Scopes equivalent by Theorem \ref{T:Scopes}.
\end{comment}
%
Suppose first that $\by^B$ is strictly increasing, i.e.\ $y^B_1 < y^B_2 < \dotsb < y^B_e$.
Let $\bl = (\lambda^{(1)},\dotsc, \lambda^{(\ell)}) \in \PP^{\ell}$ be lying in $B$ and let
$\beta_{\br^{w_0}}(\bl^{w_0})  = (\B^{(1)},\dotsc, \B^{(\ell)})$.
Let $\quot_e(\UU(\beta_{\br^{w_0}}(\bl^{w_0})) = (\C_1,\dotsc, \C_e)$ and for each $a \in [1,\,\ell]$, let
$\quot_e(\B^{(a)}) = (\C^{(a)}_1, \dotsc, \C^{(a)}_e)$.
Then $\fs(\C_j) = \sum_{a=1}^{\ell} \fs(\C^{(a)}_j)$ by \cite[Lemma 2.6(2)]{LT}.

By Lemma \ref{L:core-block-abacus} (with $i = \ell$), $\C_j = \ZZ_{< y^B_j \ell} \cup L_j$ for some $L_j \subseteq [y^B_j\ell,\, y^B_j\ell + e-1]$ for all $j \in [1,\,e]$.
Consequently, for each $a \in [1,\,\ell]$ and $j \in [1,\,e]$,
$
\C^{(a)}_j = \ZZ_{< y^B_j} \cup M^a_j$
where $M^a_j =\{ y^B_j \}$ if $ y^B_j\ell + (\ell - a) \in L_j$ and $M^a_j = \emptyset$ otherwise, so that
$$
y^B_j \leq \fs(\C^{(a)}_j) \leq y^B_j+1.$$
%Thus,
%$$\fs(\C^{(i)}_{j+1}) - \fs(\C^{(i)}_{j}) \geq y^B_{j+1} - y^B_j - 1 \geq 0 = \wt_e(\C^{(i)})$$
%for all $i \in [1,\,\ell]$ and $j \in [1,\,e-1]$.
Note that if $w_0 = \sigma\bt $ where $\bt = (t_1,\dotsc, t_{\ell}) \in \ZZ^{\ell}$ and $\sigma \in \sym{\ell}$, then
$$\B^{(a)} = \beta_{r_{\sigma(a)} + et_{a}}(\lambda^{(\sigma(a))}) = (\beta_{r_{\sigma(a)}}(\lambda^{(\sigma(a)})))^{+et_a},$$
so that
$$\quot_e(\beta_{r_{\sigma(a)}}(\lambda^{(\sigma(a)})))
= \quot_e((\B^{(a)})^{+(-et_a)}) = ((\C^{(a)}_1)^{+(-t_a)},
\dotsc, (\C^{(a)}_{e})^{+(-t_a)})
$$
by \eqref{E:quot}.
Hence, if $\bfs(\quot_e(\beta_{r_{a}}(\lambda^{(a)}))))
= (s^{\bl,a}_1,\dotsc, s^{\bl,a}_e)$ for $a \in [1,\,\ell]$, then
\begin{align*}
s^{\bl,\sigma(a)}_{j+1} - s^{\bl,\sigma(a)}_j &= \fs((\C^{(a)}_{j+1})^{+(-t_a)}) - \fs((\C^{(a)}_j)^{+(-t_a)}) \\
&= \fs(\C^{(a)}_{j+1}) - \fs(\C^{(a)}_{j}) \geq y^B_{j+1} - y^B_j - 1 \\
&\geq 0 = %\wt_e(\C^{(i)}) = \wt_e((\C^{(i)})^{+(-t_i)}) =
\wt_e(\lambda^{(\sigma(a))})
\end{align*}
for all $a \in [1,\,\ell]$ and $j \in [1,\,e-1]$, so that $B$ is a Rouquier block.

Now for a general core block $B$, let $\by = (y_1,\dotsc, y_e) \in \ZZ^{\ell}$ be such that $y_1 < \dotsb < y_e$ and $|\by| = |\by^B|$.
Then $|\by - \by^B| = |\by| - |\by^B| = 0$, so that $\by - \by^B \in \AW_e$ by \eqref{E:affineWeylgroup}.
Thus $$\by = (\by - \by^B) + \by^B = (\by - \by^B) \CDot{1} \by^B \in \AW_e \CDot{1} \by^B.$$
Let $w \in \AW_e$ be the element of least length such that $w \CDot{1} \by^B = \by$, and
%let $C = w\DDot{} B$.
%Then $\by^C = w \CDot{1} \by^B = \by$
%by Lemma \ref{L:yz}.
let $\cs_{j_m} \cs_{j_{m-1}} \dotsm \cs_{j_1}$ be a reduced expression for $w$.
Let $B_0 = B$, and
for each $a \in [1,\, m]$, %let $\by^{(a)} = (y^{(a)}_1,\dotsc, y^{(a)}_e) := \cs_{j_a} \cs_{j_{a-1}} \dotsm \cs_{j_1} \CDot{1} \by^B$, and
let $B_a = \cs_{j_a} \cs_{j_{a-1}} \dotsm \cs_{j_1} \DDot{} B$.
Then $\by^{B_a} = \by^{\cs_{j_a} \DDot{}B_{a-1}} = \cs_{j_a} \CDot{1} \by^{B_{a-1}}$ by Lemma \ref{L:yz},
so that if $y^{B_{a-1}}_{j_a+1} = y^{B_{a-1}}_{\overline{j_a}} + \delta_{\overline{j_a},e}$,
then $\by^{B_a} = \by^{B_{a-1}}$, and hence $\cs_{j_m} \dotsm \widehat{\cs_{j_a}} \dotsm \cs_{j_1} \CDot{1} \by^B = \by$, contradicting the minimality of the length of $w$.
Thus, $y^{B_{a-1}}_{j_a+1} \ne y^{B_{a-1}}_{\overline{j_a}} + \delta_{\overline{j_a},e}$, so that
$\B_{a-1}$ and $\B_{a}$ are Scopes equivalent by Lemma \ref{L:Scopes}.
Since this is true for all $a \in [1,\,m]$, $B$ is Scopes equivalent to $B_{m}$.
As $\by^{B_{m}} = \by^{w \DDot{} B} = w \CDot{1} \by^B = \by$ is strictly increasing, $B_{m}$ is Rouquier and our proof is complete.
\end{proof}

%\begin{rem}
%The Scopes equivalence between core blocks with $\ell =2$ has been proved independently and exploited to provide the decomposition numbers for these blocks by Lyle in \cite{Lyle}.
%\end{rem}

\begin{comment}
\begin{rem} \hfill
  \begin{enumerate}
    \item To investigate core blocks, it is enough to study those $B$ with $\mv_{\ell}(B) = 0$.
    This is because while we fixed $w_{\br} \in \EW_{\ell}$ such that $\br^{w_{\br}} \in \AAbar$, we can in fact choose $w_{\br}$ to be any $w \in \EW_{\ell}$ such that $\br^w \in \AAbar$ without affecting any of our arguments.
    For example, if we replace $w_{\br}$ by $w_{\br}' := w_{\br} (1,2,\dotsc,\ell)\Be_{\ell}$ then $\br^{w'_{\br}} \in \AAbar$ by \cite[Proposition 3.5(1)]{LT}.
    Furthermore for $\bl \in \PP^{\ell}$ with $\mv((\bl;\br)^{w_{\br}}) = (m_1,\dotsc, m_{\ell})$ we have $\mv((\bl;\br)^{w'_{\br}}) = (m_2,\dotsc, m_{\ell}, m_1)$.
    Thus, by replacing $w_{\br}$ by $w_{\br}((1,2,\dotsc,\ell)\Be_{\ell})^j$, we can move the zero entry in the moving vector to the last entry.

    \item When $\ell =2$, the two possible moving vectors of core blocks with weight $w$ are $(0,w)$ and $(w,0)$.  Thus combining part (1) with Corollary \ref{C:type-B}, we see that it is enough to study one $0$-initial core block to obtain results for all of these blocks.
        This explains why many problems in the representation theory of Ariki-Koike algebras are more tractable for these blocks.
  \end{enumerate}
\end{rem}
\end{comment}

\section{Simple modules and decomposition numbers of core blocks} \label{S:simple-decomp}

Core blocks satisfying Condition (I) of being initial are very special.
In this section, we exploit their properties to express the number of simple modules lying in a core block as a classical Kostka number---the number of semistandard generalised tableaux of a particular shape and a particular type---and relate the graded decomposition numbers of core blocks when $\mathrm{char}(\FF) =0$ with some corresponding graded decomposition numbers for Iwahori-Hecke algebras of type $A$.

\begin{comment}
We begin with the following easy lemma.

\begin{lem} \label{L:|y^B|}
Let $B$ be a core block of $\HH_n$ satisfying Condition (I) of being initial.
Then $y^B_j = y^B_{j_B+1} + \bbone_{j \leq j_B}$ for all $j \in [1,e]$ and
$$|\by^B| = \by^B_{j_B+1}e + j_B.$$
\end{lem}

\begin{proof}
By Lemma \ref{L:equivalent}(1)(b), $y^B_j = y^B_1 - \bbone_{j > \overline{j_B}} = y^B_{j_B+1} + \bbone_{j \leq j_B}$ for all $j \in [1,\, e]$.
Consequently,
$$
|\by^B| = \sum_{j=1}^e y^B_j = \sum_{j=1}^e (y^B_{j_B+1} + \bbone_{j \leq j_B}) = y^B_{j_B+1}e + j_B.
$$
\end{proof}
\end{comment}

\begin{prop} \label{P:I-2}
Let $B$ be a core block of $\HH_n$ satisfying Condition (I) of being initial.
%    Let $m_B = y^B_{i_B+1}e + i_B$.
Let $\bl \in \PP^{\ell}$ be lying in $B$ with $\beta_{\br^{w_{0}}}(\bl^{w_{0}}) = (\B^{(1)},\dotsc, \B^{(\ell)})$.
For each $a \in [1,\ell]$, we have
$$
\min(\ZZ \setminus \B^{(a)}) \geq |\by^B| \qquad
\text{and} \qquad
\max(\B^{(a)}) < |\by^B|+e.
$$
%
%    $$ \ZZ_{< |\by^B| - e(\bbone_{a \leq j_B})} \subseteq \B_a \subseteq \ZZ_{< |\by^B| + e(\bbone_{a > j_B})}$$
%    for all $a \in [1, \, \ell]$.
Consequently, $\B^{(a)} = \ZZ_{<|\by^B|} \cup L_a$ for some $L_a \subseteq [|\by^B|, |\by^B| + e-1]$ with $|L_a| = r'_a - |\by^B|$, where $\br^{w_0} = (r'_1,\dotsc, r'_{\ell})$.
\end{prop}

\begin{proof}
Set $(\C_1,\dotsc, \C_e) := \quot_e(\UU(\beta_{\br^{w_{0}}}(\bl^{w_{0}}))) =\quot_e(\UU(\B^{(1)},\dotsc, \B^{(\ell)}))$.
Let $x \in \ZZ$, and write $x = b_x e + r_x$, where $b_x \in \ZZ$ and $r_x \in \ZZ/e\ZZ$.
By Lemma \ref{L:core-block-abacus} with $i = \ell$, for each $j \in [1,\, e]$,
$x \in \C_j$ for all $x < y^B_j \ell $ while $x \notin \C_j$ for all $x \geq (y^B_j +1) \ell $.

%By Lemma \ref{L:equivalent}(1)(b), $y^B_j = y^B_1 - \bbone_{j > \overline{j_B}} = y^B_{j_B+1} + \bbone_{j \leq j_B}$ for all $j \in [1,\, e]$.
%Consequently,
%$$
%|\by^B| = \sum_{j=1}^e y^B_j = \sum_{j=1}^e (y^B_{j_B+1} + \bbone_{j \leq j_B}) = y^B_{j_B+1}e + j_B.
%$$

Let $a \in [1,\, \ell]$.

Suppose that $x < |\by^B|$.
Then
$ x < \cy_Be + j_B$ by Definition \ref{D:yz},
so that either
$b_x \leq \cy_B - 1 $
or ($b_x =  \cy_B $ and $r_x < j_B$).
In the former case, we have
\begin{align*}
b_x\ell + \ell -a &\leq (\cy_B - 1)\ell + \ell-a
= (y^B_{r_x+1}-\bbone_{r_x+1 \leq j_B})\ell - a
< y^B_{r_x+1}\ell.
\end{align*}
%since $\bbone_{a \leq j_B} \ell + a - j_B >0$.
where the equality in the middle follows from Lemma \ref{L:equivalent}(2).
In the latter case, we similarly have
\begin{align*}
b_x\ell + \ell -a &= \cy_B \ell + \ell-a
= y^B_{r_x+1} \ell - a
%&= (y^B_{r_x+1} - j_B) - (\bbone_{a \leq j_B} \ell + a - j_B) \\
< y^B_{r_x+1}\ell.
\end{align*}
Consequently, $b_x\ell + \ell -a \in \C_{r_x+1}$ in all cases, so that
$$
\uu_a(x) = b_x\ell e + (\ell-a)e + r_x = (b_x\ell + \ell - a)e + r_x \in \UU(\B^{(1)},\dotsc,\B^{(\ell)})
= \bigcup_{i=1}^{\ell} \uu_i(\B^{(i)}), $$
and hence $x \in \B^{(a)}$.
Thus $\min(\ZZ \setminus \B^{(a)}) \geq |\by^B|$.

Now suppose that $x \geq |\by^B| + e$.  Then
$x \geq (\cy_B + 1) e + j_B,$
so that either $b_x \geq \cy_B + 2$ or ($b_x = \cy_B + 1$ and $r_x \geq j_B$).
In the former case, we have
\begin{align*}
b_x\ell+\ell-a &\geq (\cy_B + 2)\ell + \ell - a
= (y^B_{r_x+1} + 1)\ell + (1- \bbone_{r_x+1 \leq j_B}) \ell + (\ell  - a) \geq (y^B_{r_x+1} + 1)\ell.
\end{align*}
In the latter case, we have
\begin{align*}
b_x\ell+\ell-a &= (\cy_B + 1)\ell + \ell - a = (y^B_{r_x+1} + 1)\ell + (\ell  - a) \geq (y^B_{r_x+1} + 1)\ell .
\end{align*}
Consequently, $b_x\ell + \ell -a \notin \C_{r_x+1}$ in all cases, so that
$$
\uu_a(x) = b_x\ell e + (\ell-a)e + r_x = (b_x\ell + \ell - a)e + r_x \notin \UU(\B^{(1)},\dotsc,\B^{(\ell)}) = \bigcup_{i=1}^{\ell} \uu_i(\B^{(i)})
$$
and hence $x \notin \B^{(a)}$.
Thus $\max(\B^{(a)}) < |\by^B| + e$ as desired.

The last assertion follows from Lemma \ref{L:hub}(2).
\end{proof}

\begin{cor} \label{C:I}
Let $B$ be a core block of $\HH_n$ satisfying Condition (I) of being initial.
For each $i \in [1,\, \ell]$, let $t_i \in \ZZ/e\ZZ$ be such that $t_i \equiv_e r_i - |\by^B|$.
If $\bl = (\lambda^{(1)},\dotsc, \lambda^{(\ell)}) \in \PP^{\ell}$ lies in $B$, then
$$
\beta_{t_i}(\lambda^{(i)}) = \ZZ_{<0} \cup L^{\bl}_i
$$
for some $L^{\bl}_i \subseteq [0,e-1]$ such that $|L^{\bl}_i| = t_i$.
\end{cor}

\begin{proof}
Let $w_0 = \sigma \bu$, where $\sigma \in \sym{\ell}$ and $\bu = (u_1,\dotsc, u_{\ell}) \in \ZZ^{\ell}$.
Then $\br^{w_0} = (r_{\sigma(1)} + eu_1, \dotsc, r_{\sigma(\ell)} + e u_{\ell})$ and $\bl^{w_0} = (\lambda^{(\sigma(1))}, \dotsc, \lambda^{(\sigma(\ell))})$.  Thus, by Proposition \ref{P:I-2}, for each $a \in [1,\ell]$,
$$
\beta_{r_{\sigma(a)}+eu_a}(\lambda^{(\sigma(a))}) = \ZZ_{<|\by^B|} \cup L_a
$$
for some $L_a \subseteq [|\by^B|, |\by^B|+e-1]$, with $|L_a| = r_{\sigma(a)} + eu_a - |\by^B|$.
Now,
$$[0,\, e] \ni |L_a| = r_{\sigma(a)} + eu_a - |\by^B| \equiv_e t_{\sigma(a)} \in \ZZ/e\ZZ,$$
so that $t_{\sigma(a)} = |L_a| = r_{\sigma(a)} + eu_a - |\by^B|$ unless $|L_a| = e$, in which case $L_a = [|\by^B|, |\by^B| + e-1]$ and $t_{\sigma(a)} = r_{\sigma(a)} + eu_a - |\by^B| -e$ .
Define
$$
L^{\bl}_{\sigma(a)} :=
\begin{cases}
 L_a^{+(-|\by^B|)}, &\text{if } |L_a| < e; \\
 \emptyset, &\text{otherwise.}
\end{cases}
$$
Then $L^{\bl}_{\sigma(a)} \subseteq [0,e-1]$ with $|L^{\bl}_{\sigma(a)}| = t_{\sigma(a)}$, and
\begin{align*}
\ZZ_{<0} \cup L_{\sigma(a)}^{\bl} &=
\begin{cases}
(\ZZ_{<|\by^B|} \cup L_a)^{+(-|\by^B|)}  &\text{if } |L_a| < e \\
(\ZZ_{<|\by^B|} \cup L_a)^{+(-|\by^B|-e)}     &\text{otherwise}
\end{cases} \\
&=
\begin{cases}
(\beta_{r_{\sigma(a)} + eu_a-|\by^B|}(\lambda^{(\sigma(a))}))  &\text{if } |L_a| < e \\
(\beta_{r_{\sigma(a)} + eu_a-|\by^B| -e}(\lambda^{(\sigma(a))}))     &\text{otherwise}
\end{cases} \\
&= \beta_{t_{\sigma(a)}}(\lambda^{(\sigma(a))}).
\end{align*}
\end{proof}

\begin{comment}
We get the following immediate corollary, where its last assertion follows from Lemma \ref{L:hub}(2).

\begin{cor} \label{C:I}
Let $B$ be a core block of $\HH_n$ satisfying Condition (I) of being initial, and let $\br^{w_0} = (r'_1,\dotsc, r'_{\ell})$.
For each $\bl= (\lambda^{(1)}, \dotsc, \lambda^{(\ell)}) \in \PP^{\ell}$ lying in $B$ and each $i \in [1,\, \ell]$, there exists a unique subset $L^{\bl}_i \subseteq [|\by^B|,\ |\by^B|+ e-1]$ such that
  $$
  \beta_{r'_i}(\lambda^{(i)}) = \ZZ_{<|\by^B|} \cup L^{\bl}_i.$$
In particular, $|L^{\bl}_i| = r'_i - |\by^B|$ for all $i \in [1,\,\ell]$.
\end{cor}
\end{comment}

Recall that for $(\bl;\bt) \in \PP^{\ell} \times \ZZ^{\ell}$ and $j \in \ZZ/e\ZZ$, $\AR^{e,\bt}_j(\bl)$ denotes the set of addable and removable nodes of $\bl$ with $(e,\bt)$-residue $j$, and that for $(a,b,i) \in \AR^{e,\bt}_j(\bl)$, $\cont_{\bt}(a,b,i) = b-a + t_i$ where $\bt = (t_1,\dotsc, t_{\ell})$.

\begin{lem} \label{L:AR}
Let $\bl = (\lambda^{(1)},\dotsc, \lambda^{(\ell)}) \in \PP^{\ell}$ and let $\bt = (t_1,\dotsc, t_{\ell}) \in \ZZ^{\ell}$.
Suppose that there exists $m \in \ZZ$ such that for each $i \in [1,\,\ell]$, $\beta_{t_i}(\lambda^{(i)}) = \ZZ_{< m} \cup L_i$ for some subset $L_i \subseteq [m,m+e-1]$.
%Let $AR(\bl)$ denote the set of addable nodes and removable nodes of $[\bl]$, and for each $j \in \ZZ/e\ZZ$, let $AR_j(\bl)$ denote the subset of $AR(\bl)$ consisting of nodes with $(e,\bt)$-residue $j$.
%Let $\overline{m} \in \ZZ/e\ZZ$ with $\overline{m} \equiv_e m$.
\begin{enumerate}
\item $\bl$ has no node with $(e,\bt)$-residue $\equiv_e m$; in particular it has no removable node with $(e,\bt)$-residue $\equiv_e m$.

\item For all $j \in \ZZ/e\ZZ$ with $j \not\equiv_e m$ and $\mathfrak{n} \in \AR^{e,\bt}_j(\bl)$, $\cont_{\bt}(\mathfrak{n}) = m+j_m$, where $j_m\in \ZZ/e\ZZ$ with $j_m \equiv_e j-m$.
\end{enumerate}
\end{lem}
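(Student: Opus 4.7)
The plan is to decode each partition $\lambda^{(i)}$ from the hypothesis on $\beta_{t_i}(\lambda^{(i)})$, and then bound the possible contents of genuine nodes and of addable/removable positions directly. First I would invoke Lemma \ref{L:hub}(2) to get $t_i = m + |L_i|$, and write $L_i = \{l_1 > l_2 > \dotsb > l_k\} \subseteq [m,\,m+e-1]$ with $k := |L_i|$. Reading off the parts of $\lambda^{(i)}$ from $\beta_{t_i}(\lambda^{(i)})$ yields $\lambda^{(i)}_a = l_a - m + (a-k)$ for $a \in [1,\,k]$ and $\lambda^{(i)}_a = 0$ for $a > k$. Hence the contents of the nodes in a nonempty row $a$ range over $[t_i+1-a,\,\lambda^{(i)}_a - a + t_i] = [m+k+1-a,\,l_a] \subseteq [m+1,\,m+e-1]$. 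Since no integer in $[m+1,\,m+e-1]$ is congruent to $m$ modulo $e$, every node of $\bl$ has residue not congruent to $m \pmod e$, which is part (1).

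For part (2), I would use the description recalled in Subsection 2.4: a node $\mathfrak{n} = (a,b,i) \in \AR^{e,\bt}_j(\bl)$ corresponds to $x = \cont_{\bt}(\mathfrak{n})$ such that exactly one of $x$ and $x-1$ lies in $\beta_{t_i}(\lambda^{(i)}) = \ZZ_{<m} \cup L_i$. A short case analysis on whether $x$ is less than $m$, equals $m$, lies in $(m,\,m+e)$, equals $m+e$, or exceeds $m+e$ shows that such an $x$ must lie in $[m,\,m+e]$: outside this range both of $x, x-1$ either lie in $\ZZ_{<m}$ (when $x < m$) or outside $\beta_{t_i}(\lambda^{(i)})$ (when $x > m+e$). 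The hypothesis $j \not\equiv_e m$, together with $x \equiv_e \res^{\bt}_e(\mathfrak{n}) = j$, then rules out the endpoints $x = m$ and $x = m+e$ (both of which have residue $\equiv_e m$), forcing $x \in [m+1,\,m+e-1]$. Taking $j_m \in [1,\,e-1]$ as the representative of $j - m$ modulo $e$, the unique element of $[m+1,\,m+e-1]$ with residue $j$ is $x = m + j_m$, giving $\cont_{\bt}(\mathfrak{n}) = m + j_m$ as claimed.

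The proof is essentially bookkeeping from the explicit $\beta$-set description; the only place requiring a little care is the endpoint analysis in part (2), where one exploits $L_i \subseteq [m,\,m+e-1]$ so that $x = m+e$ is automatically outside $\beta_{t_i}(\lambda^{(i)})$ and $x - 1 = m - 1$ is automatically inside when $x = m$.
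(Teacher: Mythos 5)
Your proof is correct. For part (2), your argument is essentially identical to the paper's: both use that exactly one of $\cont_{\bt}(\mathfrak{n})$ and $\cont_{\bt}(\mathfrak{n})-1$ lies in $\ZZ_{<m}\cup L_i$ to force $\cont_{\bt}(\mathfrak{n}) \in [m,\,m+e]$, and then the residue hypothesis $j\not\equiv_e m$ eliminates the two endpoints. For part (1), however, you take a more concrete route than the paper: you decode $\lambda^{(i)}$ explicitly (via $t_i = m + |L_i|$ from Lemma \ref{L:hub}(2) and the list $L_i = \{l_1 > \dotsb > l_k\}$) and bound the content of every node of $\bl$ in $[m+1,\,m+e-1]$ directly. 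The paper instead observes abstractly that a node of residue $\equiv_e m$ would produce integers $x < y \le z$ with $x \notin \beta_{t_i}(\lambda^{(i)})$, $z \in \beta_{t_i}(\lambda^{(i)})$, and $y \equiv_e m$; since $\ZZ_{<m} \subseteq \beta_{t_i}(\lambda^{(i)}) \subseteq \ZZ_{<m+e}$ this forces $y \in [m+1,\,m+e-1]$, a contradiction. The paper's version is shorter and avoids the partition-decoding bookkeeping, but it relies on the reader recognizing the gap-bead encoding of nodes; your version is more self-contained and gives the exact content range $[m+k+1-a,\,l_a]$ as a byproduct, which is a bit more information than the lemma actually needs. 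Both are sound.
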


\begin{proof} \hfill
\begin{enumerate}
\item If $\bl$ has a node with $(e,\bt)$-residue $\equiv_e m$, then there exist $x,y, z \in \ZZ$ with $x < y \leq z$ such that $x \notin \beta_{t_i}(\lambda^{(i)})$, $y \equiv_e m$ and $z \in \beta_{t_i}(\lambda^{(i)})$ for some $i \in [1,\,\ell]$.
But if $x \notin \beta_{t_i}(\lambda^{(i)})$ and $z \in \beta_{t_i}(\lambda^{(i)})$, then $x \geq m$ and $z \leq m+e-1$ since $\ZZ_{< m} \subseteq \beta_{t_i}(\lambda^{(i)}) \subseteq \ZZ_{< m+e}$, so that for all $y \in [x+1,\, z] \subseteq [m+1,m+e-1]$, $y \not\equiv_e m$.

\item Let $\mathfrak{n} = (a,b,i) \in \AR^{e,t}_j (\bl)$ where $j \not\equiv_e m$.
Then exactly one of $\cont_{\bt}(\mathfrak{n})$ and $\cont_{\bt}(\mathfrak{n})-1$ lies in $\beta_{t_i}(\lambda^{(i)}) = \ZZ_{< m} \cup L_i$.
Since $\cont_{\bt}(\mathfrak{n}) \equiv_e j \not\equiv_e m$, we must have $j \equiv_e \cont_{\bt}(\mathfrak{n}) \in [m+1,m+e-1]$.
Thus $\cont_{\bt}(\mathfrak{n}) = m+ j_m$, where $j_m \in [1,\,e-1]$ such that $j_m \equiv_e j-m$.
\end{enumerate}
\end{proof}

Given $(\bl;\bt) \in \PP^{\ell} \times \ZZ^{\ell}$ and $j \in \ZZ/e\ZZ$, there are two total orders on the set $\AR^{e,\bt}_j(\bl)$ that are of interest:
\begin{align*}
%of addable and removable nodes of $\bl$ with $(e,\bt)$-residue $j$ are totally ordered by $\succeq_{\bt}$ as follows:
(a,b,i) \succeq_{\bt} (a',b',i')\ &\Leftrightarrow\ \cont_{\bt}(a,b,i) > \cont_{\bt}(a',b',i') \ \vee\
(\cont_{\bt}(a,b,i) = \cont_{\bt}(a',b',i')\ \wedge\ i \leq i'); \\
(a,b,i) \unrhd_{\bt} (a',b',i')\ &\Leftrightarrow\ i < i'\ \vee\
(i = i'\ \wedge\ \cont_{\bt}(a,b,i) \geq \cont_{\bt}(a',b',i')).
\end{align*}
%(recall that $\cont_{\bt}(a,b,i) = b-a + t_i$).

It is easy to see that if $\bt \equiv_e \bu$, then $\unrhd_{\bt} = \unrhd_{\bu}$.
Furthermore, $\unrhd_{\bt}\ =\ \succeq_{\bt}$ when $t_i \gg t_{i+1}$ for all $i \in [1,\,\ell-1]$.
In addition, when $\ell = 1$, $\succeq_t\ =\ \succeq_u\ =\ \unrhd_u\ =\ \unrhd_t$ for all $t, u \in \ZZ$.

The {\em $j$-signature} of $\bl$ associated to the order $\geq\ \in \{\succeq_{\bt},\unrhd_{\bt}\}$ is the sequence of $+$ and $-$ signs obtained
by examining the nodes in $\AR^{e,\bt}_j(\bl)$ in ascending order with respect to $\geq$ and writing a $+$ (respectively $-$) for
each addable (respectively removable) node. From this, the {\em reduced $j$-signature} is obtained by successively deleting all adjacent pairs $-+$.
If there are any $ - $ signs in the reduced $j$-signature, the corresponding removable nodes of $\bl$ with $(e,\bt)$-residue $j$ are called {\em normal} and the smallest among these, with respect to $\geq$, is called {\em good}.

If by repeatedly removing good nodes (possibly with different $(e,\bt)$-residues) with respect to $\succeq_{\bt}$ (resp.\ $\unrhd_{\bt}$) from $\bl$ we reach $\EP$, we say that $(\bl;\bt)$ is {\em Uglov} (resp.\ {\em Kleshchev}).

Kleshchev $\ell$-partitions play an important role in the representation theory of Ariki-Koike algebras as they index the latter's simple modules. Indeed,
if $(\bl;\br)$ is Kleshchev, then the associated Specht module
$\Sp^{\bl}$ has a simple head $D^{\bl}$, and
$\{ D^{\bl} \mid \bl \in \PP^{\ell}(n),\, (\bl;\br) \text{ Kleshchev} \}$ is a complete set of pairwise non-isomorphic simple $\HH_n$-modules.

The problem of finding a non-recursive description for all Kleshchev and Uglov partitions is currently still open.
This problem has however been solved for Uglov $\ell$-partitions with $\bt \in \AAbar$, now commonly known as FLOTW $\ell$-partitions.
A non-recursive description of such $\ell$-partitions is given in \cite{Jacon-Kleshchev-multipartitions}.

\begin{thm}[{see \cite[Proposition 4.2.1]{Jacon-Kleshchev-multipartitions}}] \label{T:FLOTW}
Let $\bl = (\lambda^{(1)},\dotsc, \lambda^{(\ell)}) \in \PP^{\ell}$ and $\bt = (t_1,\dotsc, t_{\ell}) \in \AAbar$.  Then $(\bl;\bt)$ is Uglov if and only if
\begin{enumerate}
\item $\lambda^{(i)}_a \geq \lambda^{(i+1)}_{a+t_{i+1}-t_i}$ for all $i \in [1,\,\ell-1]$ and $a \in \ZZ^+$;
\item $\lambda^{(\ell)}_a \geq  \lambda^{(1)}_{a+e + t_1 - t_{\ell}}$ for all $a \in \ZZ^+$;
\item $\{ \res_e^{\bt}(a,\lambda^{(i)}_a, i) \mid i \in [1,\, \ell],\ a \in \ZZ^+, \, \lambda^{(a)}_i = k \} \ne \ZZ/e\ZZ$ for all $k \in \ZZ^+$.
\end{enumerate}
\end{thm}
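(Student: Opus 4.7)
The plan is to prove both directions simultaneously by induction on $|\bl|$, using repeated removal of good nodes with respect to $\succeq_{\bt}$. The base case $\bl = \EP$ is trivial since conditions (1)--(3) are vacuous. For the inductive step I would establish two claims: (A) if $(\bl;\bt)$ satisfies (1)--(3) and $\bl \neq \EP$, then $\bl$ admits a good removable node whose removal yields an $\ell$-partition still satisfying (1)--(3); and (B) conversely, if $\mathfrak{n}$ is a good removable node of $\bl$ and $(\bl^{-\mathfrak{n}};\bt)$ satisfies (1)--(3), then so does $(\bl;\bt)$.

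Before tackling the claims I would recast (1) and (2) in $\beta$-set language. Writing the elements of each $\beta_{t_i}(\lambda^{(i)})$ in decreasing order as $b^{(i)}_1 > b^{(i)}_2 > \dotsb$, condition (1) becomes $b^{(i)}_a \geq b^{(i+1)}_{a + t_{i+1}-t_i} + (t_i - t_{i+1})$ for all $a$, and condition (2) is the analogous cyclic statement shifted by $e$; equivalently, these say that the beads of $\UU(\beta_{\bt}(\bl))$ arrange into a descending staircase across runners in the $e$-abacus. Condition (3) is just the statement that at any row-length $k$ appearing in $\bl$, the residues $\res^{\bt}_e(a,k,i)$ of the associated rightmost nodes avoid some element of $\ZZ/e\ZZ$.

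For Claim (A), I would take $k := \max\{\lambda^{(i)}_a \}$ and let $R := \{\res^{\bt}_e(a,k,i) : \lambda^{(i)}_a = k\}$. By (3), $R \ne \ZZ/e\ZZ$, so some residue $j^*$ is missed; I would then examine the $j$-signature of $\bl$ for a residue $j$ chosen adjacent (in the cyclic order) to $j^*$, and argue that conditions (1) and (2) force the addable and removable $j$-nodes to interleave along $\succeq_{\bt}$ in a controlled way, so that the absence of $j^*$ from $R$ leaves an unpaired $-$ in the reduced signature. This $-$ yields the sought good node $\mathfrak{n}$. To check preservation of (1)--(3) under removing $\mathfrak{n}$, note that this perturbs only one $\beta_{t_i}(\lambda^{(i)})$, by moving a single bead down one position; the ``goodness'' of $\mathfrak{n}$ is precisely what prevents this bead from being the one that witnesses tightness of (1) or (2) against the adjacent $\beta$-set, and similarly prevents the new residue at the shortened row from completing $R$ into $\ZZ/e\ZZ$.

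Claim (B) is the easier direction: adding the node back is the reverse operation, and one checks directly that the staircase monotonicity and the residue-avoidance are not broken by a single bead shift. The principal obstacle is Claim (A), specifically the combinatorial argument that the reduced $j$-signature has an unpaired $-$; a cleaner alternative route worth pursuing is to use Lemma \ref{L:ARl1} to transfer the problem via $\Phi_{\bt}$ to level $1$, where good nodes of nonzero residue correspond bijectively to good nodes of $\Phi_{\bt}(\bl)$ with the same residue. One could then bootstrap off the classical Kleshchev criterion for ordinary partitions after translating (1)--(3) into the equivalent shape conditions on $\Phi_{\bt}(\bl)$ encoded by the Uglov map, reducing the remaining work to handling the residue $0$ separately using the cyclic condition (2).
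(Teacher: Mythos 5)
This theorem is not proved in the paper at all; it is quoted as a known result, citing Jacon's Proposition~4.2.1 (whose proof in turn rests on FLOTW's crystal-theoretic description of the highest-weight crystal inside the level-$\ell$ Fock space). So there is no ``paper's own proof'' to compare against --- the appropriate response to the task is simply to cite the reference. What you have written is an attempt to reprove the result from scratch, which is a much bigger undertaking than the theorem's role in this paper warrants.

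As a standalone sketch, it has a genuine gap. The whole weight of the argument sits on Claim~(A): that when (1)--(3) hold and $\bl \ne \EP$, the reduced $j$-signature for some residue $j$ has an unpaired $-$, and that removing the resulting good node preserves (1)--(3). Your description of why this happens --- ``conditions (1) and (2) force the addable and removable $j$-nodes to interleave along $\succeq_{\bt}$ in a controlled way'' --- is not an argument; this is precisely the combinatorial core of the FLOTW theorem and is where the original proof invests real effort (via energy functions and the path model). It cannot be dispatched in a sentence. Also, to run an if-and-only-if induction you need both Claims (A) and (B), and Claim (B) is not as routine as you suggest: adding a node back along the crystal operator can in principle create a new row-length class that completes the residue set in condition (3), and ruling that out again needs the goodness of the node in a nontrivial way.

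Your proposed shortcut via Lemma~\ref{L:ARl1} and $\Phi_{\bt}$ does not repair this. That lemma only matches up addable/removable nodes of nonzero $(e,\bt)$-residue; the whole difficulty of FLOTW is at residue $0$, where the correspondence breaks. Concretely, $\Phi_{\bt}(\EP)$ is generally a nonempty $e$-core with removable good nodes of residue $0$, whereas $\EP$ has none, so the level-$\ell$ crystal (w.r.t.\ $\succeq_{\bt}$) is not carried isomorphically onto a piece of the level-$1$ crystal. Consequently Uglov-ness of $(\bl;\bt)$ is \emph{not} equivalent to $e$-regularity of $\Phi_{\bt}(\bl)$, and ``bootstrap off the classical Kleshchev criterion'' is not available. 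Saying one would ``handle residue $0$ separately using the cyclic condition (2)'' names the missing work rather than doing it.
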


Since the $a$-th largest element of $\beta_{t_i}(\lambda^{(i)})$ equals $\lambda^{(i)}_a - a + t_i = \cont_{\bt}(a,\lambda^{(i)}_a,i)$, we have the following equivalent characterisation of FLOTW partitions in terms of $\beta$-sets, which is more useful for us:

\begin{cor} \label{C:FLOTW}
Let $\bl = (\lambda^{(1)},\dotsc, \lambda^{(\ell)}) \in \PP^{\ell}$ and $\bt = (t_1,\dotsc, t_{\ell}) \in \AAbar$.  Then $(\bl;\bt)$ is Uglov if and only if
\begin{enumerate}
\item $a$-th largest element of $\beta_{t_i}(\lambda^{(i)}) \geq $
$(a+t_{i+1}-t_i)$-th largest element of $\beta_{t_{i+1}}(\lambda^{(i+1)})$ for all $i \in [1,\,\ell-1]$ and $a \in \ZZ^+$;
\item $a$-th largest element of $\beta_{t_{\ell}}(\lambda^{(\ell)}) \geq e\ + $
$(a+e + t_{1}-t_{\ell})$-th largest element of $\beta_{t_1}(\lambda^{(1)})$ for all $a \in \ZZ^+$;
\item $\{ \res_e^{\bt}(a,\lambda^{(i)}_a, i) \mid i \in [1,\, \ell],\ a \in \ZZ^+, \, \lambda^{(a)}_i = k \} \ne \ZZ/e\ZZ$ for all $k \in \ZZ^+$.
\end{enumerate}
\end{cor}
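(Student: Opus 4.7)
The plan is to deduce this corollary directly from Theorem \ref{T:FLOTW} by rewriting conditions (1) and (2) of that theorem via the identity already noted in the paper: the $a$-th largest element of $\beta_{t_i}(\lambda^{(i)})$ equals $\lambda^{(i)}_a - a + t_i = \cont_{\bt}(a,\lambda^{(i)}_a,i)$. Since condition (3) of Theorem \ref{T:FLOTW} and of the corollary are literally the same, only (1) and (2) need to be handled.

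First I would verify that the index shifts that appear are admissible. Because $\bt \in \AAbar$, we have $0 \leq t_{i+1}-t_i$ for $i \in [1,\,\ell-1]$ and $0 \leq e + t_1 - t_{\ell}$, so that for $a \in \ZZ^+$ the shifted indices $a + t_{i+1} - t_i$ and $a + e + t_1 - t_{\ell}$ are again in $\ZZ^+$; this ensures both sides of each inequality refer to well-defined parts of the partitions (equivalently, to well-defined entries of the corresponding $\beta$-sets).

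Next, for condition (1), I would write out both sides using the identity: the $a$-th largest element of $\beta_{t_i}(\lambda^{(i)})$ is $\lambda^{(i)}_a - a + t_i$, and the $(a+t_{i+1}-t_i)$-th largest element of $\beta_{t_{i+1}}(\lambda^{(i+1)})$ is
\[
\lambda^{(i+1)}_{a+t_{i+1}-t_i} - (a+t_{i+1}-t_i) + t_{i+1} = \lambda^{(i+1)}_{a+t_{i+1}-t_i} - a + t_i.
\]
Subtracting $-a+t_i$ from both sides shows that the $\beta$-set inequality (1) of the corollary is equivalent to the inequality (1) of Theorem \ref{T:FLOTW}. For condition (2), the same calculation with the extra $+e$ on the right-hand side gives
\[
\lambda^{(\ell)}_a - a + t_{\ell} \geq e + \lambda^{(1)}_{a+e+t_1-t_{\ell}} - (a+e+t_1-t_{\ell}) + t_1 = \lambda^{(1)}_{a+e+t_1-t_{\ell}} - a + t_{\ell},
\]
which again reduces exactly to condition (2) of Theorem \ref{T:FLOTW} after cancelling $-a + t_{\ell}$. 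Thus the two sets of conditions match term-by-term, and the corollary follows. There is no substantive obstacle here: the only thing to be careful about is checking that index shifts stay in $\ZZ^+$ (using $\bt \in \AAbar$) so that the $\beta$-set entries on both sides exist and are given by the displayed formula.
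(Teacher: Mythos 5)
Your proof is correct and matches the paper's approach exactly: the paper presents the corollary as an immediate reformulation of Theorem \ref{T:FLOTW} using precisely the identity that the $a$-th largest element of $\beta_{t_i}(\lambda^{(i)})$ equals $\lambda^{(i)}_a - a + t_i$, and your term-by-term rewriting (plus the sanity check that $\bt \in \AAbar$ keeps the shifted indices in $\ZZ^+$) is the same computation, simply written out in full.
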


For $\eta \in \PP$, a {\em generalised $\eta$-tableau} is a function $\mathsf{T} : [\eta] \to \ZZ^+$.  Furthermore, $\mathsf{T}$ is {\em column semistandard} if $\mathsf{T}(i,j) < \mathsf{T}(i,j')$ and $\mathsf{T}(i,j) \leq \mathsf{T}(i',j)$ for all $(i,j),(i,j'), (i',j) \in [\eta]$ with $i < i'$ and $j < j'$.
Furthermore, the {\em type} of $\mathsf{T}$ is the composition $(|\mathsf{T}^{-1}\{1\}|, |\mathsf{T}^{-1}\{2\}|,\dotsc )$.

\begin{prop} \label{P:FLOTW}
Let $\bl = (\lambda^{(1)},\dotsc, \lambda^{(\ell)}) \in \PP^{\ell}$ and let $\bt = (t_1,\dotsc, t_{\ell}) \in \AAbar$.
Suppose that there exists $m \in \ZZ$ such that for each $i \in [1,\,\ell]$, $\beta_{t_i}(\lambda^{(i)}) = \ZZ_{< m} \cup L_i$ for some subset $L_i \subseteq [m,m+e-1]$.
Let $\T^{\bl}$ denote the generalised $(|L_{\ell}|, |L_{\ell-1}|, \dotsc, |L_1|)$-tableau $\T^{\bl}$, defined by
$$
\T^{\bl}(i,j) = 1-m\ + \text{ $j$-th least element of } L_{\ell-i+1}.
$$
\begin{enumerate}
\item The type of $\T^{\bl}$ is $(k_1,\dotsc, k_e)$ where $k_j = |\{ a \in [1,\,\ell] : m+j-1 \in L_a \}|$ for all $j \in [1,\,e]$.
\item $(\bl;\bt)$ is Uglov if and only if $\T^{\bl}$ is column semistandard.
\end{enumerate}
\end{prop}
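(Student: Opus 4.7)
Part (1) follows by direct inspection: each row $i$ of $\T^{\bl}$ lists the elements of $L_{\ell-i+1} \subseteq [m, m+e-1]$ in increasing order, shifted by $1-m$, so all entries lie in $[1,e]$, and entry $j$ occurs in row $i$ iff $m+j-1 \in L_{\ell-i+1}$. Summing over $i$ gives the claimed value of $k_j$.

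For Part (2), my plan is to translate each clause of Corollary \ref{C:FLOTW} into a statement about the $L_i$'s and $\T^{\bl}$. Since $t_i = m + |L_i|$ (by Lemma \ref{L:hub}(2)) and $\bt \in \AAbar$ forces $|L_1| \leq \dotsb \leq |L_{\ell}|$, the shape $(|L_{\ell}|, \dotsc, |L_1|)$ is a genuine partition; moreover, the rows of $\T^{\bl}$ are automatically strictly increasing because each $L_i$ is a set of distinct integers listed in order. So only the column (weak) monotonicity requires work, and I match it to condition (1) of Corollary \ref{C:FLOTW}.

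Write $\ell_i(j)$ for the $j$-th least element of $L_i$. The $a$-th largest element of $\beta_{t_i}(\lambda^{(i)}) = \ZZ_{<m} \cup L_i$ is $\ell_i(|L_i|-a+1)$ for $a \in [1, |L_i|]$ and $m+|L_i|-a$ for $a > |L_i|$. Using $t_{i+1}-t_i = |L_{i+1}|-|L_i|$, a two-case analysis shows that the inequality in Corollary \ref{C:FLOTW}(1) becomes a tautological equality when $a > |L_i|$ and, under the substitution $j = |L_i|-a+1$, reduces to $\ell_i(j) \geq \ell_{i+1}(j)$ for $j \in [1, |L_i|]$ when $a \leq |L_i|$; by the formula $\T^{\bl}(i,j) = 1-m+\ell_{\ell-i+1}(j)$, this is precisely the column semistandard condition on $\T^{\bl}$. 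An analogous case analysis on Corollary \ref{C:FLOTW}(2) (using $e - (t_{\ell}-t_1) \geq 0$) shows that both sides reduce to $m+|L_{\ell}|-a$ when $a > |L_{\ell}|$, and to the inequality $\ell_{\ell}(j) \geq m+j-1$ otherwise, which always holds since $\ell_{\ell}$ is strictly increasing with $\ell_{\ell}(1) \geq m$.

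The main obstacle is condition (3) of Corollary \ref{C:FLOTW}, but I expect it to be vacuous in our setting: a rightmost node $(a, \lambda^{(i)}_a, i)$ with $\lambda^{(i)}_a = k \geq 1$ forces $a \in [1, |L_i|]$, and the corresponding element $v := \ell_i(|L_i|-a+1) \in L_i$ satisfies $v - m = k + |L_i| - a \geq k \geq 1$. Since $v - m \in [0, e-1]$, the $(e,\bt)$-residue of this node equals $v-m \in [k, e-1] \subseteq [1, e-1]$, so $0 \in \ZZ/e\ZZ$ never appears and condition (3) holds for every $k \geq 1$ without any further hypothesis. Combining these three reductions with Corollary \ref{C:FLOTW} yields Part (2).
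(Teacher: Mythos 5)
Your proof takes essentially the same route as the paper's: translate Corollary \ref{C:FLOTW} into statements about the sets $L_i$, reduce condition (1) to column semistandardness via the two-case analysis on whether $a \leq |L_i|$, and show conditions (2) and (3) hold automatically. One slip in your handling of condition (3): the $(e,\bt)$-residue of the rightmost node is $v \bmod e$, not $v - m$, so the residue class that is never attained is $m \bmod e$ rather than $0$ (these agree only when $m \equiv_e 0$); your computation $v - m \in [1,e-1]$, equivalently $v \in [m+1,\,m+e-1]$, still yields the conclusion, and is exactly what the paper packages as Lemma \ref{L:AR}(1).
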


\begin{proof} \hfill
\begin{enumerate}
\item This is immediate from the definition of $\T^{\bl}$.

\item Note first that $|L_i| - |L_{i'}| = t_i - t_{i'}$ for all $i,i' \in [1,\, \ell]$ by Lemma \ref{L:hub}(2).

For each $i \in [1,\,\ell]$, the $a$-th largest element of $\beta_{t_i}(\lambda^{(i)})$ is the $a$-th largest element of $L_i$ if $a \leq |L_i|$, and equals $m - a + |L_i|$ if $a > |L_i|$.
    Now $a > |L_i|$ if and only if
    $$
    a+ t_{i+1} - t_{i} = a + |L_{i+1}| - |L_{i}| > |L_{i+1}|,
    $$
    so that when these hold, we have
    \begin{align*}
    \text{$a$-th largest element of $\beta_{t_i}(\lambda^{(i)})$}\ &=
    m-a + |L_i| = m- (a - t_i + t_{i+1}) + |L_{i+1}| \\
    &=
    \text{$(a + t_{i+1} - t_i)$-th largest element of $\beta_{t_{i+1}}(\lambda^{(i+1)})$}.
    \end{align*}
    Thus Condition (1) in Corollary \ref{C:FLOTW} is reduced to: for all $i \in [1,\,\ell-1]$
    $$
    \text{$a$-th largest element of $L_i$}\ \geq\
    \text{$(a+ t_{i+1} - t_i)$-th largest element of $L_{i+1}$}
    $$
    for all $a \in [1,\,|L_i|]$,
    which is equivalent to
    $$\T^{\bl}(\ell-i+1, |L_i|+1-a) \geq \T^{\bl}(\ell-i,|L_i|+1-a)$$
    for all $a \in [1,\,|L_i|]$.
    Replacing $i$ and $a$ with $\ell -i$ and $|L_i|+1-a$ respectively, the condition becomes $$\T^{\bl}(i,a) \leq \T^{\bl}(i+1,a)$$ for all $i \in [1,\,\ell-1]$ and $a \in [1,|L_i|]$.

Since $a+e + t_1 - t_{\ell} = a+ e + |L_1| - |L_{\ell}| > |L_1|$,
\begin{align*}
(a+e + t_1 - t_{\ell})&\text{-th largest element of $\beta_{t_1}^{(\lambda^{(1)})}$}\ = m - a -e + |L_{\ell}| \\
&=\ \text{$(e + a)$-th largest element of $\beta_{t_{\ell}}(\lambda^{(\ell)})$} \\
&\leq \text{$a$-th largest element of $\beta_{t_{\ell}}(\lambda^{(\ell)})$} -e.
\end{align*}
Hence Condition (2) in Corollary \ref{C:FLOTW} holds for $\bl$.

Condition (3) in Corollary \ref{C:FLOTW} also holds for $\bl$, since $\bl$ has no node with $(e,\bt)$-residue $\equiv_e m$ by Lemma \ref{L:AR}(1).

Thus, $\bl$ is Uglov if and only if $\T^{\bl}(i,a) \leq \T^{\bl}(i+1,a)$ for all $i \in [1,\,\ell-1]$ and $a \in [1,\, |L_i|]$, if and only if $\T^{\bl}$ is column semistandard, since $\T^{\bl}(i,b) < \T^{\bl}(i,b+1)$ for all $i \in [1,\,\ell]$ and $b \in [1,\,|L_i|-1]$ follows from the definition of $\T^{\bl}$.
\end{enumerate}
\end{proof}

Recall that $\AR(\bl)$ denotes the set of addable and removable nodes of $\bl$ for all $\bl \in \PP^{\ell}$.

\begin{lem} \label{L:I}
Let $\bl = (\lambda^{(1)},\dotsc, \lambda^{(\ell)}) \in \PP^{\ell}$ and
$\bt = (t_1,\dotsc, t_{\ell} ) \in \ZZ^{\ell}$.
Suppose that there exists $m \in \ZZ$ such that %$\bt \equiv_e \br$ and
for each $i \in [1,\,\ell]$, $\beta_{t_i}(\lambda^{(i)}) = \ZZ_{< m} \cup L_i$ for some $L_i \subseteq [m,\, m+e+1]$.
\begin{enumerate}
%\item For all $j \in \ZZ/e\ZZ$, $\AR^{e,\br}_j(\bl) = \AR^{e,\bt}_j(\bl)$.
\item For all $\mathfrak{n} = (a,b,i), \mathfrak{n}'= (a',b',i') \in \AR(\bl)$ with $\res^{\bt}_e(\mathfrak{n}) = \res^{\bt}_e(\mathfrak{n}') \not\equiv_e m$,
    $$
    \mathfrak{n} \succeq_{\bt} \mathfrak{n}'
    \ \Leftrightarrow\ %\cont_{\bt}(\mathfrak{n})= \cont_{\bt}(\mathfrak{n}') \text{ and }
    i \leq i'
    \ \Leftrightarrow\ \mathfrak{n} \unrhd_{\bt} \mathfrak{n}'.
    $$
\item $(\bl;\bt)$ is Kleshchev if and only if $(\bl;\bt)$ is Uglov.
\end{enumerate}
\end{lem}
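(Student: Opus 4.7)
The plan is to derive part (1) as an immediate consequence of Lemma \ref{L:AR}(2), and then use part (1) together with Lemma \ref{L:AR}(1) to prove part (2) by induction on $|\bl|$.

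For part (1), the content of interest is the restriction of each order to $\AR^{e,\bt}_j(\bl)$ for $j \not\equiv_e m$ (which is what actually enters the signatures). Lemma \ref{L:AR}(2) gives $\cont_{\bt}(\mathfrak{n}) = m + j_m$ for every such $\mathfrak{n}$, with $j_m \in [1, e-1]$ determined solely by $j$. Substituting equal contents into the defining clauses of $\succeq_{\bt}$ and $\unrhd_{\bt}$ collapses both orders to the comparison $i \leq i'$, yielding the claimed triple equivalence. In particular, the $j$-signatures of $\bl$ with respect to $\succeq_{\bt}$ and $\unrhd_{\bt}$ are identical sequences of $+$'s and $-$'s.

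For part (2), I would induct on $|\bl|$, the base case $\bl = \EP$ being vacuous. For the inductive step, Lemma \ref{L:AR}(1) ensures that $\bl$ has no removable node of residue $\equiv_e m$, so every good removable node (under either order) has residue $\not\equiv_e m$. Part (1) then forces the reduced $j$-signatures, the normal nodes, and the good nodes with respect to $\succeq_{\bt}$ and $\unrhd_{\bt}$ to coincide at each such $j$. The next step is to check that the hypothesis on $(\bl;\bt)$ is inherited by $(\bl^{-\mathfrak{n}};\bt)$: if $\mathfrak{n}$ is a good removable node of $\bt$-content $m + j_m$ in the $i$-th component, then removing $\mathfrak{n}$ amounts to replacing $m + j_m \in L_i$ by $m + j_m - 1$, and the new $\beta$-set in slot $i$ still has the required form $\ZZ_{<m} \cup L_i'$ with $L_i'$ contained in the same finite interval while the other components are untouched. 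Applying the inductive hypothesis to $\bl^{-\mathfrak{n}}$ and iterating, any sequence of good-node removals reducing $\bl$ to $\EP$ under $\succeq_{\bt}$ is also valid under $\unrhd_{\bt}$ and vice versa, giving Uglov $\Leftrightarrow$ Kleshchev.

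The main obstacle, mild as it is, lies in the inductive bookkeeping: one must verify both that the structural hypothesis persists after each good-node removal (so that part (1) can be reapplied) and that the matched good nodes under the two orders genuinely stay matched at every stage. Both points fall out of part (1) together with the explicit description of how $L_i$ changes upon node removal, so no substantially new input beyond Lemma \ref{L:AR} is required.
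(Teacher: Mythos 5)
Your argument is correct and follows the same route as the paper: part (1) is read off from Lemma~\ref{L:AR}(2) by observing that within a fixed residue class $j \not\equiv_e m$ all relevant nodes share the same $\bt$-content so both orders collapse to comparing the component index, and part (2) is an induction on $|\bl|$ using Lemma~\ref{L:AR}(1) plus part (1) to identify good nodes under the two orders, with the structural hypothesis $\beta_{t_i}(\lambda^{(i)}) = \ZZ_{<m} \cup L_i$ checked to persist after removing a good node. The bookkeeping you flag as the main obstacle is exactly what the paper verifies, and you handle it the same way.
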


\begin{proof} \hfill
\begin{enumerate}
%\item Since $\bt \equiv_e \br$, for all $(a,b,i) \in \ZZ^3$ with $i \in [1,\,\ell]$,
%$$
%\res_e^{\bt}(a,b,i) \equiv_e \cont_{\bt}(a,b,i) = b-a+ t_i \equiv_e b-a+ r_i = \cont_{\br}(a,b,i) \equiv_e \res_{\br}(a,b,i),
%$$
%so that $\res_e^{\bt}(a,b,i) = \res_e^{\br}(a,b,i)$.  In particular,
%$$
%\AR^{e,\bt}_j(\bl) = \{ \mathfrak{n} \in \AR(\bl) \mid \res^{\bt}(\mathfrak{n}) = j \} = \{ \mathfrak{n} \in \AR(\bl) \mid \res^{\br}(\mathfrak{n}) = j \} = \AR^{e,\br}(\bl).
%$$

\item
%Since $\mathfrak{n},\mathfrak{n}' \in \AR^{e,\bt}_{\ne m}(\bl)$,
By Lemma \ref{L:AR}(2), $\cont_{\bt}(\mathfrak{n}) = \cont_{\bt}(\mathfrak{n}')$ if and only if $\res_e^{\bt}(\mathfrak{n}) = \res^{\bt}_e(\mathfrak{n}')$.
Consequently,
$$
\mathfrak{n} \succeq_{\bt} \mathfrak{n}' \ \Leftrightarrow\  %\cont_{\bt}(\mathfrak{n}) = \cont_{\bt}(\mathfrak{n}') \text{ and }
i \leq i' \ \Leftrightarrow\ \mathfrak{n} \unrhd_{\bt} \mathfrak{n}'.
$$

\item
We prove by induction on $|\bl|$.
By Lemma \ref{L:AR}(1) and part (1), a removable node of $\bl$ is good with respect to $\succeq_{\bt}$ if and only if it is good with respect to $\unrhd_{\bt}$.
Consequently, $\bl$ has no good node with respect to $\succeq_{\bt}$ if and only if $\bl$ has no good node with respect to $\unrhd_{\bt}$, in which case $(\bl;\bt)$ is neither Kleshchev nor Uglov.
Otherwise, let $\mathfrak{n} = (a,b,i)$ be a good node of $\bl$ with respect to both $\unrhd_{\bt}$ and $\succeq_{\bt}$, and let
%Then $\mathfrak{n} \in \AR_{\ne m}^{e,\bt}$ by Lemma \ref{L:AR}(1), so that $\mathfrak{n}$ is good with respect to both $\unrhd_{\bt}$ and $\succeq_{\bt}$ by part (2).
%Let
$\bm = (\mu^{(1)},\dotsc, \mu^{(\ell)}) := \bl^{-\mathfrak{n}}$.
Then
\begin{align*}
\beta_{t_i}(\mu^{(i)}) &= \beta_{t_i}(\lambda^{(i)}) \setminus \{ \cont_{\bt}(a,b,i) \} \cup \{ \cont_{\bt}(a,b,i) - 1\} \\
&= \ZZ_{< m} \cup L_i \setminus \{ \cont_{\bt}(a,b,i) \} \cup \{ \cont_{\bt}(a,b,i) - 1\}
= \ZZ_{< m} \cup M_i
\end{align*}
where $M_i = L_i \setminus \{ \cont_{\bt}(a,b,i) \} \cup \{ \cont_{\bt}(a,b,i) - 1\} \subseteq [m,m+e-1]$,
while $\beta_{t_{i'}}(\mu^{(i')}) = \beta_{t_{i'}}(\lambda^{(i')}) = \ZZ_{< m}\cup L_{i'}$ for all $i' \in [1,\,\ell]$ with $i'\ne i$.
By induction, $(\bm;\bt)$ is Kleshchev if and only if $(\bm;\bt)$ is Uglov.
Thus, $(\bl;\bt)$ is Kleshchev if and only if $(\bl;\bt)$ is Uglov.
\end{enumerate}
\end{proof}

For $\eta \in \PP$, its conjugate $\eta'$ is the partition with $[\eta'] = \{ (j,i) \mid (i,j) \in [\eta] \}$.
A generalised $\eta$-tableau $\T$ is {\em semistandard} if and only if $\T'$, defined by $\T'(j,i) = \T(i,j)$, is a column semistandard generalised $\eta'$-tableau.
Semistandard generalised tableaux appears as composition multiplicities of permutation modules of symmetric groups.
More precisely, the number of semistandard generalised $\eta$-tableaux of type $\zeta$ equals the composition multiplicity of the Specht module $S^{\eta}$ in the permutation module $M^{\zeta}$ in characteristic zero \cite[Theorem 14.1]{James-SLN}.
If $\tilde{\zeta}$ is a composition obtained by rearranging the parts of $\zeta$, then it is well known that $M^{\zeta}$ and $M^{\tilde{\zeta}}$ are isomorphic,
so that the composition multiplicities of $S^{\eta}$ in $M^{\zeta}$ and in $M^{\tilde{\zeta}}$ are the same.
Consequently, the number of semistandard generalised $\eta$-tableaux of type $\zeta$ equals that of type $\tilde{\zeta}$.
Since the semistandard generalised $\eta$-tableaux are in a type-preserving bijective correspondence with the column semistandard generalised $\eta'$-tableaux,
we have that the number of column semistandard generalised $\eta$-tableaux of type $\zeta$ equals that of type $\tilde{\zeta}$ too.
We shall use this result freely in the proof of the following theorem.

\begin{thm} \label{T:number-simple}
Let $B$ be a core block of $\HH_n$, and write $\br^{w_{0}} = (r'_1,\dotsc, r'_{\ell})$.
Then the number of simple $\HH_n$-modules lying in $B$ equals the number of generalised column semistandard $\eta^B$-tableaux of type $\bz^B$, where
$\eta^B= (\eta^B_1,\dotsc, \eta^B_{\ell}) \in \PP$ with
$
\eta^B_i = r'_{\ell+1-i} - |\by^B|
$
for all $i \in [1,\,\ell]$.
\end{thm}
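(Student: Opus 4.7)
The plan is to reduce via Scopes equivalence to initial core blocks, where Corollary \ref{C:I} gives a concrete abacus description of the $\ell$-partitions lying in $B$, and then apply Lemma \ref{L:I}(2) together with Proposition \ref{P:FLOTW} to identify Kleshchev multipartitions with column semistandard generalised tableaux.

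First I would reduce to the case when $B$ is initial. By Proposition \ref{P:Scopes}, $B$ is Scopes equivalent to an initial core block $B_0$. Scopes equivalence is a Morita equivalence (Theorem \ref{T:CR}), so the number of simple modules lying in the block is preserved. On the combinatorial side, Lemma \ref{L:yz} gives $|\by^B| = |\by^{B_0}|$, so $\eta$ is unchanged, while $\bz^{B_0}$ is a permutation of $\bz^B$. Since column semistandard generalised $\eta$-tableaux correspond by conjugation to semistandard generalised $\eta'$-tableaux, and the number of semistandard tableaux of a given shape and content is symmetric in the content (Kostka symmetry), the right-hand side of the claimed equality is also unchanged.

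Now assume $B$ is initial. By Corollary \ref{C:I}, each $\bl$ lying in $B$ is encoded through $\bl^{w_0}$ by a tuple $(L_1^{\bl},\dotsc,L_\ell^{\bl})$ of subsets of $[|\by^B|, |\by^B|+e-1]$ with $|L_i^{\bl}| = r'_i - |\by^B|$.  I would then expand $\UU(\B_1,\dotsc,\B_\ell)$ directly using the explicit form of the maps $\uu_i$, together with $|\by^B| = \cy_B e + j_B$ and $y^B_j = \cy_B + \bbone_{j \leq j_B}$ from Lemma \ref{L:equivalent}(1)(b), to compute $\bfs(\quot_e(\UU(\B_1,\dotsc,\B_\ell)))$ in terms of $k_j := |\{i : |\by^B|+j-1 \in L_i^{\bl}\}|$. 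Matching the result against $\br^{w_0*}_B = \by^B\ell + \bz^B$ should force $(k_1,\dotsc,k_e)$ to be a cyclic permutation of $\bz^B$ that depends only on $B$ (not on $\bl$), and conversely every $(L_1,\dotsc,L_\ell)$ satisfying the size and multiplicity constraints should arise from a unique $\bl \in \PP^{\ell}(n)$ lying in $B$.

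Then, by Lemma \ref{L:I}(2), such an $\bl^{w_0}$ is Kleshchev with respect to $\br^{w_0}$ if and only if it is Uglov, and since $\br^{w_0} \in \AAbar$, Proposition \ref{P:FLOTW} puts these Uglov $\bl^{w_0}$ in bijection with column semistandard generalised $\eta$-tableaux of type $(k_1,\dotsc,k_e)$, where $\eta = (r'_\ell - |\by^B|,\dotsc, r'_1 - |\by^B|)$ is precisely the partition in the theorem. The simple $\HH_n$-modules lying in $B$ are indexed by Kleshchev multipartitions lying in $B$ (and one may equivalently use the charge $\br^{w_0}$ in place of $\br$, since $\HH_n$ depends only on the orbit $\br^{\EW_\ell}$), so their number equals the number of such tableaux, which by Kostka symmetry equals the number of column semistandard generalised $\eta$-tableaux of type $\bz^B$. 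The main obstacle is the abacus bookkeeping in the third paragraph: identifying $(k_1,\dotsc,k_e)$ with a permutation of $\bz^B$ and verifying that every such $(L_1,\dotsc,L_\ell)$ does give an $\bl \in \PP^{\ell}(n)$ lying in $B$; the remaining steps are routine assembly of results from Section \ref{S:moving-vectors} and this section.
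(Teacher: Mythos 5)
Your proposal is correct and follows essentially the same route as the paper's proof: reduce to the initial core block via Proposition \ref{P:Scopes} and Theorem \ref{T:CR}, use Corollary \ref{C:I} to describe the $\beta$-sets as $\ZZ_{<|\by^B|}\cup L_i$, compute the runner charges of $\UU(\B_1,\dotsc,\B_\ell)$ to identify the type $(k_1,\dotsc,k_e)$ as $(\bz^B)^{\sigma_B}$ (a cyclic permutation of $\bz^B$, by Corollary \ref{C:I-sigma_B}), invoke Lemma \ref{L:I}(2) and Proposition \ref{P:FLOTW} to equate Kleshchev with column semistandard, and finish with Kostka symmetry. The only difference is organizational (you perform the Scopes reduction at the start, the paper at the end), and the ``converse'' step you flag is handled implicitly in the paper: since $|\bl|$, $\core_{\HH}(\bl)$ and hence $\wt_{\HH}(\bl)$ are all determined by the shape and type, every tableau of the right shape and type reconstructs an $\ell$-partition lying in $B$.
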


\begin{proof}
Since $\HH_{\FF,q,\br}(n) = \HH_{\FF,q,\br^{w_0}}(n)$, by replacing $\br$ with $\br^{w_0}$, we may assume that $w_0 = 1_{\EW_{\ell}}$.

We first assume that $B$ is an initial core block.
For each $\bl = (\lambda^{(1)},\dotsc, \lambda^{(\ell)}) \in \PP^{\ell}$ lying in $B$ with $\beta_{\br}(\bl) = (\B^{(1)},\dotsc, \B^{(\ell)})$, and each $i \in [1,\, \ell]$, we have
$\B^{(i)} = \ZZ_{<|\by^B|} \cup L_i$
for some $L_i \subseteq [|\by^B|,\,|\by^B|+e-1]$ with $|L_i| = r'_i - |\by^B| = \eta_{\ell +1 -i}$ by Proposition \ref{P:I-2}.
If
$\quot_e(\UU(\beta_{\br}(\bl))) = (\C_1,\dotsc, \C_e)$, then for each $j \in [1,\,e]$,
\begin{align*}
\C_j &=
\left\{ \frac{x-j+1}{e}\ \bigg|\ x \in \UU(\beta_{\br}(\bl)), \, x \equiv_e j-1 \right\}
= \left\{ \frac{x-j+1}{e}\ \bigg|\ x \in \bigcup_{i=1}^{\ell} \uu_i(\B^{(i)}), \, x \equiv_e j-1 \right\} \\
&= \bigcup_{i=1}^{\ell}  \left\{ \frac{x-j+1}{e} \ \bigg|\  x \in \uu_i(\ZZ_{<|\by^B|} \cup L_i),\, x \equiv_e j-1 \right\} \\
&= \bigcup_{i=1}^{\ell} \left\{ \frac{\uu_i(x') - j+1}{e} \ \bigg|\ x' \in \ZZ_{<|\by^B|} \cup L_i, \, x' \equiv_e j-1 \right\} \\
&= \bigcup_{i=1}^{\ell} \left( \left\{ a\ell + \ell - i \ \bigg|\ a \in \ZZ,\, ae+j-1 < |\by^B| \right\} \cup \left\{ \frac{\uu_i(x') - j+1}{e} \ \bigg|\ x' \in L_i, \, x' \equiv_e j-1 \right\}\right).
\end{align*}
Since $|\by^B| = \cy_Be + j_B$ by Definition \ref{D:yz}, we see that
the necessary and sufficient condition for $ae+j-1 <|\by^B|$ is $a \leq \cy_B$ when $j-1 < j_B$, and $a \leq \cy_B -1$ when $j-1 \geq j_B$.
Consequently, when $j \leq j_B$, we have
\begin{align}
\C_j &= \bigcup_{i=1}^{\ell} \left( \left\{ a\ell + \ell -i \ \bigg|\ a \in \ZZ_{\leq \cy_B} \right\} \cup \left\{ \frac{\uu_i(x') - j+1}{e} \ \bigg|\ x' \in L_i, \, x' \equiv_e j-1 \right\}\right) \notag \\
&= \ZZ_{< (\cy_B+1)\ell} \cup \bigcup_{i=1}^{\ell}\left\{ \frac{\uu_i(x') - j+1}{e} \ \bigg|\ x' \in L_i, \, x' \equiv_e j-1 \right\} \notag \\
&= \ZZ_{< y^B_j\ell} \cup \bigcup_{i=1}^{\ell} \left\{ \frac{\uu_i(x') - j+1}{e} \ \bigg|\ x' \in L_i, \, x' \equiv_e j-1 \right\} \label{E:C_j-1}
\end{align}
by Lemma \ref{L:equivalent}(2).
On the other hand, when $j > j_B$, we similarly have
\begin{align}
\C_j &= \bigcup_{i=1}^{\ell} \left( \left\{ a\ell + \ell -i \ \bigg|\ a \in \ZZ_{\leq \cy_B-1} \right\} \cup \left\{ \frac{\uu_i(x') - j+1}{e} \ \bigg|\ x' \in L_i, \, x' \equiv_e j-1 \right\}\right) \notag \\
&= \ZZ_{< \cy_B\ell} \cup \bigcup_{i=1}^{\ell}\left\{ \frac{\uu_i(x') - j+1}{e} \ \bigg|\ x' \in L_i, \, x' \equiv_e j-1 \right\} \notag \\
&= \ZZ_{< y^B_j\ell} \cup \bigcup_{i=1}^{\ell} \left\{ \frac{\uu_i(x') - j+1}{e} \ \bigg|\ x' \in L_i, \, x' \equiv_e j-1 \right\}. \label{E:C_j-2}
\end{align}
From \eqref{E:C_j-1} and \eqref{E:C_j-2}, since the unions there are disjoint unions, we get
\begin{align*}
y^B_{j} \ell + z^B_j = x^B_j = \fs(\C_j)
%&= (y^B_{j_B+1} + \bbone_{j\leq j_B+1}) \ell + \sum_{a=1}^{\ell} |\{ \tfrac{\uu_a(x') - j+1}{e} \mid  x' \in L_a, \, x' \equiv_e j-1 \}| \\
&= y^B_j \ell + \sum_{i=1}^{\ell} |\{ x' \in L_i \mid x' \equiv_e j-1 \}|,
\end{align*}
so that
$$z^B_j = \sum_{i=1}^{\ell} |\{ x' \in L_i \mid x' \equiv_e j-1 \}|
= |\{ i \in [1,\ell] \mid |\by^B| + \tilde{j} \in L_i \}|,$$
where $\tilde{j} \in \ZZ/e\ZZ$ satisfies $|\by^B| + \tilde{j} \equiv_e j-1$.
Consequently, $\T^{\bl}$ defined in Proposition \ref{P:FLOTW} is a generalised $\eta$-tableau of type
$(z^B_{j_B+1}, z^B_{j_B+2}\dotsc, z^B_{e}, z^B_{1},\dotsc, z^B_{j_B}) = (\bz^B)^{\sigma_B}$ by Proposition \ref{P:FLOTW}(1) and Corollary \ref{C:I-sigma_B}.
By Lemma \ref{L:I} and Proposition \ref{P:FLOTW}(2), $(\bl;\br)$ is Kleshchev if and only if $\T^{\bl}$ is column semistandard.
Since $\eta$ depends only on $\br$ and $|\by^B|$, and is independent of $\bl$,
the number of simple modules lying in $B$ equals the number of column semistandard generalised $\eta$-tableaux of type
$(\bz^B)^{\sigma_B}$ and hence of type $\bz^B$.

For a general core block $B$, let $B_0$ be the initial core block that is Scopes equivalent to $B$ which exists by Proposition \ref{P:Scopes} and is unique by Lemma \ref{L:unique-j-initial}.
Then $B = w \DDot{} B_0$ for some $w \in \AW_e$, and $B$ and $B_0$ has the same number of simple modules.
By Lemma \ref{L:yz}, we have
\begin{align*}
  |\by^B| = |\by^{B_0}| \qquad \text{and} \qquad \bz^B = (\bz^{B_0})^{\overline{{w}^{-1}}}.
\end{align*}
In particular, $\eta^B = \eta^{B_0}$.
Since the number of column semistandard generalised $\eta^B$-tablaux of type $\bz^B$ equals that of type $(\bz^B)^{\sigma}$ for any $\sigma \in \sym{e}$, the Theorem follows.
\end{proof}

\begin{cor}[{cf.\ \cite[Theorem 3.20(2)]{Lyle}}]
Let $\ell =2$, and let $B$ be a core block of $\HH_n$.  Then the number of simple modules lying in $B$ equals
$$
\binom{|\bz^B|}{r'_1 - |\by^B|} - \binom{|\bz^B|}{r'_1 - |\by^B|-1},
$$
where $\br^{w_0} = (r'_1,r'_2)$.
\end{cor}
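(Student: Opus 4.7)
The plan is to invoke Theorem \ref{T:number-simple} specialised to $\ell=2$, reduce the tableau count to the enumeration of standard Young tableaux of a two-row shape, and then apply the classical ballot number formula. Concretely, since $\br^{w_0}\in\AAbar$ forces $r'_1 \leq r'_2 \leq r'_1+e$, the partition $\eta$ supplied by Theorem \ref{T:number-simple} becomes the two-row shape $\eta=(\eta_1,\eta_2)=(r'_2-|\by^B|,\, r'_1-|\by^B|)$ with $\eta_1\geq\eta_2\geq 0$, and $|\eta| = |\br^{w_0}|-2|\by^B| = |\br^*_B|-\ell|\by^B| = |\bz^B|$.

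The key observation is that $\bz^B \in \{0,1\}^e$ because each $z^B_j$ lies in $[0,\ell-1]=[0,1]$. Thus in any generalised $\eta$-tableau $\T$ of type $\bz^B$, each of the $|\bz^B|$ values $j$ with $z^B_j=1$ occurs exactly once, so all entries of $\T$ are distinct. For such a tableau, the column semistandard conditions (weakly increasing down columns, strictly increasing along rows) collapse to: both rows and columns are strictly increasing. Relabelling the $|\bz^B|$ distinct values that appear by $1,2,\dotsc,|\bz^B|$ in increasing order gives a bijection with standard Young tableaux of shape $\eta$. Moreover, since the number of column semistandard generalised $\eta$-tableaux depends only on the rearrangement class of the type (as noted in the paragraph preceding Theorem \ref{T:number-simple} via the conjugate correspondence with semistandard tableaux and permutation modules), it suffices to count SYTs of shape $(\eta_1,\eta_2)$.

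Finally, I will apply the classical formula (e.g.\ via the hook length formula, or the reflection/ballot argument) for the number of standard Young tableaux of a two-row shape $(\eta_1,\eta_2)$ with $\eta_1\geq\eta_2$, namely
\[
\binom{\eta_1+\eta_2}{\eta_2} - \binom{\eta_1+\eta_2}{\eta_2-1}.
\]
Substituting $\eta_1+\eta_2 = |\bz^B|$ and $\eta_2 = r'_1 - |\by^B|$ yields the stated expression.

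No serious obstacle is anticipated: the two substantive facts needed, namely that $z^B_j\in\{0,1\}$ for $\ell=2$ and that the number of column semistandard tableaux depends only on the multiset of type-entries, are already in hand, and the ballot-number count for two-row SYTs is classical. The only mild care needed is to ensure $\eta_2 \leq \eta_1$ so that the ballot formula applies in the stated form, which is guaranteed by $\br^{w_0}\in\AAbar$.
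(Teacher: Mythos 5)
Your proof is correct and follows essentially the same route as the paper: both invoke Theorem \ref{T:number-simple}, use that $\ell=2$ forces $z^B_j\in\{0,1\}$ so each entry appears at most once and the column semistandard tableaux become (after relabelling) standard Young tableaux of the two-row shape $\eta$, and then quote the classical ballot-number formula $\binom{\eta_1+\eta_2}{\eta_2}-\binom{\eta_1+\eta_2}{\eta_2-1}$. Your extra remark about rearrangement-invariance of the type is unnecessary once you have the relabelling bijection, but it does no harm; otherwise the two arguments coincide.
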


\begin{proof}
Let $a = r'_1 - |\by^B|$ and $b = r'_2 - |\by^B|$.
By Theorem \ref{T:number-simple}, the number of simple modules lying in $B$ equals the number of column semistandard generalised $(b,a)$-tableaux of type $\bz^B$.
Since $\ell =2$, $z^B_j \in \{0,1\}$ for all $j \in [1,\,e]$.  Consequently,
a generalised $(b, a)$-tableau of type $\bz^B$ is column semistandard if and only if it is a standard $(b, a)$-tableau.
It is well known that the number of standard $(b, a)$-tableaux is equal to the dimension of the Specht module $S^{(b, a)}$ of the symmetric group (or Iwahori-Hecke algebra of type $A$), which is known to be $\binom{a+b}{a} - \binom{a+b}{a-1}$ (see, for example, \cite[Example 14.4]{James-SLN}).
To complete the proof, we only need to observe that $(b, a)$ and $\bz^B$ have the same size (so that we can talk about $(b, a)$-tableaux of type $\bz^B$).
\end{proof}

The final main result of this paper is to connect the graded decomposition numbers of the core blocks of Ariki-Koike algebras with those of Iwahori-Hecke algebras of type $A$, when $\mathrm{char}(\FF) = 0$.
We do this through the $v$-decomposition numbers arising from the canonical bases of the Fock space representations of the quantum affine algebra $\Uv$, which we now proceed to give a brief account.

Let $v$ be an indeterminate. The quantum affine algebra $\Uv$ is a unital $\mathbb{C}(v)$-algebra generated by $\{ e_j, f_j, k_j, k_j^{-1} \mid j \in \ZZ/e\ZZ \} \cup \{D,D^{-1}\}$ subject to certain relations which we do not
need here (interested readers may refer to \cite[Section 4]{Leclerc} for more details).
The level $\ell$ Fock space representation $\CF^{\ell}$ of $\Uv$ is a $\CC(v)$-vector space with $\PP^\ell \times \ZZ^{\ell}$ as its $\CC(v)$-basis.
For each $\bt \in \ZZ^{\ell}$, the $\CC(v)$-subspace $\CF_{\bt}$ spanned by $\PP^{\ell} \times \{ \bt \}$ is invariant under the $\Uv$-action, so that $\CF^{\ell}$ decomposes naturally into a direct sum of the $\CF_{\bt}$'s, i.e.\ $\CF^{\ell} = \bigoplus_{\bt \in \ZZ^{\ell}} \CF_{\bt}$.
%Furthermore, if $\bt = (t_1,\dotsc, t_{\ell}), \bu = (u_1,\dotsc, u_{\ell}) \in \ZZ^{\ell}$ such that $t_i -u_i \equiv_e t_j - u_j$ for all $i,j$, then $\CF_{\bt} \cong \CF_{\bu}$ as $\Uv$-modules.
For our purposes, we require only the precise description of the action of the generator $f_j$ of $\Uv$ on $\CF_{\bt}$, which is
$$
f_j(\bl;\bt) = \sum_{\mathfrak{n} \in \AS^{e,\bt}_j(\bl)}
             v^{N_{\bt}^{\bl}(\mathfrak{n})} (\bl^{+\mathfrak{n}};\bt)
$$
where %$\bl^{+\mathfrak{n}}$ denotes the partition obtained from $\bl$ by adding the addable node $\mathfrak{n}$ and
$%$
N_{\bt}^{\bl}(\mathfrak{n})
= |\{ \mathfrak{n}' \in \AS(\bl) \mid %\mathfrak{n}' \text{ addable},
\mathfrak{n}' \rhd_{\bt} \mathfrak{n} \}|
- |\{ \mathfrak{n}' \in \RS(\bl) \mid %\mathfrak{n}' \text{ removable},
\mathfrak{n}' \rhd_{\bt} \mathfrak{n} \}|.
$ %$
(Recall that $\AS(\bl)$ (resp.\ $\RS(\bl)$) denotes the set of addable (resp.\ removable) nodes of $\bl$, and $\AS^{e,\bt}_j(\bl)$ is the subset of $\AS(\bl)$ consisting of nodes with $(e,\bt)$-residue $j$.)
%$$
%f_j(\bl;\bt) = \sum_{\bm} v^{N_{\bt}(\bl,\bm)} (\bm;\bt)
%$$
%where the sum runs over all $(\bm;\bt)$ obtained from $(\bl;\bt)$ by adding an addable node $(a,b,c)$ of $e$-residue $j$, and
%$N_{\bt}(\bl,\bm)$ equals the number of addable of $(\bl;\bt)$ of $e$-residue $j$ larger than $(a,b,c)$ with respect to %$\succeq_{\HH}$ minus the number of removable nodes of $(\bl;\bt)$ of $e$-residue $j$ larger than $(a,b,c)$ with respect to %$\succeq_{\HH}$.

\begin{prop} \label{P:Fock}
Let $\bl = (\lambda^{(1)},\dotsc, \lambda^{(\ell)}) \in \PP^{\ell}$ and
$\bt = (t_1,\dotsc, t_{\ell} ) \in \ZZ^{\ell}$.
Suppose that there exists $m \in \ZZ$ such that %$\bt \equiv_e \br$ and
for each $i \in [1,\,\ell]$, $\beta_{t_i}(\lambda^{(i)}) = \ZZ_{< m} \cup L_i$ for some $L_i \subseteq [m,\, m+e+1]$.
Let $\bu = \bt - m\bone$, and for each $\bnu \in \PP^{\ell}$, let $\Phi_{\bu}(\bnu) := \beta^{-1}(\UU(\beta_{\bu}(\bnu)))$.
Let $j \in \ZZ/e\ZZ$ with $j \not\equiv_e m$, and let $j_m \in \ZZ/e\ZZ$ with $j_m \equiv_e j-m$.

\begin{enumerate}
\item
There is a bijective function $\phi: \AR^{e,\bt}_j(\bl) \to \AR^{e,|\bu|}_{j_m}(\Phi_{\bu}(\bl))$ with the following properties for all $\mathfrak{n}, \mathfrak{n}' \in \AR^{e,\bt}_j(\bl)$:
\begin{enumerate}
\item $\uu_i(\cont_{\bu}(\mathfrak{n})) = \cont_{|\bu|}(\phi(\mathfrak{n}))$ if $\mathfrak{n} = (a,b,i)$;
\item $\mathfrak{n}$ is addable if and only if $\phi(\mathfrak{n})$ is addable, in which case $\Phi_{\bu}(\bl^{+\mathfrak{n}}) = (\Phi_{\bu}(\bl))^{+\phi(\mathfrak{n})}$;
\item $\mathfrak{n}$ is removable if and only if $\phi(\mathfrak{n})$ is removable, in which case $\Phi_{\bu}(\bl^{-\mathfrak{n}}) = (\Phi_{\bu}(\bl))^{-\phi(\mathfrak{n})}$;
\item $\mathfrak{n} \unrhd_{\bt} \mathfrak{n}'$ if and only if $\phi(\mathfrak{n}) \unrhd_{|\bu|} \phi(\mathfrak{n}')$.
\end{enumerate}
\item
Define $\Psi_{\bt,\bu} : \CF_{\bt} \to \CF_{|\bu|}$ by $\Psi_{\bt,\bu}(\bm;\bt) = (\Phi_{\bu}(\bm); |\bu|)$ for all $\bm \in \PP^{\ell}$ and extending it linearly.  Then
\begin{align*}
\Psi_{\bt,\bu}(f_j(\bl;\bt)) = f_{j_m}(\Psi_{\bt,\bu}(\bl;\bt))
= f_{j_m}(\Phi_{\bu}(\bl);|\bu|).
\end{align*}
\end{enumerate}
\end{prop}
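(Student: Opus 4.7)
The plan is to reduce part (1) essentially to an application of Lemma~\ref{L:ARl1}, and then derive part (2) by a direct comparison of the expansions of the two sides.

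First I would observe that since $\bu = \bt - m\bone$, we have $\res^{\bu}_e(\mathfrak{n}) \equiv_e \res^{\bt}_e(\mathfrak{n}) - m$ for every node $\mathfrak{n} \in \AR(\bl)$, so $\AR^{e,\bt}_j(\bl) = \AR^{e,\bu}_{j_m}(\bl)$. The hypothesis $j \not\equiv_e m$ becomes $j_m \not\equiv_e 0$, i.e.\ $j_m \in [1,\,e-1]$, which puts us exactly in the range covered by Lemma~\ref{L:ARl1}. Applying that lemma to $\bu$ and $j_m$ produces a bijection $\phi: \AR^{e,\bu}_{j_m}(\bl) \to \AR^{e,|\bu|}_{j_m}(\Phi_{\bu}(\bl))$ satisfying properties (a), (b), (c) of the proposition (the content relation in (a), the addable/removable dichotomy in (b)(c), and compatibility with $\pm\mathfrak{n}$ operations). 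Uniqueness is also inherited from Lemma~\ref{L:ARl1}.

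The genuinely new ingredient is property (d). Here I would invoke Lemma~\ref{L:AR}(2): under our standing hypothesis, every $\mathfrak{n} = (a,b,i) \in \AR^{e,\bt}_j(\bl)$ has the \emph{same} $\bt$-content $m + j_m$, and hence the same $\bu$-content $j_m$. Consequently for $\mathfrak{n} = (a,b,i), \mathfrak{n}' = (a',b',i')$ in $\AR^{e,\bt}_j(\bl)$, the order $\mathfrak{n} \unrhd_{\bt} \mathfrak{n}'$ collapses to the condition $i \leq i'$. On the target side, $\Phi_{\bu}(\bl)$ is a single partition, so $\unrhd_{|\bu|}$ coincides with comparison of $|\bu|$-contents. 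By (a), $\cont_{|\bu|}(\phi(\mathfrak{n})) = \uu_i(j_m) = (\ell-i)e + j_m$, which is strictly decreasing in $i$; hence $\phi(\mathfrak{n}) \unrhd_{|\bu|} \phi(\mathfrak{n}')$ is likewise equivalent to $i \leq i'$. This gives (d), and the strict version $\rhd$ follows since $\phi$ is injective.

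For part (2), I would expand $f_j(\bl;\bt)$ using the definition, apply $\Psi_{\bt,\bu}$ termwise, and rewrite each $\Phi_{\bu}(\bl^{+\mathfrak{n}})$ as $(\Phi_{\bu}(\bl))^{+\phi(\mathfrak{n})}$ via (1)(b). Since $\phi$ restricts to a bijection between $\AS^{e,\bt}_j(\bl)$ and $\AS^{e,|\bu|}_{j_m}(\Phi_{\bu}(\bl))$, the resulting sum is indexed by exactly the same terms appearing in the expansion of $f_{j_m}(\Phi_{\bu}(\bl);|\bu|)$. It remains to identify the exponents $N_{\bt}^{\bl}(\mathfrak{n}) = N_{|\bu|}^{\Phi_{\bu}(\bl)}(\phi(\mathfrak{n}))$; this follows from property (d) together with (1)(b),(c), since $\phi$ sends addable (resp.\ removable) nodes to addable (resp.\ removable) ones and preserves the strict ordering $\rhd$. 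I expect the main obstacle to be property (d)---everything else is a direct reduction to Lemma~\ref{L:ARl1}---and the crucial leverage for (d) is the fact, supplied by Lemma~\ref{L:AR}(2), that nodes in $\AR^{e,\bt}_j(\bl)$ all carry the same content, so that both orderings $\unrhd_{\bt}$ and $\unrhd_{|\bu|}$ degenerate into total orders determined purely by the $\ell$-component.
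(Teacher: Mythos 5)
Your proof is correct and takes essentially the same route as the paper: identify $\AR^{e,\bt}_j(\bl)$ with $\AR^{e,\bu}_{j_m}(\bl)$, invoke Lemma~\ref{L:ARl1} for (a)--(c), use Lemma~\ref{L:AR}(2) to collapse $\unrhd_{\bt}$ to the comparison $i\leq i'$, and match exponents termwise for part (2). The only cosmetic difference is that you compute $\uu_i(j_m)=(\ell-i)e+j_m$ directly rather than routing through Lemma~\ref{L:I}(1), which the paper cites at the corresponding step.
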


\begin{proof} \hfill
\begin{enumerate}
\item
Since $\bt = \bu + m\bone$, we have
$\res^{\bt}_e (a,b,i) \equiv_e \res^{\bu}_e (a,b,i) + m,
$
so that $\AR^{e,\bt}_j (\bl) = \AR^{e,\bu}_{j_m} (\bl)$.
Consequently, Lemma \ref{L:ARl1} provides the bijection $\phi$ satisfying (a)--(c).

For part (d), let $\mathfrak{n} = (a,b,i), \mathfrak{n}' = (a', b', i') \in \AR^{e,\bt}_j(\bl)$.
By Lemma \ref{L:AR}(2), $\cont_{\bt}(\mathfrak{n}) = \cont_{\bt}(\mathfrak{n}')$, so that
$\cont_{\bu}(\mathfrak{n}) = \cont_{\bu}(\mathfrak{n}')$ since $\bu = \bt - m\bone$.
If $\mathfrak{n} \unrhd_{\bt} \mathfrak{n}'$, then
$i \leq i'$ by Lemma \ref{L:I}(1) .
Consequently,
$$
\cont_{|\bu|}(\phi(\mathfrak{n}))
= \uu_i(\cont_{\bu}(\mathfrak{n}))
= \uu_i(\cont_{\bu}(\mathfrak{n}'))
\geq \uu_{i'}(\cont_{\bu}(\mathfrak{n}'))
= \cont_{|\bu|}(\phi(\mathfrak{n}')),
$$
so that $\phi(\mathfrak{n}) \unrhd_{|\bu|} \phi(\mathfrak{n}')$.
Conversely, if $\phi(\mathfrak{n}) \unrhd_{|\bu|} \phi(\mathfrak{n}')$, then
%$\cont_{\bt}(\mathfrak{n}) = \cont_{\bt}(\mathfrak{n}')$ and
$\cont_{|\bu|}(\phi(\mathfrak{n}))
\geq
\cont_{|\bu|}(\phi(\mathfrak{n}'))
$.
Thus
$$
\uu_i(\cont_{\bu}(\mathfrak{n}))
= \cont_{|\bu|}(\phi(\mathfrak{n}))
\geq \cont_{|\bu|}(\phi(\mathfrak{n}'))
= \uu_{i'}(\cont_{\bu}(\mathfrak{n}'))
= \uu_{i'}(\cont_{\bu}(\mathfrak{n}))
$$
so that $i \leq i'$ and hence $\mathfrak{n} \unrhd_{\bt} \mathfrak{n}'$ by Lemme \ref{L:I}(1).

\item
For $\mathfrak{n} \in \AS^{e,\bt}_j(\bl)$, we have by part (1),
\begin{align*}
N^{\bl}_{\bt}(\mathfrak{n}) &=
|\{ \mathfrak{n}' \in \AS(\bl) \mid
\mathfrak{n}' \rhd_{\bt} \mathfrak{n} \}|
- |\{ \mathfrak{n}' \in \RS(\bl) \mid
\mathfrak{n}' \rhd_{\bt} \mathfrak{n} \}| \\
&= |\{ \phi(\mathfrak{n}') \in \AS(\Phi_{\bu}(\bl)) \mid
\phi(\mathfrak{n}') \rhd_{|\bu|} \phi(\mathfrak{n}) \}|
- |\{ \phi(\mathfrak{n}') \in \RS(\Phi_{\bu}(\bl)) \mid
\phi(\mathfrak{n}') \rhd_{|\bu|} \phi(\mathfrak{n}) \}| \\
&= |\{ \mathfrak{m} \in \AS(\Phi_{\bu}(\bl)) \mid
\mathfrak{m} \rhd_{|\bu|} \phi(\mathfrak{n}) \}|
- |\{ \mathfrak{m} \in \RS(\Phi_{\bu}(\bl)) \mid
\mathfrak{m} \rhd_{|\bu|} \phi(\mathfrak{n}) \}| \\
&= N^{\Phi_{\bu}(\bl)}_{|\bu|}(\phi(\mathfrak{n})).
\end{align*}
Thus,
\begin{align*}
\Psi_{\bt,\bu}(f_j(\bl;\bt)) &= \Psi_{\bt,\bu} \left( \sum_{\mathfrak{n} \in \AS^{e,\bt}_j(\bl)}
             v^{N_{\bt}^{\bl}(\mathfrak{n})} (\bl^{+\mathfrak{n}};\bt) \right)
             = \sum_{\mathfrak{n} \in \AS^{e,\bt}_j(\bl)}
             v^{N_{\bt}^{\bl}(\mathfrak{n})} (\Phi_{\bu}(\bl^{+\mathfrak{n}});|\bu|) \\
             &= \sum_{\phi(\mathfrak{n}) \in \AS^{e,|\bu|}_{j_m}(\Phi_{\bu}(\bl))}
             v^{N_{|\bu|}^{\Phi_{\bu}(\bl)}(\phi(\mathfrak{n}))} ((\Phi_{\bu}(\bl))^{+\mathfrak{\phi(n)}};|\bu|) \\
             &= f_{j_m}(\Phi_{\bu}(\bl);|\bu|) = f_{j_m}(\Psi_{\bt,\bu}(\bl;\bt)).
\end{align*}

\end{enumerate}
\end{proof}

The Fock space possesses a bar involution $x \mapsto \overline{x}$ (see for example \cite[Subsection 3.12]{BK-GradedDecompNos}) which satisfies in particular
$\overline{a(v) x + y} = a(v^{-1}) \overline{x} + \overline{y}$ and $\overline{f_j(x)} = f_j \overline{x}$ for all $a(v) \in \CC(v)$, $x,y \in \CF_{\bt}$ and $j \in \ZZ/e\ZZ$.
There is a distinguished basis $\{ G(\bl;\bt) : \bl \in \PP^{\ell} \}$ for $\CF_{\bt}$, called the {\em canonical basis}, which has the following characterising properties:
\begin{itemize}
  \item $\overline{G(\bl;\bt)} = G(\bl;\bt)$;
  \item $G(\bl;\bt) - (\bl;\bt) \in \sum_{\bm \in \PP^{\ell}} v\ZZ[v] (\bm;\bt)$.
\end{itemize}

For $\bl,\bm \in \PP^{\ell}$ and $\bt \in \ZZ^{\ell}$, define $d_{\bl\bm}^{\bt}(v) \in \CC(v)$ by
$$
G(\bm;\bt) = \sum_{\bl \in \PP^{\ell}} d_{\bl\bm}^{\bt}(v) (\bl;\bt).$$
These {\em $v$-decomposition numbers} $d_{\bl\bm}^{\bt}(v)$ enjoy the following remarkable properties.
\begin{enumerate}
  \item \cite[Remark 3.19]{BK-GradedDecompNos} $d^{\bt}_{\bm\bm}(v) = 1$ and $d^{\bt}_{\bl\bm}(v) \in v\ZZ_{\geq 0}[v]$ when $\bl\ne \bm$.
  \item \cite{Ariki} When $(\bm;\br)$ is Kleshchev, then $d^{\br}_{\bl\bm}(1)$ equals the composition multiplicity of $D^{\bm}$ in the Specht module $\Sp^{\bl}$ of $\HH_n$ in characteristic $0$.  (In fact, more is true: the $v$-decomposition numbers are the graded decomposition numbers of the Ariki-Koike algebras \cite[Corollary 5.15]{BK-GradedDecompNos}.)
\end{enumerate}
In particular, if $d^{\br}_{\bl\bm}(v) \ne 0$, then $\bl$ and $\bm$ lies in the same block of $\HH_n$ and $\bm \unrhd \bl$.
(Here, $\bm = (\mu^{(1)},\dotsc, \mu^{(\ell)}) \unrhd \bl = (\lambda^{(1)}, \dotsc, \lambda^{(\ell)})$ if and only if
$\sum_{a=1}^{i-1} |\mu^{(a)}| + \sum_{b = 1}^j \mu^{(i)}_b \geq
\sum_{a=1}^{i-1} |\lambda^{(a)}| + \sum_{b = 1}^j \lambda^{(i)}_b$
for all $i \in [1,\,\ell]$ and $j \in \ZZ^+$.)

We note that $d^{\bt}_{\bl\bm}(v) = d^{\bu}_{\bl\bm}(v)$ when $\bt \equiv_e \bu$.
Furthermore, when $\ell = 1$,
$d^t_{\lambda\mu}(v) = d^u_{\lambda\mu}(v)$ for all $t,u \in \ZZ$, so that
we may write $d_{\lambda\mu}(v)$ without ambiguity.

\begin{prop}\label{P:decomp}
Let $B$ be a core block of $\HH_n$, and suppose that $w_0 \in \ZZ^{\ell}$ and that $B$ satisfies Condition (I) of being initial.
Let $\bu = \br^{w_0} - |\by^B|\bone$, and for each $\bnu \in \PP^{\ell}$, let $\Phi_{\bu}(\bnu) = \beta^{-1}(\UU(\beta_{\bu}(\bnu)))$. Let $\Psi_{\br,\bu} : \CF_{\br} \to \CF_{|\bu|}$ be the $\mathbb{C}(v)$-linear map with $\Psi_{\br,\bu}(\bl;\br) = (\Phi_{\bu}(\bl);|\bu|)$ for all $\bl \in \PP^{\ell}$.
If $\bm$ is an  $\ell$-partition lying in $B$ such that $(\bm;\br)$ is Kleshchev,
then $\Phi_{\bu}(\bm)$ is $e$-regular and
$$
\Psi_{\br,\bu}(G(\bm;\br)) = G(\Phi_{\bu}(\bm);|\bu|).
$$
In particular $
d^{\br}_{\bl \bm}(v)
= d_{\Phi_{\bu}(\bl) \Phi_{\bu}(\bm)} (v)
$
for all $\bl \in \PP^{\ell}$, and $d_{\lambda, \Phi_{\bu}(\bm)}(v) = 0$ if $\lambda \notin \Psi_{\bu}(\PP^{\ell})$.
\end{prop}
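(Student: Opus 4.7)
The plan is to establish the main identity $\Psi_{\br,\bu}(G(\bm;\br)) = G(\Phi_{\bu}(\bm);|\bu|)$ by induction on $|\bm|$, using Proposition~\ref{P:Fock} as the principal tool. Without loss of generality I shall assume $w_0$ is trivial (so $\br = \br^{w_0} \in \AAbar$ and $\bu = \br - |\by^B|\bone$); the general case of a pure translation $w_0 \in \ZZ^{\ell}$ reduces to this via the shift isomorphism $\CF_{\br} \to \CF_{\br^{w_0}}$, $(\bl;\br) \mapsto (\bl;\br^{w_0})$, which preserves the $\Uv$-action (since $f_j$-coefficients depend on $\bt$ only through $\rhd_{\bt}$, which is insensitive to integer shifts of the charge) and hence sends canonical basis to canonical basis. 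By Proposition~\ref{P:I-2}, every $\bl$ lying in $B$ satisfies $\beta_{r_i}(\lambda^{(i)}) = \ZZ_{<|\by^B|} \cup L^{\bl}_i$ with $L^{\bl}_i \subseteq [|\by^B|,\,|\by^B|+e-1]$, so the hypotheses of Proposition~\ref{P:Fock} hold with $m = |\by^B|$ and $\bt = \br$. Writing $j_m \in [1,\,e-1]$ with $j_m \equiv_e j - |\by^B|$ for residues $j \not\equiv_e |\by^B|$, Proposition~\ref{P:Fock}(2) then yields the key intertwining $\Psi_{\br,\bu} \circ f_j = f_{j_m} \circ \Psi_{\br,\bu}$ on the subspace of $\CF_{\br}$ spanned by $(\bl;\br)$ for $\bl$ lying in $B$, which is closed under such $f_j$.

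For the $e$-regularity of $\Phi_{\bu}(\bm)$: since $(\bm;\br)$ is Kleshchev, Lemma~\ref{L:I}(2) gives that it is also Uglov, so $\bm$ has a good $j$-node $\mathfrak{n}$ with respect to $\unrhd_{\br}$, and $j \not\equiv_e |\by^B|$ by Lemma~\ref{L:AR}(1). Proposition~\ref{P:Fock}(1)(b)--(d) then shows that $\phi(\mathfrak{n})$ is a good $j_m$-node of $\Phi_{\bu}(\bm)$ with respect to $\unrhd_{|\bu|}$ (which coincides with $\succeq_{|\bu|}$ at level one), and $\Phi_{\bu}(\bm^{-\mathfrak{n}}) = \Phi_{\bu}(\bm)^{-\phi(\mathfrak{n})}$; induction on $|\bm|$ shows $\Phi_{\bu}(\bm)$ is Kleshchev at level one, equivalently $e$-regular. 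The base case $\bm = \EP$ of the main identity holds since $\bu \in \AAbar$ gives $\wt_e(\EP;\bu) = 0$ by Theorem~\ref{T:core-n-weight}(2), so $\Phi_{\bu}(\EP)$ is an $e$-core and $G(\Phi_{\bu}(\EP);|\bu|) = (\Phi_{\bu}(\EP);|\bu|) = \Psi_{\br,\bu}(G(\EP;\br))$.

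For the inductive step, with $\mathfrak{n}, j, \bm^-$ as above, the inductive hypothesis gives $\Psi_{\br,\bu}(G(\bm^-;\br)) = G(\Phi_{\bu}(\bm^-);|\bu|)$. Proposition~\ref{P:Fock}(2) extends to divided powers, yielding $\Psi_{\br,\bu}(f_j^{(N)} G(\bm^-;\br)) = f_{j_m}^{(N)} G(\Phi_{\bu}(\bm^-);|\bu|)$ for any $N \geq 1$. Choosing $N$ suitably (e.g.\ the crystal depth $\varepsilon_j(\bm^-)+1$), the standard LLT-type recursion (cf.~\cite{BK-GradedDecompNos}) expresses $f_j^{(N)} G(\bm^-;\br)$ as $G(\bm;\br)$ plus a $\ZZ[v,v^{-1}]$-linear combination of canonical basis elements $G(\bm';\br)$ for Kleshchev $\bm' \ne \bm$ of strictly smaller crystal depth, and the analogous expansion holds at level one; inducting further on crystal depth to apply the match $\Psi_{\br,\bu}(G(\bm';\br)) = G(\Phi_{\bu}(\bm');|\bu|)$ to each such $\bm'$ yields the identity. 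The consequences for $v$-decomposition numbers follow by equating coefficients in the standard basis and using the injectivity of $\Phi_{\bu}$. The principal obstacle is the precise matching of LLT-type expansions through $\Psi_{\br,\bu}$ on both sides; a cleaner alternative is to verify directly the two characterising properties of $G(\Phi_{\bu}(\bm);|\bu|)$, namely unitriangularity (immediate from injectivity of $\Phi_{\bu}$ and the unitriangularity of $G(\bm;\br)$) and bar-invariance of $\Psi_{\br,\bu}(G(\bm;\br))$, which follows from the intertwining $\Psi_{\br,\bu}\circ f_j = f_{j_m}\circ\Psi_{\br,\bu}$, the analogous intertwining for the $e_j$'s, and the bar-invariance of $\Psi_{\br,\bu}((\EP;\br)) = (\Phi_{\bu}(\EP);|\bu|)$, which is automatic since $\Phi_{\bu}(\EP)$ is an $e$-core.
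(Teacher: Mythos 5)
Your overall strategy — use Proposition \ref{P:Fock} to intertwine $\Psi_{\br,\bu}$ with the $f_j$'s, reduce to $w_0$ trivial, and induct starting from $\EP$ — is the same as the paper's, and the base case and the $e$-regularity argument match. But there are two places where your execution has gaps.

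First, in the inductive step you take ``a'' good $j$-node $\mathfrak{n}$ of $\bm$ and then reach for divided powers $f_j^{(N)}$, crystal depth, and a general ``LLT-type recursion.'' The paper instead makes a very specific choice: $\mathfrak{m}$ is the topmost removable node of $\mu^{(i_0)}$ where $i_0$ is the first component with $\mu^{(i_0)} \ne \varnothing$, and then uses column-semistandardness of $\T^{\bm}$ (via Proposition \ref{P:FLOTW}) and Lemma \ref{L:AR} to show that $\mathfrak{m}$ strictly dominates every other node of $\AR^{e,\br^{w_0}}_j(\bm) = \AR^{e,\br^{w_0}}_j(\bnu)$ in $\unrhd_{\br^{w_0}}$. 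This is what forces the coefficient of $(\bm;\br^{w_0})$ in $f_j(G(\bnu;\br^{w_0}))$ to be exactly $1$ (not $1$ plus positive $v$-powers coming from terms $d_{\tilde{\bl}\bnu}(v)\,f_j(\tilde{\bl};\br^{w_0})$), and gives that every other standard-basis term occurring is $\lhd \bm$. Your choice $N = \varepsilon_j(\bm^-)+1$ is not the right quantity and the recursion you allude to is not set up; without pinning down the node you cannot conclude that the unitriangular expansion holds cleanly with coefficient $1$ on $(\bm;\br^{w_0})$.

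Second, your ``cleaner alternative'' for bar-invariance is flawed. The bar involution on $\CF_{|\bu|}$ is not determined by the vacuum and the $f_j$-action alone, because the level-one Fock space is strictly larger than the highest-weight submodule generated by $(\Phi_{\bu}(\EP);|\bu|)$; there is no a priori reason why the level-$\ell$ and level-$1$ bar involutions should correspond under $\Psi_{\br,\bu}$, and the intertwining $\Psi_{\br,\bu}\circ f_j = f_{j_m}\circ\Psi_{\br,\bu}$ is only established on the span of the standard basis vectors lying in the relevant block, not on the whole Fock space. The paper avoids this issue entirely: it writes $f_j(G(\bnu;\br^{w_0})) = G(\bm;\br^{w_0}) + \sum_{\bl\lhd\bm} a_{\bl}(v)\,G(\bl;\br^{w_0})$ with each $a_{\bl}(v)$ symmetric in $v\mapsto v^{-1}$, applies $\Psi$ and the inductive hypothesis to both $G(\bnu;\br^{w_0})$ and all the $G(\bl;\br^{w_0})$ in the sum, and thereby exhibits $\Psi(G(\bm;\br^{w_0}))$ as a $\ZZ[v,v^{-1}]$-combination of terms $f_{\tilde j}(G(\Phi_{\bu}(\bnu);|\bu|))$ and $G(\Phi_{\bu}(\bl);|\bu|)$ that are manifestly bar-invariant, with symmetric coefficients — so its bar-invariance is immediate without any appeal to how the two involutions are generated.
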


\begin{proof}
Since $w_0 \in \ZZ^{\ell}$, we have $\bl^{w_0} = \bl$ for all $\bl \in \PP^{\ell}$, and $\br \equiv_e \br^{w_0}$, so that $\unrhd_{\br}\ = \ \unrhd_{\br^{w_0}}$ on $\AR(\bl)$ for all $\bl \in \PP^{\ell}$.  In particular, $(\bl;\br)$ is Kleshchev if and only if $(\bl;\br^{w_0})$ is Kleshchev.
Furthermore, $d^{\br^{w_0}}_{\bl\bm}(v) = d^{\br}_{\bl\bm}(v)$ for all $\bl,\bm \in \PP^{\ell}$.  As such, it suffices to prove the Theorem with $\br$ replaced by $\br^{w_0}$.
%\Psi_{\br^{w_0},\bu}(G(\bm;\br^{w_0})) = G(\Phi_{\bu}(\bm);|\bu|)
%$ instead.

We prove by induction.
If $|\bm| = 0$, then $\bm = \EP$.
Thus $(\bm;\br^{w_0})$ is Kleshchev and $G(\bm;\br^{w_0}) = (\bm;\br^{w_0})$.
Since $\br^{w_0} \in \AAbar$, $\bu = \br^{w_0} - |\by^B|\bone \in \AAbar$, and so $\Phi_{\bu}(\bm) = \beta^{-1}(\UU(\beta_{\bu}(\EP)))$
is an $e$-core partition by \cite[Proposition 2.22]{JL-cores}.
Hence $\Phi_{\bu}(\bm)$ is $e$-regular, and $G(\Phi_{\bu}(\bm);|\bu|) = (\Phi_{\bu}(\bm);|\bu|)$ (see, for example, \cite[Proposition 11(i)]{Leclerc}).
Thus
$$\Psi_{\br^{w_0},\bu}(G(\bm;\br^{w_0})) = \Psi_{\br^{w_0},\bu} (\bm;\br^{w_0}) = (\Phi_{\bu}(\bm);|\bu|) = G(\Phi_{\bu}(\bm);|\bu|).$$

Now suppose that $(\bm;\br^{w_0})$ is Kleshchev and $|\bm| > 0$,
and assume that the statement holds for all $\bnu \in \PP^{\ell}$ with $|\bnu| < |\bm|$, or $|\bnu| = |\bm|$ and $\bnu \lhd \bm$.
By Proposition \ref{P:I-2}, for each $i \in [1,\,\ell]$,
$\beta_{r'_i}(\mu^{(i)}) = \ZZ_{< |\by^B|} \cup M_i$ for some
$M_i \subseteq [|\by^B|,\, |\by^B|+e-1]$, where $\br^{w_0} = (r'_1, \dotsc, r'_{\ell})$.
Since $(\bm;\br^{w_0})$ is Kleshchev, it is also Uglov by Lemma \ref{L:I}(2), so that the generalised $(|M_{\ell}|, |M_{\ell-1}|,\dotsc, |M_1|)$-tableau $\T^{\bm}$, % : [(|M_{\ell}|, |M_{\ell-1}|,\dotsc, |M_1|)] \to [1,\,e] $,
defined by $\T^{\mu}(a,b) = b$-th least element of $M_{\ell-a+1}$, is column semistandard by Proposition \ref{P:FLOTW}.
Let $$i_0 := \min\{ i \in [1,\,\ell] \mid |\mu^{(i)}| > 0\},$$
and let $\mathfrak{m}$ be the topmost removable node of $\mu^{(i_0)}$.
Then for each $i \in [1,\,i_0 -1]$, $\mu^{(i)} = \varnothing$ so that $\beta_{r'_i}(\mu^{(i)}) = \ZZ_{<r'_i}$.
Since $\T^{\bm}$  is column semistandard, we thus have
$\ZZ_{<r'_i} \subseteq \beta_{r'_{i_0}}(\mu^{(i_0)})$, so that
$$\cont_{\br^{w_0}}(\mathfrak{m}) -1 \geq \min(\ZZ \setminus \beta_{r'_{i_0}}(\mu^{(i_0)}))
\geq r'_i = \max(\beta_{r'_i}(\mu^{(i)}))+1,$$
for all $i\in [1,\,i_0-1]$.
Together with Lemma \ref{L:AR}(2), we conclude that
there does not exist $\mathfrak{m}' \in \AR(\bm)$ with $\mathfrak{m}' \succ_{\br^{w_0}} \mathfrak{m}$.
Consequently, there does not exist $\mathfrak{m}' \in \AR(\bm)$ such that $\mathfrak{m}' \rhd_{\br^{w_0}} \mathfrak{m}$
by Lemmas \ref{L:AR}(1) and \ref{L:I}(1).
In particular, $\mathfrak{m}$ is a good node of $\bm$ with respect to $\unrhd_{\br^{w_0}}$.

Let $j = \res^{\br^{w_0}}_e(\mathfrak{m})$, and let $\tilde{j} \in \ZZ/e\ZZ$ with $\tilde{j} \equiv_e j - |\by^B|$.
By Lemma \ref{L:AR}(1), $j \not\equiv_e |\by^B|$, and so $\tilde{j} \ne 0$.
By Proposition \ref{P:Fock}(1), there exists a bijection
$\phi : \AR^{e,\br^{w_0}}_j(\bm) \to \AR^{e,|\bu|}_{\tilde{j}}(\Phi_{\bu}(\bm))$ satisfying, for all
$\mathfrak{n}, \mathfrak{n}' \in \AR^{e,\br^{w_0}}_j(\bm)$,
$\phi(\mathfrak{n})$ is removable if and only if $\mathfrak{n}$ is removable, in which case $\Phi_{\bu}(\bl^{-\mathfrak{n}}) = (\Phi_{\bu}(\bl))^{-\phi(\mathfrak{n})}$, and
$\phi(\mathfrak{n}) \unrhd_{|\bu|} \phi(\mathfrak{n}')$ if and only if $\mathfrak{n} \unrhd_{\br^{w_0}} \mathfrak{n}'$.
Thus,
$\phi(\mathfrak{m})$ is a good node of $\Phi_{\bu}(\bm)$ with respect to $\unrhd_{|\bu|}$.

Let $\bnu = (\nu^{(1)},\dotsc, \nu^{(\ell)}) := \bm^{-\mathfrak{m}}$.
%Then $\nu^{(a)} = \mu^{(a)}$ for all $a \in [1,\ell] \setminus \{ i_0 \}$, while $\nu^{(i_0)} = \mu
Then $\beta_{r'_a}(\nu^{(a)}) = \beta_{r'_a}(\mu^{(a)}) = \ZZ_{<|\by^B|} \cup M_a$ for all $a \in [1,\ell] \setminus \{ i_0 \}$,
while
\begin{align*}
\beta_{r'_{i_0}}(\nu^{(i_0)}) &= \beta_{r'_{i_0}}(\mu^{(i_0)}) \setminus \{ \cont_{\br^{w_0}}(\mathfrak{m}) \} \cup \{ \cont_{\br^{w_0}}(\mathfrak{m}) -1\} \\
&= \ZZ_{<|\by^B|} \cup (M_{i_0} \setminus \{ \cont_{\br^{w_0}}(\mathfrak{m}) \} \cup \{ \cont_{\br^{w_0}}(\mathfrak{m}) -1\}).
\end{align*}
From this, we conclude that the block $B'$ of $\HH_{n-1}$ in which $\bnu$ lies is a core block with $\mvi^{w_0}_{\ell}(B') = 0$.
Furthermore,
$\by^{B'} = \by^{B}$ (while $z^{B'}_a = z^{B}_a$ for all $a \in [1,\,e] \setminus \{ \overline{j}, j+1 \}$, and $z^{B'}_{j+1} = z^B_{j+1} - 1$ and $z^{B'}_{\overline{j}} = z^B_{\overline{j}} + 1$).
Thus $B'$ also satisfies Condition (I) of being initial.
Consequently, by Proposition \ref{P:I-2}, for all $\bl = (\lambda^{(1)},\dotsc, \lambda^{\ell}) \in \PP^{\ell}$ lying in $B'$ and $i \in [1,\ell]$,
$\beta_{r'_i}(\lambda^{(i)}) = \ZZ_{< |\by^B|} \cup L^{\bl}_i$ for some $L^{\bl}_i \subseteq [|\by^B|,\, |\by^B|+e-1]$.
In particular,
\begin{equation}
\Psi_{\br^{w_0},\bu}(f_j (\bl;\br^{w_0})) = f_{\tilde{j}}(\Psi_{\br^{w_0},\bu} (\bl;\br^{w_0})) \label{E:Psi}
\end{equation}
for all $\bl \in \PP^{\ell}$ lying in $B'$ by Proposition \ref{P:Fock}(2).
Note also that $\AR^{e,\br^{w_0}}_j(\bnu) = \AR^{e,\br^{w_0}}_j(\bm)$.

As $\mathfrak{m}$ is the largest node (with respect to $\unrhd_{\br^{w_0}}$) in $\AR^{e,\br^{w_0}}_j(\bm)= \AR^{e,\br^{w_0}}_j(\bnu)$,
we see that
$$f_j (G(\bnu;\br^{w_0})) = f_j(\bnu;\br^{w_0}) + \sum_{\tilde{\bl} \lhd \bnu} d^{\br^{w_0}}_{\tilde{\bl} \bnu}(v)\, f_j (\tilde{\bl},\br^{w_0})
=
(\bm;\br^{w_0})+
\sum_{\substack{\bl \in \PP^{\ell} \\ \bl \lhd \bm}} c_{\bl}(v) (\bl;\br^{w_0})$$
where $c_{\bl}(v) \in \ZZ_{\geq 0}[v,v^{-1}].$
Consequently,
$$
f_j (G(\bnu;\br^{w_0})) = G(\bm;\br^{w_0}) +  \sum_{\substack{\bl \in \PP^{\ell} \\ \bl \lhd \bm}} a_{\bl}(v) G(\bl;\br^{w_0})
$$
where $a_{\bl}(v) \in \ZZ_{\geq 0} [v,v^{-1}]$ with $a_{\bl}(v^{-1}) = a_{\bl}(v)$.
Applying $\Psi_{\br^{w_0},\bu}$ to this last equation and using \eqref{E:Psi}, we get
\begin{align*}
f_{\tilde{j}} (\Psi_{\br^{w_0},\bu} (G(\bnu;\br^{w_0})) =
\Psi_{\br^{w_0},\bu} (f_j (G(\bnu;\br^{w_0}))) = \Psi_{\br^{w_0},\bu} (G(\bm;\br^{w_0})) +
\sum_{\substack{\bl \in \PP^{\ell} \\ \bl \lhd \bm}} a_{\bl}(v)\, \Psi_{\br^{w_0}} (G(\bl;\br^{w_0})).
\end{align*}
We then apply induction hypothesis to $\bnu$ and $\bl$ with $\bl \lhd \bm$ to get
\begin{align*}
f_{\tilde{j}} (G(\Phi_{\bu}(\bnu);|\bu|)) = \Psi_{\br^{w_0},\bu} (G(\bm;\br^{w_0})) +
\sum_{\substack{\bl \in \PP^{\ell} \\ \bl \lhd \bm}} a_{\bl}(v)\,  G(\Phi_{\bu}(\bl);|\bu|)).
\end{align*}
Since
$\Psi_{\br^{w_0},\bu} (G(\bm;\br^{w_0})) - (\Phi_{\bu}(\bm);|\bu|) =
\sum_{\substack{\bl \in \PP^{\ell} \\ \bl \ne \bm}} d^{\br^{w_0}}_{\bl\bm}(v)\, (\Phi_{\bu}(\bl);|\bu|) \in \sum_{\lambda \in \PP} v\ZZ[v] (\lambda;|\bu|)$,
%and $d^{\br^{w_0}}_{\bl\bm}(v) \in v\ZZ_{\geq 0}[v]$ for all $\bl \in \PP^{\ell}$,
and
\begin{align*}
\overline{\Psi_{\br^{w_0},\bu} (G(\bm;\br^{w_0}))}
&= \overline{f_{\tilde{j}} (G(\Phi_{\bu}(\bnu);|\bu|))} - \overline{\sum_{\substack{\bl \in \PP^{\ell} \\ \bl \lhd \bm}} a_{\bl}(v)\,  G(\Phi_{\bu}(\bl);|\bu|))} \\
&= f_{\tilde{j}} (G(\Phi_{\bu}(\bnu);|\bu|)) - \sum_{\substack{\bl \in \PP^{\ell} \\ \bl \lhd \bm}} a_{\bl}(v)\,  G(\Phi_{\bu}(\bl);|\bu|)) \\
&= \Psi_{\br^{w_0},\bu} (G(\bm;\br^{w_0})),
\end{align*}
we see that $\Psi_{\br^{w_0}, \bu}(G(\bm;\br^{w_0}))$ satisfies the characterising properties of $G(\Phi_{\bu}(\bm);|\bu|)$ so that
$$\Psi_{\br^{w_0}, \bu}(G(\bm;\br^{w_0})) = G(\Phi_{\bu}(\bm);|\bu|),$$ as desired.
\end{proof}

\begin{thm} \label{T:decomp}
Let $B$ be a core block of $\HH_n$, and suppose that there exists $w \in \ZZ^{\ell}$ such that $\br^w \in \AAbar$ and $\mvi^{\br^w}_{\ell}(B) = 0$.
%Let $B^*$ be the initial core block that is Scopes equivalent to $B$.
\begin{enumerate}
\item
For each $\bl \in \PP^{\ell}$ lying in $B$ with $\bij_{\ell,e}((\bl;\br)^{w}) = (\bl^*; \br^*_B)$,
there exists a unique $\Omega(\bl) \in \PP^{\ell}$ such that
$$\bij_{\ell,e}((\Omega(\bl);\br)^{w}) = ((\bl^*)^{\sigma_B\tau_B}; \bv_e(|\by^B|)\ell + \Sc(B))^{\rho_e^{-j_B}},$$
where $\rho_e = (1,2,\dotsc, e) \in \sym{e}$.
(Recall the notation of $\bv_e(x)$ in Subsection \ref{SS:notations}.)

\item
Let $\bu := \br^{w} - |\by^B|\bone$, and for each $\bnu \in \PP^{\ell}$, define $\Phi_{\bu}(\bnu) := \beta^{-1}(\UU(\beta_{\bu}(\bnu)))$.
\begin{enumerate}
\item
If $\bm \in \PP^{\ell}$ lying in $B$ such that $(\bm;\br)$ is Kleshchev, then $\Phi_{\bu}(\Omega(\bm))$ is $e$-regular.

\item
There exists a $\CC(v)$-linear map $\Theta : \CF_{\br} \to \CF_{|\bu|}$ such that
$\Theta(\bl;\br) = (\Phi_{\bu}(\Omega(\bl)), |\bu|)$ for all $\bl \in \PP^{\ell}$ lying in $B$, and
$$
\Theta(G(\bm;\br)) = G(\Phi_{\bu}(\Omega(\bm)), |\bu|)
$$
for all $\bm \in \PP^{\ell}$ lying in $B$ such that $(\bm;\br)$ is Kleshchev.

In particular, for all $\bm \in \PP^{\ell}$ lying in $B$ such that $(\bm;\br)$ is Kleshchev, $$d^{\br}_{\bl\bm}(v)  =
d_{\Phi_{\bu}(\Omega(\bl)), \Phi_{\bu}(\Omega(\bm))}(v)$$ for all $\bl \in \PP^{\ell}$ lying in $B$, and $d_{\lambda, \Phi_{\bu}(\Omega(\bm))}(v) = 0$ if $\lambda \ne  \Phi_{\bu}(\Omega(\bnu))$ for any $\bnu \in \PP^{\ell}$ lying in $B$.
\end{enumerate}
\end{enumerate}
\end{thm}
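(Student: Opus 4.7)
The plan is to reduce Theorem \ref{T:decomp} to the already-established Proposition \ref{P:decomp} via the Scopes equivalence furnished by Proposition \ref{P:Scopes}.

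First, I would invoke Proposition \ref{P:Scopes} (with $w_0 := w$) to obtain an initial core block $B_k$ Scopes equivalent to $B$ along a chain $B = B_0, B_1, \dotsc, B_k$ of Morita equivalences. By Lemma \ref{L:initial-r^*} this $B_k$ satisfies $\br^*_{B_k} = (\bv_e(|\by^B|)\ell + \Sc(B))^{\rho_e^{-j_B}}$, and by the last paragraph of Proposition \ref{P:Scopes} the Specht module $\Sp^{\bl}$ of $B$ corresponds to the Specht module $\Sp^{\bm}$ of $B_k$ with $\bij_{\ell,e}((\bm;\br)^w) = ((\bl^*)^{\sigma_B\tau_B\rho_e^{-j_B}}; \br^*_{B_k})$. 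Setting $\Omega(\bl) := \bm$, part (1) is immediate from the bijectivity of $\bij_{\ell,e}$ together with the bijectivity of the Scopes correspondence between multipartitions in $B$ and multipartitions in $B_k$.

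For part (2)(a), each step $B_i \to B_{i+1} = \cs_{j_i}\DDot{} B_i$ in the chain is a Morita equivalence (Theorem \ref{T:CR}) sending Specht modules to Specht modules, and hence simple heads to simple heads; since $(\bm;\br)$ Kleshchev is equivalent to $\Sp^{\bm}$ having a simple head, the Kleshchev property transfers to $(\Omega(\bm);\br)$ in $B_k$. Because $B_k$ is initial it satisfies Condition (I), we have $w \in \ZZ^{\ell}$, and $|\by^{B_k}| = |\by^B|$ by Lemma \ref{L:yz}, so the $\bu$ of the theorem coincides with the $\bu$ appearing in Proposition \ref{P:decomp} applied to $B_k$. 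That proposition then yields the $e$-regularity of $\Phi_{\bu}(\Omega(\bm))$.

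For part (2)(b), I would define $\Theta : \CF_{\br} \to \CF_{|\bu|}$ by setting $\Theta(\bl;\br) = (\Phi_{\bu}(\Omega(\bl));|\bu|)$ whenever $\bl$ lies in $B$ and $\Theta(\bl;\br) = 0$ otherwise, extended $\CC(v)$-linearly. For Kleshchev $\bm$ lying in $B$, the canonical basis element $G(\bm;\br)$ is supported on multipartitions in $B$, so
$$\Theta(G(\bm;\br)) = \sum_{\bl \in \PP^{\ell},\, \bl \in B} d^{\br}_{\bl\bm}(v)\, (\Phi_{\bu}(\Omega(\bl));|\bu|).$$
Composing the graded Scopes-preservation identity $d^{\br}_{\bl\bm}(v) = d^{\br}_{\Omega(\bl),\Omega(\bm)}(v)$ (which invokes Webster's graded refinement \cite[Lemma 3.1]{Webster} of \cite[Theorem 6.4]{CR}) with the identity $d^{\br}_{\Omega(\bl),\Omega(\bm)}(v) = d_{\Phi_{\bu}(\Omega(\bl)),\Phi_{\bu}(\Omega(\bm))}(v)$ supplied by Proposition \ref{P:decomp} applied to $B_k$, and using the corresponding vanishing statement of that proposition together with the bijectivity of $\Omega$, I would match this with the expansion $G(\Phi_{\bu}(\Omega(\bm));|\bu|) = \sum_{\bnu \in B} d_{\Phi_{\bu}(\Omega(\bnu)),\Phi_{\bu}(\Omega(\bm))}(v)\,(\Phi_{\bu}(\Omega(\bnu));|\bu|)$ term by term. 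The two final numerical identities (for $d^{\br}_{\bl\bm}(v)$ and for the vanishing of $d_{\lambda,\Phi_{\bu}(\Omega(\bm))}(v)$) then drop out of this same comparison. The hard part will be ensuring that Scopes equivalence preserves $v$-decomposition numbers and not merely their specialisations at $v=1$; this rests on Webster's graded enhancement of Chuang--Rouquier's Morita equivalence. Once that is granted, the rest of the argument is bookkeeping that tracks coefficients cleanly through the bijection $\Omega$.
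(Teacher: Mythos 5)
Your proposal is correct and follows essentially the same route as the paper: pass to the unique Scopes-equivalent initial core block via Proposition \ref{P:Scopes} and Lemma \ref{L:unique-j-initial}, let $\Omega$ be the bijection induced by the Scopes correspondence (which gives part (1) by bijectivity of $\bij_{\ell,e}$), transfer the Kleshchev property along the chain, check that Lemma \ref{L:yz} makes the two $\bu$'s coincide, and then feed everything into Proposition \ref{P:decomp}. The one point of divergence is the justification for $d^{\br}_{\bl\bm}(v) = d^{\br}_{\Omega(\bl),\Omega(\bm)}(v)$: you route through Webster's graded refinement of the Chuang--Rouquier Morita equivalence, whereas the paper appeals to a direct Fock-space argument (citing \cite[Theorem~20]{LM} for $\ell=1$ and asserting the same proof works for $\ell>1$). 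Your route implicitly passes through Brundan--Kleshchev to translate graded decomposition numbers into $v$-decomposition numbers, and therefore nominally runs through $\mathrm{char}(\FF)=0$; since the theorem's conclusion is a purely combinatorial statement about canonical basis coefficients that does not depend on $\FF$, this is harmless, but the paper's Fock-space argument avoids the detour and is cleaner for a statement that never actually touches the field. The rest — defining $\Theta$ on the block basis and comparing expansions term by term using the vanishing clause of Proposition \ref{P:decomp} — matches the paper's bookkeeping (there $\Theta = \Psi_{\br,\bu}\circ\Xi$, which agrees with your definition on the block $B$).
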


\begin{proof}
Since $\br^w \in \AAbar$ and $\mvi^{\br^w}_{\ell} (B) = 0$, we may choose $w_0$ to be $w$.

By Proposition \ref{P:Scopes} and Lemma \ref{L:unique-j-initial}, the unique initial core block $B_0$ that is Scopes equivalent to $B$ has $\br^*_{B_0} = (\bv_e(|\by^B|)\ell + \Sc(B))^{\rho_e^{-j_B}}$.
Furthermore, each $\bl \in \PP^{\ell}$ lying in $B$ with $\bij_{\ell,e}((\bl;\br)^{w_0}) = (\bl^*,\br^*_B)$ corresponds bijectively to a $\bm \in \PP^{\ell}$ lying in $B_0$ satisfying
\begin{align*}
\bij_{\ell,e}((\bm;\br)^{w_0}) = ((\bl^*)^{\sigma_B\tau_B\rho_e^{-j_B}}; \br^*_{B_0})
= ((\bl^*)^{\sigma_B\tau_B}; \bv_e(|\by^B|)\ell + \Sc(B))^{\rho_e^{-j_B}}.
\end{align*}
Thus $\Omega(\bl)$ exists, and is unique since $\bij_{\ell,e}$ is bijective between $\PP^{\ell}\times \ZZ^{\ell}$ and $\PP^e \times \ZZ^e$.  This shows part (1).

Since $\bl \mapsto \Omega(\bl)$ is the map induced by the Scopes equivalence between $B$ and $B_0$,
we have in particular that
for
$\bm \in \PP^{\ell}$ lying in $B$, $(\bm;\br)$ is Kleshchev if and only if $(\Omega(\bm);\br)$ is. %, in which case $\Phi_{\bu}(\Omega(\bm))$ is $e$-regular by Proposition \ref{P:decomp}.
Furthermore,
$d^{\br}_{\bl\bm}(v) = d^{\br}_{\Omega(\bl) \Omega(\bm)}(v)$ for all $\bl,\bm$ lying $B$ with $(\bm;\br)$ Kleshchev (see \cite[Theorem 20]{LM} for a proof when $\ell =1$; the proof for $\ell > 1$ is similar).
In other words, if $\Xi$ is a $\CC(v)$-linear endomorphism of $\CF_{\br}$ sending $(\bnu;\br)$ to $(\Omega(\bnu);\br)$ for all $\bnu \in \PP^{\ell}$ lying $B$, then we get $\Xi(G(\bm;\br)) = G(\Omega(\bm);\br)$ for all $\bm \in \PP^{\ell}$ lying in $B$ such that $(\bm;\br)$ is Kleshchev.

As $|\by^{B_0}| = |\by^B|$ by Lemma \ref{L:yz}, we have $\bu = \br^{w_0} - |\by^B|\bone = \br^{w_0} - |\by^{B_0}|\bone$.
Let $\Theta : \CF_{\br} \to \CF_{|\bu|}$ be the map obtained  by composing the above map $\Xi$ with $\Psi_{\br,\bu}$ from Proposition \ref{P:decomp}.
For $\bm \in \PP^{\ell}$ lying in $B$ such that $(\bm;\br)$ is Kleshchev, $(\Omega(\bm);\br)$ is Kleshchev, and applying Proposition \ref{P:decomp}, we conclude that
$\Phi_{\bu}(\Omega(\bm))$ is $e$-regular, and that
$$
\Theta (G(\bm;\br)) = \Psi_{\br,\bu}( \Xi(G(\bm;\br))) = \Psi_{\br,\bu}(G(\Omega(\bm);\br)) = G(\Phi_{\bu}(\Omega(\bm));|\bu|),
$$
proving part (2).
\end{proof}

\begin{rem}
%Note that when $\bm$ lies in a block in which every partition $\bl$ has no addable node of $(e,\br)$-residue $j$, then $d^{\br}_{\bl\bm}(v) = d^{\br}_{\cs_j \smash[t]{\DDot{\br}}\bl, \cs_j \smash[t]{\DDot{\br}} \bm}(v)$ (see \cite[Theorem 20]{LM} for a proof when $\ell =1$; the proof for $\ell > 1$ is similar).
%Consequently one can use Theorem \ref{T:decomp} together with Proposition \ref{P:Scopes} to relate the canonical basis elements labelled by $\ell$-partitions lying in any core block $B$ with $\mv^{\br^w}_{\ell}(B) = 0$ for some $w \in \ZZ^{\ell}$ such that $\br^w \in \AAbar$ with those of the level 1 Fock space.
Applying Ariki's (resp.\ Brundan-Kleshchev's) theorem to Theorem \ref{T:decomp} yields the corresponding decomposition numbers  (resp.\ graded decomposition numbers) of all core blocks of Ariki-Koike algebras satisfying the conditions in Theorem \ref{T:decomp} in terms of those of the Iwahori-Hecke algebras of type $A$ when $\mathrm{char}(\FF) = 0$.
\end{rem}

\begin{footnotesize}
	\bibliographystyle{alpha}
	\bibliography{references.bib}
\end{footnotesize}

\end{document}